\newcommand{\RRing}{{\mathcal {R}}}
\newcommand{\CFm}{\mathrm{CF}^-}
\newcommand\CFa{\widehat{\mathrm{CF}}}
\newenvironment{prooff}{\par \noindent {\bf Proof}\ }{\hfill
$\square$\medskip \par}
        \def\sqr#1#2{{\vcenter{\hrule height.#2pt
                \hbox{\vrule width.#2pt height#1pt \kern#1pt
                \vrule width.#2pt}\hrule height.#2pt}}}
        \def\square{\mathchoice\sqr67\sqr67\sqr{2.1}6\sqr{1.5}6}
\newcommand{\vv}{v}
\newcommand\Mor{\mathrm{Mor}}
\newcommand{\defin}[1]{\textbf{#1}}
\newtheorem {theorem}{Theorem}[section]
\newtheorem {lemma}[theorem]{Lemma}
\newtheorem {prop}[theorem]{Proposition}
\newtheorem {corollary}[theorem]{Corollary}
\newtheorem {conjecture}[theorem]{Conjecture}
\newtheorem {definition}[theorem]{Definition}
\theoremstyle{remark}
\newtheorem {remark}[theorem]{Remark}
\newtheorem {example}[theorem]{Example}
\def\mathcenter#1{\vcenter{\hbox{$#1$}}}
\gdef\reallynopagebreak{\nopagebreak\@nobreaktrue}}
\newcommand\goesupto{\nearrow}
\newcommand\goesdownto{\searrow}
\newcommand\Gen{\mathfrak{S}}
\newcommand\Sym{\mathrm{Sym}}
\newcommand\HFa{\widehat{HF}}
\newcommand\Z{\mathbb{Z}}
\newcommand\Field{\mathbb F}
\newcommand\Dual{\mathcal D}
\newcommand\Duality\Dual
\newcommand\HFKm{{\mathrm {HFK}}^-}
\newcommand\HFKa{\widehat{\mathrm{HFK}}}
\newcommand\ws{\mathbf w}
\newcommand\zs{\mathbf z}
\newcommand\relspinc{\underline{\spinc}}
\newcommand\x{\mathbf x}
\newcommand\w{\mathbf w}
\newcommand\z{\mathbf z}
\newcommand\y{\mathbf y}
\newcommand\ModSphere{\ModFlow\left({\mathbb S}\longrightarrow 
\Sym^{g-1}(\Sigma_{1})\times \Sym^2(\Sigma_{2})\right)}
\newcommand\ModSpheres\ModSphere
\newcommand\HFp{\HF^+}
\newcommand\HFm{\HF^-}
\newcommand\CFinf{{\rm {CF}}^\infty}
\newcommand\gr{\mathrm{gr}}
\newcommand\Mas{\mu}
\newcommand\UnparModSp{\widehat \ModSp}
\newcommand\UnparModFlow\UnparModSp
\newcommand\Mod\ModSp
\newcommand{\spinc}{\mathfrak s}
\newcommand\ModMaps{\mathcal M}
\newcommand\ModSp\ModMaps
\newcommand\Ta{{\mathbb T}_{\alpha}}
\newcommand\Tb{{\mathbb T}_{\beta}}
\newcommand\Torus{\mathbb{T}}
\newcommand\alphas{\mbox{\boldmath$\alpha$}}
\newcommand\betas{\mbox{\boldmath$\beta$}}
\newcommand\Ring{\mathcal R}
\newcommand\CFLm{\mathrm{CFL}^-}
\newcommand\spincrel\relspinc
\newcommand\CFK{\mathrm{CFK}}
\newcommand\HFK{\mathrm{HFK}}
\newcommand\CFKa{\widehat\CFK}
\newcommand\CFKm{\CFK^-}
\newcommand\fCFKm{{\mathcal{CFK}}^-}
\newcommand\fCFKinf{{\mathcal{CFK}}^\infty}
\newcommand\CFKinf{\CFK^{\infty}}
\newcommand\HFKinf{\HFK^{\infty}}
\newcommand{\Tors}{\mathrm{Tors}}
\newcommand{\HF}{\mathrm{HF}}
\def\endproof{\relax\ifmmode\expandafter\endproofmath\else
  \unskip\nobreak\hfil\penalty50\hskip.75em\hbox{}\nobreak\hfil\bull
  {\parfillskip=0pt \finalhyphendemerits=0 \bigbreak}\fi}
\def\endproofmath$${\eqno\bull$$\bigbreak}
\def\bull{\vbox{\hrule\hbox{\vrule\kern3pt\vbox{\kern6pt}\kern3pt\vrule}\hrule}}
\newcommand{\OneHalf}{\frac{1}{2}}
\newcommand{\ModSWfour}{\mathcal{M}}
\newcommand{\ModFlow}{\ModSWfour}
\newcommand\abuts\Rightarrow
\newcommand\Alg{\mathbb{A}}
\newcommand\Concord{\mathcal {C}}
\newcommand\Cont{\mathit {Cont}}
\newcommand\CFKt{\mathrm{tCFK}}
\newcommand\HFKt{\mathrm{tHFK}}
\newcommand\SpinC{\mathrm{Spin}^c}
\renewcommand{\HFa}{\widehat {\mathrm{HF}}}
\begin{document}

\title{Concordance homomorphisms from knot Floer homology}

\author[Peter S. Ozsv\'ath]{Peter S. Ozsv\'ath}
\thanks {P. Ozsv{\'a}th was partially supported by NSF DMS-1258274}
\address {Department of Mathematics, Princeton Unversity\\ Princeton,
  New Jersey 08544}
\email {petero@math.princeton.edu}

\author[Andr{\'a}s I. Stipsicz]{Andr{\'a}s I. Stipsicz} 
\thanks {A. Stipsicz was partially supported by the
  Lend{\"u}let program, ERC Advanced Grant LDTBud and OTKA NK81203}
\address {MTA R\'enyi Institute of Mathematics, Budapest, Hungary}
\email {stipsicz@renyi.hu}

\author[Zolt{\'a}n Szab{\'o}]{Zolt{\'a}n Szab{\'o}}
\thanks{Z. Szab{\'o} was partially supported by NSF DMS-1006006,
NSF DMS-1309152 and NSF DMS-1606571}
\address{Department of Mathematics, Princeton University\\ Princeton,
  New Jersey 08544} 
\email {szabo@math.princeton.edu}

\begin {abstract} 
  We modify the construction of knot Floer homology to produce a
  one-parameter family of homologies $\HFKt$  for knots in
  $S^3$. These invariants can be used to give homomorphisms from the
  smooth concordance group $\Concord$ to $\Z$, giving bounds on the
  four-ball genus and the concordance genus of knots. 
  We give some applications of these homomorphisms.
\end {abstract}


\maketitle
\newcommand\Q{\mathbb Q}
\newcommand\Conc{\mathit{Conc}}
\section{Introduction}
\label{sec:intro}

The signature of the symmetrized Seifert matrix gives a knot invariant
$\sigma(K)$ satisfying a number of basic properties~\cite{Murasugi}:
it is additive under connected sums, it changes in a controlled manner
under crossing changes, and it gives a lower bound on the genus of a
slice surface. Levine and Tristram~\cite{Tristram} extend this
  invariant to a one-parameter family of knot invariants
  $\sigma_{\omega}$ indexed by points $\omega$ on the unit circle.
  More recently, knot invariants whose properties are similar to those
  of $\sigma$ have been constructed using techniques such as knot
  Floer homology, resulting in the
  invariant~$\tau(K)$~\cite{FourBall,RasmussenThesis}; and Khovanov
  homology, resulting in Rasmussen's $s$ invariant~\cite{Jake}.  While
  $\sigma$ and $\sigma_\omega$ bound the topological slice genus, the
  newer invariants often give better bounds for the smooth
  slice genus.

The goal of the present paper is to use methods of knot Floer
  homology to construct a one-parameter family of knot invariants
  $\{\Upsilon_K(t)\}_{t\in [0,2]}$, \emph{upsilon of $K$ at $t$},
  which fit together to give a real-valued function $\Upsilon _K\colon
  [0,2]\to {\mathbb {R}}$. These invariants are additive under
  connected sums, they behave in a controlled manner under crossing
  changes, and they give lower bounds on the smooth slice genus.  This invariant
is extracted from the filtered knot Floer complex, and it is similar
to, and indeed inspired by, the work of Jen
Hom~\cite{JenHomInfinitelyGenerated}.  (For a comparison of $\Upsilon
$ to~\cite{JenHomInfinitelyGenerated}, see
Section~\ref{sec:furtherformal}.)

The invariants $\{\Upsilon_K(t)\}_{t\in [0,2]}$ are extracted from a
suitably modified variant of knot Floer homology~\cite{OSKnots,
  RasmussenThesis}. Recall that knot Floer homology is defined as the
homology of a bigraded chain complex over the base ring $\Field [U]$,
the ring of polynomials over the field $\Field $ of two elements.  (In
the following we will use coefficients in $\Field [U]$, although, with
the appropriate use of signs, the constructions and results admit
extensions to give invariants over $\Z [U]$.)  This chain complex is
associated to a doubly pointed Heegaard diagram representing the knot $K$ 
(equipped with some orientation). Denote the two basepoints by $w$ and
$z$. The generators of the knot Floer complex over $\Field [U]$ are
given combinatorially from the Heegaard diagram, the differential
of the complex counts pseudo-holomorphic disks that do not
cross $z$, while the exponent of $U$ records the multiplicity
with which the pseudo-holomorphic disk crosses $w$. The complex is also
equipped with a pair of gradings, the {\em Maslov grading} $M$
and the {\em Alexander grading} $A$, which descends to homology, endowing
$\HFKm (K)$ with the structure of a bigraded $\Field
       [U]$-module.

The above construction of $\HFKm (K)$ admits the following
variation. Fix a rational number $t\in [0,2]\cap \Q$, and let
$t=\frac{m}{n}$ where $m$ and $n$ are relatively prime integers.
The modified complex
is defined over the polynomial algebra in 
$\vv^{1/n}$.  The generators of the modified theory are the same as
those in the traditional knot Floer complex; and there
is a single grading, now by a rational number, given by $M-t\cdot A$.  The exponent of $\vv$ records $(2-t)$ times the
multiplicity with which the disk crosses $w$ and $t$ times the
multiplicity with which it crosses $z$.  
Multiplication by $\vv$ drops grading by $1$, as does the
differential.  The modified theory provides a family $\HFKt (K) $
($t\in [0,2]\cap \Q$) of \emph{$t$-modified knot Floer homologies},
which is a $\Q$-graded module over the polynomial algebra in $\vv^{1/n}$.
\begin{theorem}
  \label{thm:Invariance}
  For all rational $t=\frac{m}{n}\in [0,2]$ the $t$-modified knot
  Floer homology $\HFKt(K)$, thought of as a graded $\Field
  [\vv^{1/n}]$-module, is an invariant of the  knot $K$. 
\end{theorem}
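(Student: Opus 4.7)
The plan is to follow the classical Heegaard Floer invariance strategy: I would show that, as a graded $\F[\vv^{1/n}]$-chain complex up to chain homotopy equivalence, the $t$-modified chain complex $\CFKt(K)$ is invariant under the three moves relating any two doubly-pointed Heegaard diagrams for $(S^3,K)$---namely isotopies of the $\alphas$ and $\betas$ curves (disjoint from the basepoints $w,z$), handleslides, and stabilizations. Invariance of the homology $\HFKt(K)$ as a graded $\F[\vv^{1/n}]$-module then follows by passing to homology.

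The crucial observation is that the generators of $\CFKt(K)$ are the same intersection points as those underlying the classical knot Floer complex $\CFKm(K)$, and the differential counts the same pseudo-holomorphic Whitney disks; only the weight attached to each disk is changed, from $U^{n_w(\phi)}$ to $\vv^{(2-t)\, n_w(\phi)+t\, n_z(\phi)}$. One may therefore view $\CFKt(K)$ as the specialization of the two-variable complex $\CFK(K)$ over $\F[U,V]$ (with differential weighted by $U^{n_w(\phi)} V^{n_z(\phi)}$) along the ring homomorphism $\F[U,V]\to \F[\vv^{1/n}]$ sending $U\mapsto (\vv^{1/n})^{2n-m}$ and $V\mapsto (\vv^{1/n})^{m}$. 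Since specialization preserves chain homotopy equivalence, it suffices to establish invariance of this two-variable complex as an $\F[U,V]$-module chain complex.

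For each of the three Heegaard moves, the classical invariance arguments produce chain homotopy equivalences by counting pseudo-holomorphic triangles (or higher polygons in the isotopy step) with weight $U^{n_w(\psi)}$. I would reweight these counts by $U^{n_w(\psi)} V^{n_z(\psi)}$ and verify that the resulting $\F[U,V]$-linear maps are still chain maps that induce chain homotopy equivalences. This reduces to the standard ends-of-moduli-spaces analysis: ends split off broken flowlines and boundary degenerations, and the weight is multiplicative under concatenation since $n_w$ and $n_z$ are both additive under splicing of domains. Grading preservation under $M-tA$ is automatic, since the classical triangle maps respect $M$ and $A$ separately up to shifts by $U$- and $V$-multiplicities, and specialization converts these to the correct shifts in $\vv$-degree.

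The principal technical step is stabilization invariance, for the same reason it is in the classical theory, but with an added subtlety. In the original $\CFKm$ invariance proof one exploits the constraint $n_z(\phi)=0$ on disks counted by the differential to simplify the standard-model computation near the stabilization region. In the $\F[U,V]$-enhanced (and hence $\vv$-weighted) setting, disks with $n_z>0$ must be allowed to contribute, and one must verify that the canceling-pair argument near the stabilization still produces the desired chain homotopy equivalence once both basepoints are tracked. This is a local computation of the same character as those in~\cite{OSKnots, RasmussenThesis}, and it is the step where I expect the main care to be required.
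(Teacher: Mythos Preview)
Your approach is correct but takes a different route from the paper. You propose to re-run the Heegaard Floer invariance machine (isotopy, handleslide, stabilization) at the level of the two-variable complex over $\Field[U,V]$, and then specialize to $\Field[\vv^{1/n}]$. The paper instead observes that $\CFKt(K)$ is obtained from the already-invariant object $\CFKm(K)$ (viewed as a Maslov graded, Alexander \emph{filtered} complex over $\Field[U]$) by a purely formal algebraic construction, called $t$-modification; it then checks that this construction is functorial under filtered chain homotopy equivalences (Proposition~\ref{prop:tModifyFunctor}). Invariance follows immediately from the invariance of $\CFKm(K)$ established in~\cite{OSKnots}, with no new holomorphic curve analysis whatsoever. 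The paper even anticipates your alternative explicitly: ``One could repeat the invariance proof for knot Floer homology (relying on handle slide and stabilization invariances) \dots\ We prefer instead to appeal directly to the invariance of $\CFKinf(K)$, combined with functoriality properties of the formal construction.''

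What each approach buys: the paper's method is shorter and avoids entirely the stabilization subtlety you flag, since the disks with $n_z>0$ are already accounted for in the filtered invariance of $\CFKm$ (equivalently, the $\Z\oplus\Z$-filtered invariance of $\CFKinf$) proved in~\cite{OSKnots}. Your route is more self-contained and establishes invariance of the $\Field[U,V]$ complex as an intermediate result, which is useful in its own right; but the ``principal technical step'' you isolate is in fact not new work---it is precisely the content of the filtered invariance already in the literature, so your proof would end up re-deriving a known theorem rather than invoking it. One small correction: the differential on the filtered $\CFKm$ in this paper counts \emph{all} disks (weighted by $U^{n_w}$), not just those with $n_z=0$; the constraint $n_z=0$ appears only when passing to the associated graded object.
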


A homology class $\xi$ is said to be {\em homogeneous} if it is
represented by a cycle in a fixed grading. It is called {\em
  non-torsion} if $\vv^d \xi\neq 0$ for all $d\in
  \frac{1}{n}\Z$.  We define the invariant $\Upsilon_K(t)$ to be the
  maximal degree of any homogeneous, non-torsion homology class in
  $\HFKt(K)$.  It follows immediately from
  Theorem~\ref{thm:Invariance} that $\Upsilon_K(t)$ is also a knot
  invariant (see Corollary~\ref{cor:InvariantUps} below).

\subsection{The behaviour of $\Upsilon_K(t)$ as a function of $t$}
\label{sec:UpsilonOfT}

The function $\Upsilon_K$ satisfies the following symmetry:

\begin{prop}
  \label{prop:SymmUps}
  For any knot $K$, $\Upsilon_K(t)=\Upsilon_K(2-t)$.
 \end{prop}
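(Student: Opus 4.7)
The plan is to deduce the symmetry from the classical conjugation symmetry of the (unmodified) knot Floer complex, which identifies the complex with itself after swapping the roles of the two basepoints $w$ and $z$, flipping the sign of the Alexander grading, and suitably shifting the Maslov grading.

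Concretely, I would start from the Heegaard-diagram description. Given a doubly pointed Heegaard diagram $(\Sigma,\alphas,\betas,w,z)$ for $K$, the diagram $(\Sigma,\alphas,\betas,z,w)$ (i.e., with the roles of the two basepoints exchanged) also represents $K$ (or $K$ with the opposite orientation, but the $t$-modified theory is insensitive to this). The generators of $\CFKinf$ are the same for both diagrams, but the bigradings transform according to the well-known conjugation symmetry
\[
(M,A)\ \longmapsto\ (M-2A,\,-A),
\]
and the multiplicities of a domain at the basepoints are exchanged: $n_w(\phi)\leftrightarrow n_z(\phi)$.

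Next I would feed this into the definition of the $t$-modified theory. Writing $t=m/n$, the generators are unchanged; the new $(2-t)$-modified grading is
\[
(M-2A)-(2-t)(-A)=M-2A+(2-t)A = M-tA,
\]
which recovers the $t$-modified grading. At the same time, the $v$-exponent of a disk in the $(2-t)$-modified theory,
\[
(2-(2-t))\cdot n_w(\phi)+(2-t)\cdot n_z(\phi)=t\cdot n_w(\phi)+(2-t)\cdot n_z(\phi),
\]
becomes, after the basepoint swap $n_w\leftrightarrow n_z$, exactly the $v$-exponent of the $t$-modified theory. Thus the basepoint swap induces an isomorphism of graded $\Field[\vv^{1/n}]$-modules between the chain complex computing $\HFKt(K)$ at parameter $t$ and the one at parameter $2-t$.

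Passing to homology, this gives an isomorphism of graded $\Field[\vv^{1/n}]$-modules $\HFKt(K)|_t \cong \HFKt(K)|_{2-t}$. Since $\Upsilon_K(t)$ is defined purely in terms of the grading of the maximal-degree homogeneous non-torsion class, this isomorphism immediately yields $\Upsilon_K(t)=\Upsilon_K(2-t)$. The only non-routine point is verifying that the conjugation symmetry of $\CFKinf$ really does descend to an isomorphism of the $t$-modified complex with the $(2-t)$-modified complex at the level of filtered chain complexes (not merely total homologies); but this follows directly from the computation above, since both the grading and the $\vv$-weighting of each disk transform correctly under $(M,A,n_w,n_z)\mapsto (M-2A,-A,n_z,n_w)$.
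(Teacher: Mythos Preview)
Your proof is correct and follows essentially the same approach as the paper: both invoke the conjugation symmetry of knot Floer homology (swapping the basepoints $w$ and $z$), observe the grading transformation $(M,A)\mapsto(M-2A,-A)$, and compute that $\gr_{2-t}'=\gr_t$ under this identification. Your version is slightly more explicit in that you also verify the $v$-exponent of each disk transforms correctly, whereas the paper simply cites its earlier Proposition on the symmetry of $\CFKinf(K)$ and asserts that ``the algebraic structure of $C$ and $C'$ is the same.''
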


$\Upsilon$ also satisfies the following integrality properties
(compare also Proposition~\ref{prop:DeltaDerivative} below).

\begin{prop}
  \label{prop:Integrality}
  The quantity $\Upsilon_K(\frac{m}{n})$ lies in $\frac{1}{n} \Z$.
\end{prop}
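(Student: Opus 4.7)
The plan is to track where the gradings of cycles and generators of $\CFKt(K)$ (the chain complex computing $\HFKt(K)$) actually live, and observe that for $t=m/n$ every homogeneous element must sit in degree belonging to $\tfrac{1}{n}\Z$. Since $\Upsilon_K(t)$ is defined as the degree of a distinguished homogeneous non-torsion class, this will force $\Upsilon_K(m/n)\in \tfrac{1}{n}\Z$.

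First I would recall that the generators of the $t$-modified knot Floer complex are in canonical bijection with the generators of the ordinary knot Floer complex $\CFK^{-}(K)$ of a doubly-pointed Heegaard diagram for $K\subset S^{3}$, and that each such generator $\x$ carries an integral Maslov grading $M(\x)\in\Z$ and an integral Alexander grading $A(\x)\in\Z$ (this is the standard normalization for knots in $S^{3}$). By the definition of the $t$-modified theory recalled just before Theorem~\ref{thm:Invariance}, the grading of $\x$ in $\CFKt(K)$ is
\[
\gr_t(\x)\;=\;M(\x)\,-\,t\cdot A(\x)\;=\;M(\x)\,-\,\tfrac{m}{n}A(\x)\;\in\;\tfrac{1}{n}\Z.
\]

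Next I would observe that the ground ring of $\CFKt(K)$ is $\Field[\vv^{1/n}]$, where multiplication by $\vv$ drops grading by $1$, so multiplication by $\vv^{1/n}$ drops grading by $\tfrac{1}{n}$. Every homogeneous element of the complex is therefore an $\Field$-linear combination of elements of the form $\vv^{k/n}\cdot\x$ for various $k\in\Z_{\geq 0}$ and various generators $\x$, all in the same $\gr_t$-grading, and each such element has grading in $\tfrac{1}{n}\Z$. Passing to homology preserves this property, so every homogeneous class $\xi\in\HFKt(K)$ has $\gr_t(\xi)\in\tfrac{1}{n}\Z$.

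Finally, $\Upsilon_K(t)$ is by definition the maximum of $\gr_t(\xi)$ over homogeneous non-torsion classes $\xi\in\HFKt(K)$, and the existence of such a class (for instance the image of a top-graded cycle in the tower part of the complex) ensures the maximum is realized. Since every value being maximized over lies in $\tfrac{1}{n}\Z$, we conclude $\Upsilon_K(m/n)\in\tfrac{1}{n}\Z$. There is no real obstacle here beyond setting up the grading conventions carefully; the only subtle point is making sure the maximum is actually attained, which is immediate once one knows $\HFKt(K)$ contains a non-torsion homogeneous element, a fact that follows from the standard structure of $\HFK^{-}(K)$ having a nontrivial $\Field[U]$-tower.
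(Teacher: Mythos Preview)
Your proof is correct and follows essentially the same approach as the paper: the key observation is that over $\Field[\vv^{1/n}]$ every homogeneous element of $\CFKt(K)$ has $\gr_t$-grading in $\tfrac{1}{n}\Z$, so the maximal grading of a non-torsion class must lie there as well. The paper's proof is nearly identical, with the only addition being an explicit appeal to Proposition~\ref{prop:SameUpsilon} to confirm that the $\Upsilon_K$ computed over $\Field[\vv^{1/n}]$ agrees with the one computed over the larger ring $\RRing$ (since by that point in the paper the working definition has been shifted to $\RRing$); you implicitly bypass this by working directly with the $\Field[\vv^{1/n}]$ definition from the introduction.
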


Indeed, the above definition of $\HFKt$ and 
$\Upsilon _K(t)$ for rational $t$ can be extended to any real
$t\in [0,2]$, giving a knot invariant
$\Upsilon _K\colon [0,2]\to {\mathbb {R}}$, with the following properties:

\begin{prop}
  \label{prop:FinitelyManySlopes}
  For any knot $K$, the function $\Upsilon_K$ (defined on $[0,2]\cap
  {\mathbb {Q}}$) has a continuous extension to a real-valued
  function on $[0,2]$, which is a piecewise linear function of $t$,
  and whose derivative has finitely many discontinuities.  Each slope
  is equal to some Alexander grading $s$ for which
  $\HFKa_*(K,s)\neq 0$; hence, in particular, each slope is an
  integer.
\end{prop}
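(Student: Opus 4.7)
The plan is to reinterpret $\Upsilon_K(t)$ as a min--max functional on the bifiltered chain complex $\CFKinf(K)$, from which the continuous extension, piecewise linearity, and integrality of slopes will all follow by elementary convex analysis. Recall that $\CFKinf(K)$ is generated over $\Field$ by triples $[\x, i, j]$ with $j - i = A(\x)$ and Maslov grading $M(\x) + 2i$. For rational $t = m/n \in [0,2]$, the first step is to exhibit a grading-preserving identification of the localization $\vv^{-1/n}\CFKt(K)$ with $\CFKinf(K)$ via the rule $[\x, i, j] \mapsto \vv^{(t-2)i - tj} \x$; a short computation (using that $U$-multiplication corresponds to $\vv^2$, and that the $\CFKinf$ differential weights disks by $(n_w, n_z)$ in the bifiltration) shows this commutes with differentials.

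Under this identification, a cycle $\zeta = \sum c_k [\x_k, i_k, j_k] \in \CFKinf$ of Maslov grading $0$ representing the generator $1 \in HF^\infty(S^3)$ translates, after multiplication by $\vv^c$ with $c = \max_k \{(2-t)\, i_k + t\, j_k\}$ to clear $\vv$-denominators, into a homogeneous non-torsion cycle in the $t$-modified complex of grading $-c$. Taking the infimum over all such $\zeta$ yields the formula
\[
\Upsilon_K(t) \;=\; -\min_{\zeta}\, \max_k\,\bigl\{(2-t)\, i_k + t\, j_k\bigr\},
\]
whose right-hand side is defined for every $t \in [0,2]$ and continuous in $t$ (the minimum of a uniformly Lipschitz family of maxima is continuous). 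This provides the required extension $\Upsilon_K\colon [0,2] \to \mathbb{R}$.

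Piecewise linearity and integrality of slopes then follow from the structure of this min--max. For fixed $\zeta$, the inner function is a maximum of finitely many affine functions of $t$ with slopes $j_k - i_k = A(\x_k)$, hence piecewise linear convex with integer slopes. The crucial reduction is to restrict the outer minimization to a finite family of cycles: the Maslov grading $0$ constraint forces $i_k = -M(\x_k)/2$, so the lattice points $(i_k, j_k)$ appearing in the support of an admissible $\zeta$ lie in a finite set determined by the generators of $\CFa(K)$; the cycle and homology-class conditions then cut out a finite-dimensional affine subspace, and the function being minimized depends only on the finitely many possible supports. Hence $\Upsilon_K$ is a minimum of finitely many piecewise linear convex functions, itself piecewise linear with finitely many slope changes; every slope has the form $-A(\x_k)$ for some $\x_k$ in an optimal cycle. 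To verify that each such $A(\x_k)$ actually supports $\HFKa$, I would argue by contradiction: if $\HFKa_*(K, A(\x_k)) = 0$, then $\x_k$ is cancelled by a differential in the associated graded of the Alexander filtration, and that cancellation would strictly decrease $\max_k\{(2-t) i_k + t j_k\}$ on an open neighbourhood of the relevant $t$, contradicting the optimality of $\zeta$.

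The main obstacle is the finiteness input inside the piecewise-linearity step: rigorously showing that the minimization over cycles may be restricted to a finite family, and controlling exactly which supports remain relevant as $t$ varies. This will require careful control of how algebraic boundaries in $\CFKinf$ can simplify the support of a cycle representing $1 \in HF^\infty(S^3)$, most likely via a choice of reduced or ``vertically simplified'' basis (in the spirit of the work of Hom referenced above), in which the minimizing cycles become explicit. Once this is in hand, the remainder of the argument reduces to linear programming applied to the resulting piecewise linear convex envelopes.
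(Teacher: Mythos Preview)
Your approach—recasting $\Upsilon_K(t)$ as a min-max over Maslov-zero cycles in $\CFKinf(K)$ representing the generator of $HF^\infty(S^3)$—is a genuinely different route from the paper's, and it works; it is essentially Livingston's reinterpretation of $\Upsilon$. The paper instead views $\{\CFKt(K)\}_{t\in[0,2]}$ as a continuously varying family of chain complexes over $\RRing$ and proves an abstract result (Proposition~\ref{prop:Varying}): for any such family with rank-one homology, $t\mapsto\Upsilon(C^t)$ is continuous, agrees at each $t$ with $\gr_t(\x)$ for some generator $\x$, and that generator survives to give a non-zero class in $H_*(C^t/v)$. The paper's argument is a compactness/limit argument using a dual pairing with a cocycle; yours is finite combinatorics. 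Your route yields the explicit formula, which is handy for computation; the paper's route packages the $\HFKa$ statement more cleanly via the last clause of Proposition~\ref{prop:Varying}.

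Two comments on gaps. First, you establish only one inequality in your min-max formula: from each $\zeta$ you build a non-torsion class of grading $-c$, giving $\Upsilon_K(t)\geq -\min_\zeta c$. For the reverse you need the observation that every homogeneous element of $\CFKt(K)$ of grading $g$ is already of the form $\sum_{\x\in S} v^{\gr_t(\x)-g}\x$ for some subset $S$ of the Heegaard generators with $\gr_t(\x)\geq g$, and that both the cycle condition and the non-torsion condition depend only on $S$, not on $t$ or $g$ (this uses that $n_w(\phi)$ and $n_z(\phi)$ are determined by the gradings of $\x,\y$ once $\mu(\phi)=1$). This directly gives $\Upsilon_K(t)=\max_{S}\min_{\x\in S}\gr_t(\x)$ over a finite, $t$-independent family of subsets; no simplified basis is needed, since over $\Field=\mathbb F_2$ a finite-dimensional space is a finite set. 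Your stated ``main obstacle'' is therefore not an obstacle at all. (One also checks that generators of odd Maslov grading may be dropped from any optimal $S$, so that $S$ corresponds to a $\zeta\in\CFKinf$ as you wrote it.) Second, your sketch for the $\HFKa$ clause (``cancellation would strictly decrease the max'') is not correct as stated: cancelling one generator in the associated graded alters several terms of $\zeta$ at once and need not preserve the cycle or non-torsion condition. The clean argument is the paper's: at a generic $t$ in the interior of a linear piece of slope $-s$, the optimal cycle reduced modulo $v$ is a non-zero class in $H_*(C^t/v)\cong\HFKa(K)$, and integrality of $M$ and $A$ forces it to sit in Alexander grading $s$.
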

The following two propositions determine the behaviour of
$\Upsilon _K$ near $0$ (and so near $2$, in view of Proposition~\ref{prop:SymmUps}).

\begin{prop}
  \label{prop:BoundaryCases}
  $\Upsilon_K(0)=0$.
\end{prop}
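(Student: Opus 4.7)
The plan is to identify the chain complex computing $\HFKt(K)$ at $t=0$ with the ordinary Heegaard Floer chain complex $\CFm(S^3)$, base-changed from $\Field[U]$ to $\Field[\vv]$ via $U=\vv^{2}$, and then to read off $\Upsilon_K(0)$ from the standard calculation $\HFm(S^3)\cong\Field[U]$.

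First I would unwind the definitions at $t=0$. Writing $t=0=0/1$ we work over $\Field[\vv]$; the modified grading $M-tA$ collapses to the plain Maslov grading $M$; and the $\vv$-weight of a Whitney disk $\phi$ becomes $(2-0)\,n_w(\phi)+0\cdot n_z(\phi)=2n_w(\phi)$, so $z$ drops out of the weighting entirely.  The $t=0$ chain complex is therefore freely generated as a $\Field[\vv]$-module by $\Ta\cap\Tb$, with differential $\partial\x=\sum_{\y,\phi}\#\widehat{\ModFlow}(\phi)\,\vv^{2n_w(\phi)}\,\y$.  Since this formula only remembers $n_w$, forgetting the $z$-basepoint turns the doubly-pointed Heegaard diagram into a singly-pointed diagram for $S^3$, and the displayed differential is precisely that of $\CFm(S^3)$ after substituting $U\mapsto\vv^{2}$.

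Second, I would pass to homology.  Because $\Field[\vv]$ is free over $\Field[U]=\Field[\vv^{2}]$ (with basis $\{1,\vv\}$), base change commutes with taking homology, so
$\HFKt_{t=0}(K)\cong\HFm(S^3)\otimes_{\Field[U]}\Field[\vv]\cong\Field[\vv]$.
Under this identification the generator $1\in\HFm(S^3)$ lies in Maslov grading $0$, so each power $\vv^{d}$ lies in grading $-d$ and is manifestly non-torsion.  The maximum grading of a homogeneous non-torsion class is therefore $0$, which gives $\Upsilon_K(0)=0$.

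There is no real obstacle; the only bookkeeping point worth checking is that under $U\mapsto \vv^{2}$ the element $\vv$ acquires Maslov degree $-1$, compatibly with $U$ having degree $-2$ and with the declared behaviour of $\vv$ under the modified grading.  The conceptual content is simply that at the endpoint $t=0$ the $z$-basepoint becomes invisible in the weight, so the theory degenerates to the (base-changed) Heegaard Floer homology of $S^3$, whose non-torsion tower begins in grading zero.
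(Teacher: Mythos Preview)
Your proof is correct and follows essentially the same approach as the paper: at $t=0$ the $z$-basepoint disappears from the weighting and the grading reduces to the Maslov grading, so the $t$-modified complex is just $\CFm(S^3)$ (base-changed via $U=\vv^2$), whose non-torsion generator sits in grading $0$. The paper's proof is a one-line version of this; you have simply made the base-change step and the grading bookkeeping explicit.
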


\begin{prop}
  \label{prop:SlopeUpsilon}
  The slope of $\Upsilon_K(t)$ at $t=0$ is given by $-\tau(K)$, where
  $\tau(K)$ denotes the concordance invariant of the knot
  $K$ defined from the knot Floer homology module $\HFKm (K)$.
\end{prop}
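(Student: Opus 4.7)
The plan is to identify the top non-torsion class of $\HFKt$ for small $t>0$ with a chain-level deformation of the cycle in $\CFKm(K)$ that defines $\tau(K)$; the slope will then follow by reading off the $(M-tA)$-grading. By the filtration-theoretic characterization of $\tau(K)$, there exists a cycle $\xi \in \CFKm(K)$ with $M(\xi)=0$ and $A(\xi)=\tau(K)$ whose homology class generates $\HFm(S^3) \cong \F[U]$, and no such cycle lives at smaller Alexander grading. The first goal is to promote $\xi$ to a non-torsion cycle $\xi_t \in \CFKt$ of $(M-tA)$-grading $-t\tau(K)$, yielding the lower bound $\Upsilon_K(t) \geq -t\tau(K)$.

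For the deformation: the $\CFKt$-differential differs from the $\CFKm$-differential precisely by the contributions of disks with $n_z>0$, each carrying $\vv$-weight of the form $\vv^{(2-t)n_w + t n_z}$ with $t n_z \geq t$; hence $\partial^t \xi$ lies in $\vv^t \cdot \CFKt$. By a standard inductive homological-perturbation argument, one can successively add corrections $\vv^{kt}\eta_k$ of matching $(M-tA)$-grading to produce a genuine $\CFKt$-cycle $\xi_t = \xi + \vv^t\eta_1 + \vv^{2t}\eta_2 + \cdots$. Non-torsion-ness of $[\xi_t] \in \HFKt$ can be verified via a specialization argument: under a suitable forgetful chain map $\CFKt \to \CFm(S^3)$, the class $[\xi_t]$ maps to the top generator of $\HFm(S^3) \cong \F[U]$, which is non-torsion.

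For the matching upper bound, I would invoke Proposition~\ref{prop:FinitelyManySlopes}: near $t=0$, $\Upsilon_K$ is piecewise linear with slope equal to $-A(\alpha_0)$ for some class $\alpha_0 \in \HFKa(K)$ controlling the top non-torsion generator. Since $\Upsilon_K(0)=0$ by Proposition~\ref{prop:BoundaryCases}, the class $\alpha_0$ must have $M(\alpha_0)=0$; and the fact that $\alpha_0$ lifts to a non-torsion class in $\HFKt$ forces it to come from a cycle in $\CFKm$ representing the top generator of $\HFm(S^3)$, whence by the minimality clause in the definition of $\tau(K)$ we have $A(\alpha_0)\geq\tau(K)$. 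Combined with the lower bound, $A(\alpha_0)=\tau(K)$, and the slope at $t=0$ equals $-\tau(K)$. The main obstacle is the chain-level deformation: constructing $\xi_t$ by iterated killing of obstructions and verifying that $[\xi_t]$ is non-torsion demands careful control of the $\vv$-adic filtration on $\CFKt$ together with an explicit specialization map to $\HFm(S^3)$; once these are in place the piecewise-linear structure and the normalization $\Upsilon_K(0)=0$ pin the slope down to $-\tau(K)$.
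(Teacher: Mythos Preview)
Your two-step strategy---exhibit an explicit non-torsion cycle to get $\Upsilon_K(t)\geq -t\,\tau(K)$ for small $t$, then use piecewise linearity together with $\Upsilon_K(0)=0$ for the matching upper bound---is the same as the paper's, and your upper-bound half is essentially the paper's argument. The lower-bound half, however, has a real gap.

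The ``filtration-theoretic characterization of $\tau(K)$'' you invoke is not a characterization of $\tau$: the minimal Alexander filtration level of a genuine cycle $\xi\in\CFKm(K)$ with $M(\xi)=0$ whose class generates $\HFm(S^3)\cong\Field[U]$ is $\nu^-(K)$, not $\tau(K)$ (see Definition~\ref{def:Delta} and the surrounding discussion). These invariants can differ, so your construction would only yield $\Upsilon_K(t)\geq -t\,\nu^-(K)$, which together with the upper bound $\Upsilon_K(t)\leq -t\,\tau(K)$ does not pin down the slope. The correct starting object is a cycle $y_0$ in $\CFa(S^3)=\CFKm(K)/U$ that is Alexander-homogeneous with $A(y_0)=\tau(K)$ and $M(y_0)=0$; one then extends it to a $\CFm$-cycle $y=y_0+U\cdot y_1$, whose Alexander filtration level is no longer $\tau(K)$ in general.

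Your perturbation step is also off: $\partial_t$ and $\partial^-$ count the \emph{same} holomorphic disks with different $v$-weights, so the discrepancy is not ``contributions of disks with $n_z>0$''. The paper avoids any perturbation by passing to the model $E^t$ of Lemma~\ref{lem:SecondConstruction}, where the differential is literally the $\RRing$-linear extension of $\partial^-$. There $y$ is already a cycle, and one only has to verify the filtration inequality $F^t(v^{t\tau(K)}\cdot y)\leq 0$ for small $t$; this holds because $y_0$ sits at Alexander level $\tau(K)$ while $U\cdot y_1$ has algebraic filtration $\leq -1$. That single filtration check replaces your proposed homological-perturbation and specialization-map machinery.
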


Sometimes it is convenient to consider discontinuities of the
derivative of $\Upsilon_K(t)$. To this end, let
\[ 
\Delta \Upsilon_K'(t_0) = 
\lim_{t\goesdownto t_0} \Upsilon'_K(t)
-
\lim_{t\goesupto t_0} \Upsilon'_K(t).
\] 
Note that by Proposition~\ref{prop:SlopeUpsilon} the quantity
$\Delta\Upsilon'_K$ and $\tau$ together determine $\Upsilon_K$:
\[ \Upsilon_K(t) = -\tau(K) \cdot t + \sum_{0<s<t} \Delta
\Upsilon_K'(s) \cdot (t-s).\]

For the function $\Upsilon _K \colon [0,2]\to {\mathbb {R}}$ we have
the following extension of Proposition~\ref{prop:Integrality}:

\begin{prop}
  \label{prop:DeltaDerivative}
  For any $t\in  [0,2]$, 
  $t \cdot \Delta\Upsilon_K'(t)$ is an even integer.
\end{prop}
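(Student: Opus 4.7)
The plan is to analyze the jumps of $\Upsilon_K'$ at rational vertices $t_0 \in (0, 2)$; away from such vertices $\Delta\Upsilon'_K(t_0) = 0$ and the claim is trivial. By Proposition~\ref{prop:FinitelyManySlopes} (and its proof), the slopes of $\Upsilon_K$ on either side of $t_0$ are $-A^-$ and $-A^+$ for Alexander gradings $A^\pm \in \Z$ realized by generators $\x^\pm$ of $\CFKa(K)$ of bigradings $(M^\pm, A^\pm)$ that appear as the leading terms of cycles representing the top non-torsion class in $\HFKt$. Continuity of $\Upsilon_K$ at $t_0$ forces $M^- - t_0 A^- = \Upsilon_K(t_0) = M^+ - t_0 A^+$, so
\[
t_0 \cdot \Delta\Upsilon'_K(t_0) \,=\, t_0(A^- - A^+) \,=\, M^- - M^+,
\]
which is already an integer. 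It remains to show this integer is even.

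For the parity I pass to the two-variable knot Floer chain complex $\CFKm(K)$ over $\F[U, V]$, where $U$ corresponds to the basepoint $w$ (with Maslov grading $-2$) and $V$ corresponds to $z$ (with Maslov grading $0$), and whose specialization $U \mapsto \vv^{2-t_0}$, $V \mapsto \vv^{t_0}$ recovers $\CFKt$ at $t_0$. For $K \subset S^3$, the non-torsion part of $\HFKt(K)$ is a rank-one free $\F[\vv^{1/n}]$-module, so both $\x^-$ and $\x^+$ represent the same top non-torsion class at $t_0$ up to a power of $\vv$; hence there exist $k \geq 0$ and $\eta \in \CFKt$ with $\vv^k(\x^- + \x^+) = \partial \eta$. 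Lifting this relation to $\CFKm(K)$ produces a chain $\tilde\eta$ whose boundary contains terms $U^{a^-}V^{b^-}\x^- + U^{a^+}V^{b^+}\x^+$ for some $a^\pm, b^\pm \geq 0$. Since $V$ has Maslov grading $0$, comparing the Maslov gradings of these two terms gives
\[
M^- - 2a^- \,=\, M(\tilde\eta) - 1 \,=\, M^+ - 2a^+,
\]
whence $M^- - M^+ = 2(a^- - a^+) \in 2\Z$, as required.

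The main technical point is justifying the lift from $\CFKt$ to $\CFKm(K)$. The specialization map is not always surjective onto $\F[\vv^{1/n}]$ (when $m$ is even it misses odd powers of $\vv^{1/n}$), so $\eta$ need not admit a literal preimage. I plan to circumvent this by multiplying the relation through by a sufficiently large power of $\vv^{1/n}$ so that every monomial appearing in the resulting equation lies in the image of the specialization. Such a uniform shift alters $a^-$ and $a^+$ by the same amount and hence leaves the parity of $M^- - M^+ = 2(a^- - a^+)$ unaffected.
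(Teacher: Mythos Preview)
Your first paragraph is correct and matches the paper exactly: at a breakpoint $t_0$ there are generators $\x^{\pm}$ realizing the two slopes, continuity gives $\gr_{t_0}(\x^-)=\gr_{t_0}(\x^+)$, and hence $t_0\,\Delta\Upsilon_K'(t_0)=M(\x^-)-M(\x^+)\in\Z$.

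For the parity, however, your $\Field[U,V]$ lifting argument has a real gap. First, a minor point: $\x^-$ and $\x^+$ are generators, not cycles; the objects that are homologous modulo torsion are the full cycles $\xi^{\pm}$ having $\x^{\pm}$ among their leading terms. More seriously, the specialization $U\mapsto \vv^{2-t_0},\ V\mapsto \vv^{t_0}$ intertwines the $\gr_{t_0}$-grading on both sides, \emph{not} the Maslov ($\gr_w$) grading. So when you lift $\eta$ to $\tilde\eta\in\CFKm_{\Field[U,V]}$, you only know $\tilde\eta$ is $\gr_{t_0}$-homogeneous; there is no reason it is $M$-homogeneous. Decomposing $\tilde\eta=\sum_m\tilde\eta_m$ into $M$-homogeneous pieces, the $\x^-$-term and the $\x^+$-term of $\partial\tilde\eta$ may well come from different $\tilde\eta_m$'s, in which case the identity $M(\x^-)-2a^-=M(\tilde\eta)-1=M(\x^+)-2a^+$ you want simply does not hold. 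Your ``multiply by a large power of $\vv^{1/n}$'' remedy addresses surjectivity of the ring map but does nothing for this homogeneity issue.

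The paper's route is shorter and avoids these difficulties entirely: it argues that $M(\x^-)$ and $M(\x^+)$ are each individually even. The reason (implicit in the citation of Proposition~\ref{prop:HFmRankOne}) is that, in the $E^t$-picture of Lemma~\ref{lem:SecondConstruction}, the non-torsion class is represented by (a $\vv$-power times) a Maslov-homogeneous cycle $\zeta\in\CFKinf(K)$ over $\Field[U,U^{-1}]$ mapping to a generator of $\HFinf(S^3)$; such a $\zeta$ has even Maslov grading, and since its constituent terms are $U^{k_j}\x_{i_j}$ with $M(\x_{i_j})-2k_j=M(\zeta)$, every generator $\x_{i_j}$ appearing in $\zeta$ has even Maslov grading. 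The generator realizing $\Upsilon_K(t)$ on each linear piece is one of these, so its Maslov grading is even. This is the content of the parenthetical remark in the paper's proof.
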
 

Propositions~\ref{prop:SymmUps} and~\ref{prop:Integrality} are proved
in Section~\ref{sec:Formal}; 
Propositions~\ref{prop:FinitelyManySlopes},~\ref{prop:BoundaryCases},
and~\ref{prop:SlopeUpsilon} are proved in Section~\ref{sec:varying}.

\subsection{Topological properties of $\Upsilon_K (t)$}
\label{intro:TopologicalProperties}

Topological properties of $\Upsilon_K(t)$ follow from corresponding properties
of knot Floer homology:

\begin{prop}
  \label{prop:AddUpsilon}
  $\Upsilon_K$ is additive under connected sum of knots; i.e.
  \[ \Upsilon_{K_1\# K_2}(t)=\Upsilon_{K_1}(t)+\Upsilon_{K_2}(t).\]
\end{prop}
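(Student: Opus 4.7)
The plan is to prove additivity of $\Upsilon$ by first establishing a Künneth-type identification of the $t$-modified chain complex under connected sum, and then analyzing how the ``maximal grading of a non-torsion homogeneous class'' behaves under tensor products of graded $\Field[\vv^{1/n}]$-modules.

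First, I would recall (or derive from the standard connected sum formula for knot Floer homology, cf.~\cite{OSKnots}) that the filtered knot Floer complex of $K_1\#K_2$ is quasi-isomorphic to the tensor product of those of $K_1$ and $K_2$. Because the $t$-modified grading $M-tA$ and the $\vv$-action ($\vv$ weighted by $(2-t)$ on $w$-crossings and $t$ on $z$-crossings) are extracted functorially from the bigrading and the $w,z$ multiplicities, this quasi-isomorphism upgrades, for each rational $t=m/n\in[0,2]$, to a quasi-isomorphism of graded $\Field[\vv^{1/n}]$-complexes
\[
\CFKt(K_1\#K_2)\;\simeq\;\CFKt(K_1)\otimes_{\Field[\vv^{1/n}]}\CFKt(K_2).
\]
Taking homology and applying the Künneth formula over the PID $\Field[\vv^{1/n}]$, we obtain a (non-canonical) splitting
\[
\HFKt(K_1\#K_2)\;\cong\;\bigl(\HFKt(K_1)\otimes\HFKt(K_2)\bigr)\;\oplus\;\Tor\bigl(\HFKt(K_1),\HFKt(K_2)\bigr)
\]
as graded $\Field[\vv^{1/n}]$-modules, where the grading on the tensor product is the sum of gradings.

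Next, by the structure theorem over the PID $\Field[\vv^{1/n}]$, each $\HFKt(K_i)$ decomposes as a direct sum of a free part and a torsion part, and by Theorem~\ref{thm:Invariance} this decomposition is well-defined. The definition of $\Upsilon_{K_i}(t)$ singles out the top grading of a free generator; equivalently, $\Upsilon_{K_i}(t)$ is the maximum over homogeneous non-torsion elements $\xi$ of their grading. Choose homogeneous non-torsion classes $\xi_i\in\HFKt(K_i)$ realizing this maximum. Then $\xi_1\otimes\xi_2$ is homogeneous of grading $\Upsilon_{K_1}(t)+\Upsilon_{K_2}(t)$ and lies in the tensor product of free summands, so it is non-torsion in $\HFKt(K_1\#K_2)$, giving the inequality $\Upsilon_{K_1\#K_2}(t)\ge\Upsilon_{K_1}(t)+\Upsilon_{K_2}(t)$.

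For the reverse inequality, any non-torsion homogeneous class in $\HFKt(K_1\#K_2)$ must project nontrivially onto the tensor product of the free parts (since the Tor term and all tensor summands involving torsion are $\vv^{1/n}$-torsion). Its grading is therefore bounded above by the maximum grading of a free generator of $\HFKt(K_1)\otimes\HFKt(K_2)$, which is exactly $\Upsilon_{K_1}(t)+\Upsilon_{K_2}(t)$. Combining the two inequalities yields the claim for rational $t$; the case of arbitrary $t\in[0,2]$ then follows by continuity from Proposition~\ref{prop:FinitelyManySlopes}. The main technical point is the first step, verifying that the Künneth isomorphism respects the $t$-modified grading and the $\vv^{1/n}$-action; once this is in hand, the rest is a formal algebraic argument about non-torsion classes in tensor products over a PID.
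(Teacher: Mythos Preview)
Your approach is correct and follows the same strategy as the paper: K\"unneth for connected sums, then functoriality of the $t$-modification under tensor product. The paper's proof is a one-liner citing Theorem~\ref{thm:Kunneth} together with Propositions~\ref{prop:tModifyAsFormal} and~\ref{prop:tModifyFunctor} (the last of which contains exactly the statement $(C\otimes_{\Field[U]}C')^t\cong C^t\otimes_{\RRing}(C')^t$), and leaves the final algebraic step---that $\Upsilon$ is additive on tensor products of rank-one modules---implicit. You spell out that step explicitly, which is a genuine improvement in exposition.

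The only substantive difference is the choice of base ring. You work over the PID $\Field[\vv^{1/n}]$ for rational $t$ and then invoke continuity (Proposition~\ref{prop:FinitelyManySlopes}) to pass to arbitrary $t$; the paper works directly over $\RRing$ for all $t$ at once. Your route has the advantage that the K\"unneth theorem and the structure theorem for modules are standard over a PID, whereas over $\RRing$ one needs the observation (Lemma~\ref{lem:HomAlgLem}) that finitely generated graded $\RRing$-modules still decompose into cyclic summands. The paper's route avoids the appeal to continuity. Both are fine; note that the paper's Proposition~\ref{prop:SameUpsilon} is what guarantees your PID computation of $\Upsilon$ agrees with the $\RRing$-based definition, so you may want to cite it.
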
 

\begin{prop}
\label{prop:mirror}
Let $m(K)$ denote the mirror of the knot $K$. Then 
\[
\Upsilon _{m(K)}(t)=-\Upsilon _K (t).
\]
\end{prop}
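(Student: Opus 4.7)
The plan is to exploit the duality between the $t$-modified knot Floer complex of $K$ and that of its mirror. A doubly pointed Heegaard diagram $(\Sigma,\alphas,\betas,w,z)$ for $K$ yields one for $m(K)$ by reversing the orientation of $\Sigma$; under this reversal, the Maslov grading $M$ and the Alexander grading $A$ both change sign (pseudo-holomorphic disks are replaced by their reverses), and hence the grading $M-tA$ defining $\CFKt$ also changes sign. The differential of $\CFKt(K)$ is replaced by its transpose, with the same weights $\vv^{(2-t)n_w(\phi)+tn_z(\phi)}$ on each term, since $n_w$ and $n_z$ are insensitive to the orientation of $\Sigma$. This produces a chain-level isomorphism
\[
\CFKt(m(K))\;\cong\;\mathrm{Hom}_R\!\bigl(\CFKt(K),R\bigr)
\]
of graded chain complexes over $R=\F[\vv^{1/n}]$, where the grading on the right-hand side is automatically the negative of the grading on $\CFKt(K)$.

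Passing to homology, because $R$ is a PID the universal coefficient theorem yields a natural split short exact sequence relating $\HFKt(m(K))$ to $\mathrm{Hom}_R(\HFKt(K),R)$ together with an $\mathrm{Ext}^1_R$-term which is pure torsion. Consequently the free part of $\HFKt(m(K))$ is the graded $R$-dual of the free part of $\HFKt(K)$, so the set of gradings at which $\HFKt(m(K))$ supports a non-torsion homogeneous class is the negative of the corresponding set for $\HFKt(K)$.

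The remaining ingredient is that the free part of $\HFKt(K)$ is a rank-one free $R$-module. This can be seen by inverting $\vv$: the localization $\vv^{-1}\HFKt(K)$ can be identified with a version of the Heegaard Floer homology of $S^3$ tensored with $\F[\vv^{\pm 1/n}]$, which has rank one. Equivalently, the non-torsion part of $\HFKt(K)$ has a single homogeneous generator, whose grading is $\Upsilon_K(t)$ by definition. Combining this with the previous step, the free part of $\HFKt(m(K))$ is rank one with generator in grading $-\Upsilon_K(t)$, giving $\Upsilon_{m(K)}(t)=-\Upsilon_K(t)$ for $t\in[0,2]\cap\Q$. The result for all $t\in[0,2]$ follows by the continuity of $\Upsilon_K$ established in Proposition~\ref{prop:FinitelyManySlopes}.

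The main obstacle is the bookkeeping in the first step: one must verify that reversing the orientation of $\Sigma$ leaves the $(2-t)$ and $t$ weightings of $\vv$ untouched (since both $n_w$ and $n_z$ are multiplicities, hence unchanged), while simultaneously flipping both $M$ and $A$. Once this chain-level duality is set up correctly, the mirror identity for $\Upsilon$ reduces to standard homological algebra over a PID, combined with the rank-one ``tower'' property of the $t$-modified theory.
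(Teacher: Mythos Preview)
Your argument is correct and follows essentially the same route as the paper: establish the graded chain-level identification $\CFKt(m(K))\cong(\CFKt(K))^*$, then invoke the universal coefficient theorem together with the rank-one property of the free part to conclude $\Upsilon_{m(K)}(t)=-\Upsilon_K(t)$. The only cosmetic difference is that the paper packages the duality through its formal $t$-modification machinery (Propositions~\ref{prop:tModifyAsFormal}, \ref{prop:MirrorChain}, \ref{prop:Mirror}), first identifying $\CFKm(m(K))\cong(\CFKm(K))^*$ and then showing $t$-modification commutes with dualization, whereas you work directly at the Heegaard-diagram level; the content is the same.
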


The invariant $\Upsilon_K$ changes in a controlled manner under crossing
changes:

\begin{prop}
  \label{prop:CrossChangeUpsilon}
  Let $K_+$ and $K_-$ be two knots which differ in a crossing change.
  Then, for $0\leq t \leq 1$ we have that
   \[ \Upsilon_{K_+}(t)\leq \Upsilon_{K_-}(t)\leq \Upsilon_{K_+}(t)+t.\]
\end{prop}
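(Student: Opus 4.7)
The inequality is the $t$-parameter upgrade of the classical crossing-change bound $\tau(K_-)\leq \tau(K_+)\leq \tau(K_-)+1$, and in fact recovers it upon differentiating at $t=0$ by Proposition~\ref{prop:SlopeUpsilon}. The plan is to adapt the original Ozsv{\'a}th--Szab{\'o} argument for that $\tau$-bound to the $t$-modified complex $\CFKt$, by producing chain maps in both directions between $\CFKt(K_+)$ and $\CFKt(K_-)$ and reading off the $\Upsilon$ bounds from their $\gr_t$-grading shifts.

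Concretely, I would pick doubly pointed Heegaard diagrams $\mathcal{H}_+$, $\mathcal{H}_-$ for $K_+$ and $K_-$ that agree outside a small disk $D$ containing the crossing, with a local model inside $D$ that realizes the crossing change by a single isotopy of a $\beta$-curve. Completing this to a Heegaard triple whose third set of attaching curves represents the oriented resolution $K_0$ at the crossing, together with its canonical top-graded intersection point $\Theta$, one defines chain maps
\[
F_+\colon \CFKt(K_+)\longrightarrow \CFKt(K_-), \qquad F_-\colon \CFKt(K_-)\longrightarrow \CFKt(K_+)
\]
by counting pseudo-holomorphic triangles, weighted by $\vv^{(2-t)\,n_w(\psi)+t\,n_z(\psi)}$ as in the definition of $\CFKt$. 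A local basepoint-multiplicity count for the small triangles supported in $D$ should show that $F_+$ is $\gr_t$-grading preserving (no extra basepoint crossings), while $F_-$ has $\gr_t$-shift $-t$, arising from a single extra passage through $z$ and no $w$-crossings.

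The main technical content is then to verify that each of $F_\pm$ carries non-torsion homogeneous cycles to non-torsion homogeneous cycles. This is the $\CFKt$-analog of the classical fact that a skein-type cobordism map induces an isomorphism on $HF^-(S^3)$; concretely, one checks that after localizing the ground ring by inverting $\vv$ the maps $F_\pm$ become isomorphisms on the resulting $\vv$-localized homology, so that no non-torsion class in the source is killed in the target. Granting this, the two inequalities follow directly from the definition of $\Upsilon$: a non-torsion homogeneous cycle realizing $\Upsilon_{K_+}(t)$ is mapped by $F_+$ to a non-torsion cycle of the same $\gr_t$-grading, giving $\Upsilon_{K_-}(t)\geq \Upsilon_{K_+}(t)$; and a non-torsion cycle realizing $\Upsilon_{K_-}(t)$ is mapped by $F_-$ to a non-torsion cycle of $\gr_t$-grading $\Upsilon_{K_-}(t)-t$, giving $\Upsilon_{K_+}(t)\geq \Upsilon_{K_-}(t)-t$. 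The main obstacle is the $\vv$-localization claim for $F_\pm$ together with the precise basepoint bookkeeping that pins the shifts as $0$ and $-t$; the restriction $t\in[0,1]$ reflects that the local model introduces at most a single $z$-crossing (and no $w$-crossings) in one of the two directions, producing the asymmetric shift $-t$ rather than $-(2-t)$.
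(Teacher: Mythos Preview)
Your $F_+$ direction is essentially the paper's argument for $\Upsilon_{K_+}(t)\leq\Upsilon_{K_-}(t)$: the skein-triangle map of \cite[Theorem~10.2]{OSKnots} gives a filtered chain map $\CFKinf(K_+)\to\CFKinf(K_-)$ which, restricted to a $\SpinC$ structure of minimal $|c_1|$, induces an isomorphism on $\HFm(S^3)$; applying the $t$-modification functor (Proposition~\ref{prop:tModifyFunctor}) produces a $\gr_t$-preserving map that is an isomorphism after inverting $\vv$, and Lemma~\ref{lem:InclusionInequality} gives the bound.

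The genuine gap is your $F_-$. In the skein triple you set up, the triangle maps run $K_+\to K_-\to K_0\to K_+$; there is no direct triangle map $K_-\to K_+$ in that triple. If instead you build the reverse triple, the resulting map corresponds to the four-dimensional cobordism given by $+1$-surgery on a small unknot encircling the crossing. That cobordism is $\mathbb{CP}^2$ with two balls removed, which has $b_2^+=1$, and the induced map on $\HFinf(S^3)$ vanishes. Consequently, after inverting $\vv$ your $F_-$ is zero on homology rather than an isomorphism: it sends non-torsion classes into torsion, so no $\Upsilon$ inequality can be read off. The local basepoint bookkeeping you sketch cannot repair this; the obstruction is the positive-definiteness of the cobordism, not how many times a small triangle meets $z$.

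The paper obtains the second inequality $\Upsilon_{K_-}(t)\leq\Upsilon_{K_+}(t)+t$ by a different and much shorter route: $K_-\#m(K_+)$ can be unknotted by a single crossing change, hence has slice genus at most one, so Theorem~\ref{thm:BoundSliceGenus} gives $\Upsilon_{K_-\#m(K_+)}(t)\leq t$ for $t\in[0,1]$; additivity (Proposition~\ref{prop:AddUpsilon}) together with $\Upsilon_{m(K_+)}=-\Upsilon_{K_+}$ (Proposition~\ref{prop:mirror}) then yields the bound.
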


For $1\leq t \leq 2$, symmetry and the above inequality implies that
  \[ \Upsilon_{K_+}(t)\leq \Upsilon_{K_-}(t)\leq \Upsilon_{K_+}(t)+(2-t).\]

The invariant $\Upsilon _K (t)$ also provides a lower bound
for the (smooth)  \emph{slice genus} $g_s(K)$ of the
knot $K$ as follows:

\begin{theorem}
  \label{thm:BoundSliceGenus}
  The invariants $\Upsilon_{K}(t)$ bound the slice genus of $K$; more
  precisely, for $0\leq t \leq 1$, 
  \[ |\Upsilon_K(t)|\leq t\cdot g_s(K).\]
\end{theorem}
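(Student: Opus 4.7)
My plan is to derive the bound by combining a saddle (band-surgery) inequality for $\Upsilon_K(t)$ with a Morse-theoretic decomposition of a slice surface, in the same spirit as the standard proof of $|\tau(K)|\le g_s(K)$. The idea is that a genus-$g$ surface in $B^4$ bounded by $K$ gives a cobordism from $(S^3,K)$ to $(S^3, U)$ (with $U$ the unknot), and this cobordism induces chain maps on $\CFKt$ whose $t$-modified grading shifts are controlled by $g$.

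First I would establish a saddle analogue of Proposition~\ref{prop:CrossChangeUpsilon}: if $K_1$ and $K_2$ are related by a single oriented band-move, then for $t\in[0,1]$,
\[
\bigl|\Upsilon_{K_1}(t)-\Upsilon_{K_2}(t)\bigr|\le t.
\]
This should follow by examining the chain map on $\CFKt$ induced by the elementary (index-one) band cobordism and bounding its $t$-modified degree shift; alternatively, one can deduce it from Proposition~\ref{prop:CrossChangeUpsilon} by realizing a saddle move via an appropriate crossing change together with isotopies. The map in the opposite direction (the dual band) yields the reverse inequality.

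Next, if $g_s(K)=g$, I put the slice surface $\Sigma\subset B^4$ in Morse position with respect to the radial function. By standard moves I can assume there are no births, so $\Sigma$ consists of saddles followed by a single capping disk. The Euler-characteristic relation $\chi(\Sigma)=1-2g$ then forces exactly $2g$ saddles: $g$ that split components and $g$ that merge them, and these pair up to build the $g$ handles of $\Sigma$. I would analyze the cobordism map at the level of a full handle (rather than of a single saddle): since one of the two saddles comprising a handle is orientation-preserving in the sense of Proposition~\ref{prop:CrossChangeUpsilon} and the other is not, their cumulative effect on $\Upsilon_K(t)$ is bounded by $t$ (not $2t$). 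Iterating this handle bound $g$ times reduces $K$ to the unknot, and since $\Upsilon_U(t)=0$ (the Floer homology of $U$ is a single non-torsion tower in grading $0$), I obtain $\Upsilon_K(t)\le t\cdot g$. Applying the same argument to $m(K)$ and invoking Proposition~\ref{prop:mirror} gives the lower bound $\Upsilon_K(t)\ge -t\cdot g$.

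The main obstacle is the sharp per-handle bound. Applied crudely to the $2g$ saddles individually, the saddle inequality from Step~1 yields only the weaker estimate $|\Upsilon_K(t)|\le 2t\cdot g$. Obtaining the advertised factor of $g$ requires treating each handle as a single piece of cobordism: one must verify that the chain map induced by a split-merge pair has $t$-modified grading shift bounded by $t$, i.e.\ that the grading shifts of the two saddles partially cancel. This is the technical heart of the argument and is where the parameter $t$ enters quantitatively, reflecting the fact that each handle adds one unit to the Alexander-grading span of the relevant non-torsion class while leaving its Maslov grading essentially unchanged.
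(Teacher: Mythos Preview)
Your strategy is plausible in spirit but has two genuine gaps, and it differs substantially from the paper's route.

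First, the ``saddle analogue'' of Proposition~\ref{prop:CrossChangeUpsilon} is not well-posed as you state it. A single oriented band move on a knot produces a two-component link (or vice versa), so there is no knot $K_2$ to which you can apply $\Upsilon$ as defined here. The suggestion to reduce a saddle move to a crossing change plus isotopies does not work: band surgery and crossing changes are genuinely different operations with different effects on the topology. To make your approach go through you would need either a version of $\Upsilon$ for links together with a saddle inequality for it, or a direct analysis of cobordism maps on $\CFKt$---neither of which you supply.

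Second, even granting the saddle inequality, you correctly identify that the naive bound is $2t\cdot g$, and your fix---pairing the two saddles of each handle and asserting their grading shifts partially cancel---is exactly the missing content. You have named the heart of the argument but not carried it out. Moreover, a genus-$g$ slice surface cannot in general be decomposed into a sequence of ``handles'' each of which begins and ends at a knot; the intermediate levels of the Morse function are typically multi-component links, so the pairing you envision is not always available in the form you describe.

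The paper avoids all of this by a much shorter route: it bounds $\Upsilon_K(t)$ in terms of the $\nu^-$ invariant via an inclusion of subcomplexes (Proposition~\ref{prop:BoundByNu}), and then invokes the already-known slice genus bound $\nu^-(K)\le g_s(K)$ (Theorem~\ref{thm:NuBounds}, due to Rasmussen and Hom--Wu). The mirror trick then gives the two-sided inequality. This approach packages all the hard cobordism analysis into the existing $\nu^-$ bound, rather than redoing it for $\Upsilon$.
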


The bounds on the slice genus are no stronger than the bounds coming
from Rasmussen's ``local $h$ invariants''~\cite{RasmussenThesis}, see
also~\cite{HomWu, JakeOtherPaper};  in fact, the slice bounds are
proven by bounding $\Upsilon _K$ in terms of $h$ invariants (see
Proposition~\ref{prop:BoundByNu} below). 
The bounds based on  $\Upsilon_K(t)$ are
convenient, though, as they come from homomorphisms:

\begin{corollary}
  \label{cor:UpsConcHomom}
  For each fixed $t$, the map $K\mapsto \Upsilon_K(t)$ gives a
  homomorphism from the (smooth) knot concordance group
  $\Concord$ to ${\mathbb R}$; indeed, $\Upsilon_K(t)$ induces a
  homomorphism $\Upsilon \colon \Concord \to \Cont([0,2])$ from the
  concordance group $\Concord$ to the vector space of continuous
  functions on $[0,2]$. \qed
\end{corollary}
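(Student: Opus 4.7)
The plan is to combine the topological and symmetry properties of $\Upsilon_K$ established earlier in the section: additivity under connected sum (Proposition~\ref{prop:AddUpsilon}), behavior under mirroring (Proposition~\ref{prop:mirror}), the slice genus bound (Theorem~\ref{thm:BoundSliceGenus}), and the continuous extension to $[0,2]$ (Proposition~\ref{prop:FinitelyManySlopes}).

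First, I would fix $t \in [0,2]$ and show that $K \mapsto \Upsilon_K(t)$ descends to a well-defined function on the concordance group $\Concord$. The concordance group is the set of knots modulo the equivalence relation $K_1 \sim K_2 \Longleftrightarrow K_1 \# m(K_2)$ bounds a smoothly embedded disk in $B^4$, i.e., $g_s(K_1 \# m(K_2)) = 0$. For $t \in [0,1]$, Theorem~\ref{thm:BoundSliceGenus} gives $|\Upsilon_{K_1 \# m(K_2)}(t)| \leq t \cdot g_s(K_1 \# m(K_2)) = 0$, so $\Upsilon_{K_1 \# m(K_2)}(t) = 0$; by the symmetry $\Upsilon_K(t) = \Upsilon_K(2-t)$ (Proposition~\ref{prop:SymmUps}), this vanishing extends to all $t \in [0,2]$. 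Now Propositions~\ref{prop:AddUpsilon} and~\ref{prop:mirror} give
\[
\Upsilon_{K_1}(t) - \Upsilon_{K_2}(t) = \Upsilon_{K_1}(t) + \Upsilon_{m(K_2)}(t) = \Upsilon_{K_1 \# m(K_2)}(t) = 0,
\]
so $\Upsilon_{K_1}(t) = \Upsilon_{K_2}(t)$ whenever $K_1$ and $K_2$ are concordant.

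Next, the homomorphism property on $\Concord \to \mathbb{R}$ is an immediate restatement of Proposition~\ref{prop:AddUpsilon}: the group operation on $\Concord$ is connected sum, and additivity under connected sum literally says the map is a group homomorphism.

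Finally, for the statement that $\Upsilon$ takes values in $\Cont([0,2])$, I would invoke Proposition~\ref{prop:FinitelyManySlopes}, which guarantees that each individual $\Upsilon_K$ is a (piecewise linear, hence) continuous real-valued function on $[0,2]$. Since the pointwise sum of continuous functions is continuous, and since we have just shown that $K \mapsto \Upsilon_K$ is additive under connected sum and well-defined on concordance classes, the assignment $K \mapsto \Upsilon_K$ defines a group homomorphism $\Concord \to \Cont([0,2])$, where the target carries the natural structure of a real vector space (and in particular an abelian group) under pointwise addition. No step is really a serious obstacle: the entire argument is a bookkeeping exercise that repackages the previously stated propositions, the only subtlety being that well-definedness on $\Concord$ is not an assumed property but must be extracted from the slice genus bound together with the symmetry of $\Upsilon_K$.
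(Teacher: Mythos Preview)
Your proof is correct and follows essentially the same approach as the paper's own proof, which simply cites Theorem~\ref{thm:BoundSliceGenus} and Proposition~\ref{prop:AddUpsilon} to conclude well-definedness on $\Concord$, and then Proposition~\ref{prop:AddUpsilon} again for the homomorphism property. Your version is more explicit in spelling out the roles of Propositions~\ref{prop:mirror} and~\ref{prop:SymmUps} (which the paper leaves implicit) and in addressing continuity via Proposition~\ref{prop:FinitelyManySlopes}, but the underlying argument is the same.
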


\begin{proof}
  It follows from Theorem~\ref{thm:BoundSliceGenus} and
  Proposition~\ref{prop:AddUpsilon} that if $K_1$ and $K_2$ are
  concordant, then $\Upsilon_{K_1}=\Upsilon_{K_2}$; i.e. $\Upsilon$ is
  a well-defined function on the concordance
  group. Proposition~\ref{prop:AddUpsilon} now implies that it is a
  homomorphism.
\end{proof}

In a different direction, recall that the {\em{concordance genus}} of
$K$, written $g_c(K)$, is the minimal Seifert genus of any knot $K'$
which is concordant to $K$.  The invariant $\Upsilon_K$ can
be used to bound this quantity, according to the following:

\begin{theorem}
  \label{thm:BoundConcordanceGenus}
  Let $s$ denote the maximum of the finitely many slopes appearing in the graph of $\Upsilon_K(t)$
  (c.f. Proposition~\ref{prop:FinitelyManySlopes}).
  Then, 
  \[ s\leq g_c(K).\]
\end{theorem}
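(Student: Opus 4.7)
The plan is to combine three ingredients: concordance invariance of $\Upsilon$, the bound on slopes from Proposition~\ref{prop:FinitelyManySlopes}, and the genus-detecting property of $\widehat{\mathrm{HFK}}$.

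First, I would observe that since $\Upsilon$ factors through the concordance group (Corollary~\ref{cor:UpsConcHomom}), any knot $K'$ that is smoothly concordant to $K$ satisfies $\Upsilon_{K'}(t)=\Upsilon_K(t)$ for all $t\in[0,2]$. In particular, $K$ and $K'$ have the same set of slopes, so the maximum slope $s$ in the graph of $\Upsilon_K$ equals the maximum slope in the graph of $\Upsilon_{K'}$.

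Next, by Proposition~\ref{prop:FinitelyManySlopes}, every slope of $\Upsilon_{K'}$ corresponds to an Alexander grading $s'$ with $\widehat{\mathrm{HFK}}_{*}(K',s')\neq 0$. Applying this to the maximum slope $s$ of $\Upsilon_{K'}(=\Upsilon_K)$, we conclude that there exists some Alexander grading $s$ in which $\widehat{\mathrm{HFK}}_{*}(K',s)$ is nonzero. Hence $s$ is bounded above by the largest Alexander grading in which $\widehat{\mathrm{HFK}}(K')$ is supported, which by the genus-detection theorem of knot Floer homology equals the Seifert genus $g(K')$.

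Putting these together, $s\leq g(K')$ for every $K'$ concordant to $K$. Taking the infimum over such $K'$ gives $s\leq g_c(K)$, as desired. The main potential obstacle is simply the reliance on the genus-detection theorem for $\widehat{\mathrm{HFK}}$ together with the slope identification of Proposition~\ref{prop:FinitelyManySlopes}; once both are in hand, the argument is essentially immediate from concordance invariance.
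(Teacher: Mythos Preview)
Your proposal is correct and follows essentially the same route as the paper: the paper's one-line proof cites Proposition~\ref{prop:FinitelyManySlopes} together with Proposition~\ref{prop:GenusBounds}, and the concordance invariance of $\Upsilon$ (your first step) is implicit in passing from the Seifert genus to the concordance genus. The only minor remark is that you invoke the full genus-detection theorem (equality), whereas the paper deliberately uses only the easier inequality of Proposition~\ref{prop:GenusBounds}, as noted right after that proposition; the equality is not needed here.
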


It is interesting to compare this result to~\cite{HomConcordanceGenus}. 

Propositions~\ref{prop:AddUpsilon},~\ref{prop:mirror},~\ref{prop:CrossChangeUpsilon}
and Theorem~\ref{thm:BoundSliceGenus} are all proved in
Section~\ref{sec:Formal}.
\subsection{Calculations}

The invariant $\Upsilon _K$ can be explicitly computed for some classes of
knots. For alternating knots we have

\begin{theorem}
  \label{thm:AltKnots}
  Let $K$ be an alternating knot (or, more generally, a
  quasi-alternating one) with signature $\sigma$. Then,
  \[\Upsilon_K(t)=(1-|t-1|)\frac{\sigma}{2}.\] 
  In particular, the derivative of $\Upsilon_K(t)$ has at most one
  discontinuity, which can occur at $t=1$.
\end{theorem}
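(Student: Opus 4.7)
The plan is to exploit the thinness of knot Floer homology for quasi-alternating knots, together with an explicit identification of the non-torsion tower in the $t$-modified homology.

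First, I would invoke the result of Manolescu--Ozsv\'ath that for a quasi-alternating knot $K$, every generator $\xi$ of $\HFKa(K)$ with Alexander grading $s$ sits in Maslov grading $M(\xi)=s+\sigma(K)/2$, so that $\HFKa(K)$ is supported on the diagonal $M-A=\sigma(K)/2$. Together with the fact that $\tau(K)=-\sigma(K)/2$ for quasi-alternating $K$, Proposition~\ref{prop:SlopeUpsilon} gives the initial slope of $\Upsilon_K$ at $t=0$ as $\sigma(K)/2$.

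Next, I would lift the thin-diagonal condition from $\HFKa$ to a filtered chain homotopy equivalent model for $\CFKm(K)$ in which every basic generator satisfies $M-A=\sigma/2$. This model decomposes as the direct sum of one staircase-like summand, carrying the unique non-torsion tower of $\HFKm(K)$ whose top generator sits at $A=\tau=-\sigma/2$ and $M=0$, together with a collection of acyclic ``box'' summands contributing only $v$-torsion to $\HFKt$. In the thin model, a basic generator $\xi$ with $A(\xi)=s$ has $t$-modified grading
\[
M(\xi)-t\,A(\xi)\;=\;\frac{\sigma}{2}+(1-t)\,s,
\]
so reading off the non-torsion tower yields, for $t\in[0,1]$,
\[
\Upsilon_K(t)\;=\;\frac{\sigma}{2}+(1-t)\Bigl(-\frac{\sigma}{2}\Bigr)\;=\;\frac{t\sigma}{2}\;=\;(1-|t-1|)\,\frac{\sigma}{2}.
\]
The symmetry $\Upsilon_K(t)=\Upsilon_K(2-t)$ of Proposition~\ref{prop:SymmUps} then extends the formula to $[1,2]$, and the stated claim about discontinuities of $\Upsilon_K'$ is immediate.

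The main obstacle is justifying that the top non-torsion class in $\HFKt(K)$ is indeed at Alexander grading $-\sigma/2$ for the entire interval $(0,1)$ rather than at some generator of higher Alexander grading $s>-\sigma/2$ (which for $t<1$ would yield the strictly larger $t$-modified grading $\sigma/2+(1-t)s$). This requires analyzing how the differentials in the staircase-like summand, weighted by $\vv^{(2-t)n_w+tn_z}$, pair each generator above the $\tau$-level with a $\vv$-multiple of a lower generator, forcing all of them to be $\vv$-torsion in $\HFKt$. For genuinely alternating knots this can be made explicit from Heegaard diagrams arising from an alternating projection, and the quasi-alternating case can then be handled inductively on the determinant by means of the skein exact triangle in knot Floer homology.
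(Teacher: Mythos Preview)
Your approach is essentially the paper's: reduce to the staircase-plus-boxes model for the knot Floer complex of a thin knot, and read off $\Upsilon_K$ from the staircase. The paper carries this out more cleanly by first using $\Upsilon_{m(K)} = -\Upsilon_K$ (Proposition~\ref{prop:mirror}) to reduce to the case $\tau(K) = -\sigma(K)/2 \geq 0$. In that case the staircase has its top generator $x_0$ at $A = \tau$, $M = 0$, there are \emph{no} staircase generators with $A > \tau$, and since the box summands are acyclic they contribute nothing to the non-torsion part of $\HFKt$. So every non-torsion class must involve some $x_i$, and checking $\gr_t(x_0) \geq \gr_t(x_i)$ for $t \in [0,1]$ is immediate.

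Your final ``obstacle'' paragraph is where the argument drifts. You already asserted that the boxes contribute only torsion, so box generators with $A > \tau$ are no issue. Without the mirror reduction there \emph{can} be staircase generators with $A > \tau$ (exactly when $\tau < 0$), so some justification is needed there --- but the mechanism you propose is not right: those generators are \emph{not} $v$-torsion in $\HFKt$. For instance, for the left-handed trefoil the generator $p$ at $A = 1$ appears in the non-torsion cycle $v^{2-2t}p + r$; the point is rather that any homogeneous non-torsion cycle containing $p$ must also contain $r$ (at $A = \tau = -1$) with unit coefficient, so its $\gr_t$ equals $\gr_t(r)$, not $\gr_t(p)$. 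This follows from a one-line analysis of the staircase differentials, not from Heegaard diagrams or a skein induction on the determinant --- and the mirror reduction avoids even that. Your appeal to Proposition~\ref{prop:SlopeUpsilon} for the initial slope is also redundant once you compute $\Upsilon_K$ directly on $[0,1]$.
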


The knot Floer homology of torus knots was determined
in~\cite{NoteLens}. These computations lead to the following
computation of their $\Upsilon_K$ invariant, which can be phrased
purely in terms of their Alexander polynomial.  If $K=T_{p,q}$ is the
$(p,q)$ torus knot (where $p$ and $q$ are positive, relatively prime
integers), then the nonzero coefficients in the Alexander polynomial
$\Delta _K(t)$ are all $\pm 1$, and they alternate in sign.  Write the
Alexander polynomial of $K$ as
\begin{align*}
  \Delta_{K}(t)&= t^{-\frac{pq -p
      -q+1}{2}}\frac{(t^{pq}-1)(t-1)}{(t^p-1)(t^q-1)}=
  \\ &=\sum_{k=0}^n (-1)^k t^{\alpha_k},
\end{align*}
where $\{\alpha_k\} _{k=0}^n$ is a decreasing sequence of integers.
Consider a corresponding sequence $\{ m_k\}_{k=0}^n$ of integers,
defined inductively by the formulae
  \begin{align*}
    m_{0}&=0 \\
    m_{2k} &= m_{2k-1}-1 \\
    m_{2k+1} &= m_{2k}-2(\alpha_{2k}-\alpha_{2k+1})+1.
  \end{align*}

From these integers the invariant $\Upsilon _K(t)$ is computed
by the following formula:

\begin{theorem}
  \label{thm:TorusKnots}
  Let $K$ be a positive torus knot, and let
  $\{m_k,\alpha_k\}_{k=0}^n$
  be the above sequences extracted from its Alexander polynomial.
  Then, 
  \[
  \Upsilon _K (t)=\max _{\{i\big|0\leq 2i\leq n\}} \{ m_{2i}-t\alpha _{2i}\} .
  \]
\end{theorem}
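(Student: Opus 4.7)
The plan is to proceed from the explicit staircase model of the knot Floer complex for positive torus knots, established in \cite{NoteLens}, and to read off $\Upsilon_K(t)$ directly from the resulting $t$-modified complex.

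\textbf{Step 1.} For a positive torus knot $K = T_{p,q}$, the computation $\HFKa(K,\alpha_k) \cong \Field$ in Maslov grading $m_k$ (and $0$ in all other Alexander gradings), together with the L-space knot property, forces the knot Floer chain complex to be a \emph{staircase}: generators $x_0, \dots, x_n$ with $A(x_k) = \alpha_k$, $M(x_k) = m_k$, $\partial x_{2k} = 0$, and
\[\partial x_{2k+1} = U^{\alpha_{2k}-\alpha_{2k+1}} x_{2k} + V^{\alpha_{2k+1}-\alpha_{2k+2}} x_{2k+2},\]
where the two terms record disks crossing $w$ (respectively $z$) with the indicated multiplicities. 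Substituting $U \mapsto \vv^{2-t}$ and $V \mapsto \vv^t$ as prescribed by the $t$-modified theory, this becomes a complex over $\Field[\vv^{1/n}]$ in which $x_k$ lives in grading $m_k - t\alpha_k$, with
\[\partial x_{2k+1} = \vv^{(2-t)(\alpha_{2k}-\alpha_{2k+1})} x_{2k} + \vv^{t(\alpha_{2k+1}-\alpha_{2k+2})} x_{2k+2}.\]
A short check using the Maslov recurrence verifies that $\partial$ lowers $M - tA$ by one on both summands.

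\textbf{Step 2.} I would show that every homogeneous cycle is a linear combination of elements $\vv^{p} x_{2k}$ with even indices. Writing a homogeneous cycle of grading $g$ as $c = c_{\mathrm{ev}} + c_{\mathrm{od}}$ with $c_{\mathrm{od}} = \sum_k d_k \vv^{q_k} x_{2k+1}$ (so $q_k = m_{2k+1} - t\alpha_{2k+1} - g$ is determined by homogeneity), the identity $m_{2k+1} = m_{2k-1} - 2(\alpha_{2k}-\alpha_{2k+1})$ (a consequence of the recurrence) shows that the two contributions $d_{k-1}\vv^{q_{k-1} + t(\alpha_{2k-1}-\alpha_{2k})}$ and $d_k \vv^{q_k + (2-t)(\alpha_{2k}-\alpha_{2k+1})}$ to the coefficient of $x_{2k}$ in $\partial c_{\mathrm{od}}$ have identical $\vv$-exponents. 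The extremal coefficient, at $x_0$, reduces to $d_0 \vv^{q_0 + (2-t)(\alpha_0 - \alpha_1)}$ and forces $d_0 = 0$; the relation $d_k = d_{k-1}$ (over $\Field$) then propagates inductively, giving $c_{\mathrm{od}} = 0$.

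\textbf{Step 3.} By Step~2, a homogeneous cycle $c = \sum c_k \vv^{p_k} x_{2k}$ of grading $g$ has $p_k = m_{2k} - t\alpha_{2k} - g \ge 0$ for each $k$ with $c_k \ne 0$, so $g \le M^\ast := \max_k(m_{2k} - t\alpha_{2k})$; equality is attained at $c = x_{2k^\ast}$ for any argmax $k^\ast$. To see $[x_{2k^\ast}]$ is non-torsion, invert $\vv$: each relation $\partial x_{2k+1} = 0$ yields $x_{2k+2} = \vv^{(2-t)(\alpha_{2k}-\alpha_{2k+1}) - t(\alpha_{2k+1}-\alpha_{2k+2})} x_{2k}$ in $\vv$-localised homology, so every $x_{2k}$ becomes a nonzero $\vv$-power multiple of $x_0$ and is therefore non-torsion. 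Thus $\Upsilon_K(t) = M^\ast$, as claimed. The main obstacle is Step~2: the cancellation of $\vv$-exponents that enables the inductive elimination of odd-indexed coefficients is a direct manifestation of the Maslov recurrence, but its careful verification is where the essential bookkeeping concentrates.
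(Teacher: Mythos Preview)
Your argument is correct and follows essentially the same route as the paper. The paper deduces the torus knot case from a slightly more general statement about $L$-space knots (Theorem~\ref{thm:LSpaceKnotsUpsilon}), whose proof simply invokes the staircase model complex of Theorem~\ref{thm:LSpaceKnots} and asserts in one line that ``glancing at the differential, it is clear that the non-torsion part is generated by one of the even generators $x_{2k}$'', whence $\Upsilon_K(t)=\max_k \gr_t(x_{2k})$. Your Steps~2 and~3 are a careful unpacking of exactly this glance: you verify explicitly that no homogeneous cycle carries an odd-indexed term (by propagating from the extremal coefficient at $x_0$) and that every even generator survives to a non-torsion class after inverting $v$. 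One small point: in Step~3 you should note that $x_0$ itself is nonzero in the localized homology---this follows because $\HFKt(K)\otimes_{\RRing}\RRing^*$ has rank one (Proposition~\ref{prop:HFmRankOne}), and your computation shows it is spanned by $x_0$.
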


In fact, we will prove a more general analogue of the above theorem
(Theorem~\ref{thm:LSpaceKnotsUpsilon}),
which holds for any knot on which some positive surgery gives an
$L$-space, in the sense of~\cite{NoteLens}.

\begin{example}
  \label{ex:T34}
  Let $K=T_{3,4}$ be the $(3,4)$ torus knot. Since
  $\Delta_K(t)=t^3-t^2+1-t^{-2}+t^{-3}$, the function
 $\Upsilon_{K}(t)$ is given by 
  \[ \Upsilon_K(t)= \left\{\begin{array}{ll}
      -3t & {t\in [0,\frac{2}{3}]} \\
        -2 & {t\in [\frac{2}{3},\frac{4}{3}]} \\
          -6+3 t & {t\in[\frac{4}{3},2]}.
      \end{array}
      \right.\]
\end{example}

Theorems~\ref{thm:AltKnots} and~\ref{thm:TorusKnots} 
are proved in Section~\ref{sec:Calcs}.
For an inductive formula computing $\Upsilon _{T_{p,q}}$ in terms
of the functions $\Upsilon _{T_{n,n+1}}$ see \cite{FellKrat}.

\subsection{Applications of $\Upsilon$ to the concordance group}
\label{subsec:Consequences}

Partially computing $\Upsilon _K$ for an infinite family of torus knots, 
we get

\begin{theorem}
  \label{thm:Independence}
  The function 
  \[ K\mapsto  
  \left(\frac{1}{n}
  \Delta\Upsilon'_K(\frac{2}{n})\right)_{n=2}^{\infty}\] from the
  concordance group $\Concord$ to $\Z
  ^{\infty}=\bigoplus_{n=2}^{\infty} \Z$ is surjective.
\end{theorem}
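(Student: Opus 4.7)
The plan is to realize every standard basis vector $e_n\in\bigoplus_{j\geq 2}\Z$ as the image of an explicit element of $\Concord$ built from positive torus knots and their mirrors, using a triangular induction. Write
\[ \Phi_j(K) := \tfrac{1}{j}\Delta\Upsilon'_K(2/j). \]
By Propositions~\ref{prop:AddUpsilon} and~\ref{prop:DeltaDerivative}, each $\Phi_j$ is a $\Z$-valued homomorphism $\Concord\to\Z$, and by Proposition~\ref{prop:mirror} one has $\Phi_j(m(K))=-\Phi_j(K)$.

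The first step is to compute $\Upsilon_{T_{n,n+1}}$ for the positive $(n,n+1)$-torus knot using Theorem~\ref{thm:TorusKnots}: extract the sequences $(\alpha_k)$ and $(m_k)$ from the (cyclotomically described) Alexander polynomial of $T_{n,n+1}$, and analyze the upper envelope of the lines $m_{2i}-t\alpha_{2i}$. The assertion to establish is that this envelope is piecewise linear on $[0,2]$ with slope discontinuities occurring exactly at $t=2k/n$ for $k=1,\dots,n-1$, each of size $n$; equivalently, the slopes on the successive linear pieces form the arithmetic progression $-\binom{n}{2},\ -\binom{n}{2}+n,\ \dots,\ \binom{n}{2}$. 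This is consistent with the initial slope $-\tau(T_{n,n+1})=-\binom{n}{2}$ predicted by Proposition~\ref{prop:SlopeUpsilon}, and it is verified by direct computation for $n=2$ (jump $2$ at $t=1$), $n=3$ (Example~\ref{ex:T34}), and $n=4$ (for $T_{4,5}$, jumps $4$ at $t=1/2,\,1,\,3/2$). Granting this,
\[
\Phi_j(T_{n,n+1}) \;=\; \begin{cases} n/j & \text{if } j\mid n,\\ 0 & \text{otherwise,}\end{cases}
\]
so $\Phi_n(T_{n,n+1})=1$ and $\Phi_j(T_{n,n+1})=0$ for all $j>n$.

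The triangularity now enables an inductive construction. Set $K_2:=T_{2,3}$ and, for $n\geq 3$, define
\[
[K_n] \;:=\; [T_{n,n+1}] \;-\; \sum_{\substack{j\mid n\\ 2\leq j<n}} \tfrac{n}{j}\,[K_j] \ \in\ \Concord,
\]
with subtraction realized by connected sum with mirrors. By induction, each $K_j$ with $j<n$ satisfies $\Phi_{j'}(K_j)=\delta_{j'j}$, so the correction exactly cancels the contribution of $T_{n,n+1}$ to every divisor slot $j<n$, while slot $n$ (where $\Phi_n(T_{n,n+1})=1$) and slots $j>n$ (where all summands vanish) are untouched. Thus $\Phi_j(K_n)=\delta_{jn}$, which places every $e_n$ in the image of $(\Phi_j)_{j\geq 2}$, yielding surjectivity. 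The principal obstacle is the envelope computation in the first step: verifying that the vertices $(m_{2i},\alpha_{2i})$ lie on the upper envelope precisely at the predicted break-points, each producing a slope jump of exactly $n$. This reduces to a combinatorial statement about the Alexander polynomial of $T_{n,n+1}$ (equivalently, about the numerical semigroup $\langle n,n+1\rangle$), which can be settled by a direct inductive analysis of the $\alpha$- and $m$-sequences, or by appealing to the $L$-space generalization of Theorem~\ref{thm:TorusKnots} announced in the paper.
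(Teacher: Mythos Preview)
Your approach is essentially the same as the paper's: both use the torus knots $T_{n,n+1}$, compute $\Upsilon_{T_{n,n+1}}$ via Theorem~\ref{thm:TorusKnots}, and exploit the resulting triangular structure $\Phi_n(T_{n,n+1})=1$, $\Phi_j(T_{n,n+1})=0$ for $j>n$. The paper records the envelope computation you sketch as Proposition~\ref{prop:Tnnplus1}, proving it by first writing the Alexander polynomial of $T_{n,n+1}$ in closed form (so that $\alpha_{2i}=\tfrac{1}{2}n(n-2i-1)$ after correcting a small typo) and then reading off the slopes; your jump-size claim of $n$ at each $t=2k/n$ is exactly what falls out.

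The only stylistic difference is how the triangularity is converted into surjectivity. You perform an explicit Gaussian elimination, inductively building $[K_n]$ so that $\Phi_j(K_n)=\delta_{jn}$. The paper instead isolates this step as a general algebraic lemma (Lemma~\ref{lem:Splitting}): the matrix $(\Phi_j(T_{n,n+1}))$ is identity plus strictly upper-triangular, hence invertible over $\Z$, so the map restricted to the span of the $T_{n,n+1}$ is an isomorphism onto $\Z^{\infty}$. The two arguments are interchangeable; the paper's packaging has the advantage of being reusable (it is invoked again for Theorem~\ref{thm:IndependenceTopSlice}), while yours makes the preimages of the basis vectors explicit.
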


\begin{remark}
  Implicit in the above theorem is the statement that (a) for any knot
  $K$, the invariant $\Delta \Upsilon'_K(\frac{2}{n})$ is divisible by $n$ (as
  a consequence of Proposition~\ref{prop:DeltaDerivative}), and that
  (b) for a knot $K$ the value
$\Delta\Upsilon'_K(\frac{2}{n})$ is non-zero for
  only finitely many $n$ (which follows from
  Proposition~\ref{prop:FinitelyManySlopes}).
\end{remark}

Theorem~\ref{thm:Independence} then easily implies the (well-known)
existence of a direct summand of $\Concord$ isomorphic to $\Z
^{\infty}$~\cite{Litherland}.

By examining discontinuities of $\Upsilon _K'$, 
Theorem~\ref{thm:AltKnots} and Example~\ref{ex:T34} have the following
immediate corollary (which indeed can be seen by other means, as
well):

\begin{corollary}
In the smooth concordance group $\Concord$ the torus knot $T_{3,4}$
is linearly independent from all alternating knots. \qed
\end{corollary}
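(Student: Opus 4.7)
The plan is to use the fact that $\Upsilon$ is a concordance homomorphism (Corollary~\ref{cor:UpsConcHomom}) together with the explicit computations in Theorem~\ref{thm:AltKnots} and Example~\ref{ex:T34}, and then to detect a discontinuity of $\Upsilon'$ that alternating combinations cannot produce.

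First I would set up a putative linear dependence: suppose $T_{3,4}$ is linearly dependent with alternating knots in $\Concord$, so there exist alternating knots $K_1,\dots, K_\ell$ and integers $a, b_1,\dots,b_\ell$ with $a\neq 0$ such that
\[
a\cdot T_{3,4} + \sum_{i=1}^\ell b_i \cdot K_i = 0 \quad \text{in } \Concord.
\]
Applying the homomorphism $\Upsilon\colon \Concord \to \Cont([0,2])$ of Corollary~\ref{cor:UpsConcHomom}, we obtain the functional identity
\[
a\cdot \Upsilon_{T_{3,4}}(t) + \sum_{i=1}^\ell b_i \cdot \Upsilon_{K_i}(t) = 0 \quad \text{for all } t\in [0,2].
\]

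Next I would use Theorem~\ref{thm:AltKnots} to rewrite each alternating term as $\Upsilon_{K_i}(t) = (1-|t-1|)\sigma(K_i)/2$. In particular, each $\Upsilon_{K_i}$ is piecewise linear with the derivative having (at most) a single discontinuity at $t=1$. Hence any $\Z$-linear combination of the $\Upsilon_{K_i}$ has the property that $\Delta\Upsilon'$ vanishes at every point of $[0,2]$ other than possibly $t=1$. On the other hand, Example~\ref{ex:T34} gives
\[
\Delta \Upsilon'_{T_{3,4}}\!\left(\tfrac{2}{3}\right) = 0 - (-3) = 3 \neq 0.
\]

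The conclusion is then immediate: evaluating $\Delta (\,\cdot\,)'$ at $t = 2/3$ on both sides of the identity yields $3a = 0$, forcing $a=0$, contrary to our assumption. The only step requiring any care is the verification that the operator $\Delta(\,\cdot\,)'$ at a fixed point $t_0\in (0,2)$ is a well-defined linear functional on the image of $\Upsilon$ --- but this follows from Proposition~\ref{prop:FinitelyManySlopes}, which guarantees that each $\Upsilon_K$ is piecewise linear with finitely many slopes, so one-sided derivatives at $t_0$ exist and depend linearly on $K$. No genuine obstacle arises; the argument is essentially a direct combination of the calculations already established.
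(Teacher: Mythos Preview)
Your proof is correct and takes essentially the same approach as the paper: the corollary is stated as an immediate consequence of Theorem~\ref{thm:AltKnots} and Example~\ref{ex:T34} obtained by examining discontinuities of $\Upsilon_K'$, which is exactly what you have spelled out in detail.
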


The Levine-Tristram signature function is a powerful tool for studying the
concordance group; see for instance~\cite{Litherland,Tristram}. However,
$\Upsilon_K(t)$ can also be used to study knots for which such
topological methods yield no information: using $\Upsilon $ we can
prove results 
for the subgroup $\Concord _{TS}\subset \Concord$ given by
\emph{topologically slice} knots, while the Levine-Tristram signature function
vanishes on this subgroup. We illustrate this phenomenon by reproving
a recent result of J. Hom~\cite{JenHomInfinitelyGenerated}, which states
that $\Concord _{TS}$ admits a direct summand isomorphic to
$\Z^{\infty}$.

For a given knot $K$, let $W_0^+(K)$ denote its untwisted positive
Whitehead double; and let $C_{p,q}(K)$ denote its $(p,q)$ cable (for
$p$ and $q$ relatively prime). Consider the family of knots
\begin{equation}\label{eq:TheKnots}
K_n=C_{n,2n-1}( W_0^+(K))\# (-T_{n,2n-1}).
\end{equation}
Observe that $K_n$ are topologically slice: by a theorem of
  Freedman~\cite{Freedman} the knot $W_0^+(K)$ is topologically slice, hence
  the cable $C_{n,2n-1}( W_0^+(K))$ is topologically concordant to the
  same cable of the unknot, consequently $K_n$ is topologically slice.  The
partial computation of $\Upsilon _{K_n}$, and the same map as used in
Theorem~\ref{thm:Independence}, now yields the following:

\begin{theorem}
  \label{thm:IndependenceTopSlice}
  Consider the topologically slice knots $\{K_n\}_{n=2}^{\infty}$
  given in \eqref{eq:TheKnots}. These form a basis for a free direct
  summand of the subgroup $\Concord _{TS}$ of the concordance group
  given by topologically slice knots.  In fact, the map $\Concord\to
  \bigoplus_{n=2}^{\infty} \Z$ defined by
  \[ K\mapsto  \left(\frac{1}{2n-1}
    \Delta\Upsilon'_K(\frac{2}{2n-1})\right)_{n=2}^{\infty}\] maps the
    span of $\{K_n\}_{n=2}^{\infty}$ isomorphically onto $\Z
    ^{\infty}=\bigoplus_{n=2}^{\infty}\Z$.
\end{theorem}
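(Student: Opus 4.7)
The plan is to compute $\Upsilon_{K_n}$ on a neighborhood of $0$ with enough precision to identify a discontinuity of $\Upsilon'_{K_n}$ at $t_n = \frac{2}{2n-1}$, and then to show that the matrix
\[
A_{m,n} = \frac{1}{2m-1}\Delta\Upsilon'_{K_n}(t_m)
\]
is upper triangular with $\pm 1$ along the diagonal. By Proposition~\ref{prop:AddUpsilon} and Proposition~\ref{prop:mirror}, it suffices to study
\[
\Upsilon_{K_n}(t) = \Upsilon_{C_{n,2n-1}(W_0^+(K))}(t) - \Upsilon_{T_{n,2n-1}}(t),
\]
and by Corollary~\ref{cor:UpsConcHomom} the proposed evaluation map is a homomorphism on $\Concord$, so linearity in $n$ is automatic.

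First I would apply Theorem~\ref{thm:TorusKnots} to $T_{n,2n-1}$. Proposition~\ref{prop:SlopeUpsilon} gives the initial slope $-\tau(T_{n,2n-1}) = -(n-1)^2$, and by reading off the top two exponents of the Alexander polynomial of $T_{n,2n-1}$ one identifies the first slope break of $\Upsilon'_{T_{n,2n-1}}$ at $t = t_n$, with jump of size $2n-1$ (consistent with Proposition~\ref{prop:DeltaDerivative}). In particular $t_m$ with $m > n$ lies in the interval where $\Upsilon'_{T_{n,2n-1}}$ is still constant.

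The main step is the corresponding analysis of $\Upsilon_{C_{n,2n-1}(W_0^+(K))}$ on $[0, t_n]$. Here I would establish, as the technical heart of the proof, a cabling property for $\Upsilon$: for a knot $L$ with $\tau(L) > 0$ (which holds for $L = W_0^+(K)$ when $\tau(K) > 0$, as for the right-handed trefoil), the function $\Upsilon_{C_{p,q}(L)}$ agrees with $\Upsilon_{T_{p,q}}$ up to an explicit linear term on a suitable initial subinterval of $[0,2]$ depending only on $p$ and $q$. Applied with $(p,q) = (n, 2n-1)$, this forces $\Upsilon_{K_n}$ to be linear on $[0, t_n)$, with a single slope break at $t_n$ of magnitude $2n-1$ inherited from the torus-knot contribution. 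Establishing this cabling behavior is the main obstacle; it requires new input beyond the formal properties already derived — either a general cabling formula for $\Upsilon$, analogous to the $\epsilon$-invariant cabling formula of~\cite{JenHomInfinitelyGenerated}, or a direct bordered or surgery-based analysis of the filtered knot Floer complex for this family.

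Once the cable computation is in hand, the assembly of the matrix $A_{m,n}$ is immediate. For $m > n$ one has $t_m < t_n$, so by the preceding step $\Delta\Upsilon'_{K_n}(t_m) = 0$; on the diagonal $A_{n,n} = \pm 1$; and the off-diagonal entries for $m < n$ are integers by Proposition~\ref{prop:DeltaDerivative}. Thus $A_{m,n}$ is upper triangular with unit diagonal and hence invertible over $\Z$. Consequently the restriction of the proposed evaluation map to $\mathrm{Span}\{K_n\}$ is an isomorphism onto $\bigoplus_{n=2}^{\infty} \Z$, and as this map is itself a concordance homomorphism it furnishes an explicit retraction, realizing $\mathrm{Span}\{K_n\}$ as a free direct $\Z^\infty$-summand of $\Concord_{TS}$.
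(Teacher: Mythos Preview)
Your overall architecture---compute $\Upsilon_{K_n}$ on an initial interval, obtain a unipotent triangular matrix, apply the splitting lemma---matches the paper exactly. The gap is in the computation itself: you have the source of the break at $t_n=\frac{2}{2n-1}$ backwards.

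Your claim that the first slope break of $\Upsilon_{T_{n,2n-1}}$ occurs at $t_n$ is incorrect. For an $L$-space knot the first break is at $\frac{2}{\alpha_0-\alpha_2}$, and for $T_{n,2n-1}$ one has $\alpha_0-\alpha_2=n$ (e.g.\ for $T_{3,5}$, $\Delta=t^4-t^3+t-1+\cdots$, so $\alpha_0=4$, $\alpha_2=1$). Thus $\Upsilon_{T_{n,2n-1}}$ is linear on $[0,\frac{2}{n}]\supsetneq[0,t_n]$ and contributes nothing at $t_n$; the paper confirms this directly (Lemma~\ref{lem:PartUpsTorusKnot}). Consequently your proposed cabling property---making $\Upsilon_{C_{n,2n-1}(W_0^+(K))}$ agree with $\Upsilon_{T_{n,2n-1}}$ plus a linear term on an initial interval---would force $\Upsilon_{K_n}$ to be linear through $t_n$ with no break at all, the opposite of what you need. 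The break at $t_n$ in fact comes from the \emph{cable of the Whitehead double} itself: the paper shows via an explicit bordered Floer computation (Lemma~\ref{lem:PartUpsCableWDTorusKnot}) that $\Upsilon_{C_{n,2n-1}(W_0^+(T_{2,3}))}$ has slope $-(n^2-n+1)$ on $[0,t_n]$ and slope $-(n^2-3n+2)$ just after, a jump of $2n-1$. This is genuinely delicate---the relevant differential in $\CFKinf$ drops Alexander grading by $2n-1$, is invisible in $\CFKm$, and has to be located via the conjugation symmetry of Proposition~\ref{prop:HFKsymmetry}. A general cabling principle of the shape you sketch does not deliver this; the paper's direct computation is doing the real work.
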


\begin{remark}
The fact that the group $\Concord _{TS}$
 of topologically slice knots contains a
$\Z^{\infty}$ direct summand was first proved by Jen Hom
in~\cite{JenHomInfinitelyGenerated}. Her examples are very similar to
the ones we have given here: only the cabling parameters are
different. (We chose our parameters out of convenience for our
computations.)  Her homomorphisms also use the knot Floer complex, but
they appear to use it differently from ours; see especially
Proposition~\ref{prop:NotHom} below.
\end{remark}

\begin{remark}
  Corollary~\ref{cor:ConcordanceGenusFamily} provides a refinement of
  Theorem~\ref{thm:IndependenceTopSlice}, giving lower bounds
  on the concordance genera of linear combinations of the $K_n$.
  Compare also~\cite{Chen} for a generalization of the above result.
\end{remark}

\subsection{Outline of the paper}
In Section~\ref{sec:HFK}, we review some notation from knot Floer
homology, as well as some of its key results.  In
Section~\ref{sec:DefUpsilon}, we spell out the definition of
$\Upsilon_K(t)$ in more detail, extracted from $t$-modified knot Floer
homology.  Invariance of the $t$-modified theory is seen as a special
case of a formal construction described in Section~\ref{sec:Formal}.
The behaviour of $\Upsilon_K(t)$ as a function of $t$ is studied in
Section~\ref{sec:varying}, where we also verify
Proposition~\ref{prop:FinitelyManySlopes}.  In
Section~\ref{sec:Calcs}, we give some computations, verifying the
computations for alternating and torus knots.  In
Section~\ref{sec:Bordered}, we recall the essentials of bordered Floer
homology, which will be used in the computations from
Section~\ref{sec:LinIndep}, where we prove
Theorem~\ref{thm:IndependenceTopSlice}.  In
Section~\ref{sec:furtherformal}, we compare the homomorphism
$\Upsilon_K(t)$ with those arising from the work of
Hom~\cite{JenHomInfinitelyGenerated}.  Finally, in
Section~\ref{sec:Links}, we give a generalization to the case of links.

\subsection{Further remarks and questions}

Note that $t$-modified knot Floer homology has a special
behaviour when we specialize to $t=1$.  In that case, one can
associate moves to {\em unoriented} saddles. This will be further
pursued in~\cite{Unorient}.

The results from Section~\ref{sec:UpsilonOfT} can be thought of as
giving linear relations between the values of $\Upsilon_K$ at
various values of $t$: $\Upsilon_K(t)=\Upsilon_K(2-t)$ and
$\Upsilon_K(0)=0$. It is natural to wonder if there are any further
linear relations between the various values of
$\Upsilon_K(t)$ for $t\in [0,1]$. We conjecture that there are none.

More explicitly, for each rational number $t$,  consider the 
homomorphism
\[ \phi_{t}\colon \Concord \to \Z \]
defined in terms of the expression $t=\frac{m}{n}$ where $m$ and $n$ are relatively prime integers by
\[ \phi_{\frac{m}{n}}(K)=\left\{\begin{array}{ll}
    \frac{1}{2n} \Delta\Upsilon'_K(\frac{m}{n}) & {\text{if $m$ is odd}} \\ \\
    \frac{1}{n} \Delta\Upsilon'_K(\frac{m}{n}) & {\text{if $m$ is even.}}
  \end{array}\right.\]
\begin{conjecture}
  The map $K\mapsto (\phi_t(K))_ {\{t\in\Q\big| 0<t<1\}}$, where
  $t=\frac{m}{n}$ with  $(m,n)$  relatively prime
  integers, induces a surjection onto $\bigoplus_{\{t\in\Q\big|
    0<t<1\}} \Z$.
\end{conjecture}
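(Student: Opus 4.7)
The plan is to reduce the conjecture to a concrete realizability statement for piecewise linear $\Upsilon$--functions and then to attack that statement using $L$--space knots (and, if necessary, their cables and satellites). First I would verify that each $\phi_t$ really does land in $\Z$: combining Proposition~\ref{prop:DeltaDerivative} (which says $t\cdot \Delta\Upsilon'_K(t)\in 2\Z$) with the parity case analysis for $m$ forces $\Delta\Upsilon'_K(m/n)\in 2n\Z$ when $m$ is odd and in $n\Z$ when $m$ is even. Once this is checked, Corollary~\ref{cor:UpsConcHomom} gives that each $\phi_t\colon \Concord\to\Z$ is a homomorphism, and so the combined map $\Phi\colon K\mapsto (\phi_t(K))_{t\in(0,1)\cap\Q}$ is a homomorphism into $\bigoplus_{t}\Z$ (note that by Proposition~\ref{prop:FinitelyManySlopes} only finitely many coordinates are nonzero for a fixed $K$, so $\Phi$ lands in the direct sum, not the direct product). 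Surjectivity means realizing every finitely supported integer tuple as some $\Phi(K)$.

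The key step is the following realization claim: for each rational $t_0\in(0,1)$ there exists a knot $K(t_0)\in\Concord$ whose singularity set for $\Upsilon'$ contains $t_0$, together with enough control on where the other jumps sit. My first candidate source would be $L$--space knots, since the generalized Theorem~\ref{thm:LSpaceKnotsUpsilon} expresses $\Upsilon$ purely in terms of the Alexander polynomial via the $\{m_k,\alpha_k\}$--sequences from Theorem~\ref{thm:TorusKnots}. The singularities of $\Upsilon$ for an $L$--space knot occur precisely at the rationals $t$ where two of the linear functions $m_{2i}-t\alpha_{2i}$ exchange dominance; this set of $t$'s is completely explicit in terms of the $\alpha_{2i}$'s. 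By choosing torus knots $T_{p,q}$ and then, more generally, cables of $L$--space knots (whose Alexander polynomials can be prescribed in considerable detail), one should be able to arrange for a jump at any given rational in $(0,1)$.

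Given such a realizing family, the second step is a triangularity argument. Enumerate the rationals $t_1,t_2,\dots$ in $(0,1)\cap\Q$ in some order (for instance by increasing height $\max(|m|,|n|)$). I would try to produce, inductively, a knot $L_k\in\Concord$ with $\phi_{t_k}(L_k)=1$ and $\phi_{t_j}(L_k)=0$ for $j<k$, by starting with a realizing knot $K(t_k)$ that has a nonzero $\phi_{t_k}$ and subtracting off $\Z$--linear combinations of the previously constructed $L_1,\dots,L_{k-1}$ (together with their mirrors, which by Proposition~\ref{prop:mirror} flip signs) to kill the unwanted earlier components. Since $\phi_{t_k}(L_k)$ is an integer, dividing to make it $1$ is only a matter of further linear combinations once one also exhibits a knot whose $\phi_{t_k}$--value is a unit; if only a common multiple $d_k$ is achievable directly, then at worst one first surjects onto the sublattice $\bigoplus_k d_k\Z$ and then refines. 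Once such an $\{L_k\}$ is in hand, finite $\Z$--linear combinations surject onto $\bigoplus_{t\in(0,1)\cap\Q}\Z$.

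The main obstacle is the realization step: producing, for every rational $t_0\in(0,1)$, a knot whose $\Upsilon'$ actually jumps at $t_0$. The slopes appearing in $\Upsilon_K$ are all integers (Proposition~\ref{prop:FinitelyManySlopes}), so the jump rationals of $\Upsilon_K$ are intersections of integer--slope lines through integer heights, i.e.\ rationals $p/q$ with $q$ dividing a difference of two Alexander gradings in which $\HFKa$ is nontrivial; it is not immediately obvious that every denominator can be achieved, especially for $L$--space knots, where the Alexander gradings are tightly constrained by the Alexander polynomial. I therefore expect that pure $L$--space knots will not suffice and one will have to combine them with cables, Whitehead doubles, and quasi--alternating satellites, invoking the bordered Floer toolkit of Section~\ref{sec:Bordered} to compute the resulting $\Upsilon$ in enough detail to locate its singularities. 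Controlling the auxiliary jumps well enough to make the triangularity step go through is the essential difficulty, and is presumably why the statement is offered as a conjecture rather than a theorem.
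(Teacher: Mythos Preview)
The statement you are attempting to prove is a \emph{conjecture} in the paper, not a theorem: the paper offers no proof whatsoever, so there is nothing to compare your proposal against. You yourself recognize this in your final paragraph, noting that the realization step ``is presumably why the statement is offered as a conjecture rather than a theorem.''

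Your preliminary observations are sound. The integrality of $\phi_t$ does follow from Proposition~\ref{prop:DeltaDerivative} together with the parity split (when $m$ is odd, $\gcd(m,2n)=1$ forces $\Delta\Upsilon'_K(m/n)\in 2n\Z$; when $m$ is even, $n$ is odd and one gets $\Delta\Upsilon'_K(m/n)\in n\Z$), and Proposition~\ref{prop:FinitelyManySlopes} ensures the map lands in the direct sum rather than the direct product. These are the easy checks the authors presumably had in mind when formulating the conjecture.

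The substantive content of the conjecture is exactly your ``realization claim,'' and here your proposal is not a proof but a research program. You would need, for every rational $t_0\in(0,1)$, a knot with a break of $\Upsilon'$ at $t_0$ and with sufficient control over the remaining breaks to run a triangularity argument; moreover you would need $\phi_{t_0}$ to hit a unit, not merely a nonzero integer. Nothing in the paper establishes this, and the partial evidence offered (Theorem~\ref{thm:Independence}, which handles only the rationals $2/n$) already requires nontrivial computations. Your instinct that $L$-space knots alone are unlikely to suffice and that bordered techniques would be needed is reasonable, but this remains genuinely open.
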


A more challenging variant of the above conjecture is the following:

\begin{conjecture}
  The map $K\mapsto (\phi_t(K))_{\{t\in\Q\big| 0<t<1\}}$, where
  $t=\frac{m}{n}$ with $(m,n)$ relatively prime integers,
  induces a surjection from the subgroup $\Concord _{TS}$ of
  topologically slice knots onto $\bigoplus_{\{t\in\Q\big| 0<t<1\}}
  \Z$.
\end{conjecture}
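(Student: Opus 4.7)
The plan is to attempt a generalization of the construction from Theorem~\ref{thm:IndependenceTopSlice}, producing for each $t_0=m/n \in (0,1)\cap\Q$ (with $\gcd(m,n)=1$) a topologically slice knot $J_{t_0}$ whose $\Upsilon$ function carries a prescribed discontinuity of $\Upsilon'$ at $t_0$, and then to use a triangular/Gaussian elimination argument to realize an arbitrary element of the direct sum $\bigoplus_{t\in\Q\cap(0,1)} \Z$ by a $\Z$-linear combination of the $J_{t_0}$'s. Since direct summands do not come for free here, the real content is surjectivity onto each finitely supported tuple, and for that a triangular structure suffices.

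For the construction, I would generalize \eqref{eq:TheKnots} by allowing more flexible cabling parameters. Given $t_0=m/n$, choose an integer $q$ coprime to $n$ and set
\[
J_{m,n} \;=\; C_{n,q}(W_0^+(K))\;\#\;\bigl(-T_{n,q}\bigr),
\]
where $K$ is an auxiliary companion and the correction $-T_{n,q}$ is included precisely to kill the topological type: since $W_0^+(K)$ is topologically concordant to the unknot (Freedman), the cable $C_{n,q}(W_0^+(K))$ is topologically concordant to $T_{n,q}$, so $J_{m,n}$ lies in $\Concord_{TS}$. The parameter $q$ must be chosen so that the cabling procedure places a discontinuity of $\Upsilon'_{J_{m,n}}$ at exactly $t_0=m/n$; the case $q=2n-1$ of Theorem~\ref{thm:IndependenceTopSlice} places it at $2/(2n-1)$, and more generally cabling formulas rescale the discontinuities of $\Upsilon'$ of the companion in a way determined by the cable parameters, so varying $q$ should give access to all denominators $n$ and numerators $m$.

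The heart of the proof is the computation of $\Upsilon_{C_{n,q}(W_0^+(K))}(t)$. I would proceed as in Section~\ref{sec:LinIndep}, computing the relevant portion of $\CFKm$ by pairing the bordered type~D module of the solid torus knot complement (carrying the $(n,q)$ cable pattern) with the type~A module of $W_0^+(K)$ from Section~\ref{sec:Bordered}, then reading off $\Upsilon$ via the definition of $\HFKt$ from Section~\ref{sec:DefUpsilon}. Subtracting $T_{n,q}$ introduces the explicit torus-knot contribution of Theorem~\ref{thm:TorusKnots}, whose discontinuities of $\Upsilon'$ are known precisely; the expectation is that the cabled contribution and the torus knot contribution cancel everywhere except at a controlled set of rationals, the key one being $t_0=m/n$. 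Once $\phi_{t_0}(J_{m,n})\neq 0$ is established, ordering the rationals by denominator (and then by numerator) and showing that $\phi_t(J_{m,n})=0$ for $t$ with strictly smaller denominator would make the resulting matrix lower triangular with nonzero diagonal entries, which is enough to invert over $\Z$.

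The main obstacle is the precise determination of all discontinuities of $\Upsilon'_{J_{m,n}}$ for general $(n,q)$. In the case $q=2n-1$, the relevant bordered computation was manageable and a single nontrivial coordinate sufficed, so no triangular control was needed. For arbitrary $q$, the type~A module of $W_0^+(K)$ paired against a more complicated cabling pattern produces many more generators of $\CFKm$, and extracting the exact location of the jumps of $\Upsilon'$ (as opposed to just bounding $\Upsilon$) is delicate. A related difficulty is that cabling can introduce spurious discontinuities at rationals $t_1$ with the same or smaller denominator than $t_0$, potentially destroying the lower triangular structure; avoiding this may require either a clever choice of $q$ for each $m/n$, or replacing the simple triangular argument with a more refined linear algebra argument over $\bigoplus_t \Z$. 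Finally, one must verify that all the constructed knots genuinely lie in $\Concord_{TS}$ rather than only in $\Concord$, which is automatic from the Freedman input once the cable-minus-torus-knot pattern is respected.
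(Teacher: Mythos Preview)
The statement you are attempting to prove is a \emph{conjecture} in the paper, not a theorem; the authors explicitly state it as an open problem and provide no proof. So there is no paper proof to compare your proposal against.

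Your proposal is a reasonable research outline, but it is not a proof, and you yourself flag the essential gaps. Let me sharpen them. First, the relationship between cabling parameters $(n,q)$ and the location of the first discontinuity of $\Upsilon'$ is not the simple ``varying $q$ gives access to all numerators $m$'' that you suggest. In the paper's computation for $C_{n,2n-1}(W_0^+(T_{2,3}))$, the discontinuity lands at $\tfrac{2}{2n-1}$, a number determined by the Alexander-grading gaps in the paired complex (Lemma~\ref{lem:PartUpsCableWDTorusKnot}), not directly by $n$ or $q$ in any transparent way. There is no a priori reason that the family $\{C_{n,q}\}$ with fixed $n$ and varying $q$ sweeps out all numerators $m$ coprime to $n$; establishing this would require a full bordered computation for each $(n,q)$, which is exactly the ``main obstacle'' you name.

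Second, even granting a knot $J_{m,n}$ with $\phi_{m/n}(J_{m,n})\neq 0$, your triangular strategy requires $\phi_t(J_{m,n})=0$ for all $t$ of strictly smaller denominator. But connected-summing with $-T_{n,q}$ only cancels the torus-knot discontinuities; the cable of the Whitehead double can (and in the paper's examples does) have its own discontinuities of $\Upsilon'$ at several rationals, and there is no mechanism in your construction to force these to avoid small-denominator points. Without that control, the matrix $\bigl(\phi_t(J_{m,n})\bigr)$ need not be triangular, and surjectivity onto $\bigoplus_t \Z$ does not follow.

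In short: your plan identifies the right ingredients (topologically slice satellites, bordered computations, triangularity), but the two steps you label as obstacles are precisely the content of the conjecture, and neither is resolved by what you have written.
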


It is natural to wonder what the image of the above map is, when
further restricted to knots with $\epsilon=0$, in the sense
of~\cite{JenHomInfinitelyGenerated}; in particular, for those knots
which are in the kernel of Hom's
homomorphisms~\cite{JenHomInfinitelyGenerated}. (For a brief
discussion about $\epsilon$, see Section~\ref{sec:furtherformal}.)


The limitations of $\Upsilon_K(t)$ become apparent when we consider
alternating knots: Theorem~\ref{thm:AltKnots} can be interpreted as
saying that the span of all alternating knots has a one-dimensional
image under $\Upsilon_K$. By contrast, alternating torus knots
$T_{2,2n+1}$ are linearly independent in $\Concord$; more generally, a
theorem of Litherland~\cite{Litherland} states that all torus knots
are linearly indepedent in the concordance group. These limitations
notwithstanding, it seems likely that one can get more information by
pushing the present techniques further. For instance, in the spirit
of~\cite{CassonGordon}, one can consider branched covers to construct
further invariants. The simplest of these branched covers is the
double branched cover $\Sigma(K)$ of a knot $K\subset S^3$, which is a
rational homology sphere. The branch locus forms a null-homologous
knot in $\Sigma(K)$. It would be natural to consider an analogue of
$\Upsilon$ in that double branched cover to try to get further
concordance information.

According to a recent result of Jen Hom~\cite{HomEpsilonUpsilon},
there are knots for which $\Upsilon\equiv 0$, 
but her invariant $\epsilon$ is non-zero.

\bigskip

{\bf Acknowledgements:} The first author wishes to thank Matt Hedden,
Robert Lipshitz, and Dylan Thurston for many memorable hours,
computing knot Floer homology groups of various satellite knots.  We
would like to thank Tim Cochran and Dan Dore for their feedback, and
Linh Truong for detailed corrections to an early draft of this paper.
We also wish to thank Jen Hom and Chuck Livingston for very
interesting conversations and suggestions.  We did not take the parity
of the numerator of $t\in \Q$ into account properly in an earlier
draft of this paper; we are particularly indebted to Chuck for having
pointed out the mistake, which led to a correction in the statement of
the above conjectures. We also would like to thank the referees for
  helpful comments and suggestions.

\newcommand\CFp{{\rm {CF}}^+}
\section{Notions from knot Floer homology}
\label{sec:HFK}

The knot Floer complex from \cite{OSKnots} (which will be briefly
recalled below) fits into the following formal framework:

\begin{definition}
  \label{def:AlexMas}
  An \defin{Alexander filtered, Maslov graded chain complex} $C$
  is a
  chain complex over $\Field[U]$ with the following additional structure:
  \begin{itemize}
  \item The chain complex is a finitely generated free module over $\Field[U]$.
  \item The complex is generated over $\Field$ by a generating set equipped
    with
    two integer-valued functions, called the \defin{Maslov} and
    the \defin{Alexander functions}.
  \item Multiplication by $U$ drops the Maslov function by two.
  \item Multiplication by $U$ drops the Alexander function by
    one.
    \end{itemize}
    The Maslov and Alexander functions induce a grading and a filtration, the 
\defin{Maslov grading} and 
    the \defin{Alexander filtration} respectively.
    We require the following further properties:
    \begin{itemize}
    \item The differential drops Maslov grading by one.
    \item The Alexander function induces a filtration and the
      differential respects this Alexander filtration (i.e. elements
      with Alexander filtration $\leq t$ are mapped to elements with
      Alexander filtration $\leq t$ for all $t$).
  \end{itemize}
\end{definition} 

A chain complex with the above properties can be associated to a
doubly pointed Heegaard diagram representing a knot $K\subset
S^3$~\cite{OSKnots, RasmussenThesis}.  Let ${\mathcal
  {H}}=(\Sigma,\alphas,\betas,w,z)$ be such a Heegaard diagram for
$K$.  The chain complex $\fCFKm ({\mathcal {H}})$ is generated over
$\Field [U]$ by the same generating set $\Gen=\Ta\cap\Tb\subset {\rm
  {Sym}}^g (\Sigma )$ as the Heegaard Floer chain complex $\CFm$ of
the ambient 3-manifold $S^3$.  The generators of the complex come
equipped with two integer-valued functions, the Maslov function
and the Alexander function.  Up to an additive constant, these 
functions are characterized
as follows (the additive indeterminacy will be removed later).  If
$\x$ and $\y$ are two generators, there is a space of homotopy classes
of maps which connect them, written $\pi_2(\x,\y)$. These homotopy
classes are equipped with two additive functions (i.e. additive under
juxtaposition): the {\em Maslov index}, written $\mu(\phi)$, and, for
a point $p\in \Sigma -\alphas-\betas$ in the Heegaard surface, the
{\em multiplicity at $p$}, written $n_p(\phi)$, which measures the
algebraic intersection number of $\phi$ with the divisor $\{ p\}
\times \Sym^{g-1}(\Sigma)$.

The Maslov and Alexander functions are characterized up to overall
additive shifts
by the equations:
  \begin{align}
    M(\x)-M(\y)&=\Mas(\phi)-2n_w(\phi), \label{eq:MaslovFormula}\\
    A(\x)-A(\y)&=n_z(\phi)-n_w(\phi), \label{eq:AlexanderFormula} 
  \end{align}
for any $\phi\in\pi_2(\x,\y)$.

With the generating set and grading defined as above, the differential
$\partial ^-$ counts pseudo-holomorphic representatives of some
$\phi\in\pi_2(\x,\y)$ with Maslov index one, and it records the
multiplicity at $w$ in the exponent of a formal variable $U$.
Explicitly, the differential on $\fCFKm({\mathcal {H}})$ is defined by
\begin{equation}\label{eq:boundary}
\partial^-\x = \sum_{\{\y\in\Gen\}}
\sum_{\{\phi\in\pi_2(\x,\y)\big|\Mas(\phi)=1\}} \#\Big(
\frac{\ModFlow(\phi)}{\mathbb R} \Big) U^{n_w(\phi)} \y.
\end{equation}
Here $\ModFlow (\phi )$ is the space of holomorphic representatives of
$\phi \in \pi _2(\x , \y )$, and once its dimension (which is equal to
$\mu (\phi )$) is one, the symbol $\# \big(
\frac{\ModFlow(\phi)}{\mathbb R}\big)$ denotes the $\mod{2}$ count of
the elements in the factor space $\frac{\ModFlow(\phi)}{\mathbb R}$.
The Alexander function gives a filtration, ultimately resulting in the
Alexander filtered, Maslov graded chain complex $\fCFKm ({\mathcal
  {H}})$. (Note that $\fCFKm({\mathcal H})$ is a filtered complex. It
is the object which was denoted $CFK^{-,*}(S^3,K)$
in~\cite{OSKnots}. The homology of the associated graded object
of $\fCFKm ({\mathcal {H}})$ gives the knot Floer homology
$\HFKm(K)$.)

The total homology of $\fCFKm({\mathcal {H}})$ can be shown to
be isomorphic to $\Field[U]$,~cf.~\cite{HolDisk}; see
Proposition~\ref{prop:HFmRankOne}
below. Equation~\eqref{eq:MaslovFormula} determines $M$ only up
  to an overall additive constant. That indeterminacy is removed by
  requiring that the the generator of $\fCFKm({\mathcal {H}})$ has
  Maslov grading equal to $0$.  (Note that this convention differs
from the grading convention from~\cite{HolDisk}, where this generator
had grading $-2$; hopefully, no confusion will arise.)

Likewise, the Alexander function is specified by Equation~\eqref{eq:AlexanderFormula} up to an overall additive constant. We remove that indeterminacy as follows. First specialize the chain complex
$\fCFKm ({\mathcal {H}})$ to $U=0$, and take the homology of the
associated (Alexander) graded object, to get the knot Floer homology
group $\HFKa(K)$. The Maslov and Alexander gradings now descend to a
bigrading on $\HFKa(K)=\bigoplus_{d,s\in\Z}\HFKa_d(K,s)$, which is a
finite dimensional vector space over $\Field$. (Here, $d$ is induced
from the Maslov grading and $s$ from the Alexander grading.)
Equivalently, we consider the $\Field$-vector space $\CFKa ({\mathcal
  {H}})$ generated over $\Field$ by $\Gen=\Ta\cap\Tb$, endowed with
the differential $\widehat{\partial}$ counting only those holomorphic
disks which satisfy that $n_z=n_w=0$ (dropping the formal variable $U$
from the formula of Equation~\eqref{eq:boundary}).  The normalization
of $A$ is chosen so that the graded Euler characteristic $\chi =\sum
_{d,s}(-1)^d\dim _{\Field} \HFKa _d (K, s)\cdot t^s$ is a symmetric
polynomial in $t$; in fact, it is the symmetrized Alexander polynomial
of $K$.

We can tensor a Maslov graded, Alexander filtered chain complex $C$
(as in Definition~\ref{def:AlexMas}) with $\Field[U,U^{-1}]$ to obtain
a complex $C^{\infty}$ with a second $\Z$-filtration, given by the
powers of $U$.  More precisely, we say that 
for a generator $\x$ of $C$ over $\Field [U]$
an element $U^i\cdot \x$
has {\em algebraic filtration level} $-i$.  There is no loss of
information in doing this: $C$ can be recovered from $C^{\infty}$ by
taking the $\Field [U]$-subcomplex of $C^{\infty}$ consisting of elements of
algebraic filtration level $\leq 0$.

In the case of knot Floer homology, by applying the above procedure
to $\fCFKm ({\mathcal {H}})$ we get the chain complex
$\fCFKinf({\mathcal {H}})$. 
\begin{theorem}(\cite{OSKnots})
The $\Z \oplus \Z$-filtered 
chain homotopy type of the resulting
Alexander and algebraically filtered, Maslov graded complex $\fCFKinf
({\mathcal {H}})$ is a knot invariant. (In particular, the $\Z\oplus\Z$ filtered chain homotopy type is independent of the orientation.)
\qed
\end{theorem}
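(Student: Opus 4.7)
The plan is to reduce the statement to the invariance of $\HFa$ and $\HFm$ from \cite{OSKnots}, observing that when the constructions there are carried out with \emph{both} basepoints kept fixed, the maps automatically respect the bifiltration coming from $(-n_w, n_z-n_w)$. First I would invoke the standard fact that any two doubly pointed Heegaard diagrams $(\Sigma,\alphas,\betas,w,z)$ representing the same knot $K\subset S^3$ are connected by a finite sequence of moves in the complement of $\{w,z\}$: isotopies of the $\alpha$- and $\beta$-curves, handleslides among the $\alpha_i$'s (resp.\ $\beta_i$'s), and index one/two stabilizations supported in a region disjoint from $\{w,z\}$.

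For each such move I would construct a bifiltered chain map between the associated $\CFKinf$ complexes, using the same moduli-space counts as in the invariance proofs for $\HFm(S^3)$, but extracting two numerical weights from each flowline instead of one. For curve and complex-structure isotopies, one uses the continuation maps defined by index-zero disk counts weighted by $U^{n_w(\phi)}$ and shifted in Alexander filtration by $n_z(\phi)-n_w(\phi)$; by positivity of intersection, both multiplicities are nonnegative on holomorphic disks, so these maps preserve the algebraic filtration and the Alexander filtration. For handleslides, one forms a Heegaard triple $(\Sigma,\alphas,\alphas',\betas,w,z)$ where $\alphas'$ is the result of the handleslide and pairs with a distinguished generator $\Theta\in\HFa(\TaPr,\Ta)$; weighting the triangle count by $U^{n_w(\psi)}$ and tracking $n_z(\psi)-n_w(\psi)$ in the Alexander filtration again gives a bifiltered map. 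For stabilizations, a local model argument shows that holomorphic disks split off a canonical factor in a neighborhood of the stabilization region, yielding a tautological bifiltered identification.

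To upgrade these chain maps to a bifiltered chain homotopy equivalence, one verifies that the composition of the map for a move with the map for its inverse is bifiltered chain homotopic to the identity. The required chain homotopies arise from one-parameter families of moduli spaces (generic paths in the space of almost complex structures, or quadruple triangle counts for handleslide-inverse-handleslide), and they respect both filtrations for exactly the same positivity-of-intersection reasons as above. Combining these equivalences along a chain of moves yields a bifiltered chain homotopy equivalence between $\CFKinf(\mathcal H)$ and $\CFKinf(\mathcal H')$.

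The main obstacle, or really the only point at which bifiltered invariance goes beyond the invariance of $\HFm(S^3)$, is to ensure that at each handleslide step the distinguished generator $\Theta$ lies in the correct $(M,A)$-bigrading and is canonical up to bifiltered chain homotopy; equivalently, one must check that the intermediate Heegaard multi-diagrams continue to represent $K$ in such a way that the triangle-count weights $U^{n_w}$ and $n_z-n_w$ match across the pair of pants. Once this compatibility is established, the argument is formally parallel to that of \cite{OSKnots}, and one obtains the desired invariance of the $\Z\oplus\Z$-filtered, Maslov-graded chain homotopy type of $\CFKinf(\mathcal H)$.
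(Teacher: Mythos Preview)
The paper gives no proof of this theorem: the statement is followed immediately by \qed and a citation to \cite{OSKnots}, so the result is simply imported from that reference. Your sketch is a faithful outline of the argument actually carried out in \cite{OSKnots}: connect any two doubly pointed diagrams by isotopies, handleslides, and stabilizations supported away from $\{w,z\}$, and observe that the continuation, triangle, and stabilization maps used to prove invariance of $\HFm(S^3)$ automatically respect both filtrations because $n_w(\phi)\ge 0$ and $n_z(\phi)\ge 0$ on holomorphic curves. The point you flag about the top generator $\Theta$ is handled in \cite{OSKnots} by arranging the handleslide triple so that the small triangles at $\Theta$ miss both basepoints; once that is in place the rest is exactly as you describe.
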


Using the Alexander filtration on $\fCFKm ({\mathcal {H}})$ we
can consider the homology of the associated graded object. Explicitly,
we endow the same $\Field[U]$-module freely generated by the set $\Gen$ equipped with
a differential $\partial ^-_K$ that counts only those holomorphic
disks which satisfy $n_z=0$. The homology of this
associated graded object is called the
\emph{knot Floer homology group} $\HFKm (K)$, which is a bigraded
$\Field [U]$-module.

\subsection{Results from knot Floer homology}
\label{subsec:Results}
We will be using several theorems about knot Floer homology in the
present paper.  The invariance of knot Floer homology in this setting
means that for two Heegaard diagrams ${\mathcal {H}}_1$ and ${\mathcal
  {H}}_2$ representing the same knot $K$ the corresponding Maslov
graded, Alexander filtered chain complexes $\fCFKm ({\mathcal {H}}_1)$
and $\fCFKm ({\mathcal {H}}_2)$ are Maslov graded, Alexander filtered
chain homotopy equivalent. From now on, this filtered chain
homotopy type will be denoted by $\fCFKm (K)$; similarly, we write
$\fCFKinf(K)$ for the filtered chain homotopy type of the corresponding complex over $\Field[U,U^{-1}]$.
We have the following K{\"u}nneth principle for connected sums:

\begin{theorem}
  \label{thm:Kunneth} (\cite[Theorem~{7.1}]{OSKnots})
  Suppose that $K_1$ and $K_2$ are two knots in $S^3$ and $K_1\# K_2$ is
  their connected sum. Then, there is a Maslov graded, Alexander
  filtered chain homotopy equivalence
  \[ \fCFKm(K_1\#K_2)\sim \fCFKm(K_1)\otimes_{\Field[U]} \fCFKm(K_2).\]
\qed
\end{theorem}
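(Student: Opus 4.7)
The plan is to build a doubly pointed Heegaard diagram $\mathcal{H}$ for $K_1 \# K_2$ from doubly pointed diagrams $\mathcal{H}_1, \mathcal{H}_2$ for $K_1, K_2$ via a connect sum construction. Start with Heegaard surfaces $\Sigma_i$ carrying the usual attaching systems and basepoints $w_i, z_i$; remove small open disks around points adjacent to $z_1$ and to $w_2$, and glue in a connect-sum tube, producing a surface $\Sigma = \Sigma_1 \# \Sigma_2$. The attaching curves of $\mathcal{H}$ are the union of the $\alpha$-curves (respectively $\beta$-curves) from the two summands, and its basepoints are $w = w_1$ and $z = z_2$. A Morse-theoretic check confirms that $\mathcal{H}$ represents $K_1 \# K_2$. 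Since the $\alpha$- and $\beta$-curves inherited from the two summands lie in disjoint regions of $\Sigma$, the generator set $\Ta \cap \Tb$ splits canonically as $\Gen(\mathcal{H}_1) \times \Gen(\mathcal{H}_2)$, so one can define
\[ \Phi \colon \CFKm(\mathcal{H}_1) \otimes_{\Field[U]} \CFKm(\mathcal{H}_2) \longrightarrow \CFKm(\mathcal{H}) \]
on generators by $\x_1 \otimes \x_2 \mapsto (\x_1, \x_2)$. Because $w_1$ and $z_2$ each lie in only one of the summand surfaces, multiplicity and Maslov-index additivity combined with \eqref{eq:MaslovFormula} and \eqref{eq:AlexanderFormula} imply, after normalizing the additive constants, that $\Phi$ preserves Maslov grading and Alexander filtration level.

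The main step is to show that $\Phi$ intertwines differentials, equating $\partial^-$ on $\CFKm(\mathcal{H})$ with the Leibniz differential $\partial^-_1 \otimes \operatorname{id} + \operatorname{id} \otimes \partial^-_2$. The approach is to stretch the neck along the connect-sum tube in a one-parameter family of almost complex structures. Gromov compactness in this degenerating family guarantees that a rigid holomorphic representative of a class $\phi \in \pi_2((\x_1, \x_2), (\y_1, \y_2))$ in $\mathcal{H}$ with $\mu(\phi) = 1$ converges, in the infinite-neck limit, to a matched pair $(\phi_1, \phi_2)$ of holomorphic disks with $\phi_i \in \pi_2(\x_i, \y_i)$ in $\mathcal{H}_i$, whose multiplicities at the removed disks must agree. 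Index additivity $\mu(\phi) = \mu(\phi_1) + \mu(\phi_2) + (\text{tube contribution})$ combined with the non-negativity of Maslov indices on classes admitting holomorphic representatives (Lemma~\ref{HolDisk:lemma:NonNegativity}) forces exactly one of the $\phi_i$ to be constant; the matching condition then forces the common matching multiplicity to vanish, so the non-constant disk is a rigid index-one disk in $\mathcal{H}_i$ supported away from the connect-sum region, i.e., precisely a disk counted by $\partial^-_i$. A standard gluing argument produces, conversely, a unique rigid disk in $\mathcal{H}$ from any such matched pair once the neck is sufficiently long. Because the $U$-weight $U^{n_{w_1}(\phi)}$ and the Alexander shift $n_{z_2}(\phi) - n_{w_1}(\phi)$ each depend on only one of the summands, the resulting chain isomorphism is one of Alexander filtered, Maslov graded $\Field[U]$-complexes.

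The principal obstacle is the analytical core of the neck-stretching argument: verifying the precise degeneration of index-one disks, ruling out broken or bubbled configurations of positive matching multiplicity at the tube, and establishing the gluing theorem that inverts the degeneration. These are all standard ingredients in Heegaard Floer theory and directly parallel the connect sum formula for the Heegaard Floer homology of closed three-manifolds proved in \cite{HolDisk}; the only new bookkeeping is that one must track the multiplicity at $z_2$ alongside the $U$-weight at $w_1$ in order to identify the Alexander filtration on the tensor product with the filtration on $\CFKm(\mathcal{H})$, which causes no real difficulty because these two basepoints lie in disjoint summands of $\Sigma$.
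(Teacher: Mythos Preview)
The paper does not prove this statement; it is quoted from \cite[Theorem~7.1]{OSKnots} and marked with a \qed. Your outline follows the geometric setup of that reference, but the degeneration step has a genuine gap.

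The problem is the claim that the matching multiplicity at the neck is forced to vanish. For a class $\phi$ in $\mathcal H$ restricting to classes $\phi_i$ in the capped--off summands with common neck multiplicity $m=n_{z_1}(\phi_1)=n_{w_2}(\phi_2)$, the index relation is
\[
\mu(\phi)=\mu(\phi_1)+\mu(\phi_2)-2m,
\]
so $\mu(\phi)=1$ only gives $\mu(\phi_1)+\mu(\phi_2)=1+2m$; for $m>0$ non--negativity no longer forces either side to be constant. Concretely, an index--one disk $\phi_1$ in $\mathcal H_1$ with $n_{z_1}(\phi_1)=m$ pairs in the limit with the periodic class $m[\Sigma_2]$ (constant on generators, Maslov index $2m$) on the other side, and these broken configurations do occur. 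If you keep only the $m=0$ limits, the $\Sigma_1$--part of the resulting differential sees only disks with $n_{z_1}=0$ and the $\Sigma_2$--part only disks with $n_{w_2}=0$; this is \emph{not} $\partial^-_1\otimes 1+1\otimes\partial^-_2$. Already for $K_1=K_2=T_{2,3}$ the truncated complex has total homology $\Field[U]\oplus\Field$ rather than $\Field[U]$, so it cannot be $\CFKm(K_1\#K_2)$. The actual argument must either include the $m>0$ degenerations (the count of the $m[\Sigma_i]$ piece with its constraint at the neck point is $1$, restoring precisely the missing terms of $\partial^-_i$ and, on the $\Sigma_2$ side, the missing $U$--powers via $n_{w_1}(m[\Sigma_1])=m$), or else replace the direct neck--stretch by a filtered chain map defined through holomorphic triangles and shown to induce an isomorphism on the associated graded objects. (Incidentally, the lemma you cite concerns non--negativity of local multiplicities $n_p(\phi)$, not of Maslov indices.)
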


Another key result is the following:

\begin{prop}
  \label{prop:HFmRankOne}
  The total homology of $\fCFKm(K)$ 
  (i.e. taking the homology after forgetting the filtration)
  is isomorphic to $\Field[U]$, and the
  Maslov grading of the generator is $0$.
\end{prop}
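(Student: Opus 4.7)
The plan is to identify the total complex (forgetting the Alexander filtration) with a familiar chain complex whose homology has already been computed in the foundational papers of Heegaard Floer homology.

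First I would observe that if one starts from the doubly pointed Heegaard diagram $\mathcal{H} = (\Sigma, \alphas, \betas, w, z)$ and forgets the $z$ basepoint entirely, then the resulting singly pointed diagram $(\Sigma, \alphas, \betas, w)$ is a Heegaard diagram for $S^3$. The associated Heegaard Floer chain complex $\CFm(S^3)$ is generated by the same set $\Gen = \Ta \cap \Tb$ over $\Field[U]$, and its differential counts pseudo-holomorphic disks in $\pi_2(\x,\y)$ with $\Mas(\phi)=1$, weighted by $U^{n_w(\phi)}$. Inspecting the formula~\eqref{eq:boundary}, this is literally the same differential as $\partial^-$ on $\CFKm(\mathcal{H})$ when one discards the $n_z$-information that only encodes the Alexander filtration. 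Thus, as a Maslov graded chain complex over $\Field[U]$, $\CFKm(\mathcal{H})$ coincides with $\CFm(S^3)$ computed from $(\Sigma, \alphas, \betas, w)$.

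Second, I would invoke the fundamental calculation $\HFm(S^3) \cong \Field[U]$ from~\cite{HolDisk}, which gives that the total homology of $\CFm(S^3)$ is a rank-one free $\Field[U]$-module. Combined with Step 1, this yields $H_*(\CFKm(\mathcal{H})) \cong \Field[U]$, proving the first assertion.

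For the Maslov grading of the generator, this is simply a matter of normalization: the additive indeterminacy of the Maslov function on $\CFKm(\mathcal{H})$ is fixed by the convention stated immediately before the proposition, namely by declaring the generator of $H_*(\CFKm(\mathcal{H}))$ to lie in Maslov grading $0$. (This differs by a shift from the convention in~\cite{HolDisk}, where the generator sits in grading $-2$, so once one translates between conventions there is nothing further to check.)

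The only genuine content here is the calculation $\HFm(S^3) \cong \Field[U]$, which is not something one reproves in this paper; the rest is unpacking definitions and observing that forgetting $z$ turns the knot Floer complex into the three-manifold Floer complex of $S^3$. There is no real obstacle; the main point is to be careful that the differential $\partial^-$ of $\CFKm$ as used here is the \emph{total} differential (counting all disks, weighted by $U^{n_w}$, with the Alexander filtration coming for free from positivity of $n_z$), rather than the associated-graded differential $\partial^-_K$ that defines $\HFKm(K)$.
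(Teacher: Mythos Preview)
Your proposal is correct and follows exactly the approach of the paper: the paper's proof is the single sentence ``This complex computes the Heegaard Floer homology of $S^3$, cf.~\cite{HolDisk},'' and you have spelled out precisely why that is so. Your additional remarks about the grading normalization and the distinction between the total differential $\partial^-$ and the associated-graded differential $\partial^-_K$ are useful clarifications but add nothing beyond what the paper intends.
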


\begin{proof}
  This complex computes the Heegaard Floer homology of $S^3$,
  cf.~\cite{HolDisk}. The Maslov grading has been normalized so that
the homology has its generator in Maslov grading $0$.
\end{proof}

The above statement has the following restatement in terms of the
structure of $\HFKm(K)$ (thought of as a module over $\Field[U]$).
Consider the submodule of torsion elements 
\begin{equation}\label{eq:torsion}
\Tors=\{ x\in \HFKm (K)\mid p\cdot x=0\ \text{for some polynomial } p\in
\Field [U]-\{ 0\}\};
\end{equation}
and consider the quotient $\HFKm (K)/\Tors$, which is a free $\Field
[U]$-module. Then, the free quotient is isomorphic to $\Field[U]$ or,
more succinctly, the module $\HFKm(K)$ has rank one over $\Field[U]$.

It can be shown that any non-torsion element
in the $\Field [U]$-module $\HFKm (K)$ has even Maslov grading.

The symmetry in $\Upsilon_K$ (given in Proposition~\ref{prop:SymmUps})
is a consequence of
 the following symmetry in knot Floer homology:

\begin{prop}
  \label{prop:HFKsymmetry} (\cite[Proposition~3.9]{OSKnots})
  There is a symmetry of $\fCFKinf(K)$ which switches the role
  of the algebraic and Alexander filtrations; i.e. if
    $\fCFKinf(K)'$ denotes the $\Z\oplus\Z$-filtered chain complex
    obtained from $\fCFKinf$ by exchanging the two factors in the
    filtration, then there if a $\Z\oplus\Z$-filtered chain homotopy
    equivalence betwee $\fCFKinf(K)'$ and $\fCFKinf(K)$.
\end{prop}

For a Maslov graded, Alexander filtered chain complex $(C,\partial)$ over $\Field
[U]$  we define the \emph{dual
  complex} $(C^*, d)$ as follows.  As a module,
$C^*=\Mor_{\Field [U]}(C, \Field [U])$;
that is, $C^*$ is the set of $\Field [U]$-module homomorphisms from 
$C$ to the ground ring $\Field [U]$. It is naturally an $\Field [U]$-module,
by considering the action of $p\in \Field [U]$ on $\phi \in C^*$, defined by
$(p\cdot \phi )(x)=\phi (p\cdot x)$, for all $x\in C$.
The differential $d$ is uniquely characterized by the property that 
$(d \phi ) (x)= \phi (\partial x)$ for all $x\in C$.

The grading on $C^*$ is defined as follows. First, equip $\Field [U]$
with the Maslov grading and Alexander filtration so that $M(U^d)=-2d$
and $A(U^d)=-d$. Now a morphism $\phi \in C^*$ is said to be
{\em homogeneous} if there are integers $m$ and $a$ so that
  $\phi$ takes any element of $C$ with degree $n$ and
  filtration level $b$ to an element of $\Field [U]$ with grading
  $m+n$ and filtration level $a+b$.  
  This induces a grading and a filtration on $C^*$, 
  where the homogeneous element $\phi\in C^*$
  has grading $m$ and 
  filtration level  $a$.  

The following is essentially a restatement of~\cite[Proposition~3.7]{OSKnots}:
\begin{prop}
  \label{prop:Mirror}
  Let $K$ be a knot, and $m(K)$ denote its mirror.
  Then, there is an identification $\fCFKm (m(K))\cong (\fCFKm (K))^*$
  of Alexander filtered, Maslov graded chain complexes over $\Field [U]$.
\end{prop}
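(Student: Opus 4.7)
The plan is to prove this by explicitly comparing Heegaard diagrams for $K$ and $m(K)$, and showing that the chain complex associated to the mirrored diagram is tautologically the dual of the original. Concretely, if $\mathcal{H} = (\Sigma, \alphas, \betas, w, z)$ is a doubly pointed Heegaard diagram representing $K$, then $\bar{\mathcal{H}} = (-\Sigma, \alphas, \betas, w, z)$ (the same data with the opposite orientation on $\Sigma$) is a Heegaard diagram for $m(K)$. The two diagrams have the same generating set $\Gen = \Ta \cap \Tb$, which immediately gives an identification of underlying free $\Field[U]$-modules between $\CFKm(m(K))$ and the $\Field[U]$-dual of $\CFKm(K)$, by sending each generator $\x$ of $\CFKm(m(K))$ to the dual generator $\x^*$ of $(\CFKm(K))^*$.

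The key matching is at the level of homotopy classes of Whitney disks: reversing the orientation of $\Sigma$ produces a bijection $\pi_2^{\mathcal{H}}(\x,\y) \longleftrightarrow \pi_2^{\bar{\mathcal{H}}}(\y,\x)$ which preserves the multiplicities $n_w$ and $n_z$, and which negates the Maslov index $\mu$. Moreover, a holomorphic representative in one diagram corresponds to a holomorphic representative for the opposite complex structure in the other, so (after choosing suitable almost-complex structures) the $\bmod 2$ counts $\#(\ModFlow(\phi)/\mathbb{R})$ agree on corresponding moduli spaces. Plugging this into the differential formula \eqref{eq:boundary} shows that the $\x^*$-coefficient of $\partial^- \y$ in $\CFKm(m(K))$ equals the $\y$-coefficient of $\partial^- \x$ in $\CFKm(K)$ (each weighted by the same power of $U$), which is precisely the defining relation for $d_{C^*}$.

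Next I would verify that the gradings and filtrations match under the identification $\x \mapsto \x^*$. From \eqref{eq:MaslovFormula} and \eqref{eq:AlexanderFormula}, together with $\mu(\phi^{\bar{\mathcal{H}}}) = -\mu(\phi^{\mathcal{H}})$ and the invariance of $n_w, n_z$, one computes that the Maslov and Alexander functions on $\Gen$ as computed from $\bar{\mathcal{H}}$ are the negatives of those computed from $\mathcal{H}$ (up to overall additive constants, which are pinned down by Proposition~\ref{prop:HFmRankOne} and the symmetrization convention for the Alexander polynomial, both of which are compatible with mirroring). This matches the convention on the dual complex, where $\x^*$ has bidegree $(-M(\x), -A(\x))$ and $U$ acts by $M(U)=-2$, $A(U)=-1$, so the two descriptions of the bigrading (hence of the Alexander filtration) agree.

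The main technical obstacle is the careful bookkeeping of orientations and grading normalizations: one must check that the additive constants in the Maslov and Alexander functions, which were fixed by normalizing $H_*(\CFKm)$ and by requiring symmetry of the Alexander polynomial, transport correctly under mirroring. Once this is settled, the rest of the argument is a direct translation of~\cite[Proposition~3.7]{OSKnots} into the language of dual complexes, and the conclusion that $\CFKm(m(K)) \cong (\CFKm(K))^*$ as Alexander filtered, Maslov graded $\Field[U]$-chain complexes follows.
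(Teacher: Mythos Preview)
Your approach is exactly the paper's: pass to the Heegaard diagram $(-\Sigma,\alphas,\betas,w,z)$ for $m(K)$ and use that reflection identifies moduli spaces of disks from $\x$ to $\y$ in $\Sigma$ with moduli spaces from $\y$ to $\x$ in $-\Sigma$; this is precisely the content of the paper's two-sentence proof (which in turn points to \cite[Proposition~3.7]{OSKnots}).

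One correction: under the bijection $\pi_2^{\mathcal H}(\x,\y)\leftrightarrow\pi_2^{\bar{\mathcal H}}(\y,\x)$ the Maslov index is \emph{preserved}, not negated, since the moduli spaces (and hence their expected dimensions) are identified. This is what you actually need: for $\bar\phi\in\pi_2^{\bar{\mathcal H}}(\y,\x)$ corresponding to $\phi\in\pi_2^{\mathcal H}(\x,\y)$, Equation~\eqref{eq:MaslovFormula} gives
\[
M^{\bar{\mathcal H}}(\y)-M^{\bar{\mathcal H}}(\x)=\mu(\bar\phi)-2n_w(\bar\phi)=\mu(\phi)-2n_w(\phi)=M^{\mathcal H}(\x)-M^{\mathcal H}(\y),
\]
which yields the desired negation of the Maslov function. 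With your stated sign on $\mu$ this computation would not close up. The rest of your outline is fine.
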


\begin{proof}
  If $(\Sigma,\alphas,\betas,w,z)$ represents $K$, then
  $(-\Sigma,\alphas,\betas,w,z)$ represents $m(K)$. Reflection
  identifies moduli spaces of pseudo-holomorphic disks from $\x$ to
  $\y$ in $\Sigma$ with corresponding moduli spaces of
  pseudo-holomorphic disks from $\y$ to $\x$ in $-\Sigma$.
\end{proof}


Recall that the $\tau$-invariant $\tau (K)$ of a knot $K$ is
defined as 
\[
\tau (K)=-\max \{ A(x)\mid x\in \HFKm (K) \quad \text{is homogeneous and
  non-torsion}\} . 
\]
(Note that this is not the definition of $\tau$ from~\cite{FourBall};
but the equivalence of the two definitions was established
in~\cite[Lemma~A.2]{Trans}.) This invariant provides a non-trivial 
lower bound for the slice genus of $K$:
\[
\vert \tau (K)\vert \leq g_s(K).
\]
The identification of 
$\fCFKm (m(K))$ as the dual of $\fCFKm (K)$ (together with the 
grading conventions applied) then easily implies that 
\[
\tau (m(K))=-\tau (K).
\]

By considering a Heegaard diagram for a knot $K$ adapted to a Seifert
surface, a strong relation between the Seifert genus and the knot
Floer homology of a knot $K$ can be proved:
\begin{prop}
  \label{prop:GenusBounds}
  Let $K$ be a knot with Seifert genus $g(K)$.  Then,
  \[\max\{s\big|\HFKa_*(K,s)\neq 0\} \leq g(K). \]
\qed
\end{prop}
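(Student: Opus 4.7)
My plan is to prove the inequality by building a doubly-pointed Heegaard diagram $\mathcal{H}=(\Sigma,\alphas,\betas,w,z)$ for $(S^3, K)$ adapted to a minimum-genus Seifert surface $F$, and then bounding the Alexander grading of every generator directly in that diagram. First I would fix $F$ with $g(F)=g(K)=:g$ and choose a self-indexing Morse function on $S^3$ compatible with the pair $(K,F)$ — one whose level-set Heegaard surface $\Sigma$ meets $F$ transversally in arcs and circles that cut $F$ into disks. The handles of the two handlebodies can be chosen so that their compressing disks cap off these arcs, exhibiting $F$ as the union of disks in $\Sigma$ together with disks in the handlebodies. In such a diagram, $F$ is realized by a periodic domain $\mathcal{P}\subset \Sigma$ whose boundary is a meridional pair of arcs separating $w$ and $z$, so that $n_z(\mathcal{P})-n_w(\mathcal{P})=1$.

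Next I would use the data of this periodic domain to bound the Alexander grading $A(\x)$ of any generator $\x\in\Ta\cap\Tb$. Via Equation~\eqref{eq:AlexanderFormula}, applied to a class $\phi\in\pi_2(\x,\x)$ whose underlying periodic domain is $\mathcal{P}$, $A(\x)$ can be expressed as an affine function of the local multiplicities $n_{x_i}(\mathcal{P})$ and the Euler measure $\widehat\chi(\mathcal{P})$, with the overall additive constant fixed by the normalization via the symmetrized Alexander polynomial. Since $\mathcal{P}$ is assembled from the disks cut out of $F$ by $\Sigma$ together with compressing disks in the handlebodies, its Euler measure is controlled by $\chi(F)=1-2g$. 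A bookkeeping argument comparing these contributions then yields $A(\x)\le g$ uniformly in $\x$, which implies the proposition.

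Equivalently — and perhaps more cleanly — one can pass to the zero-surgery $S^3_0(K)$, in which $F$ caps off to a closed surface $\widehat F$ of genus $g$. Under this correspondence $2A(\x)=\langle c_1(\spinct(\x)),[\widehat F]\rangle$, and one invokes the adjunction inequality for $\HFa$ of $S^3_0(K)$, together with the fact that the generators of $\CFKa(K,s)$ correspond to generators of $\CFa(S^3_0(K))$ in the appropriate Spin$^c$ structure, to deduce the bound. The genus-zero case (when $K$ is the unknot) is handled separately and is trivial.

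The main obstacle I anticipate is the first step — carefully constructing the adapted Heegaard diagram so that $F$ is faithfully realized by a single periodic domain, and showing that no spurious periodic domains interfere with the Alexander-grading computation. Tracking the additive normalization of the Alexander grading under the various identifications (or, in the surgery formulation, matching the $\Z$-indexing of $\HFKa(K,\cdot)$ with the labeling of Spin$^c$ structures on $S^3_0(K)$) also requires care. Once the adapted diagram is set up correctly, the remaining argument is a clean Euler-characteristic bookkeeping, essentially the Heegaard Floer incarnation of the classical Seifert-genus bounds familiar from gauge theory.
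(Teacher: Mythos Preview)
Your proposal is correct and aligns with the paper's treatment. Note that the paper does not actually supply a proof here: the proposition is stated with a \texttt{\textbackslash qed} and treated as a known result, with only the preceding sentence (``By considering a Heegaard diagram for a knot $K$ adapted to a Seifert surface\dots'') indicating the method. Your outline---building a Heegaard diagram adapted to a minimal-genus Seifert surface and bounding the Alexander grading of every generator via the associated periodic domain---is exactly the standard argument this hint refers to (originating in \cite{OSKnots}), and your alternative route through $S^3_0(K)$ and the adjunction inequality is the other standard way the same bound is packaged in the literature.
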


In fact, the above inequality is sharp~\cite{GenusBounds}, but that is
not of importance to the present applications.

\subsection{Computations}
Knot Floer homology groups can be easily computed for certain special
classes of knots.  We will use the following computation of knot Floer
homology for alternating knots~\cite{AltKnots}:

\begin{theorem}(\cite{AltKnots})
  \label{thm:AltKnotsHFK}
  Let $K$ be an alternating knot. Then,
  $\HFKa_d(K,s)\neq 0$ only if $d-s=\frac{\sigma(K)}{2}$, where
  $\sigma(K)$ denotes the knot signature. In particular,
  $\tau (K)=-\frac{\sigma (K)}{2}$. \qed 
\end{theorem}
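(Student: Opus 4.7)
The strategy, following \cite{AltKnots}, is to compute $\HFKa(K)$ directly from a Heegaard diagram adapted to an alternating projection of $K$. First, I would choose an alternating projection $D$ and associate to it a doubly-pointed Heegaard diagram whose generators of $\CFKa$ are in bijective correspondence with Kauffman states of $D$. In this setup, the Maslov and Alexander gradings of a state can be computed combinatorially as sums of local contributions at the crossings of $D$.

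The heart of the argument is the combinatorial identity
\[
M(\x) - A(\x) = \frac{\sigma(K)}{2}
\]
for every Kauffman state $\x$. This is verified by summing the local Maslov and Alexander contributions at each crossing (which depend only on the sign of the crossing and on how the state resolves it) and comparing the result against a formula for the signature in terms of Kauffman state data, for instance via the Gordon--Litherland approach applied to a checkerboard surface. The alternating hypothesis is essential here: it forces the local discrepancy $M(\x)-A(\x)$ to contribute the same amount at every crossing, independent of the state, so the sum collapses to the global constant $\sigma(K)/2$.

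With this identity in hand, the differential $\widehat\partial$ on $\CFKa$ decreases Maslov grading by one and preserves Alexander grading, so it shifts the quantity $M-A$ by $-1$. Since every generator lies on the single diagonal $M-A=\sigma(K)/2$, no matrix entry of $\widehat\partial$ can connect two generators, hence $\widehat\partial=0$ and $\HFKa(K)\cong\CFKa(K)$ is supported entirely on the diagonal $d-s=\sigma(K)/2$, giving the first assertion.

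For the identity $\tau(K)=-\sigma(K)/2$, the thinness of $\HFKa(K)$ pins down the filtered chain homotopy type of $\CFKinf(K)$ up to a ``staircase'' model determined by the Alexander polynomial and signature; from this model one reads off that the free generator of $\HFKm(K)/\Tors$ sits in bigrading $(0,\sigma(K)/2)$, and the definition of $\tau$ recalled in Section~\ref{subsec:Results} then gives $\tau(K)=-\sigma(K)/2$. The main obstacle is the combinatorial identity above: the bulk of \cite{AltKnots} is devoted to this crossing-by-crossing matching of Floer-theoretic gradings with signature contributions, and it is what makes the alternating hypothesis indispensable.
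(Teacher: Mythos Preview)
The paper does not prove this theorem at all: it is stated as a quotation from \cite{AltKnots} and closed with a \qed, serving purely as background input for the computation of $\Upsilon_K$ for alternating knots. There is nothing in the present paper to compare your argument against.

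That said, your sketch does accurately reflect the strategy of \cite{AltKnots}: generators given by Kauffman states of an alternating projection, combinatorial formulas for $M$ and $A$ as sums of local crossing contributions, and the key identity $M(\x)-A(\x)=\sigma(K)/2$ for every state, from which thinness and the vanishing of $\widehat\partial$ follow immediately. One small correction: the deduction of $\tau(K)=-\sigma(K)/2$ does not require knowing the full filtered chain homotopy type or any staircase model. Thinness alone forces the non-torsion generator of $\HFKm(K)$ to lie on the line $d-s=\sigma(K)/2$, and since that generator has $M=0$ (by the normalization in Proposition~\ref{prop:HFmRankOne}), its Alexander grading is $-\sigma(K)/2$, giving $\tau(K)=-\sigma(K)/2$ directly from the definition recalled in Section~\ref{subsec:Results}.
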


\begin{remark}
  The normalization of the signature in the above theorem is such that
  $\sigma$ of the right-handed trefoil knot is $-2$. Since the graded
  Euler characteristic of knot Floer homology is the Alexander
  polynomial, the above theorem determines $\HFKa (K)$ for an
  alternating knot $K$ in terms of its signature and Alexander
  polynomial. As explained in
    \cite[Corollary~10.3.2]{Gridbook}, $\HFKm (K)$ of an alternating
    knot $K$ is also determined by its signature and Alexander
    polynomial.
\end{remark}

Finally, we will use the computation of knot Floer homology for torus
knots. We state a slightly more general version, as follows.
An \emph{$L$-space} is a three-manifold $Y$ that is a rational homology
sphere (i.e. $b_1(Y)=0$), with the additional property that the total rank
of its Floer homology $\HFa(Y)$ coincides with the number of elements
in $H_1(Y;\Z)$. All lens spaces are $L$-spaces.

The knot $K$ is called an \emph{$L$-space knot} if some positive
surgery on $K$ gives a 3-manifold that is an $L$-space. 
(Since $(pq-1)$-surgery on the torus knot
  $T_{p,q}$ is the lens space $L(pq-1,p^2)$, any positive torus knot
  is an $L$-space knot.)   Let $K$ be an $L$-space knot. The invariants of $K$ are heavily
constrained~\cite{NoteLens}.  Specifically, the non-zero coefficients
of the Alexander polynomial are all $\pm 1$, and they alternate in
sign, hence there is a decreasing sequence of integers
$\{\alpha_k\}_{k=0}^n$ with the property that the symmetrized
Alexander polynomial of $K$ can be written
\begin{equation}
  \label{eq:DefAs}
  \Delta_K(t)=\sum_{k=0}^n (-1)^k t^{\alpha_k}.
\end{equation}
Define another sequence of integers $\{m_k\}_{k=0}^n$ by
\begin{align}
  m_{0}&=0 \nonumber \\
  m_{2k} &= m_{2k-1}-1 \label{eq:DefMs} \\
  m_{2k+1} &= m_{2k}-2(\alpha_{2k}-\alpha_{2k+1})+1. \nonumber
\end{align}

The polynomial $\Delta_K(t)$ 
determines a Maslov graded, Alexander filtered
chain complex $C^{\infty}(\Delta_K)$ as follows.
The complex is generated over $\Field[U,U^{-1}]$ by generators
$\{x_k\}_{k=0}^n$  with grading and filtration level
specified by
\[ M(x_k)=m_k~\qquad \text{and}~\qquad A(x_k)=\alpha_k,\]
and differential (for all $0\leq 2k-1\leq n$):
\begin{equation}
  \label{eq:LSpaceDifferential}
  \partial x_{2k-1} = U^{\alpha_{2k-2}-\alpha_{2k-1}} x_{2k-2} + x_{2k}, \qquad
\partial x_{2k}=0.
\end{equation}

\begin{theorem}
  \label{thm:LSpaceKnots}
  Suppose that $K$ is an $L$-space knot.  Then, the chain complex
  $C^{\infty}(\Delta_K(t))$ represents the $\Z\oplus\Z$-filtered chain
  homotopy type $\fCFKinf(K)$.
\end{theorem}
\begin{proof}
  This is equivalent to the main result from~\cite{NoteLens}.
\end{proof}

\subsection{Slice genus bounds}
The slice genus bounds for $\Upsilon_K(t)$ will come from certain slice genus
bounds from knot Floer homology. First, we make a formal definition:

\begin{definition}
  \label{def:Delta}
  For a chain complex $C$ over $\Field[U]$ equipped with a (Maslov) grading,
  let $\delta(C)$ denote the maximal grading of any homogenous non-torsion
  class in the homology $H_*(C)$ of $C$.
\end{definition}

Starting from the knot Floer complex $\fCFKm(K)$, we can consider a new
subcomplex ${\mathcal A}(K,s)$, generated by all elements $c\in \fCFKm
(K)$ with $A(c)\leq s$.

It is perhaps easiest to think of ${\mathcal A}(K,s)$ as generated
over $\Field$ by elements $U^i \x$, where $\x$ is a preferred generator
of $\fCFKm (K)$ over $\Field [U]$ with $i\geq \max(A(\x)-s,0)$.
The complexes ${\mathcal A}(K,s)$ govern the behaviour of the Heegaard
Floer homologies $\HFm(S^3_n(K))$ of the 3-manifolds $S^3_n(K)$
obtained by sufficiently large surgeries on $K$. Functorial properties
of the cobordism map then allow one to extract slice genus bounds from
these subcomplexes; see
especially~\cite[Corollary~7.4]{RasmussenThesis}. Here we use a
formulation akin to that of Hom and Wu~\cite{HomWu}.

\begin{definition}
  Let $\nu^-(K)$ be the minimal $s$ so that 
  $\delta({\mathcal A}(K,s))=0$.
\end{definition}

Strictly speaking, Hom and Wu formulate their invariant $\nu^+(K)$ in
terms of $\HFp$, rather than $\HFm$. The two definitions give the same
result:

\begin{prop}
  The invariant $\nu^-(K)$ agrees with the invariant $\nu^+(K)$
  defined by Hom and Wu in~\cite{HomWu}.
\end{prop}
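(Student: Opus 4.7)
My plan is to match the two definitions directly by translating between the minus and plus pictures using the standard long exact sequence relating $\HFm$ and $HF^+$.

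First, I would recall Hom--Wu's definition precisely: $\nu^+(K)$ is the minimal $s$ such that the chain map $v_s^+\colon A_s^+\to CF^+(S^3)$ induced by quotienting by the $\{j>s\}$-part is nontrivial on the $U$-tower, i.e., sends the tower of $H_*(A_s^+)$ isomorphically onto the tower of $HF^+(S^3)=\Field[U,U^{-1}]/\Field[U]$ with its generator in Maslov grading $-2$. Here $A_s^+$ is the quotient of $\CFKinf(K)$ corresponding to the region $\{i\geq 0\ \text{or}\ j\geq s\}$. Next, I would observe that Hom--Wu's subquotient $A_s^+$ and the subcomplex ${\mathcal A}(K,s)$ fit into a short exact sequence
\[ 0 \to {\mathcal A}(K,s) \to {\mathcal A}(K,s)\otimes_{\Field[U]}\Field[U,U^{-1}] \to A_s^+[-1] \to 0, \]
which is nothing but the localization sequence restricted to the sub/quotient complexes of $\CFKinf(K)$ picking out the $\{j\leq s\}$ region. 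This sequence sits inside, and is compatible with, the analogous sequence $0\to \CFKm(K)\to\CFKinf(K)\to CF^+(S^3)[-1]\to 0$ for the ambient complex.

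Now I would translate the grading condition. The non-torsion part of $H_*({\mathcal A}(K,s))$ is a free rank-one $\Field[U]$-module, whose image in $H_*(\CFKm(K))=\Field[U]$ under the inclusion is (as is easily seen) a nonzero sub-$\Field[U]$-module, hence of the form $U^{k(s)}\Field[U]$ for some $k(s)\geq 0$. The value $\delta({\mathcal A}(K,s))$ equals $-2k(s)$, so $\delta({\mathcal A}(K,s))=0$ exactly when the inclusion induces a surjection on non-torsion elements in Maslov grading $0$. Passing to the $HF^+$ side via the long exact sequence above, this surjectivity condition is equivalent to $v_s^+$ sending the tower of $H_*(A_s^+)$ isomorphically onto the tower of $HF^+(S^3)$ in its natural bottommost grading, which is precisely Hom and Wu's definition of $\nu^+(K)$.

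Finally, I would conclude by noting that both $\nu^-(K)$ and $\nu^+(K)$ are defined as the minimal $s$ satisfying the matched condition, so the two invariants coincide. The main obstacle is purely bookkeeping: one must verify that the direction of the map in the long exact sequence, together with the Maslov grading shifts on $HF^\infty$ and $HF^+$, are consistent, and that the identification of ${\mathcal A}(K,s)$ with a large-surgery subcomplex matches the convention Hom--Wu use for $A_s^+$ (which requires in particular that ${\mathcal A}(K,s)={\mathcal A}(K,s)\otimes_{\Field[U]}\Field[U,U^{-1}]\cap \CFKm(K)$ sits inside $\CFKinf(K)$ in the region $\{i\leq 0,\, j\leq s\}$ after the identification $\CFKm=C\{i\leq 0\}$).
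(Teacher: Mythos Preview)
Your proposal is correct and follows essentially the same route as the paper: both set up the commuting pair of localization short exact sequences for ${\mathcal A}(K,s)\subset \CFm(S^3)$ and their $+$-quotients, and then observe that Hom--Wu's tower condition on $v_s^+$ is equivalent (via the middle isomorphism on $\HFinf$) to surjectivity of $v_s^-$, which in turn is the condition $\delta({\mathcal A}(K,s))=0$. The paper's writeup is slightly more compact, displaying the map of short exact sequences as a single diagram rather than invoking the long exact sequence, but the content is the same.
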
 

\begin{proof}
  For a chain complex $C$ over $\Field[U]$, let $C^+$ denote the
  cokernel of the localization map $C\to C\otimes_{\Field}
  \Field[U,U^{-1}]$.  Write $\CFm(S^3)$, $\CFinf(S^3)$
  and $\CFp(S^3)$ for $\fCFKm(K)$, $\fCFKm(K)\otimes\Field[U,U^{-1}]$, 
  and $(\fCFKm(K))^+$ respectively.  
Let ${\mathcal A}^+(K,s)$ denote the cokernel of the
 natural inclusion ${\mathcal
    A}(K,s)$ to $\CFinf(S^3)$. The definition also induces a map 
$v_s^+\colon {\mathcal A}^+(K,s)\to \CFp(S^3)$.
  In fact, there is a map of short exact sequences:
  \[
  \begin{CD} 
    0 @>>> {\mathcal A}(K,s) @>>> \CFinf(S^3) @>>> {\mathcal A}^+(K,s) @>>> 0 \\
    & & @V{v^-_s}VV  @VVV @VV{v^+_s}V \\
    0 @>>> \CFm(S^3) @>>> \CFinf(S^3) @>>> \CFp(S^3) @>>> 0 \\
  \end{CD}
  \]
  In~\cite{HomWu}, the invariant $\nu^+(K)$ is defined to be the
  minimal $s$ for which $v^+_s$ takes the image of $\CFinf(S^3)$ in
  $H({\mathcal A}^+(K,s))$ isomorphically onto $H(\CFp(S^3))$.  Now
  this condition on $s$ is equivalent to the condition that $v^-_s$ is
  surjective, which in turn is equivalent to the condition that $v^-_s$
  contains the generator $1\in \Field[U]\cong H(\CFm(S^3))$.  But
  $v^-_s$ is a Maslov graded map; so this latter condition in turn is
  equivalent to the condition that $\delta({\mathcal A}(K,s))=0$.
  This establishes the desired equality.
\end{proof}

\begin{theorem}
  \label{thm:NuBounds}
  Let $K\subset S^3$ be a knot. Then, 
  $\nu^-(K)\leq g_s(K)$.
\end{theorem}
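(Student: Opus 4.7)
The plan is to combine the slice surface with the large surgery formula of Ozsv\'ath--Szab\'o and the adjunction inequality for Heegaard Floer cobordism maps. Fix a smoothly embedded surface $\Sigma \subset B^4$ of genus $g = g_s(K)$ with $\partial\Sigma = K$. For an integer $N$ sufficiently large (in particular $N > 2g$), attach an $N$-framed 2-handle to $B^4$ along $K$ to form a 4-manifold $X$ with $\partial X = S^3_N(K)$. Capping off $\Sigma$ with the core of the 2-handle produces a closed surface $\hat\Sigma \subset X$ of genus $g$ with $[\hat\Sigma]^2 = N$, representing a generator of $H_2(X;\Z)$.

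By the large surgery formula, for each integer $s$ with $|s| < N/2$ there is a canonical grading-respecting identification $H_*(\mathcal{A}(K,s)) \cong \HFm(S^3_N(K),\mathfrak{s}_s)$ for a specific $\mathrm{Spin}^c$ structure $\mathfrak{s}_s$ on $S^3_N(K)$. Moreover, under this identification, the map $v^-_s : H_*(\mathcal{A}(K,s)) \to \HFm(S^3) \cong \F[U]$ coincides, up to a known grading shift, with the cobordism map $F^-_{\bar W,\mathfrak{t}_s}$ associated to the turned-around 2-handle cobordism $\bar W : S^3_N(K) \to S^3$ and the $\mathrm{Spin}^c$ structure $\mathfrak{t}_s$ on $W$ characterized by $\langle c_1(\mathfrak{t}_s),[\hat\Sigma_0]\rangle + N = 2s$, where $\hat\Sigma_0$ is the obvious closed surface in $W$ generating $H_2(W)$.

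By definition, $\nu^-(K)\le g$ is equivalent to $\delta(\mathcal{A}(K,g)) = 0$, i.e.\ to the statement that $v^-_g$ carries some non-torsion class onto the generator of $\HFm(S^3)$ in Maslov grading $0$. Via the identification above, this becomes a statement about the cobordism map $F^-_{\bar W,\mathfrak{t}_g}$. Filling $\bar W$ on the $S^3$ side with $-B^4$ (containing $-\Sigma$) produces the 4-manifold $\bar X$ in which $\hat\Sigma$ becomes a closed genus $g$ surface with $[\hat\Sigma]^2 = N$; since the generator of $\HFm(S^3)$ is the image of the $B^4$-map, the composition law reduces the claim to showing that the full cobordism map associated to $\bar X$ with $\mathrm{Spin}^c$ structure $\mathfrak{t}_g$ is non-trivial on a non-torsion class. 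The adjunction inequality for $\HFm$-cobordism maps, applied to $\hat\Sigma$, eliminates all but finitely many $\mathrm{Spin}^c$ structures, and the grading change formula $\Delta M = \tfrac{c_1(\mathfrak{t})^2 - 2\chi - 3\sigma}{4}$, combined with the shift in the large surgery identification, singles out $s=g$ as the extremal value at which the surviving class lands precisely in Maslov grading $0$.

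The main obstacle will be the careful bookkeeping of $\mathrm{Spin}^c$ structures and absolute gradings: matching $\mathfrak{s}_g$ with the correct extension $\mathfrak{t}_g$ over $X$, invoking the grading change formula above, and combining it with the large surgery grading shift so that the image of the non-torsion class emerges in Maslov grading exactly $0$, not merely in non-positive grading. The geometric non-vanishing is a clean application of adjunction; the arithmetic comparison of gradings is standard, but must be carried out explicitly to conclude that $\delta(\mathcal{A}(K,g)) = 0$ and thus $\nu^-(K) \le g_s(K)$.
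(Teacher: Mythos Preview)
The paper does not supply its own proof of this theorem; its entire argument is a citation to \cite[Proposition~2.4]{HomWu} and \cite[Corollary~7.4]{RasmussenThesis}. Your outline---the large-surgery identification of $H_*(\mathcal{A}(K,s))$ with $\HFm(S^3_N(K),\mathfrak{s}_s)$, the interpretation of $v^-_s$ as a 2-handle cobordism map, and then an adjunction-plus-grading argument using the capped slice surface $\hat\Sigma$---is exactly the strategy of those references, so you are essentially reconstructing what the paper defers to.

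Two places where your sketch is loose. First, you speak of ``the full cobordism map associated to $\bar X$'', but $\bar X$ is a closed 4-manifold and has no cobordism map; what you mean is the composite of the map coming from $B^4\supset\Sigma$ with the 2-handle map $F^-_{\bar W,\mathfrak{t}_g}$, or equivalently the relative invariant of $\bar X$ minus a ball. Second, adjunction gives \emph{vanishing} of cobordism maps for spin$^c$ structures violating the genus bound; the \emph{non}-vanishing at the extremal value $s=g$ requires a separate input---typically that the sum over spin$^c$ structures is an isomorphism on $\HFinf$ for the negative-definite piece, so that some term must survive, and then grading forces it to be the extremal one. This is precisely the ``careful bookkeeping'' you flag in your final paragraph, and the cited papers carry it out.
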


\begin{proof}
  This is~\cite[Proposition~2.4]{HomWu}; see also~\cite[Corollary~7.4]{RasmussenThesis}.
\end{proof}

\section{Definitions of $t$-modified knot Floer homology and $\Upsilon_K(t)$}
\label{sec:DefUpsilon}

The aim of this section is to describe the definition of $t$-modified
knot Floer homology of a knot $K\subset S^3$ and its corresponding
numerical invariant $\Upsilon _K (t)$.  (See~\cite{Living} for 
an alternative description of these constructions.)

We describe the definition first for rational $t$, and then extend the
definition for the general case. For rational $t$ the base ring can be
chosen to be a polynomial ring, while for the general case we need to
work with a slightly larger ring ${\mathcal {R}}$, which will be
described below.

Fix a rational number $0\leq t=\frac{m}{n} \leq 2$, and  consider the
chain complex over $\Field[v^{1/n}]$, generated by the same generators
that were 
used to generate $\CFKm $ over $\Field [U]$.
Equip this module with the grading
\[ 
\gr_t(\x)=M(\x)-tA(\x)
\]
on the generators and take $\gr _t (v^{\alpha }\x ) = \gr _t (\x )-\alpha$
for $\alpha \in {\frac{1}{n}\Z}$,
that is,  multiplication by $v$ drops the grading by one.
Define the differential
\begin{equation}
  \label{eq:tModDiff}
  \partial_t \x = \sum_{\y\in\Gen}\sum_{\{\phi\in\pi_2(\x,\y)\big|\Mas(\phi)=1\}} \# \left(\frac{\ModFlow(\phi)}{\mathbb R}\right)
  v^{t n_z(\phi)+ (2-t) n_w(\phi)} \y.
\end{equation}
This construction makes sense even when $t\in [0,2]$ is real, once we
choose a little more complicated base ring.
The ring described below was chosen so that the definition of $\partial _t$
makes sense, while the ring retains a convenient property of $\Field [U]$:
finitely generated modules decompose as direct sums of cyclic modules.
\begin{definition}\label{def:ring}
Let ${\mathbb {R}}_{\geq 0}$ denote the set of nonnegative real numbers.
The {\em ring of long power series} ${\mathcal R}$
defined as follows. As an abelian group,
${\mathcal{R}}$ is the group of
formal sums
\[
\{ \sum _{\alpha \in A} v^{\alpha}\mid A\subset {\mathbb {R}}_{\geq 0}, \
\text{$A$ well-ordered}\} ,
\]
where we use the order on $A$ induced from ${\mathbb {R}}$. 
Note that if $A$ and $B$ are well ordered subsets of ${\mathbb{R}}$,
then so is their sum 
\[ A+B=\{\gamma\in{\mathbb{R}}\big| \gamma=\alpha+\beta~\text{for some $\alpha\in A$ and $\beta\in B$}\}.\]
The product in ${\mathcal{R}}$ is given
by the formula
\[
(\sum_{\alpha\in A} v^\alpha)\cdot
(\sum_{\beta\in B} v^{\beta}) = 
\sum_{\gamma\in A + B} \#\{(\alpha,\beta)\in A\times B\big| \alpha+\beta=\gamma\}
\cdot v^{\gamma},
\]
where the count appearing as the coefficient of $v^{\gamma}$ is of course to be interpreted as a number modulo $2$.
\end{definition}

It is straightforward to verify that the above defined 
product is well-defined.
The field of fractions
${\mathcal {R}}^*$ of the ring ${\mathcal {R}}$ above can be
identified with
\[
\{ \sum _{\alpha \in A} v^{\alpha}\mid A\subset {\mathbb {R}}, 
\text{$A$ well-ordered}\} .
\]
Define the \emph{rank} of a module $M$ over ${\RRing}$ as the dimension of
the ${\RRing}^*$-vector space $M\otimes _{\RRing}\RRing ^*$.

In the interest of uniformity, we will henceforth always consider the
$t$-modified knot complex over ${\mathcal {R}}$, bearing in mind that
$\Field[v^{1/n}]$ (used in the definition for rational $t$) is a
subring of ${\mathcal {R}}$, so we can naturally extend the base ring
in the rational case. This does not affect what we mean by $\Upsilon$; see Proposition~\ref{prop:SameUpsilon}.

\begin{remark}
  The ring ${\mathcal{R}}$  is the unique valuation ring with valuation
  group ${\mathbb{R}}$ and quotient field $\Z/2\Z$. For more information on
  this ring, see~\cite[Section~11]{brandal}  and~\cite{FuchsSalce}.
\end{remark}

\begin{lemma}
  \label{lem:tModIsCx}
  The endomorphism defined in Equation~\eqref{eq:tModDiff} is a differential.
  The differential drops the grading $\gr _t$ by one.
\end{lemma}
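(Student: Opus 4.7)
The plan is to verify three properties in turn: (a) that $\partial_t$ is a well-defined $\RRing$-linear endomorphism of the underlying module, (b) that it is homogeneous of degree $-1$ with respect to $\gr_t$, and (c) that $\partial_t^2 = 0$.

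For (a), note that Equations~\eqref{eq:MaslovFormula} and~\eqref{eq:AlexanderFormula} show that for any fixed $\x, \y \in \Gen$, imposing $\mu(\phi) = 1$ uniquely determines the integers $n_w(\phi)$ and $n_z(\phi)$. By positivity of intersections, any class $\phi$ admitting a pseudo-holomorphic representative satisfies $n_w(\phi), n_z(\phi) \geq 0$, and since $t \in [0,2]$, the exponent $t\, n_z(\phi) + (2-t)\, n_w(\phi)$ is nonnegative and hence yields a well-defined element of $\RRing$. Since $\Gen$ is finite, $\partial_t \x$ is a finite $\RRing$-linear combination of generators. For (b), applying Equations~\eqref{eq:MaslovFormula} and~\eqref{eq:AlexanderFormula} for $\mu(\phi) = 1$ gives
\[
\gr_t(\x) - \gr_t(\y) = \bigl(M(\x) - M(\y)\bigr) - t\bigl(A(\x) - A(\y)\bigr) = 1 - (2-t)\, n_w(\phi) - t\, n_z(\phi),
\]
while the term $v^{t\, n_z(\phi) + (2-t)\, n_w(\phi)}\, \y$ contributing to $\partial_t \x$ has $\gr_t$-degree $\gr_t(\y) - (2-t)\, n_w(\phi) - t\, n_z(\phi)$. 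The two quantities agree after subtracting $1$, confirming that $\partial_t$ drops $\gr_t$ by one.

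For (c), the standard Gromov--Floer compactification of the one-dimensional moduli space $\ModFlow(\phi)/\mathbb{R}$ for $\phi \in \pi_2(\x,\z)$ with $\mu(\phi)=2$ shows that its ends consist of broken flowlines $\phi = \phi_1 \ast \phi_2$ with $\mu(\phi_i)=1$; any boundary degenerations cancel in pairs exactly as in the usual proof that $\partial^2 = 0$ for $\CFKinf$, see~\cite{HolDisk}. Since $n_w$ and $n_z$ are additive under juxtaposition, the combined weight $\bigl(t\, n_z(\phi_1) + (2-t)\, n_w(\phi_1)\bigr) + \bigl(t\, n_z(\phi_2) + (2-t)\, n_w(\phi_2)\bigr)$ equals $t\, n_z(\phi) + (2-t)\, n_w(\phi)$, depending only on $\phi$. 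Collecting terms of $\partial_t^2 \x$ according to $\phi$, the coefficient of $v^{t\, n_z(\phi) + (2-t)\, n_w(\phi)}\, \z$ is the $\bmod{2}$ count of ends of $\ModFlow(\phi)/\mathbb{R}$, which vanishes.

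The main obstacle is notational rather than analytic: transversality, compactness, and the compensation of boundary degenerations are exactly the inputs already used in constructing $\CFKinf$. The $t$-modified theory simply reweights each pseudo-holomorphic disk by the factor $v^{t\, n_z + (2-t)\, n_w}$, so the only thing that really needs checking beyond the usual Heegaard Floer machinery is that this weight is multiplicative under juxtaposition of Whitney disks---which is immediate from the additivity of $n_w$ and $n_z$.
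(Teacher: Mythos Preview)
Your proof is correct and follows essentially the same approach as the paper's: the grading computation in your part (b) is the same direct application of Equations~\eqref{eq:MaslovFormula} and~\eqref{eq:AlexanderFormula} that the paper carries out, and your part (c) spells out in detail what the paper compresses to the single observation that $n_z$ and $n_w$ are additive under juxtaposition. Your part (a) on well-definedness (nonnegativity of exponents via positivity of intersections, finiteness of the sum) is a useful addition that the paper leaves implicit.
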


\begin{proof}
  The fact that the endomorphism is a differential can be seen
  directly from the fact that $n_z$ and $n_w$ are additive under
  juxtaposition of flows and that $\partial $ in 
$\CFKm$ is a differential.

  The grading properties follow quickly from
  Equations~\eqref{eq:MaslovFormula}
  and~\eqref{eq:AlexanderFormula}. Specifically, if $v^{\alpha}\y$
  appears with non-zero multiplicity in $\partial_t \x$, then there is
  a homotopy class $\phi\in\pi_2(\x,\y)$ with 
  \begin{align*}
    \Mas(\phi)&=1, \\
    t n_z(\phi)+(2-t)   n_w(\phi)&=\alpha.
  \end{align*}
In this case, 
  \begin{align*}
    \gr_t(\x) - \gr_t(v^{tn_z(\phi)+(2-t)n_w(\phi)} \y) &=
    M(\x)-M(\y) - t A(\x) + t A(\y) + t (n_z(\phi)-n_w(\phi)) + 2n_w(\phi) \\
    &=\mu (\phi ) =1.
  \end{align*}
\end{proof}

\begin{definition}
  \label{def:tModifiedCFK}
  We call the resulting $\gr _t$-graded chain complex the
  \defin{$t$-modified knot Floer complex}, and denote it by
  $\CFKt(K)$.  Its homology, denoted by $\HFKt(K)$, is called the
  \defin{$t$-modified knot Floer homology}; it is a finitely generated
  $\gr _t$-graded module over $\RRing$.
\end{definition}

The construction of $\CFKt(K)$ can be thought of as coming
from a formal construction associated to  Alexander filtered, Maslov
graded complexes, as it will be explained in
Section~\ref{sec:Formal}. 

\begin{theorem}
  \label{thm:InvarianceOfHFKt}
  $\HFKt(K)$, thought of as an isomorphism class of 
    $\gr_t$-graded module over ${\mathcal {R}}$, is a
  knot invariant.
\end{theorem}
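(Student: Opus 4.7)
My strategy is to realize $\CFKt$ as the image of $\CFKm$ under a purely formal base-change construction, and then invoke the known invariance of $\CFKm$ up to filtered, Maslov-graded chain homotopy equivalence.

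First, I would define a construction $T_t$ which converts an Alexander-filtered, Maslov-graded chain complex $C$ over $\Field[U]$ (together with a distinguished basis $\{\x_i\}$ over $\Field[U]$) into a $\gr_t$-graded chain complex over $\mathcal{R}$, as follows. Let $T_t(C)$ be the free $\mathcal{R}$-module on $\{\x_i\}$, with grading $\gr_t(\x_i) := M(\x_i) - t\,A(\x_i)$ (and $\gr_t(v^\alpha \x) = \gr_t(\x) - \alpha$), and with differential
\[
\partial_t \x \;=\; \sum_{\y, k} c_k^\y \cdot v^{t(A(\x) - A(\y)) + 2k}\, \y,
\]
where $\partial \x = \sum_{\y, k} c_k^\y\, U^k\, \y$ in $C$, with $c_k^\y \in \Field$. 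The exponents $t(A(\x)-A(\y)) + 2k$ are nonnegative since $A(\x) - A(\y) + k = n_z(\phi) \geq 0$ and $k = n_w(\phi) \geq 0$; and $\partial_t$ drops $\gr_t$ by $1$ as in the proof of Lemma~\ref{lem:tModIsCx}. Using the identity $n_z(\phi) = A(\x) - A(\y) + n_w(\phi)$ from~\eqref{eq:AlexanderFormula}, one checks directly that $T_t(\CFKm(\mathcal{H})) = \CFKt(\mathcal{H})$.

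Second, I would verify that $T_t$ sends filtered, Maslov-grading-preserving chain maps to $\mathcal{R}$-linear chain maps preserving $\gr_t$, and filtered chain homotopies (which raise Maslov grading by $1$) to homotopies of degree $+1$ in $\gr_t$. The same nonnegativity argument and degree computation as for $\partial_t$ carry through verbatim. Applying this to a filtered chain homotopy equivalence between $\CFKm(\mathcal{H}_1)$ and $\CFKm(\mathcal{H}_2)$ coming from the standard knot Floer invariance theorem (\cite{OSKnots}), one obtains a chain homotopy equivalence between $\CFKt(\mathcal{H}_1)$ and $\CFKt(\mathcal{H}_2)$ as $\gr_t$-graded $\mathcal{R}$-modules, so $\HFKt(K)$ is an invariant of $K$.

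The main subtlety is that for irrational $t$ the $v$-exponents are real, which is why $\mathcal{R}$ (rather than a polynomial ring) is required. Since every $\CFKm(\mathcal{H})$ is finitely generated over $\Field[U]$, the coefficients of the chain maps and homotopies built from the invariance proofs are \emph{finite} $\Field$-linear combinations of $v^\alpha$ with $\alpha \in \mathbb{R}_{\geq 0}$, which lie automatically in $\mathcal{R}$; no convergence issues arise. The general formal framework justifying this base-change procedure, together with the careful bookkeeping of well-ordered supports, is laid out in Section~\ref{sec:Formal}.
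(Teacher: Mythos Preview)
Your proposal is correct and follows essentially the same approach as the paper: realize $\CFKt$ as the $t$-modification of $\CFKm$ via a formal construction on Alexander-filtered, Maslov-graded $\Field[U]$-complexes, check functoriality under filtered chain maps and homotopies, and invoke the known invariance of $\CFKm$. The paper does exactly this in Section~\ref{sec:Formal} (Definition~\ref{def:tModifyFormal}, Proposition~\ref{prop:tModifyAsFormal}, Proposition~\ref{prop:tModifyFunctor}); the only cosmetic difference is that the paper deduces functoriality from a basis-free reformulation of $C^t$ as a subcomplex $E^t\subset C_{\RRing^*}$ (Lemma~\ref{lem:SecondConstruction}), rather than by the direct coefficient-and-exponent verification you sketch.
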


One could repeat the invariance proof for knot Floer homology (relying
on handle slide and stabilization invariances) to prove
Theorem~\ref{thm:InvarianceOfHFKt}. We prefer instead to appeal
directly to the invariance of $\CFKinf(K)$, combined with
functoriality properties of the formal construction. This proof will
be given in Section~\ref{sec:Applications}.

Next we give the definition of $\Upsilon_K(t)$:

\begin{definition}
  \label{def:DefUps}
  $\Upsilon_K(t)\in {\mathbb {R}}$ is the maximal $\gr _t$-grading of
  any homogeneous non-torsion element of $\HFKt(K)$.
\end{definition}

Theorem~\ref{thm:InvarianceOfHFKt} has the following immediate consequence:

\begin{corollary}
  \label{cor:InvariantUps}
  $\Upsilon_K(t)$ is a knot invariant.
\end{corollary}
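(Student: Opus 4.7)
The plan is to derive this corollary directly from Theorem~\ref{thm:InvarianceOfHFKt} by observing that every ingredient in Definition~\ref{def:DefUps} is intrinsic to the data of a $\gr_t$-graded $\RRing$-module. Specifically, a class being \emph{homogeneous of degree $\alpha$} means exactly that it lies in the $\alpha$-graded summand of $M$; a class $\xi \in M$ being \emph{non-torsion} means that $v^d \cdot \xi \neq 0$ for every $d \geq 0$, equivalently that $\xi$ has nontrivial image in $M \otimes_\RRing \RRing^*$; and the "maximum grading" is then manifestly a feature of the graded $\RRing$-module structure. All three notions are therefore preserved under any isomorphism of $\gr_t$-graded $\RRing$-modules.

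Given this observation, the argument becomes essentially a single sentence. If $f \colon M \to N$ is such an isomorphism, then $f$ carries homogeneous non-torsion elements of degree $\alpha$ in $M$ bijectively to homogeneous non-torsion elements of degree $\alpha$ in $N$, so the set of gradings realized by homogeneous non-torsion classes agrees for $M$ and $N$, and hence so does its supremum. Applying this to the isomorphism between the $t$-modified knot Floer homologies arising from any two Heegaard diagrams representing the same knot $K$, as provided by Theorem~\ref{thm:InvarianceOfHFKt}, we conclude that $\Upsilon_K(t)$ is independent of the diagram.

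The one minor point to address, which I do not view as a genuine obstacle, is verifying that the supremum in Definition~\ref{def:DefUps} is actually a well-defined real number rather than $\pm\infty$. Boundedness from above follows because $\HFKt(K)$ is finitely generated over $\RRing$, so the gradings of any finite generating set give an upper bound on the gradings of all homogeneous elements. The existence of at least one homogeneous non-torsion class (so that $\Upsilon_K(t)$ is not $-\infty$) can be verified from the fact that $H_*(\CFKt(K)) \otimes_\RRing \RRing^*$ is nontrivial, which ultimately traces back to the nonvanishing of the total homology $\HFm(S^3) \cong \Field[U]$ recalled in Proposition~\ref{prop:HFmRankOne}; once this is in hand, the corollary is purely formal.
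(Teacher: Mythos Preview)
Your proof is correct and follows essentially the same approach as the paper: observe that $\Upsilon_K(t)$ depends only on the $\gr_t$-graded $\RRing$-module structure of $\HFKt(K)$, and invoke Theorem~\ref{thm:InvarianceOfHFKt}. Your version is more detailed than the paper's (which is a two-sentence argument), and your additional remarks on well-definedness of the maximum are correct and useful, though the paper does not pause to make them at this point.
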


\section{$t$-modified knot Floer homology as a formal construction}
\label{sec:Formal}

In this section we describe a way to associate new chain complexes to
a given Maslov graded, Alexander filtered chain complex $C$ over
$\Field [U]$, in the sense of Definition~\ref{def:AlexMas}.  The
$t$-modified knot complexes can be thought of as associated to
$\CFKm(K)$ in this manner. Since the association is functorial under
filtered chain homotopy equivalences (of $C$), the invariance of the
$t$-modified homology groups are quickly seen to follow from the
invariance of $\CFKm(K)$.

Suppose that $C$ is a finitely generated, Maslov graded, Alexander
filtered chain complex over $\Field[U]$. Let $\x$ be a generator
of $C$ over $\Field [U]$, with Maslov grading $M(\x)$.
Since multiplication by $U$ decreases the Maslov grading by $2$,
elements of Maslov grading $M(\x )-1$ are linear combinations of
elements of the form $U^{\frac{M(\y)-M(\x)+1}{2}} \y$, where $\y $ is
a generator.  In particular, $M(U^{\frac{M(\y)-M(\x)+1}{2}} \y)=M(\x)
-1$ implies that $M(\y)\geq M(\x)-1$, and $M(\x)$ and $M(\y)$ have
opposite parity. The differential on $C$ can be written as
\begin{equation}
  \label{eq:GeneralComplex}
  \partial \x = \sum_{\y} c_{\x,\y}\cdot U^{\frac{M(\y)-M(\x)+1}{2}} \y,
\end{equation}
where $c_{\x,\y}\in \Field$.



\begin{definition}
  \label{def:tModifyFormal}
Suppose that $C$ is a finitely generated, Maslov graded, Alexander
filtered chain complex over $\Field [U]$, and let 
$\RRing$ be the ring of Definition~\ref{def:ring} (containing 
$\Field [U]$ by $U=v^2$). For $t\in [0,2]$
the $t$-modified complex $C^t$ of $C$ is
defined as follows:
\begin{itemize}
\item
As an ${\mathcal {R}}$-module, $C^t$ is equal to $C_{\mathcal {R}}=C\otimes
_{\Field [U]}\RRing$.
\item 
For each generator $\x$ of $C$ over $\Field[U]$, define 
$\gr_t(\x)=M(\x)-t A(\x)$, and extend this to $C^t$ by the convention that
$\gr _t (v^{\alpha}\x)=\gr _t (\x )-\alpha$. Thus,
$\gr_t$ induces a real-valued grading on $C^t$ with the property that 
multiplication by $v$ drops grading by 1.
\item 
Endow the graded module $C^t$ with a differential 
\[ 
\partial_t \x = \sum_{\y} c_{\x,\y}\cdot  v^{\gr_t(\y)-\gr_t(\x)+1}\y,
\]
where the coefficients $c_{\x, \y}\in \Field$ are taken from 
the differential of $C$ through Equation~\eqref{eq:GeneralComplex}.
\end{itemize}
\end{definition}

The exponent of $v$ is chosen so that the differential drops $\gr_t$
by exactly one. The relevance of the construction is the following:

\begin{prop}
  \label{prop:tModifyAsFormal}
  Starting from the Maslov graded, Alexander filtered chain complex
  $(\CFKm(K), \partial ^-)$ of a knot $K$ over $\Field[U]$, the
  associated $t$-modified complex $(\CFKm(K))^t$ (following
  Definition~\ref{def:tModifyFormal}) agrees with the $t$-modified
  knot Floer complex $\CFKt (K)$ (in the sense of
  Definition~\ref{def:tModifiedCFK}).
\end{prop}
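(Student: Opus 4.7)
The strategy is direct: unpack both definitions of the $t$-modified differential on the same underlying module $\CFKm(K)\otimes_{\Field[U]}\RRing$, freely generated over $\RRing$ by $\Gen = \Ta\cap\Tb$, and verify that the two formulas for $\partial_t$ agree term by term. The grading $\gr_t = M - tA$ is defined identically on both sides using the extensions \eqref{eq:MaslovAlexanderExtension}, so the issue reduces to comparing the two differentials.

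The key preliminary observation is that for a Whitney disk $\phi\in\pi_2(\x,\y)$ with $\Mas(\phi)=1$, both multiplicities $n_w(\phi)$ and $n_z(\phi)$ are uniquely determined by the Maslov and Alexander gradings of the endpoints. Indeed, Equation~\eqref{eq:MaslovFormula} forces
\[
n_w(\phi) = \tfrac{1}{2}\bigl(M(\y)-M(\x)+1\bigr),
\]
and then Equation~\eqref{eq:AlexanderFormula} forces $n_z(\phi) = n_w(\phi) + A(\y) - A(\x)$. Consequently, grouping the knot Floer differential \eqref{eq:boundary} by the target generator $\y$ puts it in precisely the form \eqref{eq:GeneralComplex}, with
\[
c_{\x,\y} \;=\; \sum_{\{\phi\in\pi_2(\x,\y)\,|\,\Mas(\phi)=1\}} \#\bigl(\ModFlow(\phi)/\mathbb{R}\bigr).
\]

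Next, I would compare the exponents of $v$ produced by the two constructions. In the direct definition \eqref{eq:tModDiff}, every disk $\phi\in\pi_2(\x,\y)$ of Maslov index one contributes with exponent $t n_z(\phi)+(2-t)n_w(\phi)$; by the preliminary observation, this exponent depends only on $\x$ and $\y$, so the inner sum collapses to $c_{\x,\y}\,v^{t n_z + (2-t) n_w}\,\y$. On the formal side, Definition~\ref{def:tModifyFormal} uses the exponent $\gr_t(\y)-\gr_t(\x)+1$. A short grading computation using $M(\y)-M(\x) = 2n_w - 1$ and $A(\y)-A(\x) = n_w - n_z$ gives
\[
\gr_t(\y)-\gr_t(\x)+1 \;=\; \bigl(M(\y)-M(\x)\bigr) - t\bigl(A(\y)-A(\x)\bigr) + 1 \;=\; (2-t)n_w + t n_z,
\]
so the exponents match and the two differentials coincide.

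There is no real obstacle here; the argument is essentially bookkeeping. The one substantive point worth isolating is that $\Mas(\phi)=1$ rigidifies both $n_w(\phi)$ and $n_z(\phi)$ in terms of the Maslov and Alexander gradings of the endpoints, which is exactly what makes the $v$-weighted count in \eqref{eq:tModDiff} factor through the original coefficients $c_{\x,\y}$ of Equation~\eqref{eq:GeneralComplex}. With that in hand, the rest is the grading identity above.
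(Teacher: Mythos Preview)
Your proof is correct and takes the same approach as the paper: both reduce to the exponent identity $t\,n_z(\phi) + (2-t)\,n_w(\phi) = \gr_t(\y) - \gr_t(\x) + 1$, which the paper delegates to the computation in Lemma~\ref{lem:tModIsCx} while you verify directly. (One harmless sign slip: Equation~\eqref{eq:AlexanderFormula} gives $n_z(\phi) = n_w(\phi) + A(\x) - A(\y)$, not the reversed sign you wrote, but your final computation uses the correct relation $A(\y)-A(\x) = n_w - n_z$ and is unaffected.)
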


\begin{proof}
After identifying the generators and their gradings, 
we only need to check that if $c_{\x,\y}\neq 0$, then
\[ 
t n_z(\phi)+(2-t)n_w(\phi)=\gr_t(\y)-\gr_t(\x)+1;
\]
but this was verified in the proof of Lemma~\ref{lem:tModIsCx}.
\end{proof}

We give the $t$-modified complex $C^t$ the following second, more
transparently functorial description.  As before, let $C$ be a
finitely generated, Maslov graded, Alexander filtered chain complex
over $\Field [U]$, and think of $\Field[U]$ as a subring of $\RRing$
(with variable $v$) where $U=v^2$.  Consider the tensor product of $C$
now with the field ${\mathcal {R}}^*$ of fractions:
\[
C_{{\mathcal {R}}^*}=C\otimes _{\Field [U]}{{\mathcal {R}}^*}.
\]
The Maslov grading and Alexander filtration on $C$ induce
real-valued Maslov gradings and Alexander filtrations on $C_{{\mathcal{R}}^*}$ by 
the convention that 
\[
A(v^{\alpha}\x)=A(\x)-\frac{\alpha}{2} ~\text{and}~ M(v^{\alpha}\x)=M(\x)-\alpha,\]
where $\x$ is a homogeneous generator of $C$ as a $\Field[U]$-module
Just as in the discussion
preceding Subsection~\ref{subsec:Results}, $C_{\RRing ^*}$ admits and
algebraic filtration (given by $-\frac{\alpha}{2}$ for
$v^{\alpha}\cdot \x$), and $C_{\RRing}= C\otimes _{\Field [U]}\RRing$
can be recovered from $C_{\RRing ^*}$ by taking the elements with
algebraic filtration level $\leq 0$.

Rewrite the boundary operator from Equation~\eqref{eq:GeneralComplex}
as
\[ 
\partial \x = \sum_{\y} c_{\x,\y} \cdot v^{M(\y)-M(\x)+1} \y.
\]
For each $t\in [0,2]$, there is a new filtration $F^t$ on
$C_{{\mathcal {R}}^*}$ defined by $\frac{t}{2}$ times the Alexander 
filtration plus $(1-\frac{t}{2})$ times the algebraic filtration.
Clearly, this filtration depends on $t$.  Observe that multiplication
by $v$ drops the algebraic filtration by $\OneHalf$ and the Alexander
filtration by $\OneHalf$, and hence it drops the $F^t$ filtration level by
$\OneHalf$. Consider the subcomplex $E^t$ of $C_{{\mathcal {R}}^*}$
(as an $\RRing$-module) with filtration level $F^t\leq 0$.  This
subcomplex retains a Maslov grading (and multiplication by $v$ drops the
Maslov grading by one).

\begin{lemma}
  \label{lem:SecondConstruction}
  The chain complex $E^t$ with its induced Maslov grading is isomorphic
  to the chain complex $C^t$ from Definition~\ref{def:tModifyFormal}.
\end{lemma}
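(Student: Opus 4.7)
The plan is to exhibit an explicit $\RRing$-linear isomorphism $\phi \colon C^t \to E^t$ that intertwines the Maslov grading on $E^t$ with the $\gr_t$-grading on $C^t$ and also intertwines the two differentials. Both sides are modules built on the underlying generating set of $C$, so the key is to rescale each generator by an appropriate power of $v$.

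First I would unpack the condition defining $E^t$. For a generator $\x$ of $C$, the element $v^\alpha \x \in C_{\RRing^*}$ has algebraic filtration level $-\alpha/2$ and Alexander filtration level $A(\x) - \alpha/2$, so
\[
F^t(v^\alpha \x) = \tfrac{t}{2}\bigl(A(\x) - \tfrac{\alpha}{2}\bigr) + \bigl(1 - \tfrac{t}{2}\bigr)\bigl(-\tfrac{\alpha}{2}\bigr) = \tfrac{t}{2} A(\x) - \tfrac{\alpha}{2}.
\]
Thus $v^\alpha \x$ lies in $E^t$ precisely when $\alpha \geq t A(\x)$, and as an $\RRing$-module $E^t$ splits as a direct sum over the generators of $C$, where the $\x$-summand is $v^{tA(\x)} \RRing \cdot \x \subset C_{\RRing^*}$.

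Next, I would define $\phi \colon C^t \to E^t$ on generators by $\phi(\x) = v^{tA(\x)} \x$ and extend $\RRing$-linearly. The computation above shows that $\phi(\x) \in E^t$ (indeed, it achieves equality $F^t = 0$), and the analogous direct-sum description of $C^t$ as $\bigoplus_\x \RRing \cdot \x$ makes $\phi$ manifestly an isomorphism of $\RRing$-modules. For the grading, the Maslov grading on $C_{\RRing^*}$ is determined by $M(v^\alpha) = -\alpha$, so
\[
M(\phi(\x)) = M(\x) - t A(\x) = \gr_t(\x),
\]
and since multiplication by $v$ drops $M$ (on $E^t$) and $\gr_t$ (on $C^t$) each by one, $\phi$ is grading-preserving.

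Finally, I would verify compatibility with differentials by a direct calculation. Writing $\partial \x = \sum_\y c_{\x,\y} v^{M(\y) - M(\x) + 1} \y$ on $C_{\RRing^*}$ and $\partial_t \x = \sum_\y c_{\x,\y} v^{\gr_t(\y) - \gr_t(\x) + 1}\y$ on $C^t$, both $\phi(\partial_t \x)$ and $\partial(\phi(\x))$ equal
\[
\sum_\y c_{\x,\y} \, v^{M(\y) - M(\x) + tA(\x) + 1}\,\y,
\]
as follows by substituting $\gr_t(\y) - \gr_t(\x) = M(\y) - M(\x) - t(A(\y) - A(\x))$ on one side and pulling the factor $v^{tA(\x)}$ through $\partial$ on the other. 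This establishes $E^t \cong C^t$. There is no real obstacle here: the statement is essentially a bookkeeping identity, and the only thing to watch is keeping the conventions for $M(v^\alpha)$, $A(v^\alpha)$, and the two filtrations consistent throughout.
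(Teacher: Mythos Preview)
Your proof is correct and follows exactly the same route as the paper: define $\phi(\x)=v^{tA(\x)}\x$, check that this is a grading-preserving $\RRing$-module isomorphism, and verify it is a chain map by comparing exponents of $v$. Your write-up is in fact more detailed than the paper's, since you explicitly compute $F^t(v^\alpha\x)=\tfrac{t}{2}A(\x)-\tfrac{\alpha}{2}$ to justify that $\phi$ lands in (and onto) $E^t$, whereas the paper simply asserts the map is an isomorphism.
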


\begin{proof}
  Consider the ${\mathcal{R}}$-module map $\phi\colon C^t \to
    C_{{\mathcal {R}}^*}$ defined by $\x \mapsto v^{t A(\x)} \x$. It
    is straightforward to check the image of $\phi$ lies in $E^t$, and
    indeed $\phi\colon C^t\to E^t$ induces an ${\mathcal {R}}$-module
    isomorphism.  This isomorphism respects grading, since
  \[ M(\phi(\x))=M(v^{tA(\x)}\x)=M(\x)-tA(\x)=\gr_t(\x).\]

  Write $\partial'$ for the differential on $E^t$ and $\partial$ for the differential on $C^t$. 
  We verify that  $\phi$ is a chain map:
  \begin{align*}
    \partial' \phi(\x)&=\partial' (v^{tA(\x)}\x)= v^{tA(\x)}\partial' \x=
     v^{tA(\x)} \sum_{\y} c_{\x,\y} v^{M(\y)-M(\x)+1}\cdot \y \\
    &= \sum_{\y} c_{\x,\y} v^{M(\y)-M(\x)+1 + tA(\x)-tA(\y)}\cdot v^{tA(\y)}\y 
  =\sum_{\y} c_{\x,\y} v^{\gr_t(\y)-\gr_t(\x)+1} \phi(\y)=\phi(\partial \x),
  \end{align*}
  since $t A(\x)-tA(\y) + M(\y)-M(\x)+1=\gr_t(\y)-\gr_t(\x)+1$.
\end{proof}

We state functoriality in terms of maps
between Alexander filtered, Maslov graded chain complexes.  A morphism
$\phi\colon C \to C'$ of degree $m\in \Z$ between two Alexander filtered, Maslov graded
chain complexes (in the sense of Definition~\ref{def:AlexMas}) is an
$\Field[U]$-module map from $C$ to $C'$ that respects filtration
levels (i.e. if $\xi\in C$ has filtration level $\leq t$, then
$\phi(\xi)\in C'$ has filtration level $\leq t$, as well) and that sends
elements in $C_d$ to elements in $C'_{d+m}$. A {\em homomorphism} $f\colon C\to C'$ between two Alexander 
filtered, Maslov graded chain complexes is a morphism of degree $0$ that also satisfies
$\partial' \circ f + f \circ \partial=0$. For instance, the identity map from $C$ to itself is a homomorphism.
Two homomorphisms $f, g \colon C\to C'$ are said to be {\em homotopic}
if there is a morphism $h\colon C\to C'$ of degree $1$ with 
$f+g=\partial'\circ h + h\circ \partial$. As usual, $C$ and $C'$ are called {\em filtered chain homotopy equivalent}
if there are homomorphisms $f\colon C\to C'$ and $g\colon C'\to C$ so that $f\circ g$ and $g\circ f$ are homotopic to the respective
identity maps. 

With the above definitions in place, functoriality follows immediately from the second version of the
$t$-modified construction (given in
Lemma~\ref{lem:SecondConstruction}):
\begin{prop}
  \label{prop:tModifyFunctor}
  Let $f\colon C \to C'$ be a Maslov graded, Alexander filtered chain
  map between chain complexes over $\Field [U]$.  There is a
  corresponding graded chain map $f^t\colon C^t \to (C')^t$, with the
  following properties:
\begin{itemize}
\item 
    If $f\colon C \to C'$ and $g \colon C'\to C''$ are two Maslov
    graded, Alexander filtered chain maps, then
\[
(g\circ f)^t = g^t \circ f^t.
\]
\item If $f,g\colon C \to C'$ Maslov
    graded, Alexander filtered chain maps are chain homotopic to each
    other, then $f^t$ and $g^t$ are chain homotopic to one another. In
    particular, filtered chain homotopy equivalent
    complexes are transformed by the construction $C\mapsto C^t$ into
 homotopy equivalent complexes.
\item For  $C$ and $C'$  Maslov graded, Alexander filtered 
chain complexes over $\Field[U]$ we have 
\[
(C\otimes_{\Field[U]} C')^t \cong (C^t)\otimes_{{\mathcal {R}}} (C')^t.
\]
\end{itemize}
\qed
\end{prop}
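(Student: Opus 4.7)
The plan is to exploit the second description of the $t$-modified complex from Lemma~\ref{lem:SecondConstruction}, which makes the construction manifestly functorial. Recall that, up to the change of variable $\phi(\x) = v^{tA(\x)}\x$, the complex $C^t$ is identified with the subcomplex $E^t \subseteq C_{\RRing^*} = C \otimes_{\Field[U]} \RRing^*$ of elements of $F^t$-filtration level $\leq 0$, equipped with its induced Maslov grading (which under $\phi$ corresponds to the $\gr_t$-grading on $C^t$). From this viewpoint, $t$-modification is the two-step operation ``extend scalars to $\RRing^*$, then pass to the $F^t \leq 0$ sub-$\RRing$-module,'' and functoriality should come essentially for free.

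\textbf{Definition of $f^t$ and composition.} Given a homomorphism $f\colon C \to C'$, form the extension $f \otimes \mathrm{id}\colon C_{\RRing^*} \to C'_{\RRing^*}$. Since $f$ preserves the Maslov grading, the Alexander filtration, and (being $\Field[U]$-linear) the algebraic filtration, this extension preserves the $F^t$ filtration on both sides. It therefore restricts to a Maslov-preserving chain map $E^t \to (E')^t$; transporting across the identification of Lemma~\ref{lem:SecondConstruction} yields the desired $\gr_t$-preserving chain map $f^t$. For $f\colon C \to C'$ and $g\colon C' \to C''$, the equality $(g \circ f) \otimes \mathrm{id} = (g \otimes \mathrm{id}) \circ (f \otimes \mathrm{id})$ on $C_{\RRing^*}$ restricts to $(g \circ f)^t = g^t \circ f^t$ on $E^t$.

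\textbf{Chain homotopy.} A chain homotopy $h\colon C \to C'$ (a morphism of Maslov degree $+1$ preserving the Alexander filtration, and satisfying $f + g = \partial' h + h\partial$) again extends to $h \otimes \mathrm{id}$, which preserves the $F^t$ filtration by the same considerations as for $f$, and so restricts to a Maslov-degree-$+1$ map $h^t\colon E^t \to (E')^t$. The homotopy identity survives both base change and restriction, yielding $f^t + g^t = \partial_t' h^t + h^t \partial_t$. Applied to the two homotopies witnessing a filtered chain homotopy equivalence, this produces a chain homotopy equivalence between $C^t$ and $(C')^t$.

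\textbf{K\"unneth.} Work instead with the $C_{\RRing}$ description of $C^t$. As $\RRing$-modules,
\[
(C \otimes_{\Field[U]} C') \otimes_{\Field[U]} \RRing \;\cong\; (C \otimes_{\Field[U]} \RRing) \otimes_{\RRing} (C' \otimes_{\Field[U]} \RRing),
\]
and the $\gr_t$-gradings match on both sides because $M$ and $A$ are additive under tensor product. It remains to verify that the differentials agree: by the Leibniz rule for the tensor product differential, $\partial(\x \otimes \x') = (\partial \x) \otimes \x' + \x \otimes (\partial \x')$, and an application of the defining formula $\partial_t \x = \sum_{\y} c_{\x,\y}\, v^{\gr_t(\y) - \gr_t(\x) + 1}\, \y$ from Definition~\ref{def:tModifyFormal} to the tensor factors shows that the $v$-exponent produced on $\x \otimes \x'$ in each summand equals the one prescribed by $\partial_t \otimes 1 + 1 \otimes \partial_t'$ on the right-hand side. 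The main obstacle is precisely this last step: the other three assertions follow tautologically from the ``extend scalars and truncate'' picture, whereas the K\"unneth statement requires one to carefully track how the $v$-exponents recording $\gr_t$-shifts on the two tensor factors assemble into the single exponent dictated by the $\gr_t$-shift on the product.
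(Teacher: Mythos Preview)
Your proof is correct and takes the same approach as the paper, which simply notes that functoriality follows immediately from the second description of the $t$-modified complex in Lemma~\ref{lem:SecondConstruction} and omits further details. You have supplied those details, including the K\"unneth verification, which the paper leaves entirely to the reader.
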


The dual complex of a chain complex $C$ over $\RRing$ can be defined
by a simple adaptation of the definitions we had earlier for chain
complexes over $\Field [U]$. In particular, if $C$ is a finitely
generated chain complex over $\RRing$, we can consider its dual
complex $C^*=\Mor_{\RRing}(C,\RRing )$, as the module of maps
\[
\phi\colon C \to \RRing
\]
which commute with the $\RRing$-action, i.e. for $x\in C$ and $r\in
\RRing$ we have
\[
\phi(r\cdot x)=r \cdot \phi(x).
\]
There is a natural Kronecker
pairing
\[
C \otimes _{\RRing} \Mor_{\RRing}(C,\RRing )\to \RRing ,
\]
denoted $\langle\cdot,\cdot\rangle$ and defined as $\langle x, \phi
\rangle =\phi (x)$.  The dual complex $\Mor_{\RRing}(C,\RRing )$ is
equipped with the differential $d\colon \Mor_{\RRing}(C,\RRing )\to
\Mor_{\RRing}(C,\RRing )$ determined by
\[
\langle x, d\phi\rangle = \langle \partial x,\phi\rangle.
\]

Equipping the ring $\RRing$ with the grading $\gr (v^{\alpha
})=-\alpha$, we define the degree of a morphism in $C^*$ to be $m$ if
it takes elements in $C$ of degree $n$ to algebra elements of degree
$m+n$. 

As the results of the above construction, for a graded chain complex
$C$ over $\RRing$ we get the dual chain complex $C^*$, which is also
graded.  (Note that in this way we get the usual cochain complex, only
equipped with $(-1)$-times its usual grading.)  With this notion in
place, we get
\begin{prop}\label{prop:MirrorChain}
For a Maslov graded, Alexander filtered chain complex $C$ over $\Field
[U]$ and for its dual complex $C^*={\rm {Hom}}(C, \Field [U])$ we have
that
\[
(C^*)^t\cong (C^t)^*.
\]
\end{prop}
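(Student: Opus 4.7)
The plan is to construct an explicit isomorphism by working with dual bases on both sides. Since $C$ is finitely generated and free over $\Field[U]$, I fix a homogeneous $\Field[U]$-basis $\{\x\}$ of $C$, write the differential in the form of Equation~\eqref{eq:GeneralComplex} with structure constants $c_{\x,\y}\in\Field$, and produce two natural $\RRing$-bases: on $(C^*)^t$, the $\Field[U]$-dual basis $\{\x^*\}$ (extended $\RRing$-linearly), and on $(C^t)^*$, the $\RRing$-dual basis $\{(\x)^\vee\}$. The candidate isomorphism is the $\RRing$-linear map $\x^*\mapsto (\x)^\vee$.

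First I would check the gradings agree. By the conventions in Section~\ref{sec:HFK} applied to $C^*=\Mor_{\Field[U]}(C,\Field[U])$, one has $M(\x^*)=-M(\x)$ and $A(\x^*)=-A(\x)$, so $\gr_t(\x^*)=-\gr_t(\x)$. On the other side, by the grading convention for $\RRing$-module duals used in the definition preceding Proposition~\ref{prop:MirrorChain} (i.e., $\phi$ has degree $m$ if it takes degree-$n$ elements to degree-$(m+n)$ elements of $\RRing$), the morphism $(\x)^\vee$ sends $\x$, of degree $\gr_t(\x)$, to $1\in\RRing$, which has degree $0$, so $\gr_t((\x)^\vee)=-\gr_t(\x)$. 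Thus the map $\x^*\mapsto(\x)^\vee$ is grading-preserving.

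Next I would verify that the differentials match, which is the only step requiring computation. Write $\partial\x=\sum_\y c_{\x,\y}U^{(M(\y)-M(\x)+1)/2}\y$; the dual differential on $C^*$ over $\Field[U]$ has the form $d\x^*=\sum_\y c_{\y,\x}U^{(M(\x)-M(\y)+1)/2}\y^*$, which in the notation of Equation~\eqref{eq:GeneralComplex} has structure constants $c^*_{\x^*,\y^*}=c_{\y,\x}$. Applying the $t$-modification formula of Definition~\ref{def:tModifyFormal} and using $\gr_t(\x^*)=-\gr_t(\x)$, one obtains
\[
d_t\,\x^*=\sum_\y c_{\y,\x}\,v^{\gr_t(\x)-\gr_t(\y)+1}\,\y^*.
\]
On the other side, the $\RRing$-dual differential on $(C^t)^*$ is determined by $\langle\y,d(\x)^\vee\rangle=\langle\partial_t\y,(\x)^\vee\rangle=c_{\y,\x}\,v^{\gr_t(\x)-\gr_t(\y)+1}$, giving exactly $d(\x)^\vee=\sum_\y c_{\y,\x}\,v^{\gr_t(\x)-\gr_t(\y)+1}(\y)^\vee$. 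The two expressions agree under $\x^*\mapsto(\x)^\vee$, so the chain map is an isomorphism.

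The only potential obstacle is purely notational: keeping straight the two different meanings of ``dual'' (over $\Field[U]$ versus over $\RRing$) and the two different meanings of ``grading'' on a dual (Maslov/Alexander on $C^*$ versus the $\RRing$-valued degree on $(C^t)^*$). Once the conventions are written down side-by-side as above, the identification is essentially tautological, reflecting the general fact that for a finitely generated free module, dualizing commutes with the base change $\Field[U]\hookrightarrow\RRing$ given by $U=v^2$, and that the $t$-modification is itself just such a base change combined with a rescaling of grading determined by the Alexander filtration.
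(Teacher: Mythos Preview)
Your proof is correct and is precisely the explicit verification that the paper omits: the paper's own proof consists of the single sentence ``The proof follows quickly from the definitions,'' and your dual-basis computation is exactly how one unwinds those definitions. Both gradings and differentials are checked correctly, and your closing remark about base change is the right conceptual summary.
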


\begin{proof}
  The proof follows quickly from the definitions.
\end{proof}

\subsection{Consequences for $\Upsilon_K(t)$}
\label{sec:Applications}

Some basic properties of $\Upsilon_K(t)$ enumerated in
Section~\ref{sec:intro} are consequences of corresponding properties
of knot Floer homology, and the formal properties of $t$-modification.
Before turning to the proofs, however, we complete the discussion of
Section~\ref{sec:DefUpsilon} by verifying invariance of $t$-modified
knot Floer homology.

\bigskip

\begin{prooff} {\bf of Theorem~\ref{thm:InvarianceOfHFKt}.}
  As shown in~\cite{OSKnots}, the Maslov graded,
  Alexander filtered chain complexes over $\Field[U]$ associated to
  two Heegaard diagrams representing the same knot $K$ are filtered
  homotopy equivalent.~(Independence of the Heegaard diagram
    and the knot orientation are verified in~\cite[Theorem~3.1 and Proposition~3.9]{OSKnots} respectively.)
  According to
  Proposition~\ref{prop:tModifyFunctor},  the filtered homotopy
  equivalence induces homotopy equivalence of $t$-modified complexes,
  concluding the proof.
\end{prooff}
Notice that this result then proves Theorem~\ref{thm:Invariance}.

\bigskip

\begin{prooff} {\bf of Corollary~\ref{cor:InvariantUps}.}
  According to Theorem~\ref{thm:InvarianceOfHFKt}, the graded
  ${\mathcal {R}}$-module $\HFKt(K)$ is a knot invariant. Since
  $\Upsilon_K(t)$ is extracted from the graded ${\mathcal {R}}$-module
  structure of $\HFKt (K)$, the claim of the corollary follows.
\end{prooff}

Having established the invariance of $\HFKt (K)$ and $\Upsilon_K$, we turn to the 
basic
properties of $\Upsilon_K$ stated in Section~\ref{sec:intro}.

\bigskip

\begin{prooff} {\bf of Proposition~\ref{prop:SymmUps}.} Suppose that $C$ is a
  chain complex for knot Floer homology derived from a Heegaard
  diagram representing the knot $K$ with two basepoints $w$ and
  $z$. Let $C'$ be the chain complex with the roles of the two
  basepoints switched. As stated in
  Proposition~\ref{prop:HFKsymmetry}, there is a
  filtered quasi-isomorphism  between $C$ and $C'$. The image of a
  generator $\x$ of $C$ is mapped to a generator $\x'$ of $C'$ with
  \begin{align*}
    M'(\x')&=M(\x)-2A(\x) \\
    A'(\x')&=-A(\x).
  \end{align*}
  Thus, $\gr_t(\x)=\gr_{2-t}'(\x')$, and since the algebraic structure
  of $C$ and $C'$ is the same, the equality 
\begin{equation}\label{eq:symm}
\Upsilon _K(t)=\Upsilon _K(2-t)
\end{equation}
follows.
\end{prooff}

\bigskip

\begin{prooff} {\bf  of Proposition~\ref{prop:BoundaryCases}.}
  Observe that when $t=0$, then $\CFKt (K)$ agrees with the usual
  differential on $\CFm(S^3)$ (the Heegaard Floer chain complex of
  $S^3$), with its standard Maslov grading.  In turn, $\CFm(S^3)$ is
  graded so that its generator has grading $0$, so $\Upsilon_K(0)=0$.
\end{prooff}

\bigskip

\begin{prooff} {\bf of Proposition~\ref{prop:AddUpsilon}.}
  This follows from the K{\"u}nneth formula for connected sums
  (c.f.~\cite[Theorem~7.1]{OSKnots}, restated here as
  Theorem~\ref{thm:Kunneth}), together with
  Propositions~\ref{prop:tModifyAsFormal} and \ref{prop:tModifyFunctor}.
\end{prooff}

\bigskip

\begin{prooff} {\bf of Proposition~\ref{prop:mirror}.}
Combining
Propositions~\ref{prop:tModifyAsFormal},~\ref{prop:MirrorChain},
and~\ref{prop:Mirror}, we have 
  \begin{align*}
    \CFKt(m(K))&=(\CFKm(m(K)))^t \\
    &= (\CFKm(K)^*)t \\
    &= (\CFKm(K)^t)^* \\
    &= (\CFKt(K))^*. 
  \end{align*}
  It follows from the universal coefficient theorem, together with our grading conventions
  on dual complexes, that
  \[ 
  \Upsilon _{m(K)}(t)=-\Upsilon _K (t),
  \]
concluding the proof.
\end{prooff}

\subsection{Slice genus bounds}
The slice genus bound of Theorem~\ref{thm:BoundSliceGenus} (and of
Proposition~\ref{prop:CrossChangeUpsilon}) will be seen as
consequences of the slice genus bounds coming from 
Theorem~\ref{thm:NuBounds}, and a simple algebraic principle.

Recall that if $C$ is a finitely generated, graded chain complex over
$\Field [U]$, then $\delta(C)$ is by definition the maximal
grading of any non-torsion element in the homology $H_*(C)$.

\begin{lemma}
  \label{lem:InclusionInequality}
  Let $C \to C'$ be a grading-preserving map of finitely generated,
  graded chain complexes over $\Field[U]$ 
  with the property that the induced map
    $H(C)\otimes_{\Field[U]} \Field[U,U^{-1}]\to H(C')\otimes_{\Field[U]} \Field[U,U^{-1}]$
  is an isomorphism.
  Then, $\delta(C)\leq \delta(C')$.
\end{lemma}
\begin{proof}
  If $c\in C$ represents a non-torsion homology class, then so
  does its image in $H(C')$ (by the hypothesis). Thus $\delta(C)$,
  which coincides with the grading of some $c\in C$, is less than or equal to
  $\delta(C')$, as needed.
\end{proof}

Obviously, a similar inequality holds for complexes over the ring
$\RRing$ (after we replace $\Field [U, U^{-1}]$ with $\RRing ^*$ in
the hypothesis).

\begin{prop}
  \label{prop:BoundByNu}
  For $0 \leq t \leq 1$, there is an inequality
  \[ -t \nu^-(K)\leq \Upsilon_K(t).\]
\end{prop}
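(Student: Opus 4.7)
The plan is to exhibit, for each $t \in [0,1]$, a homogeneous non-torsion class in $\HFKt(K)$ of $\gr_t$-grading $-t\nu^-(K)$, from which the inequality is immediate. I will work with the second description $E^t \subseteq \CFKm(K) \otimes_{\Field[U]} \RRing^*$ of the $t$-modified complex from Lemma~\ref{lem:SecondConstruction}: this model retains the original $\CFKm$-differential and identifies the $\gr_t$-grading on $\CFKt(K)$ with the ordinary Maslov grading on $E^t$, which makes both the cycle and non-torsion checks essentially automatic.

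Set $s = \nu^-(K)$. By definition of $\nu^-$, there is a cycle $z \in \mathcal{A}(K,s)$ of Maslov grading $0$ whose homology class is non-torsion; equivalently, $z$ is a cycle in $\CFKm(K)$ with $M(z)=0$ and $A(z) \leq s$, and its algebraic filtration level $j$ (viewed in $\CFKinf(K)$) satisfies $j \leq 0$. My candidate homogeneous element of $E^t$ is $v^{ts} z$, and the filtration check reduces to
\[
F^t(v^{ts} z) = \tfrac{t}{2}\bigl(A(z) - \tfrac{ts}{2}\bigr) + \bigl(1-\tfrac{t}{2}\bigr)\bigl(j - \tfrac{ts}{2}\bigr),
\]
which, using $A(z) \leq s$, $j \leq 0$, and the non-negativity of $\tfrac{t}{2}$ and $1-\tfrac{t}{2}$, is bounded by $\tfrac{ts}{2} - \tfrac{ts}{2} = 0$. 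Hence $v^{ts} z \in E^t$, and it is a cycle because the differential on $\CFKm(K) \otimes \RRing^*$ is $\RRing^*$-linear.

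Non-torsion-ness of $[v^{ts} z] \in H(E^t)$ follows from the observation that inverting $v$ on $E^t$ recovers the full complex $\CFKm(K) \otimes \RRing^*$, so $H(E^t) \otimes_{\RRing} \RRing^* \cong H(\CFKm(K)) \otimes \RRing^* \cong \RRing^*$ by Proposition~\ref{prop:HFmRankOne}; since $[z]$ represents the free generator of $H(\CFKm(K))/\Tors$, the class $[v^{ts} z]$ is non-zero in this localization, hence non-torsion in $H(E^t)$. Its Maslov grading in $E^t$ is $M(z)-ts = -ts$, which under the isomorphism $E^t \cong \CFKt(K)$ of Lemma~\ref{lem:SecondConstruction} transports to a $\gr_t$-grading of $-t\nu^-(K)$. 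The definition of $\Upsilon_K(t)$ as the maximum such grading now yields the stated inequality.

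The filtration inequality above is the main computational step, but it is routine once one tracks that $v^\alpha$ shifts both the algebraic and the Alexander filtrations by $-\alpha/2$. The subtler point is the non-torsion argument, where one must remember that non-torsion for an $\RRing$-module means surviving localization at $v$ (rather than at $U$), and then use that $E^t$ localizes back to the full complex. Working with the $E^t$-model sidesteps the otherwise awkward issue that the $t$-modified differential in the direct description is not literally the restriction of $\partial^-$, and it also makes the range $t\in[0,1]$ transparent (indeed the same computation goes through for all $t\in[0,2]$, with the $t\in[1,2]$ case being equivalent to $t\in[0,1]$ via the symmetry of Proposition~\ref{prop:SymmUps}).
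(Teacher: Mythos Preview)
Your proof is correct and is essentially the same as the paper's: both work in the $E^t$ model of Lemma~\ref{lem:SecondConstruction} and use that $v^{ts}\cdot\mathcal{A}_{\RRing}(K,s)$ sits inside $E^t$ with the inclusion becoming an isomorphism after inverting $v$. The paper packages this as an application of Lemma~\ref{lem:InclusionInequality}, whereas you unwind that lemma by directly exhibiting the non-torsion cycle $v^{ts}z$ and checking the filtration bound by hand; your remark that the computation in fact goes through for all $t\in[0,2]$ is correct.
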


\begin{proof}
  This claim follows from the second construction of the $t$-modified
  complex, from Lemma~\ref{lem:SecondConstruction}.  
Adapting the corresponding notion for $\Field [U]$-modules, let
${\mathcal A}_{\RRing}(K,s)$ denote the subcomplex 
of $C_{\RRing }$ generated by the elements 
of $C_{\RRing}$
satisfying $A\leq s$.
There is an inclusion (of subcomplexes over
  $\RRing$)
  \[ v^{ts}\cdot {\mathcal A}_{\RRing}(K,s)\subset \CFKt(K)\]
  which induces isomorphisms after we tensor with ${\mathcal {R}}^*$.
  Then 
  \[\delta({\mathcal A}_{\RRing}(K,s))-ts\leq \delta(\CFKt(K));\]
  so if $\delta({\mathcal A}_{\RRing}(K,s))=0$, then 
  \[ -t s \leq \delta(\CFKt(K)).\]
  Minimizing over all $s$ with $\delta({\mathcal A}(K,s))=0$, we
  obtain the claimed inequality.
\end{proof}

\bigskip

\begin{prooff} {\bf of Theorem~\ref{thm:BoundSliceGenus}.}
  By taking also the mirror $m(K)$ of $K$, and using the fact that
  $\Upsilon _{m(K)}(t)=-\Upsilon _K (t)$ from
  Proposition~\ref{prop:mirror}, we conclude that
  \[ |\Upsilon_K(t)|\leq t \max(\nu^-(K),\nu^-(m(K)).\]
  The theorem now follows from Theorem~\ref{thm:NuBounds}.
\end{prooff}

\bigskip

\begin{prooff} {\bf of Proposition~\ref{prop:CrossChangeUpsilon}.}
 Since $K_{-}\# (m(K_{+}))$ has slice genus less than or
  equal to one, Theorem~\ref{thm:BoundSliceGenus} gives
  $\Upsilon_{K_{-}\# (m(K_{+}))}(t) \leq t$, so
  \[\Upsilon_{K_-}(t)\leq \Upsilon_{K_+}(t)+t\]
  follows from Proposition~\ref{prop:AddUpsilon}.
  
  To see that $\Upsilon_{K_+}(t)\leq \Upsilon_{K_-}(t)$, we proceed as
  follows.  The triangle counting map used in the proof of the skein
  exact sequence \cite[Theorem~10.2]{OSKnots} induces a filtered map
  $\CFKinf(K_{+})\to\CFKinf(K_{-})$. This is a sum over $\SpinC$
  structures (on the cobordism $W$) of maps; but restricting to either
  $\SpinC$ structure with minimal $|c_1(\spinc)|$ (evaluated on the
  generator of $H_2(W)$), we get an isomorphism on $\HFm$. Apply
  $t$-modification to this map, as in
  Proposition~\ref{prop:tModifyFunctor}, and notice that
  Lemma~\ref{lem:InclusionInequality} applies.
\end{prooff}

\begin{remark}
The above proposition could alternatively be seen as a consequence of
the skein inequality for $\nu^-(K)$. This follows quickly from the
behaviour of the maps associated to negative definite cobordisms,
see~\cite{AbsGraded}.
\end{remark}

\subsection{Special behaviour for $t\in [0,2]\cap {\mathbb {Q}}$}
The following proposition indicates that we obtain the same
  $\Upsilon$-invariant, regardless of the base ring used in the
  definition.  Indeed, for rational $t=\frac{m}{n}$, the complex
$\CFKt (K)$ can be defined over the subring $\Field[\vv^{1/n}]$ of
$\RRing$ (this is how we defined $\Upsilon$ in the introduction).

\begin{prop}
  \label{prop:SameUpsilon}
  Let $C$ be a finitely generated, free, graded chain complex over
  $\Field[v^{1/n}]$, and consider the induced chain complex $C\otimes
  _{\Field[v^{1/n}]}\RRing$.  Then, the maximal grading of any
  homogeneous non-torsion element of $H(C)$ agrees with the maximal
  grading of any homogeneous non-torsion element of
  $H(C\otimes_{\Field[v^{1/n}]}\RRing)$. In particular, for rational
  $t$, the invariant $\Upsilon _K$, defined using $\CFKt(K)$ with
  coefficients in $\Field[v^{1/n}]$, coincides with $\Upsilon _K$,
  defined using $\CFKt(K)$ with coefficients in $\RRing$.
\end{prop}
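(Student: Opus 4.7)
The plan is to reduce the statement to a purely algebraic fact: for a finitely generated graded chain complex over the graded PID $\Field[v^{1/n}]$, the structure theorem together with flatness of the extension to $\RRing$ forces the maximal grading of a homogeneous non-torsion class in homology to be preserved under base change. Setting $u=v^{1/n}$, the ring $\Field[v^{1/n}]=\Field[u]$ is a graded principal ideal domain in which $u$ is homogeneous of grading $-1/n$. Since $C$ is finitely generated and free over this ring, $H(C)$ is a finitely generated graded $\Field[u]$-module.

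The key step is to apply the graded version of the structure theorem to obtain a splitting
\[
H(C) \;=\; \bigoplus_{i=1}^{r} \Field[u]\cdot e_i \;\oplus\; \bigoplus_{j=1}^{s} \bigl(\Field[u]/u^{k_j}\bigr)\cdot f_j
\]
with homogeneous generators $e_i,f_j$. A homogeneous element is non-torsion precisely when its projection onto the free summand is nonzero, so the maximum grading of a non-torsion homogeneous element in $H(C)$ is $\max_i \gr(e_i)$. Next I would observe that $\RRing$ is a torsion-free $\Field[u]$-module (it is a domain containing $\Field[u]$), and over a PID torsion-free is equivalent to flat. Hence tensoring with $\RRing$ is exact and yields
\[
H\bigl(C \otimes_{\Field[u]} \RRing\bigr) \;=\; H(C)\otimes_{\Field[u]}\RRing \;=\; \bigoplus_i \RRing\cdot e_i \;\oplus\; \bigoplus_j \bigl(\RRing/u^{k_j}\bigr)\cdot f_j.
\]
The first summand is $\RRing$-free on homogeneous generators of the same gradings as before, while each $\RRing/u^{k_j}$ is $\RRing$-torsion because $u^{k_j}$ remains a nonzero element of $\RRing$. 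Therefore the maximum grading of a homogeneous non-torsion element of $H(C\otimes\RRing)$ is again $\max_i \gr(e_i)$, which establishes the first assertion of the proposition.

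For the ``in particular'' clause I would specialize to $C=\CFKt(K)$ viewed with coefficients in $\Field[v^{1/n}]$. By construction (Definition~\ref{def:tModifyFormal}) this complex is finitely generated and free on the finite set $\Gen$, so the general algebraic statement applies and both versions of $\Upsilon_K(t)$ extract the same maximum grading. The one point that requires care is justifying a graded form of the structure theorem, but this is routine since $\Field[u]$ is generated in a single homogeneous degree and the standard Smith normal form reduction can be carried out on a presentation matrix with homogeneous entries. I expect no serious obstacle: the entire argument is a short exercise in commutative algebra, and the main thing to verify cleanly is simply the flatness of $\RRing$ over $\Field[u]$.
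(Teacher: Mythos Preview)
Your proposal is correct and follows essentially the same approach as the paper: both arguments use that $\RRing$ is torsion-free (hence flat) over the PID $\Field[v^{1/n}]$, deduce $H(C\otimes\RRing)\cong H(C)\otimes\RRing$, and check that the free/torsion decomposition is preserved under base change so the maximal free-generator grading is unchanged. The only cosmetic difference is that you invoke the graded structure theorem to obtain an explicit cyclic decomposition, whereas the paper argues directly with the short exact sequence $0\to\Tors(M)\to M\to M/\Tors(M)\to 0$ to show $\Tors(M)\otimes\RRing=\Tors(M\otimes\RRing)$; neither gains anything substantive over the other.
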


\begin{proof}
  For a graded module $M$ over $\Field[v^{1/n}]$, let $\Tors(M)$
  denote its torsion submodule. 
  We argue first that
  \begin{equation}
    \label{eq:TorsionModule}
    (M\otimes \Ring)/\Tors(M\otimes \Ring) = (M/\Tors(M))\otimes \Ring.
  \end{equation}
  To see this, consider  the short exact sequnece 
  \[ 0\rightarrow \Tors(M)\rightarrow M\rightarrow M/\Tors(M)\rightarrow 0, \]
  where $M/\Tors(M)$ is a free module. Since $\Ring$ is torsion-free
  as a module over $\Field[v^{1/n}]$, we can tensor the above short
  exact sequence with $\Ring$ to get
  \[ 0\rightarrow \Tors(M)\otimes \Ring\rightarrow M\otimes \Ring
  \rightarrow (M/\Tors(M))\otimes \Ring \rightarrow 0. \] Since the
  image of $\Tors(M)\otimes \Ring$ is contained in $\Tors(M\otimes
  \Ring)$, and $(M/\Tors(M))\otimes \Ring$ is torsion-free, we 
  conclude that $\Tors(M)\otimes\Ring=\Tors(M\otimes \Ring)$ and hence
  Equation~\eqref{eq:TorsionModule} holds.

  By the universal coefficients theorem (with coefficient ring
  $\Field[v^{1/n}]$), there is an isomorphism
\[
H(C\otimes_{\Field[v^{1/n}]}\Ring)\cong H(C)\otimes_{\Field[v^{1/n}]}\Ring,
\]
  since $\Ring$ is a torsion-free module over $\Field[v^{1/n}]$.
  Applying Equation~\eqref{eq:TorsionModule}, we conclude that
\[ 
H(C\otimes_{\Field[v^{1/n}]}\Ring)/\Tors(H(C\otimes_{\Field[v^{1/n}]}\Ring))
  \cong (H(C)/\Tors(H(C)))\otimes \Ring.
\]
  
  The maximal grading of any non-torsion element in $H(C)$ is, in
  fact, the maximal grading of a generator of the free module
  $H(C)/\Tors(H(C))$, which of course coincides with the maximal
  grading of $H(C\otimes\Ring)/\Tors(H(C\otimes \Ring))$.
\end{proof}


\bigskip

\begin{prooff} {\bf of Proposition~\ref{prop:Integrality}.}
The grading on $\CFKt (K)$, when considered over $\Field [v^{1/n}]$, lies in 
$\frac{1}{n}\Z$. Therefore $\Upsilon _K (\frac{m}{n})\in \frac{1}{n}\Z$ 
follows from Proposition~\ref{prop:SameUpsilon}.
\end{prooff}

\section{$\Upsilon_K(t)$ as a function of $t$}
\label{sec:varying}

\subsection{Continuously varying homologies}
\label{sec:ContinuouslyVaryingHomologies}
Proposition~\ref{prop:FinitelyManySlopes} will be seen as the special
case of a general construction. As before, let ${\mathcal {R}}$ denote the
ring of long power series, defined in Definition~\ref{def:ring}.
We grade this ring (by real numbers) so that $\vv$ has grading $-1$;
i.e. $\vv^{\alpha}$ has grading $-\alpha$.

Let $C$ be a finitely generated complex over
${\mathcal {R}}$. Define $\Upsilon(C)$ to be the maximal grading of any
non-torsion element in $H_*(C)$.

Note that ${\mathcal{R}}$ has a unique maximal
  ideal denoted
  $\vv^{>0} {\mathcal{R}}$, which is the union $\cup_{\alpha>0}v^{\alpha} \cdot {\mathcal{R}}$. If $C$ is a finitely generated complex over ${\mathcal{R}}$, let
  $C/\vv^{>0}C$ denote the induced complex
  \[ C\otimes_{\mathcal R}({\mathcal R}/\vv^{>0}{\mathcal R})
  = C/\bigcup_{\alpha>0} v^{\alpha}\cdot C.\]

\begin{definition}
  \label{def:ContVaryCx}
  A continuously varying family of finitely generated chain
  complexeses $\{C^t\}$ over ${\mathcal {R}}$, indexed by $t\in [0,2]$,
  is the following data:
\begin{itemize}
  \item Generators $\{x_i\}_{i=1}^n$ (which generate each $C^t$ as
    $\RRing$-modules), so that the grading $\gr_t(x_i)\in {\mathbb {R}}$ is a
    continuous function of $t$.
  \item Differentials $D^t\colon C^t \to C^t$ which drop the grading
    $\gr_t$ by one, and which vary continuously in $t$; i.e.
    \[ D^t x_i = \sum_{j} a_{i,j}(t) x_j,\] where $a_{i,j}(t)$ is
    either zero for all $t$, or it is of the form
    $a_{i,j}(t)=\vv^{g_{i,j}(t)}$ for some continuous function
    $g_{i,j}$ of $t$.  In fact, grading considerations ensure
    $g_{i,j}(t)=\gr_t(x_j)-\gr_t(x_i)+1$.
\end{itemize}
\end{definition}

\begin{prop}
  \label{prop:Varying}
  Let $\{C^t\}_{t\in[0,2]}$ be a continuously varying family of
  finitely generated chain complexes over $\RRing$. Suppose moreover that
  the rank of $H_*(C^t)$ is one. Then, $\Upsilon(C^t)$ is a continuous
  function of $t$. Moreover, for each $t$, there is a corresponding
  generator $x(t)$ in the finite generating set with the property that
  \[ 
\Upsilon(C^t)=\gr_t(x(t)).
\] 
In fact, there is some non-zero homology class in $H_*(C^t/\vv^{>0}C^t)$ whose
grading agrees with $\Upsilon(C^t)$.
\end{prop}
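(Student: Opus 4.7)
The plan is to establish three claims in order: $(b)$ $\Upsilon(C^t) = \gr_t(x(t))$ for some generator $x(t)$; $(c)$ there is a nontrivial class at grading $\Upsilon(C^t)$ in $H_*(C^t/\mathfrak{m} C^t)$, where $\mathfrak{m} \subset \RRing$ is the maximal ideal (the algebraic form of ``setting $v=0$''); and $(a)$ continuity of $\Upsilon(C^t)$. The first two claims will be essentially formal consequences of maximality of the grading; claim $(a)$ will be the main substance.

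For $(b)$, I would take a homogeneous non-torsion cycle $\xi$ of grading $g := \Upsilon(C^t)$ and write $\xi = \sum_i c_i v^{\gr_t(x_i) - g} x_i$ with $c_i \in \F$. Membership in $C^t$ forces $\gr_t(x_i) \geq g$ whenever $c_i \neq 0$; if every nonzero term had $\gr_t(x_i) > g$, then with $\delta := \min_{c_i \neq 0}(\gr_t(x_i) - g) > 0$ one has $\xi = v^\delta \xi'$, where $\xi' \in C^t$ is a cycle of grading $g + \delta$ and is non-torsion (as $v^\delta[\xi'] = [\xi]$), contradicting maximality. For $(c)$, by $(b)$ the reduction $\overline{\xi}$ in $C^t/\mathfrak{m} C^t$ is nonzero and is a reduced cycle; if $\overline{\xi} = \overline{D^t}\,\overline{\eta}$, lifting $\overline{\eta}$ to $\eta$ and writing $\xi - D^t \eta = v^\delta \zeta$ for some $\delta > 0$ produces $\zeta$, a non-torsion cycle of grading $g + \delta$, again contradicting maximality.

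For $(a)$ the key input will be a $t$-independent cycle criterion. For a homogeneous $\xi = \sum_{j \in T} c_j v^{\gr_t(x_j) - g(t)} x_j$ of grading $g(t)$, since $a_{j,k}(t) = v^{\gr_t(x_k) - \gr_t(x_j) + 1}$ whenever nonzero, direct computation will give
\[
D^t \xi \;=\; \sum_k v^{\gr_t(x_k) - g(t) + 1}\Bigl(\sum_{j \in T,\, a_{j,k} \not\equiv 0} c_j\Bigr) x_k,
\]
so $\xi$ is a cycle iff $\sum_{j \in T,\, a_{j,k} \not\equiv 0} c_j \equiv 0 \pmod 2$ for every $k$—a combinatorial condition on $T$, the $\{c_j\}$, and the sparsity pattern of the $\{a_{j,k}\}$, manifestly independent of $t$. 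An analogous $t$-independent criterion will characterise when such a $\xi$ is the boundary of a chain of prescribed support. Given $t_n \to t_0$, I would apply $(b)$ to obtain non-torsion cycles $\xi_{t_n}$ at grading $\Upsilon(C^{t_n})$, and pigeonhole on the finitely many possible pairs (support $T$, coefficient vector $\{c_j\}$) to pass to a subsequence where both are constant. The limit $\xi^{\lim} := \sum_{j \in T} c_j v^{\gr_{t_0}(x_j) - b} x_j$, with $b := \lim_n \Upsilon(C^{t_n})$, will lie in $C^{t_0}$ by continuity of the gradings and be a homogeneous cycle of grading $b$. Non-torsion of $\xi^{\lim}$ will follow by contradiction: any pre-boundary $\mu_0$ of $v^\epsilon \xi^{\lim}$ at $t_0$ would, via the analogous $t$-independent boundary criterion, transport to a pre-boundary of $v^\epsilon \xi_{t_n}$ at large $n$, contradicting non-torsion of $\xi_{t_n}$. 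This gives $b \leq \Upsilon(C^{t_0})$; the reverse inequality comes symmetrically, starting from a non-torsion cycle at $t_0$.

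The main obstacle will be ensuring that the transported pre-boundary $\mu_{t_n}$ actually lies in $C^{t_n}$, that is, that its exponents of $v$ remain non-negative for large $n$. Here the rank-one hypothesis plays its essential role: it guarantees that the non-torsion part of $H_*(C^t)$ admits a uniquely determined generator up to the $\RRing^*$-action, allowing the pre-boundary $\mu_0$ to be chosen (possibly after modification by exact terms supplied by the rank-one generator) with strictly positive exponent offsets at $t_0$, which then persist at nearby $t_n$ by continuity of the gradings.
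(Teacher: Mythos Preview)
Your arguments for $(b)$ and $(c)$ are correct. The first half of $(a)$---showing that a subsequential limit $\xi^{\lim}$ at grading $b$ is non-torsion at $t_0$, hence $b\le \Upsilon(C^{t_0})$---also works. However, the ``main obstacle'' you identify there is not one: if $v^{\epsilon}\xi^{\lim}=D^{t_0}\mu_0$, simply replace $\epsilon$ by $2\epsilon$ so that $\mu_0$ is replaced by $v^{\epsilon}\mu_0$, which has all $v$-exponents $\ge \epsilon>0$; these persist at nearby $t_n$ by continuity. No appeal to the rank-one hypothesis is needed at this step.

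The genuine gap is in the reverse inequality, which is \emph{not} symmetric to the first. Starting from a non-torsion cycle $\xi_0$ at $t_0$ and transporting it to $\xi_0^{(t_n)}$, you must show $\xi_0^{(t_n)}$ remains non-torsion for large $n$. If it is torsion, you get relations $v^{\epsilon_n}\xi_0^{(t_n)}=D^{t_n}\mu_n$ with $\epsilon_n>0$ and $\mu_n\in C^{t_n}$; but now you are transporting a \emph{sequence} of pre-boundaries from $t_n$ to $t_0$, and nothing prevents $\epsilon_n\to 0$. In that case the limit relation at $t_0$ degenerates to $\xi_0=D^{t_0}\mu_0$ only if the limiting $\mu_0$ lies in $C^{t_0}$, which you cannot guarantee (the exponent bound $\gr_{t_n}(x_j)\ge h_n$ gives only $\gr_{t_0}(x_j)\ge \Upsilon(C^{t_0})+1-\liminf\epsilon_n$, useless when $\epsilon_n\to 0$). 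Your paragraph about modifying $\mu_0$ by ``exact terms supplied by the rank-one generator'' is addressed at the wrong direction and does not resolve this.

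The paper closes this gap by a duality trick. At each $t$ it finds a cycle $x^t$ \emph{and} a cocycle $\phi^t\in\Mor_{\RRing}(C^t,\RRing)$ with Kronecker pairing $\langle x^t,\phi^t\rangle=1$. A short lemma (using rank one, via $\Upsilon(C^*)=-\Upsilon(C)$) shows that for \emph{any} such pair one must have $\gr_t(x^t)=\Upsilon(C^t)$. Since ``cycle'', ``cocycle'', and ``pairing equals $1$'' are all $t$-independent combinatorial conditions on the supports and coefficients (exactly as in your cycle criterion), a subsequential limit $(x^s,\phi^s)$ is again such a pair, whence $\gr_s(x^s)=\Upsilon(C^s)$; continuity follows directly. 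This is where the rank-one hypothesis actually enters.
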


Before proving the statement, recall the following:

\begin{lemma}\label{lem:HomAlgLem}
  Let $C$ be a graded, finitely generated module over ${\mathcal
    {R}}$. Then, the homology of $C$ splits as a direct sum of graded
  cyclic modules; i.e. modules of the form ${\mathcal {R}}$ or
  ${\mathcal {R}}/\vv^{\alpha} {\mathcal {R}}$ for some
  $\alpha\in{\mathbb R}_{\geq 0}$ (with a possible shift in degree).
\end{lemma}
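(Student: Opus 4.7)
The plan is to prove this as a graded analogue of the structure theorem for finitely generated modules over a principal ideal domain, exploiting the fact that $\RRing$ is a valuation ring (as noted in the remark following Definition~\ref{def:ring}) with especially simple homogeneous elements. Since the coefficient field $\Field = \mathbb{F}_2$ and we grade $\RRing$ by $\gr(\vv^{\alpha}) = -\alpha$, the nonzero homogeneous elements of $\RRing$ are precisely the monomials $\vv^{\alpha}$ with $\alpha \geq 0$. For any two such monomials $\vv^{\alpha}$ and $\vv^{\beta}$, one divides the other; so finitely generated homogeneous ideals of $\RRing$ are principal, of the form $\vv^{\alpha}\RRing$ or $(0)$.

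I would then argue by induction on the minimal number $n$ of homogeneous generators of (the homology, or of $C$ itself). Choose a presentation by picking homogeneous generators $x_1,\dots,x_n$ and a finite set of homogeneous relations $r_1,\dots,r_m$, writing $r_j = \sum_i a_{j,i}\,x_i$ where each $a_{j,i}$ is either $0$ or a monomial $\vv^{\alpha_{j,i}}$. Now perform a graded Smith normal form on the matrix $A = (a_{j,i})$: any two nonzero entries in the same row or column are monomials $\vv^{\alpha}, \vv^{\beta}$, and since $\vv^{\min(\alpha,\beta)}$ divides both, elementary homogeneous row and column operations (replacing a row by a scalar multiple plus a monomial multiple of another row, and similarly for columns, with the scalars in $\Field$) reduce $A$ to a diagonal matrix with homogeneous entries. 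The diagonal entries are each either $0$, contributing an $\RRing$-summand to the cokernel, or $\vv^{\alpha}$, contributing an $\RRing/\vv^{\alpha}\RRing$-summand, with the grading shift read off from the degrees of the $x_i$.

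The routine bookkeeping to check is that each elementary operation preserves the grading of all generators and relations — but this is automatic since the only nonzero entries occurring are monomials, and the monomial factors appearing in elementary operations are forced by the degree-preservation requirement. The one mildly subtle point is finite presentability: one must know that, given finitely many homogeneous generators $x_1,\dots,x_n$, the module of relations is itself generated by finitely many homogeneous elements. This is a standard fact for finitely generated modules over a valuation domain (see e.g.\ Fuchs--Salce \cite{FuchsSalce}), and is the main technical obstacle; one can also verify it directly here by observing that, among relations of any fixed degree, the set of coefficients of each $x_i$ forms a homogeneous ideal, so that the relations at each degree are finite-dimensional over $\Field$, and then bounding the finite collection of degrees where a new relation is needed via another induction on generator count.

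Alternatively, one could invoke the classical structure theorem for finitely generated modules over a valuation domain wholesale, and then simply note that all the choices in its proof can be made to respect the real-valued grading because homogeneous elements of $\RRing$ are monomials. Either route yields the claimed decomposition into graded cyclic summands of the form $\RRing$ or $\RRing/\vv^{\alpha}\RRing$ (each possibly shifted in degree).
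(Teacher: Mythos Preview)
Your proposal is correct and follows essentially the same route as the paper: both observe that $\RRing$ is a valuation ring in which every finitely generated ideal is principal (generated by $\vv^{\min(\alpha_i)}$), and then adapt the PID structure theorem via Smith normal form. The paper's proof is a two-sentence sketch of exactly this, while you supply more detail---in particular the care about homogeneity and finite presentability that the paper simply elides.
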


\begin{proof}
Although the ring ${\mathcal {R}}$ is not a principal ideal domain, it
is a valuation ring, so every finitely generated ideal in ${\mathcal
  {R}}$ is principal, see~\cite[Section~11]{brandal}. (In fact, the
proof of this fact for ${\mathcal {R}}$
is so simple that we sketch it here. Suppose that $f_1,\dots, f_n$
generate the ideal $I$, and write $f_i= \vv^{\alpha_i} q_i$ where
$q_i\in {\mathcal {R}}$ is a unit.  Choosing
$\alpha=\min \{ \alpha_1,\dots,\alpha_n\}$, it is easy to see that the
element $\vv^\alpha$ generates the ideal $I$.)  Adapting the proof of
the usual classification of modules over a principal ideal domain, it
follows immediately that any finitely generated module is a sum of
cyclic modules.
\end{proof}

Recall the definition of the dual complex $C^*=\Mor _{\RRing}(C, \RRing)$
for a complex $C$ over $\RRing$, with the Kronecker pairing
\[
\langle \cdot , \cdot \rangle \colon C\otimes _{\RRing} \Mor _{\RRing
}(C, \RRing) \to \RRing , \]
defined by the formula $\langle c, \phi \rangle = \phi (c)$.
The module $C^*$ is equipped with a differential $d$ that is dual to the differential on $C$.

\begin{lemma}
  \label{lem:ReformulateUps}
  Let $C$ be a finitely generated chain complex over $\RRing$,
  generated by the elements $\{ x_1, \ldots , x_k\}$, and suppose that
  the rank of $H_*(C)$ is one.  Then there is a morphism $\phi\colon C
  \to \RRing$ with $d\phi=0$  and an element $x\in C$ with $\partial x=0$,
  so that $\langle x, \phi\rangle=1$.  In fact, for any such pair $(x,
  \phi)$, the degree of $x$ is $\Upsilon(C)$ and the degree of $\phi$
  is $-\Upsilon(C)$, and there is some generator $x_i$ of $C$ with the
  property that
  \[ \gr \, x_i=\gr \, x.\]
\end{lemma}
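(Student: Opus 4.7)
The plan is to combine the structure theorem for finitely generated graded $\RRing$-modules (Lemma~\ref{lem:HomAlgLem}) with a universal coefficients style construction of the cocycle $\phi$, and then to use homogeneity together with the torsion-freeness of $\RRing$ to pin down the degrees of an arbitrary such pair.

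Applying Lemma~\ref{lem:HomAlgLem} to $H_*(C)$ and using the rank-one hypothesis gives a graded decomposition $H_*(C) \cong \RRing(d) \oplus T$, where $T$ is a direct sum of torsion cyclic modules $\RRing/\vv^{\alpha_i}\RRing$; by definition of $\Upsilon$, the shift $d$ equals $\Upsilon(C)$. First I would pick a cycle $x_0 \in C$ representing the generator of the free summand $\RRing(d)$, which is necessarily of degree $d$. Next, define a graded module map $\psi\colon H_*(C) \to \RRing$ of degree $-d$ by sending the free generator to $1 \in \RRing$ and killing $T$; this is well defined, because any homomorphism from a torsion cyclic $\RRing/\vv^{\alpha}\RRing$ into the domain $\RRing$ is zero. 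Since $C$ is free over $\RRing$, the edge map of the universal coefficients sequence yields a surjection $H^*(C^*) \twoheadrightarrow \Mor_{\RRing}(H_*(C),\RRing)$, and we lift $\psi$ to a cocycle $\phi_0 \in C^*$ with $\langle x_0, \phi_0 \rangle = \psi([x_0]) = 1$.

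Now consider an arbitrary pair $(x,\phi)$ satisfying the hypotheses. The condition $d\phi = 0$ lets $\phi$ descend to a graded homomorphism $[\phi]\colon H_*(C)\to \RRing$, which by the same torsion-free argument vanishes on $T$. Writing $[x] = a[x_0] + \tau$ with $a \in \RRing$ and $\tau \in T$, the equation $1 = \langle x, \phi \rangle = a\cdot [\phi]([x_0])$ forces $a$ to be a unit in $\RRing$; homogeneity of $x$ then yields $\gr(x) = d + \gr(a) = d = \Upsilon(C)$, and degree additivity of the pairing forces $\gr(\phi) = -d$.

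For the final clause, expand $x = \sum_i a_i x_i$ in the given generators. Homogeneity gives $\gr(x_i) = d + \mathrm{val}(a_i) \geq d$ whenever $a_i \neq 0$, where $\mathrm{val}$ is the $\vv$-adic valuation on $\RRing$; it therefore suffices to produce one $a_i$ with $\mathrm{val}(a_i) = 0$, equivalently to show $x \notin \vv C$. If instead $x = \vv y$ for some $y \in C$, then $\vv$-injectivity on the free module $C$ combined with $\partial x = 0$ promotes $y$ to a cycle, and writing $[y] = b[x_0] + \tau'$ produces $a = \vv b$, contradicting $a$ being a unit. The main step deserving extra care is the universal coefficients lift of $\psi$ to a cocycle in the second paragraph: because $H_{*-1}(C)$ can have $\RRing$-torsion, $\mathrm{Ext}^1_{\RRing}$ is typically nonzero, but only surjectivity of the edge map is needed and that holds whenever $C$ is free.
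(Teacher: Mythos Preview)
Your approach is the same as the paper's: decompose $H_*(C)$ via the structure theorem, construct the cocycle by lifting the projection $H_*(C)\to\RRing$ through the universal coefficients surjection, and read off degrees from $\langle x,\phi\rangle = 1$. You are in fact more thorough than the paper in treating the ``for any such pair'' clause and the final claim about some generator $x_i$, both of which the paper's proof leaves largely implicit.

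There is one slip in your last paragraph. The claimed equivalence between ``some $a_i$ has $\mathrm{val}(a_i)=0$'' and ``$x \notin \vv C$'' fails over $\RRing$, because exponents are real-valued: if every $\alpha_i = \mathrm{val}(a_i)$ lies in $(0,1)$ then $x \notin \vv C$ and yet no $a_i$ is a unit. What you actually need is $x \notin \vv^{\alpha} C$ for every $\alpha > 0$, and your contradiction argument, with $\vv$ replaced by $\vv^{\alpha}$, proves exactly that: writing $x=\vv^{\alpha}y$ gives $a=\vv^{\alpha}b$, contradicting that $a$ is a unit. (The paper's own proof contains the same imprecision when it passes from ``$\xi$ cannot be realised as $\vv$ times any other cocycle'' to ``$\gr(\xi)=\Upsilon(\Mor_{\RRing}(C,\RRing))$''.) With that adjustment your argument is complete.
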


\begin{proof}
By the universal coefficients theorem, $H_*(C)$ contains a direct
summand which is isomorphic to $\RRing$. The grading of the generator
$x$ of this $\RRing$-summand is $\Upsilon(C)$. Consider the splitting
of $H_*(C)$ as the sum of cyclic modules, and take the map to $\RRing$
which takes $x$ to $1$. By the universal coefficients theorem in
cohomology, there is a cohomology class $[\phi]$ with the property that
Kronecker pairing with $\phi$ realizes this map; i.e. there is a cocycle
so that
\[ 
\langle x, \phi \rangle = 1.
\]
It follows that $\phi$ cannot be realized as $v$ times any other
cocycle, therefore $\gr(\phi)=\Upsilon(\Mor_{\RRing}(C,\RRing))$.
Since the grading of $1$ is zero, it also follows that
$\gr(\phi)+\gr(x)=0$, implying the statement.

To show that $\gr(x)=\gr(x_i)$ for some $i\in\{1,\dots,k\}$, we express $x$ in terms of the basis for $C$:
\[ x =\sum_{i\in I} \vv^{\alpha_i}\cdot x_i,\] where $I\subset
\{1,\dots k\}$, and the $\alpha_i$ are real numbers with $\alpha_i\geq
0$, and $\gr(x)=\gr(x_i)-\alpha_i$. Let $\alpha=\min_{i\in I} \alpha_i$. Clearly, 
$x= t^{\alpha}\cdot x'$, where
$x'$ is a cycle representing a non-torsion homology class, 
with $\gr_t(x)=\gr(x')-\alpha$. It follows that $\alpha=0$, as desired.
\end{proof}

\bigskip

\begin{prooff} {\bf of Proposition~\ref{prop:Varying}.}
Fix some $s\in [0,2]$, and let $t_i$ be any sequence with
$\lim_{i\to\infty} t_i = s$.  Lemma~\ref{lem:ReformulateUps} gives
sequences of cycles $x^{t_i}$ and $\phi^{t_i}$ with
\[ 
\langle x^{t_i}, \phi^{t_i}\rangle = 1
\]
and $\gr_{t_i}(x^{t_i})\in
  \{\gr_{t_i}(x_1),\dots,\gr_{t_i}(x_k)\}$.  It follows that there is
  a uniform bound on $\gr_{t_i}(x^{t_i})$ or, equivalently, on the
  exponents of $\vv$ in the expression of $x^{t_i}$ in terms of the
  basis. Thus, we can find a subsequence
  $\{n_i\}_{i=1}^{\infty}\subset {\mathbb {N}}$ so that the
  $x^{t_{n_i}}$ converge to $x^s\in C^s$.  Passing to a further
  subequence if needed, we can assume that the $\phi^{t_{n_i}}$
  converge to some $\phi ^s\in \Mor_{\RRing}(C^s,\RRing)$.  Since
$\partial_t(x^{t_{n_i}})=0$, by continuity we conclude that
$\partial_s(x^s)=0$. Similarly, $d_{t_{n_i}} \phi^{t_{n_i}}=0$ imply
$d_s \phi^s=0$.  Now, by continuity, $\lim_{i\to \infty}\gr_{t_{n_i}}
(x^{t_{n_i}})=\gr_s(x^s)$, and we conclude from
Lemma~\ref{lem:ReformulateUps} that
$\Upsilon(C^{t_{n_i}})\to\Upsilon(C^s)$.  Since this holds for any
sequence of $\{t_i\}$ which converges to $s$, we conclude that the
function $\Upsilon(C^t)$ is continuous at $s$. Since $s$ is arbitrary,
we conclude that $\Upsilon(C^t)$ is a continuous function.

Now, there are $n$ continuous functions $\gr(x^t_i)$, and for any $t$,
the value
$\Upsilon(C^t)$ agrees with at least one of them (again, according to
Lemma~\ref{lem:ReformulateUps}).

Finally, observe that if $x^{t_i}$ represents a boundary in
$C^{t_{n_i}}/\vv^{>0}C^{t_{n_i}}$, 
then in fact $x^{t_i}$ would be homologous (in
$C^{t_{n_i}}$) to $\vv^{\alpha}$ times a different cycle in
$C^{t_{n_i}}$. But this would contradict the statement that $x^{t_i}$
is a maximal grading, non-torsion homogeneous element.
\end{prooff}

\subsection{Applications to $\Upsilon_K(t)$}

Proposition~\ref{prop:FinitelyManySlopes} is an immediate consequence
of Proposition~\ref{prop:Varying}:

\bigskip

\begin{prooff} {\bf of Proposition~\ref{prop:FinitelyManySlopes}.}
  Consider the complexes $\CFKt (K)$ over $\RRing$. These have a fixed
  generating set, and the differential is specified by
  \[ \partial _t\x = \sum_{\y\in\Gen}
  \sum_{\{\phi\in\pi_2(\x,\y)\big|\Mas(\phi)=1\}}
  \#\left({\frac{\ModFlow(\phi)}{\mathbb R}}\right) \vv^{t
    n_{z}(\phi)+(2-t)n_{w}(\phi)} \y.\] We clearly obtain a
  continously varying family of chain complexes $C^t$ (over $\RRing$)
  in the sense of Definition~\ref{def:ContVaryCx}.
  
We can now apply Proposition~\ref{prop:Varying} (whose hypotheses are
satisfied, thanks to Proposition~\ref{prop:HFmRankOne}) to conclude
that $\Upsilon _K(t)$ is a continuous function of $t$, which agrees,
at any $t$, with one of the finitely many linear functions
$\{\gr_t(\x)\}_{\x\in\Gen}$ (recall that $\Gen =\Ta \cap \Tb$ is the
set of generators and $\gr_t(\x)=M(\x)-tA(\x)$), as stated in
Proposition~\ref{prop:Varying}.  Note that since the various slopes of
the functions $\{\gr_t(\x)\}_{\x\in\Gen}$ are $(-1)$-times the
Alexander gradings of those elements, it follows at once that the
finitely many slopes of $\Upsilon_K(t)$ are all integers.
\end{prooff}

\bigskip

\begin{prooff} {\bf  of Theorem~\ref{thm:BoundConcordanceGenus}.}
  This follows from Proposition~\ref{prop:FinitelyManySlopes},
  together with Proposition~\ref{prop:GenusBounds} 
and the concordance invariance of $\Upsilon$.
\end{prooff}

\bigskip

\begin{prooff} {\bf  of Proposition~\ref{prop:SlopeUpsilon}.}
  Consider a sequence of cycles $x^t$ indexed by $t>0$
  satisfying the following properties:
  \begin{enumerate}
  \item {\em (homogeneity)} $x^t$ is homogeneous with grading $\gr_t$;
  \item {\em (non-torsion)} $x^t$ is non-torsion;
  \item \label{item:Minimality} {\em (maximality)} $x^t$ maximizes
    $\gr_t$ among all $\gr_t$-homogeneous, non-torsion elements.
  \end{enumerate}
  Write $x^t$ in terms of a basis of generators
  \[x^t = \sum a_i(t) v^{\epsilon_i(t)}\x_i,\]
  (with $a_i(t)\in \Field$ and $\epsilon _i (t)\in {\mathbb
      {R}}$). By passing to a subsequence in $t$, we can make the
  following further assumptions:
  \begin{enumerate}
    \setcounter{enumi}{3}
  \item The $a_i(t)=a_i$ are
    constant; i.e. the $\x^t$ converge as $t\to \infty$.
    Equivalently, there is some fixed set $I$ with the property that
    \[ \x^t=\sum_{i\in I} v^{\epsilon_i(t)}\x_i.\]
  \item 
    \label{item:VanishEpsilon}
    There is some $i_0\in I$ with the property that $\epsilon_{i_0}(t)\equiv 0$.
    (This final property follows from maximality.)
  \end{enumerate}
  Maximality further ensures that 
  \[ \Upsilon_K(t)=\gr_t(\x^t)=M(\x_i)-t A(\x_i)-\epsilon_i(t)\] for
  $t>0$.  Since $\Upsilon_K(0)=0$
  (Proposition~\ref{prop:BoundaryCases}), continuity of $\Upsilon$
  (Proposition~\ref{prop:Varying}) ensures that for those $j$ with
  $\epsilon_j(0)=0$ (and those exist by
  Property~\eqref{item:VanishEpsilon}), we have
  \[ \gr_t(\x^t)=-tA(\x_j).\]
  This ensures the limiting cycle $\x^0=\lim_{t\to 0} \x^t$ is
  a sum of chains with fixed
  Alexander grading, which in fact is $A(\x_j)$ (for any $j$ with
  $\epsilon_j(0)=0$).  
    Observe that $\CFKt(K)/v^{>0}\cdot \CFKt(K)=\CFa(S^3)$,
a chain complex whose homology is $\HFa(S^3)\cong\Field$.
     The image of $\x^0$
  in the homology of this quotient complex generates
  $\HFa(S^3)$. We conclude at once that $A(\x_j)=A(\x^0)\geq
  \tau(K)$.  Thus,
  \begin{equation}
    \label{eq:UpsilonAbove}
    \Upsilon_K(t)\leq -t \cdot\tau(K)
  \end{equation}
  for all sufficiently small $t$.

For the converse, we find it convenient to work in the model for
$C^t$ considered in Lemma~\ref{lem:SecondConstruction}.
Take a chain $y_0\in \CFa(S^3)$ with the following properties:
\begin{itemize}
  \item The chain $y_0$ is a cycle, which represents the non-trivial
    homology class in $\HFa(S^3)$.
  \item The chain $y_0$ is homogeneous in Maslov and Alexander gradings;
    in particular, $M(y_0)=0$.
  \item The Alexander grading of $y_0$ is $\tau(K)$.
\end{itemize}
We can extend $y_0$ to a Maslov-homogeneous cycle $y$ representing the
generator $\HFm(S^3)$ by adding only terms with non-zero $U$ powers
in them.

Write $y=y_0+U \cdot y_1$. Next, consider $y$ as a cycle in
$C^t$. Since $U y_1=v^2 y_1$ has algebraic filtration less than $2$,
and there is a uniform upper bound on the Alexander gradings of any
element, we conclude that for all $0\leq t$ sufficiently small,
$F^t(y)\leq 0$. Indeed, by making the upper bound smaller if needed,
for all $0\leq t$ sufficiently small,
$F^t(v^{t\cdot \tau(K)} \cdot y)\leq 0$, so we can view $v^{t\cdot \tau}\cdot y$ 
as an element of $E^t$. Since $y$ represents a
non-zero class in $\HFm(S^3)$, the class $v^{t\cdot \tau(K)} \cdot y$
represents a non-torsion homology class in $E^t$.

According to Lemma~\ref{lem:SecondConstruction}, in the model for
$E^t$ the Maslov grading of $v^{t\cdot \tau(K)}\cdot y$, which is
$-t\cdot \tau(K)$, corresponds to the grading in $C^t$.

We conclude that, for all sufficiently small $t\geq 0$, 
\[ \Upsilon_{K}(t)\geq -t\cdot \tau(K).\]
Combining this with Equation~\eqref{eq:UpsilonAbove},
we conclude that for all sufficiently small $\geq 0$,
\[ \Upsilon_K(t)=-t\cdot \tau(K), \]
from which Proposition~\ref{prop:SlopeUpsilon} follows immediately.
\end{prooff}

\bigskip

\begin{prooff} {\bf  of Propositions~\ref{prop:DeltaDerivative}.}
  Suppose that $t$ is a point where $\Delta \Upsilon'_K(t)\neq 0$.
  By Proposition~\ref{prop:Varying},
  there are two different generators $x$ and $y$ with
  $\gr_t(x)=\gr_t(y)$, but $A(x)\neq A(y)$ and
  \[\Delta\Upsilon'_K(t)=A(x)-A(y).\]
  The condition $\gr_t(x)=\gr_t(y)$ (that is,
  $M(x)-t A(x)=M(y)-t A(y)$) 
  ensures that
  \[ t \Delta\Upsilon'_K(t) = t (A(x)-A(y))=M(x)-M(y) \]
  is an even integer. (Recall that a non-torison element has even
  Maslov grading; cf. Proposition~\ref{prop:HFmRankOne}.)
\end{prooff}

\section{Computations}
\label{sec:Calcs}

Theorems~\ref{thm:AltKnots} and~\ref{thm:TorusKnots} are quick
consequences of the corresponding knot Floer homology computations.

\bigskip

\begin{prooff} {\bf  of Theorem~\ref{thm:AltKnots}.}
  Apply Theorem~\ref{thm:AltKnotsHFK}. In view of
  Lemma~\ref{prop:mirror}, we can assume without loss of generality
  that $\tau(K)=-\sigma(K)/2$ is non-negative. 

  In this case, there is a sequence of elements $x_0,\dots,x_n$, and
  $y_0,\dots,y_{n-1}$ in $\CFKinf(K)$, with 
  $\partial y_i = U x_i + x_{i+1}$, and  
  \begin{eqnarray*}
    A(x_i)&= \tau(K)-2i \\
    M(x_i)&= -2i. 
  \end{eqnarray*}
  Clearly, $x_0$ represents a
  non-torsion generator. In fact, any non-torsion class must
  contain at least one of the $x_i$.  Moreover, for $0\leq t\leq 1$,
  $\gr_t(x_0)$ is maximal among $\gr_t(x_i)$.  Thus,
    \[\Upsilon_K(t)=\gr_t(x_0)=-A(x_0)\cdot t =
    \frac{\sigma(K)}{2}\cdot t.\] The values for $1\leq t \leq 2$ now
    follow from Proposition~\ref{prop:SymmUps}.
\end{prooff}

\begin{remark}
  Theorem~\ref{thm:AltKnotsHFK} can be generalized immediately to {\em
    quasi-alternating knots} in the sense of~\cite{NoteLens}, using
  the appropriate generalization of Theorem~\ref{thm:AltKnots}
  from~\cite{QuasiAlternating}.
\end{remark}

\begin{theorem}
  \label{thm:LSpaceKnotsUpsilon}
  Let $K$ be an $L$-space knot, and let $\{\alpha_i\}_{i=0}^n$
  and $\{m_i\}_{i=0}^n$ be the 
  associated sequence of integers, as defined in Equation~\eqref{eq:DefAs}
  and~\eqref{eq:DefMs} respectively. Then, 
  \[
  \Upsilon _K (t)=\max _{\{i\big|0\leq 2i\leq n\}} \{ m_{2i}-t\alpha _{2i}\} .
  \]
\end{theorem}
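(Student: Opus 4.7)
Plan: By Theorem~\ref{thm:LSpaceKnots}, $\CFKinf(K)$ is filtered chain homotopy equivalent to the model complex $C^\infty(\Delta_K(t))$, so by functoriality of $t$-modification (Proposition~\ref{prop:tModifyFunctor}) it suffices to compute $\Upsilon$ for the $t$-modification $C^t$ of this model. Applying Definition~\ref{def:tModifyFormal} with the gradings $\gr_t(x_k) = m_k - t\alpha_k$, and using the recursion $m_{2k-1} = m_{2k-2} - 2(\alpha_{2k-2} - \alpha_{2k-1}) + 1$, $m_{2k} = m_{2k-1} - 1$ to rewrite the exponent $\gr_t(\y) - \gr_t(\x) + 1$ of each outgoing edge, one obtains
\[
\partial_t x_{2k-1} = v^{(2-t)(\alpha_{2k-2}-\alpha_{2k-1})} x_{2k-2} + v^{t(\alpha_{2k-1}-\alpha_{2k})} x_{2k}, \qquad \partial_t x_{2k} = 0.
\]

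For $t\in (0,2)$, both exponents $a_k := (2-t)(\alpha_{2k-2}-\alpha_{2k-1})$ and $b_k := t(\alpha_{2k-1}-\alpha_{2k})$ are strictly positive. A short induction on $k$ --- starting from the boundary of the $x_0$-coefficient, which forces the coefficient of $x_1$ to vanish, and continuing --- shows that any $\RRing$-linear cycle in $C^t$ lies in the submodule freely generated by the even-indexed elements $x_0, x_2, \dots, x_n$, and that the boundaries in this submodule are generated by the relations $v^{a_k} x_{2k-2} + v^{b_k} x_{2k}$ for $k=1,\dots, n/2$.

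Let $j$ be an index (with $0\le 2j\le n$) that maximizes $\gr_t(x_{2j}) = m_{2j} - t\alpha_{2j}$. I claim $\Upsilon_K(t) = \gr_t(x_{2j})$. First, $[x_{2j}]\in H_*(C^t)$ is non-torsion: after tensoring with $\RRing^*$, each relation $v^{a_k}[x_{2k-2}] = v^{b_k}[x_{2k}]$ becomes invertible in $v$, so the resulting module is $\RRing^*$-free of rank one, spanned by (the image of) any $[x_{2k}]$. Second, no non-zero homogeneous element of grading strictly greater than $\gr_t(x_{2j})$ exists in the $\RRing$-span of $\{x_{2k}\}$: any homogeneous element of grading $g$ is a sum $\sum \lambda_k v^{\gr_t(x_{2k}) - g} x_{2k}$ with $\lambda_k\in\Field$, which requires $g\le \gr_t(x_{2k})$ whenever $\lambda_k\ne 0$, hence $g\le \max_k \gr_t(x_{2k}) = \gr_t(x_{2j})$.

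Combining these two points gives $\Upsilon_K(t) = \max_{0\le 2i\le n}\{m_{2i} - t\alpha_{2i}\}$ for $t\in (0,2)$; the values at the endpoints $t=0$ and $t=2$ then follow from the continuity of $\Upsilon_K$ (Proposition~\ref{prop:FinitelyManySlopes}), and one verifies directly that the stated formula also reproduces $\Upsilon_K(0)=0$ at $j=0$ and $\Upsilon_K(2)=0$ at $j=n/2$ (using the recursion for $m_k$ together with symmetry $\alpha_0 = -\alpha_n$). The main technical obstacle is the identification of exponents in Step~1 above: matching $v^{\gr_t(\y)-\gr_t(\x)+1}$ with $(2-t)(\alpha_{2k-2}-\alpha_{2k-1})$ and $t(\alpha_{2k-1}-\alpha_{2k})$ is a fiddly but direct consequence of the $m_k$-recursion, and once this is in hand the rest of the argument is a clean algebraic analysis of homology over the valuation ring $\RRing$.
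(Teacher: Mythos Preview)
Your proof is correct and follows essentially the same approach as the paper: reduce to the model complex via Theorem~\ref{thm:LSpaceKnots}, observe that cycles are spanned by the even generators $x_{2k}$, and take the maximum of their $\gr_t$-gradings. The paper's proof is a one-liner (``glancing at the differential, it is clear that the non-torsion part is generated by one of the even generators''), whereas you fill in the explicit computation of the $t$-modified differential and the inductive identification of cycles; your version is more detailed but not substantively different. One minor remark: your separate treatment of the endpoints $t=0,2$ via continuity is unnecessary, since the same induction on the $x_0$-coefficient goes through for all $t\in[0,2]$ (at each endpoint one of the two exponents vanishes, but the other remains positive, which is enough).
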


\begin{proof}
  According to Theorem~\ref{thm:LSpaceKnots}, we can consider the
  model complex specified by Equation~\eqref{eq:LSpaceDifferential} in
  place of the knot Floer complex.  Glancing at the differential, it
  is clear that the non-torsion part is generated (over $\RRing$) by
  one of the even generators $x_{2k}$ with $0\leq 2k \leq n$
  (and $k$ is an integer). It follows that
  \[\Upsilon_K(t)=\max_{0\leq 2k\leq n} \{ \gr_t(x_{2k})\}
  = \max_{0\leq 2k\leq n} \{ m_{2k}-t\alpha_{2k}\} .\]
\end{proof}

The above result  immediately implies:

\bigskip

\begin{prooff} {\bf of Theorem~\ref{thm:TorusKnots}.}  The statement
  follows immediately from Theorem~\ref{thm:LSpaceKnotsUpsilon}, since lens
  spaces are $L$-spaces~\cite[Proposition~3.1]{HolDiskTwo}, and 
  $pq\pm 1$ surgery on the torus knot $T_{p,q}$ is a lens space.
\end{prooff}

Example~\ref{ex:T34} from the introduction follows immediately.
We generalize it to the family $T_{n,n+1}$, as follows:
\begin{prop}
  \label{prop:Tnnplus1}
  Consider the torus knot $T_{n,n+1}$. Then,
  $\Upsilon_{T_{n,n+1}}(t)$ is the piecewise linear function 
   whose values for $t\in [\frac{2i}{n},\frac{2i+2}{n}]$ (for $i=0,\dots n-1$) are given by
 \[ \Upsilon_{T_{n,n+1}}(t)= -i(i+1)-\frac{1}{2}n (n-2i-1)t.\]
  In particular,
  \[ \OneHalf t \cdot \Delta\Upsilon'_{T_{n,n+1}}(t)= \left\{\begin{array}{ll}
      1 & {\text{for $t = \frac{2i}{n}$}} \\
      0 &{\text{otherwise.}}
      \end{array}
      \right.\]
\end{prop}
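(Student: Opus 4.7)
The plan is to view this proposition as a direct unpacking of Theorem~\ref{thm:LSpaceKnotsUpsilon}: since $T_{n,n+1}$ is a positive torus knot, it is an $L$-space knot, so
\[
\Upsilon_{T_{n,n+1}}(t)=\max_{0\leq 2i\leq 2g}\{m_{2i}-t\alpha_{2i}\},
\]
with $g=\binom{n}{2}$. The real content is therefore to identify the sequences $\{\alpha_{2i}\}$ and $\{m_{2i}\}$ explicitly, and to evaluate the maximum.

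The first step, and the main obstacle, is to determine the exponent sequence $\{\alpha_k\}_{k=0}^{2g}$ of the symmetrized Alexander polynomial of $T_{n,n+1}$. I would extract from the product formula
\[
\Delta_{T_{n,n+1}}(t)=t^{-\binom{n}{2}}\,\frac{(t^{n(n+1)}-1)(t-1)}{(t^n-1)(t^{n+1}-1)}=\frac{1+t^{n+1}+t^{2(n+1)}+\cdots+t^{(n-1)(n+1)}}{1+t+\cdots+t^{n-1}}
\]
the following block-pattern for consecutive differences: $\alpha_{2i}-\alpha_{2i+1}=i+1$ and $\alpha_{2i+1}-\alpha_{2i+2}=n-1-i$ for $0\leq i\leq n-1$. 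One way to obtain this is by induction on $i$ using the above rational expression (carrying out the long division in blocks of $n+1$); an alternative conceptual route is via the Weierstrass semigroup $\langle n,n+1\rangle$, whose gap structure is arranged into $n-1$ rows of decreasing lengths and matches the same pattern. Either way, telescoping from $\alpha_0=\binom{n}{2}$ yields the closed form
\[
\alpha_{2i}=\binom{n}{2}-in=\tfrac{n(n-1-2i)}{2},\qquad 0\leq i\leq n.
\]

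The second step is routine: substituting the block-differences into the recursion of Equation~\eqref{eq:DefMs} gives $m_{2i+2}=m_{2i+1}-1=m_{2i}-2(\alpha_{2i}-\alpha_{2i+1})=m_{2i}-2(i+1)$, and hence by telescoping $m_{2i}=-i(i+1)$.

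The third and final step is to identify the maximum. Since the slopes $-\alpha_{2i}$ of the lines $L_i(t)=-i(i+1)-\tfrac{n(n-1-2i)}{2}t$ are strictly increasing in $i$ while the intercepts $m_{2i}$ are strictly decreasing, consecutive lines $L_i$ and $L_{i+1}$ intersect at a unique point. Setting $L_i(t)=L_{i+1}(t)$ gives $2(i+1)=t(\alpha_{2i}-\alpha_{2i+2})=tn$, so the breakpoint is at $t=2(i+1)/n$. On each interval $[2i/n,2(i+1)/n]$ the $i$-th line $L_i$ therefore realizes the maximum, which gives the stated piecewise linear expression. For the derivative statement, the slope jump at $t=2i/n$ is $\alpha_{2(i-1)}-\alpha_{2i}=n$, from which $\tfrac12 t\,\Delta\Upsilon'_{T_{n,n+1}}(2i/n)=\tfrac12\cdot\tfrac{2i}{n}\cdot n$ follows immediately; at all other $t$ the derivative is continuous. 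The only real work is in step one; steps two and three are straightforward once the Alexander exponents are known.
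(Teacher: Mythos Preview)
Your approach is essentially the paper's: invoke Theorem~\ref{thm:TorusKnots}, determine the even Alexander exponents $\alpha_{2i}$, and take the maximum of the resulting lines. The one substantive difference is in step one: rather than extracting the exponent gaps by long division or via the gap structure of the semigroup $\langle n,n+1\rangle$, the paper simply writes down the closed form
\[
p(t)=\sum_{j=0}^{n-1}t^{\,nj}\;-\;t\sum_{j=0}^{n-2}t^{\,(n+1)j}
\]
and verifies in one line that $(t^n-1)(t^{n+1}-1)\,p(t)=(t-1)(t^{n(n+1)}-1)$, so that $p(t)=t^{n(n-1)/2}\Delta_{T_{n,n+1}}(t)$ and the $\alpha_{2i}$ can be read off directly from the first sum. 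This is quicker and more self-contained than either route you propose; the paper then simply cites Theorem~\ref{thm:TorusKnots} without spelling out your steps two and three.

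One caution: your (correct) values $\alpha_{2i}=\tfrac{n(n-1-2i)}{2}$ and $m_{2i}=-i(i+1)$, which do agree with Example~\ref{ex:T34} when $n=3$, differ from the displayed formula in the proposition by the index shift $i\mapsto i-1$; and your final line gives $\tfrac12\cdot\tfrac{2i}{n}\cdot n=i$ at the $i$-th breakpoint, not $1$. You should flag and reconcile this discrepancy explicitly rather than assert that the stated formula follows immediately.
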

\begin{proof}
The Alexander polynomial $\Delta_{n,n+1}(t)$ of $T_{n,n+1}$ is
$t^{-\frac{1}{2}n(n-1)}\frac{(t^{n(n+1)}-1)(t-1)}{(t^n-1)(t^{n+1}-1)}$. Let
\[ p(t)=\sum _{i=0}^{n-1}t^{ni}-t\sum _{i=0}^{n-2}t^{(n+1)i}.
\]
Since 
\[ (t^n-1)(t^{n+1}-1) p(t) = (t-1)(t^{n(n+1)}-1),\]
we conclude that
\[ p(t)=t^{\frac{1}{2}n(n-1)}\cdot \Delta _{T_{n,n+1}}(t).\]
Thus,
\[\alpha _{2i}=n(n-i)-\frac{1}{2}n(n-1)=\frac{1}{2}n(n-2i-1) ,\] 
and the formula for $\Upsilon_{T_{n,n+1}}(t)$ now follows from
Theorem~\ref{thm:TorusKnots}.
\end{proof}
 
The above examples show that for each rational number $t$, the homomorphism
\[ 
\OneHalf t \cdot\Delta\Upsilon '_K(t) \colon \Concord \to \Z
\]
is surjective. As the next lemma shows, the existence of this
map implies the existence of the stated direct summand in the
concordance group.

\begin{lemma}
  \label{lem:Splitting}
  Let $G$ be an Abelian group, and $H\subset G$ be a subgroup
  generated by the elements $(h_i )_{i=1}^{\infty}$.  Suppose that
  $(\lambda_n\colon G \to \Z)_{n=1}^{\infty}$ is a collection of
  homomorphisms with the property that $\lambda_n(h_n)=1$ and
  $\lambda_m(h_n)=0$ for $m>n$. Then, $H$ is a $\Z^{\infty}$ direct
  summand of $G$.
\end{lemma}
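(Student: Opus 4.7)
The plan is to perform a triangular (Gram--Schmidt style) change of basis on $H$ to produce generators dual to the $\lambda_n$, and then to use these to split $H$ off from $G$. Inductively, I would set $\tilde h_1 = h_1$ and, for $n \geq 2$,
\[ \tilde h_n \;=\; h_n \;-\; \sum_{k=1}^{n-1}\lambda_k(h_n)\,\tilde h_k. \]
Since this is a unipotent upper-triangular change of generators, the $\tilde h_n$ still generate $H$. By induction on $n$ one verifies that $\lambda_m(\tilde h_n) = \delta_{mn}$ for every pair $m, n$: when $m > n$, the hypothesis $\lambda_m(h_n) = 0$ together with the inductive fact $\lambda_m(\tilde h_k) = 0$ for $k < n < m$ kills every correction term; when $m = n$ the corrections again vanish, leaving $\lambda_n(\tilde h_n) = \lambda_n(h_n) = 1$; and when $m < n$ the defining formula for $\tilde h_n$ is arranged precisely to force $\lambda_m(\tilde h_n) = 0$.

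Next, the identities $\lambda_m(\tilde h_n) = \delta_{mn}$ force the $\tilde h_n$ to be $\Z$-linearly independent: in any finite relation $\sum_i c_i \tilde h_i = 0$, applying $\lambda_N$ where $N$ is the largest index with $c_N \neq 0$ yields $c_N = 0$, a contradiction. Hence $\{\tilde h_n\}_{n\geq 1}$ is a free basis of $H$, and $H \cong \bigoplus_{n\geq 1}\Z = \Z^{\infty}$.

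Finally, I would build the retraction $\rho \colon G \to H$ by
\[ \rho(g) \;=\; \sum_{n\geq 1} \lambda_n(g)\,\tilde h_n. \]
The identity $\lambda_m(\tilde h_n) = \delta_{mn}$ immediately gives $\rho(\tilde h_m) = \tilde h_m$, so $\rho$ is a retraction of the inclusion $H \hookrightarrow G$, and consequently $G = H \oplus \ker(\rho)$, exhibiting $H$ as the claimed direct summand. The main obstacle is showing that the sum defining $\rho(g)$ actually terminates in $H$, i.e.\ that $\lambda_n(g) = 0$ for all but finitely many $n$. In the intended applications to the concordance group (Theorems~\ref{thm:Independence} and~\ref{thm:IndependenceTopSlice}) this is automatic from Proposition~\ref{prop:FinitelyManySlopes}, which forces $\Delta\Upsilon'_K$ to have finite support, so that the $(\lambda_n)$ assemble into a single homomorphism $G \to \bigoplus_n \Z$ rather than merely $G \to \prod_n \Z$; the lemma is to be read in that setting.
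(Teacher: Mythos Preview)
Your approach is essentially the paper's, made explicit. The paper assembles the $\lambda_n$ into a single map $\Lambda\colon G\to\Z^{\infty}$ and observes that its restriction to $H$ (expressed in the basis $h_i$) is the identity plus a locally nilpotent transformation, hence invertible; inverting it is precisely your triangular Gram--Schmidt step, and $(\Lambda|_H)^{-1}\circ\Lambda$ is your retraction $\rho$.

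You are also right to flag the finite-support issue, and in fact the lemma as literally stated is false: take $G=\prod_{n}\Z$, $h_n=e_n$, and $\lambda_n$ the $n$-th projection; then $H=\bigoplus_n\Z$, which by Specker's theorem is \emph{not} a direct summand of $\prod_n\Z$. The paper's proof has exactly the same gap---it silently assumes that $g\mapsto(\lambda_n(g))_n$ lands in $\bigoplus_n\Z$ rather than $\prod_n\Z$---and, as you say, this is harmless in the actual applications because Proposition~\ref{prop:FinitelyManySlopes} forces finite support. Your write-up is therefore at least as complete as the paper's, and more candid about the hypothesis actually being used.
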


\begin{proof}
 Consider the map $\Lambda\colon G \to \Z^{\infty}$ given by
  \[ g \mapsto ( \lambda_n(g))_{n=1}^{\infty}.\] 
  Consider the linear transformation
  $\Z^{\infty}\to \Z^{\infty}$ given by 
  \[ (a_n)_{n=1}^{\infty}\mapsto \big( \lambda_n(\sum_{i=1}^{\infty} a_i h_i) \big)_{n=1}^{\infty} .\]
  The hypothesis ensures that this is the identity map plus a
  nilponent transformation. Such a map is necessarily invertible,
  concluding the proof.
\end{proof}

\bigskip

\begin{prooff} {\bf of Theorem~\ref{thm:Independence}.}
  Consider the homomorphisms
  $K\mapsto (\lambda_n(K)=\frac{1}{n}\Delta\Upsilon'_K(\frac{2}{n}))_{n=1}^{\infty}$
  and the elements $\{[T_{n,n+1}]\}_{n=1}^{\infty}\subset
  \Concord$. According to Proposition~\ref{prop:Tnnplus1},
  $\lambda_n(T_{n,n+1})=1$ and $\lambda_m(T_{n,n+1})=0$ for
  $m>n$. Thus, Lemma~\ref{lem:Splitting} applies and concludes the proof.
\end{prooff}

\newcommand\Ainfty{{\mathcal A}_{\infty}}
\newcommand\CFDa{\widehat{CFD}}
\newcommand\CFAa{\widehat{CFA}}
\newcommand\DT{\boxtimes}

\section{Generalities on bordered Floer homology (with torus boundary)}
\label{sec:Bordered}

The proof of Theorem~\ref{thm:IndependenceTopSlice} involves
computations of knot invariants for satellite knots. This problem is
well suited to bordered Floer homology~\cite{InvPair}.

Bordered Floer homology is an invariant for
  three-manifolds with parameterized (bordered) boundary. To a
  parameterized surface, this invariant associates a differential
  graded algebra, to a three-manifold with boundary it associates two
  kinds of modules over this algebra, called the {\em type $D$} and
  {\em type $A$} modules. Let $Y$ be a connected, closed, oriented
  three-manifold equipped with a parameterized separating surface $F$,
  expressing $Y=Y_1\cup_F Y_2$. A pairing theorem expresses the
  Heegaard Floer homology $\HFa(Y)$ as an algebraic pairing (the ``box
  tensor product'') of the type $D$ structure of $Y_1$ and of the type $A$
  structure of $Y_2$.

For the reader's convenience we collect here some useful facts about
bordered Floer homology (in the case of three-manifolds with torus
boundary). This material can be found in~\cite[Chapter~11]{InvPair};
see also~\cite{PNAS} for a general overview of the theory.

\subsection{The torus algebra}
\label{subsec:TorusAlg}
In this section we follow~\cite[Section~11.1]{InvPair}.
The algebra $\Alg(\Torus)$ associated to a torus
has two
minimal idempotents $\iota_0$ and $\iota_1$, and six other basic
generators:
\[
 \rho_1 \qquad  \rho_2 \qquad   \rho_3 \qquad
 \rho_{12} \qquad  \rho_{23} \qquad  \rho_{123}.
\]
The differential is zero, and the non-zero products are
\[
  \rho_1\rho_2 = \rho_{12} \qquad \rho_2\rho_3 = \rho_{23} \qquad
  \rho_1\rho_{23} = \rho_{123} \qquad \rho_{12}\rho_{3} = \rho_{123}.
\]
(All other products of two non-idempotent basic generators vanish
identically.)  There are also compatibility conditions with the
idempotents:
\begin{align*}
  \rho_1&=\iota_0\rho_1 \iota_1&
  \rho_2&=\iota_1\rho_2 \iota_0&
  \rho_3&=\iota_0\rho_3 \iota_1 \\
  \rho_{12}&=\iota_0\rho_{12} \iota_0&
  \rho_{23}&=\iota_1\rho_{23} \iota_1&
  \rho_{123}&=\iota_0\rho_{123} \iota_1.
\end{align*}

This algebra is graded by a non-commutative group $G$. One model for
$G$ is a group generated by triples $(j;p,q)$ where $j,p,q\in
\OneHalf \Z$ and $p+q \in \Z$.  The group law is
$$(j_1; p_1,q_1) \cdot (j_2; p_2,q_2)
= \biggl(j_1+j_2+
\begin{vmatrix}
p_1 & q_1 \\
p_2 & q_2 
\end{vmatrix}; p_1+p_2, q_1+q_2\biggr).$$
This group has a distinguished central element $\lambda=(1;0,0)$.
(The group $G(\{\mathbb T\})$, introduced in
\cite[Section~11.1]{InvPair} naturally grades the torus algebra;
$G(\{\mathbb T\})$ can be defined as a certain subgroup of $G$ we
discussed above.)

The gradings of the algebra elements are specified by the following formulae:
\begin{equation}\label{eq:grading-torus-alg}
  \gr(\rho_1) = \bigl(-\OneHalf;\OneHalf,-\OneHalf\bigr) \qquad 
  \gr(\rho_2) = \bigl(-\OneHalf;\OneHalf,\OneHalf\bigr) \qquad
  \gr(\rho_3) = \bigl(-\OneHalf;-\OneHalf,\OneHalf\bigr).
\end{equation}
This is extended to all other group elements by the rule
$\gr(ab)=\gr(a)\gr(b)$.

\subsection{Gradings on modules}

Let $Y_R$ be a torus-bordered three-manifold, and assume for
simplicity that $Y_R$ is a homology knot complement; that is,
$H_1(Y_R;\Z)\cong \Z$.

According to~\cite[Chapter~6]{InvPair},
  this bordered manifold has an associated
  {\em type D structure} in the sense of~\cite[Section~2.3]{InvPair},
  denoted $\CFDa(Y_R)$. As a vector space,
this is graded by a
homogeneous $G$-space.  In fact, there is an element $p$ which is homogeneous
with grading $\langle p\rangle$ and with the
property that $\CFDa(Y_R)$ is graded by the space of left cosets $G/\langle p\rangle$.
The type $D$ structure is equipped with a structure map
  \[ \delta^1\colon \CFDa(Y_R)\to\Alg\otimes\CFDa(Y_R),\]
  (where $\Alg$ is the torus algebra $\Alg(\Torus)$ recalled above)
which respects the grading, in the
sense that if $\x$ is some generator and $a\otimes \y$ appears with
non-zero multiplicity in $\delta^1(\x)$, then
\[ 
\lambda^{-1} \gr(\x)= \gr(a)\cdot \gr(\y),
\] 
where $\gr(a)$ is the grading in the algebra and $\gr(\x)$ and
$\gr(\y)$ denote gradings in the module.

Let $Y_L$ be a torus-bordered three-manifold, and assume again the
$Y_L$ is a homology knot complement. According to~\cite[Chapter~7]{InvPair}, there is a right $\Ainfty$ module associated to $Y_L$, the {\em type $A$ invariant of $Y_L$},
denoted $\CFDa(Y_L)$. Moreover, there is an element $q$
with the property that the type $A$ invariant $\CFAa(Y_L)$ is graded
by the right coset space $\langle q\rangle \backslash G$. 

The $\Ainfty$ operations  respect these gradings, in the sense that if $\x$
is some generator, and $\y$ appears with non-zero multiplicity in
$m_n(\x,a_1,\dots,a_{n-1})$, then
\begin{equation}
  \label{eq:GradingTypeA}
  \lambda^{n-2}\gr(\x)\gr(a_1)\cdots\gr(a_{n-1})=\gr(\y).
\end{equation}

The pairing theorem~\cite[Theorem~1.3]{InvPair} identifies the
quasi-isomorphism type of $\CFa(Y_{L}\cup Y_R)$ with that of the
tensor product $\CFAa(Y_L)\DT \CFDa(Y_R)$, as defined
in~\cite[Section~2.4]{InvPair}.  The grading set of $\CFa(Y_{L}\cup Y_R)$ is some cyclic group (given by the Maslov grading). The pairing
theorem also identifies this grading set with a subset of the double
coset space $\langle q\rangle \backslash G/\langle p\rangle$.  The
latter space has an action by $\Z$, induced from translation by the
central element $\lambda$.

\subsection{The pairing theorem and knots}

Suppose now that $Y_L$ contains a knot, in addition to a bordered
boundary.  In this case, the diagram for $Y_L$ contains yet another
basepoint ($w$), and $\CFAa(Y_L)$ can correspondingly be thought of as
a type $A$ structure (for example) over the torus algebra, where the
base ring is $\Field[U]$. The grading group can be correspondingly
enriched to $G\times \Z$, where the additional $\Z$-factor is called
the {\em Alexander factor}.

According to~\cite[Theorem~11.19]{InvPair} the pairing
$\CFAa(Y_L,z,w)\DT\CFDa(Y_R)$ now represents the knot Floer homology
$\HFKm$
of $Y_L\cup Y_R$, equipped with the knot (supported in $Y_L$).  The
tensor product is graded by a double coset space $\langle p\rangle
\backslash G\times \Z /\langle q\rangle$.  Translation on the
Alexander factor now corresponds to changing the Alexander grading for
the induced knot in $Y_L\cup Y_R$. 
We will use this (as
in~\cite[Chapter~11]{InvPair}) to study satellite operations (where
$Y_L$ is a solid torus).

\newcommand\AlexGr{\mathbf A}
\newcommand\MasGr{\mathbf M}
\section{Linear independence}
\label{sec:LinIndep}

Using bordered Floer homology computations, in this section we will determine
parts of the knot Floer chain complex of the knots of
Equation~\eqref{eq:TheKnots} from the introduction.  These computations will
enable us to give a proof of Theorem~\ref{thm:IndependenceTopSlice}.
In this proof we need to consider cables of the Whitehead double
$W_0^+(T_{2,3})$ of the trefoil knot $T_{2,3}$. We start with a simpler
computation of considering some cables of the trefoil, and then turn to 
cables of the Whitehead double.

See also ~\cite{Chen, HomCables, Petkova} for similar computations.

\subsection{A warm-up: cables of the trefoil knot}

Given a knot $K$ and relatively prime integers $(p,q)$, let $C_{p,q}(K)$
denote the $(p,q)$ cable of $K$. Let $T_{p,q}$ be the $(p,q)$ torus knot
($C_{p,q}$ of the unknot).  For integers $n\geq 2$ consider the family of
knots $C_{n,2n-1} (T_{2,3})$.  As a warm-up to our future calculations, we
prove the following:

\begin{lemma}
  \label{lem:PartUpsCableTorusKnot}
  The values of $\Upsilon_{C_{n,2n-1}(T_{2,3})}$ on the interval $[0,\frac{1}{n-1}]$ are determined by
  \[ \Upsilon_{C_{n,2n-1} (T_{2,3})}(t)=
  \left\{
    \begin{array}{ll}
      -(n^2-n+1) \cdot t & t\in [0,\frac{2}{2n-1}] \\ 2-(n^2-3n+2)
      \cdot t & t\in [\frac{2}{2n-1} , \frac{1}{n-1}].\\
   \end{array}
    \right.\]
\end{lemma}

In fact, it is not difficult to describe $\Upsilon_{C_{n,2n-1}
  (T_{2,3})}(t)$ completely; but the above partial computation will be
sufficient for our immediate needs.

We prove Lemma~\ref{lem:PartUpsCableTorusKnot} after a little
preparation.  The proof relies on a computation of knot Floer
homology, which can be done by a number of different techniques; see
for example~\cite{HeddenCables}.  In fact, according
to~\cite[Theorem~1.10]{HeddenCablesII}, $C_{n,2n-1} (T_{2,3})$ has an
$L$-space surgery, so one could apply Theorem~\ref{thm:LSpaceKnotsUpsilon};
see also~\cite{HomLSpace}.  We prefer instead to proceed using
bordered Floer homology (see~\cite[Chapter~11]{InvPair} for $n=2$
and~\cite{Petkova} for general $n$; see also~\cite{HomCables}), as the
computation will serve as a warm-up to a later computation given in
Lemma~\ref{lem:PartUpsCableWDTorusKnot}, where
Theorem~\ref{thm:LSpaceKnotsUpsilon} does not apply.

\begin{lemma}
  \label{lem:TypeDTref}
  The type $D$ module of the $+2$-framed right-handed trefoil knot
  complement has grading set given by
  $G/\lambda\gr(\rho_{12})\gr(\rho_{23})^{2}$.  It has
  five generators, $I$, $J$, $K$, $P$, and $Q$, with gradings
  specified by:
  \begin{equation}
    \begin{aligned}
      \gr(I)&=\lambda^{-2}\gr(\rho_{23})^{-1}  \qquad & \gr(J)=\lambda^{-1}  \qquad  &\gr(K)= \gr(\rho_{23}) \\
      \gr(P)&= \lambda^{-2}\gr(\rho_3)^{-1}  &\gr(Q)=\lambda^{-2}\gr(\rho_1)^{-1}&
    \end{aligned}
  \end{equation}
  The differential is specified by:
  \begin{equation}                                                                
  \label{eq:TrefoilD}
  \mathcenter{\begin{tikzpicture}[x=2.3cm,y=48pt]                                 
      \node at (0,2) (i) {$I$};
      \node at (1,2) (p) {$P$};
      \node at (2,2) (j) {$J$};
      \node at (2,1) (q) {$Q$};
      \node at (2,0) (k) {$K$};
      \draw[->](j) to node[above]{$\rho_3$} (p);
      \draw[->](p) to node[above]{$\rho_2$} (i);
      \draw[->](j) to node[right]{$\rho_{1}$} (q);
      \draw[->](k) to node[right]{$\rho_{123}$} (q);
      \draw[->](i) to node[below, sloped]{$\rho_{12}$} (k);
\end{tikzpicture}}
  \end{equation}
  where the arrows connecting generators represent terms in $\delta^1$,
  and the labels specify algebra elements; e.g.
  \[\delta^1J=\rho_3\otimes P + \rho_1\otimes Q.\]
\end{lemma}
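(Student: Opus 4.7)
The plan is to compute the type $D$ structure of the $+2$-framed right-handed trefoil complement directly from a bordered Heegaard diagram, following the framework of~\cite{InvPair}. First I would fix a genus-one bordered Heegaard diagram for the trefoil complement: since the right-handed trefoil is a $2$-bridge knot, its exterior admits a genus-one Heegaard decomposition, and one can draw a genus-one bordered Heegaard diagram whose $\beta$-circles realize the $+2$ framing by adjusting the slope of the boundary-parallel $\beta$-arc. (Equivalently, one could start from the type $D$ module of the $0$-framed, or $\infty$-framed, trefoil complement known from~\cite[Chapter~11]{InvPair} and apply the appropriate Dehn twist bimodule twice, which ought to reproduce the same five generators with the claimed differentials.)

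Once the diagram is fixed, I would enumerate the generators as intersection points between $\alpha$- and $\beta$-curves that satisfy the idempotent compatibility with the two regions of $\Sigma - \alphas$. A direct count yields exactly five, which we label $I, J, K, P, Q$ so as to match the statement. The differential $\delta^1$ is then read off by enumerating embedded bigon and rectangle domains in the diagram that avoid the basepoint and whose boundary on $\del\Sigma$ encodes a sequence of Reeb chords $\rho_I$. Each such domain of Maslov index one contributes a single arrow $\x \to \rho_I \otimes \y$; the five arrows in diagram~\eqref{eq:TrefoilD}, labeled by $\rho_1, \rho_2, \rho_3, \rho_{12}, \rho_{123}$, correspond to the embedded bigons/rectangles one finds. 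Checking that $(\delta^1)^2 = 0$ — equivalently, that $\rho_{12}\cdot \rho_{123}$-type compositions cancel — is a short verification using the structure equation and the relations in $\Alg(\Torus)$.

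For the gradings, I would use the formula $\lambda^{-1}\gr(\x) = \gr(a)\gr(\y)$ whenever $a\otimes\y$ appears in $\delta^1(\x)$, together with the algebra gradings from~\eqref{eq:grading-torus-alg}. Fixing the grading of (say) $J$ as $\lambda^{-1}$, the arrow $J \xrightarrow{\rho_3} P$ forces $\gr(P) = \lambda^{-2}\gr(\rho_3)^{-1}$, the arrow $J \xrightarrow{\rho_1} Q$ forces $\gr(Q) = \lambda^{-2}\gr(\rho_1)^{-1}$, and the arrow $P \xrightarrow{\rho_2} I$ then gives $\gr(I) = \lambda^{-2}\gr(\rho_{23})^{-1}$ after multiplying $\gr(\rho_3)^{-1}\gr(\rho_2)^{-1} = \gr(\rho_{23})^{-1}$. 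Finally $I \xrightarrow{\rho_{12}} K$ gives $\gr(K) = \lambda^{-1}\gr(\rho_{12})^{-1}\gr(\rho_{23})^{-1}\gr(\lambda^{-2})^{-1} = \gr(\rho_{23})$ after simplification, and one checks that the remaining arrow $K\xrightarrow{\rho_{123}}Q$ is consistent modulo the periodicity subgroup. The periodic domain of the diagram — which exists because the trefoil exterior has $b_1 = 1$ — gives the single periodic element $\lambda\,\gr(\rho_{12})\gr(\rho_{23})^2$, so the grading set is $G/\lambda\gr(\rho_{12})\gr(\rho_{23})^2$.

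The main obstacle is drawing the correct bordered Heegaard diagram and rigorously enumerating domains: once the five generators and five arrows are identified, the grading calculation is essentially forced by the algebra relations, and the quoted periodicity element is the one coming from the unique doubly-periodic domain in a genus-one bordered diagram whose $\alpha$-arcs have $+2$-framing. If the direct diagrammatic route becomes cumbersome, the fallback is the bimodule approach: take the $\infty$-framed trefoil's $\CFDa$ (which is small and standard) and tensor with $\widehat{CFDA}(\tau_\mu^2)$ for the meridional Dehn twist twice, then simplify by cancellation to reach precisely the five-generator presentation above.
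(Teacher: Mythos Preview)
Your approach is genuinely different from the paper's, and more laborious than necessary. The paper does not draw any Heegaard diagram for the trefoil complement at all; instead it invokes the $\HFK$-to-type-$D$ algorithm of~\cite[Theorem~11.27]{InvPair}. That theorem takes as input the filtered complex $\CFKinf(K)$ together with a framing and outputs $\CFDa$ of the complement algorithmically. Since $\CFKinf(T_{2,3})$ is the three-generator staircase (generators $\mathbf{i},\mathbf{j},\mathbf{k}$ with $\partial\mathbf{j}=U\mathbf{i}+\mathbf{k}$), feeding this and the framing $+2$ into the algorithm is a mechanical exercise that immediately produces the five generators, the five arrows, and the gradings. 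Your grading propagation via $\lambda^{-1}\gr(\x)=\gr(a)\gr(\y)$ is fine and is essentially what the algorithm does internally, but the generators and differentials themselves come for free from the algorithm rather than from any holomorphic curve count.

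One concrete issue with your direct route: the trefoil exterior does \emph{not} admit a genus-one bordered Heegaard diagram. A genus-one bordered diagram (one $\beta$-circle, two $\alpha$-arcs, no $\alpha$-circles) describes a solid torus; the trefoil complement needs genus at least two. So the plan to ``draw a genus-one bordered Heegaard diagram whose $\beta$-circles realize the $+2$ framing'' cannot work as stated. Your fallback via Dehn-twist bimodules would work, but again is more effort than simply applying~\cite[Theorem~11.27]{InvPair}, which is the intended and shortest route here.
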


\begin{proof}
  Recall that the knot Floer homology group $\HFKa(T_{2,3})$ of the
  right-handed trefoil has three generators, which we label ${\mathbf i}$,
  ${\mathbf j}$, and ${\mathbf k}$; with gradings $A({\mathbf i})=1$,
  $A({\mathbf j})=0$, $A({\mathbf k})=-1$, $M({\mathbf i})=0$, $M({\mathbf
    j})=-1$, $M({\mathbf k})=-2$; and a differential (in $\CFKinf$) with
  $\partial {\mathbf j} = U \cdot {\mathbf i} + {\mathbf k}$.  The type $D$
  module of the lemma follows from the $\HFK$-to-type $D$
  algorithm, given in~\cite[Theorem~11.27]{InvPair}.
\end{proof}

To compute the cable, we tensor with the type $A$ module for the
$(n,-1)$ cabling module; see~\cite{HomCables, Petkova}.
This graded module can be described as follows:
\begin{lemma}
  \label{lem:CablingPiece}
  The $(n,-1)$ cabling module has grading set 
  $\lambda \gr(U^n)\gr(\rho_3)\gr(\rho_2)\backslash G$.
  Its generators are $X$ and $\{A_i,B_i\}_{i=1}^n$,
  with gradings specified by 
  \begin{align}
    \gr(X)&=e     \label{eq:GradingX} \\
    \gr(A_i)&=\lambda^{i-n} \gr(\rho_2)^{-1} (\gr(\rho_2)\gr(\rho_1))^{i-n}
    \label{eq:GradingAi} \\
    \gr(B_i)=&=\lambda^{i-n}\gr(U)^{n-i} \gr(\rho_3) (\gr(\rho_2)\gr(\rho_1))^{i-n}.
    \label{eq:GradingBi}
  \end{align}
  The operations are specified by the following graph:
  \begin{equation}
    \label{eq:CablingTypeA}
  \mathcenter{\begin{tikzpicture}
      \node at (-8,1) (x) {$X$};
      \node at (-5,2) (an1) {$A_{n}$};
      \node at (-5,0) (bn1) {$B_{n}$};
      \node at (-2,2) (an2) {$A_{n-1}$};
      \node at (-2,0) (bn2) {$B_{n-1}$};
      \node at (0,2) (dotsup) {$\dots$};
      \node at (0,0) (dotsdown) {$\dots$};
      \node at (2,2) (ai) {$A_i$};
      \node at (2,0) (bi) {$B_i$};
      \node at (4,2) (dots2up) {$\dots$};
      \node at (4,0) (dots2down) {$\dots$};
      \node at (6,2) (a1) {$A_{1}$};
      \node at (6,0) (b1) {$B_{1}$};
      \draw[->,dashed](an1) to node[right]{$U^{n}$} (bn1);
      \draw[->,dashed](an2) to node[right]{$U^{n-1}$} (bn2);
      \draw[->,dashed](ai) to node[right]{$U^{i}$} (bi);
      \draw[->,dashed](a1) to node[right]{$U$} (b1);
      \draw[->](an1) to node[above,sloped]{$\rho_2$} (x);
      \draw[->](x) to node[below,sloped]{$\rho_3$} (bn1);
      \draw[->](bn2) to node[below]{$U \rho_2\otimes \rho_1$} (bn1);
      \draw[->](an2) to node[above]{$\rho_2\otimes \rho_1$} (an1);
      \draw[->](dotsup) to node[above]{$\rho_2\otimes \rho_1$} (an2);
      \draw[->](dotsdown) to node[below]{$U \rho_2\otimes \rho_1$} (bn2);
      \draw[->](ai) to node[above]{$\rho_2\otimes \rho_1$} (dotsup);
      \draw[->](bi) to node[below]{$U \rho_2\otimes \rho_1$} (dotsdown);
      \draw[->](dots2up) to node[above]{$\rho_2\otimes \rho_1$} (ai);
      \draw[->](dots2down) to node[above]{$\rho_2\otimes \rho_1$} (bi);
      \draw[->](a1) to node[above]{$\rho_2\otimes \rho_1$} (dots2up);
      \draw[->](b1) to node[below]{$U \rho_2\otimes \rho_1$} (dots2down);
    \end{tikzpicture}}
\end{equation}
\end{lemma}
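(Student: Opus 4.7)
The plan is to compute this module explicitly from a bordered Heegaard diagram adapted to the $(n,-1)$-cabling pattern in the solid torus, following the strategy of~\cite[Chapter~11]{InvPair} for the case $n=2$ and generalized in~\cite{Petkova,HomCables}. First, I would fix a genus-one bordered Heegaard diagram with torus boundary in which the $\beta$-curve produces an $n$-fold winding region around a meridional $\alpha$-arc, with the basepoints $w$ and $z$ placed so that the associated knot in the solid torus is $C_{n,-1}$. The generator $X$ arises from the unique intersection point occupying the $\iota_1$-idempotent, and the pairs $A_i, B_i$ (for $i = 1, \dots, n$) arise from the intersection points in the $\iota_0$-idempotent, indexed by the $n$ successive winding intervals.

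Next, I would read off the $A_\infty$-operations by enumerating the embedded, index-one holomorphic domains in this diagram. The bigon connecting $A_i$ and $B_i$ crosses $w$ exactly $i$ times, producing $m_2(A_i, U^i) = B_i$. Boundary-adjacent bigons yield the operations $m_2(A_n, \rho_2) = X$ and $m_2(X, \rho_3) = B_n$, while longer rectangular domains that wrap once around the boundary contribute $m_3(A_i, \rho_2, \rho_1) = A_{i-1}$ and $m_3(B_i, \rho_2, \rho_1) = U \cdot B_{i-1}$; the extra $U$ on the $B$-row reflects an additional crossing of $w$ by these wider domains. A verification that these are the only nontrivial operations is needed: all other candidate domains either cross $z$ (and are therefore excluded from $\CFAa$) or come in cancelling pairs via the usual index and embeddedness arguments.

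Finally, the gradings are pinned down from the compatibility law~\eqref{eq:GradingTypeA} together with the normalization $\gr(X) = e$ and the algebra gradings~\eqref{eq:grading-torus-alg}. For instance, $m_2(A_n, \rho_2) = X$ forces $\gr(A_n) = \gr(\rho_2)^{-1}$, and $m_2(X, \rho_3) = B_n$ forces $\gr(B_n) = \gr(\rho_3)$; the further relation $m_2(A_n, U^n) = B_n$ gives $\gr(A_n) \gr(U^n) = \gr(B_n)$, identifying the coset stabilizer as $\lambda \gr(U^n) \gr(\rho_3) \gr(\rho_2)$ (with the leading $\lambda$ accounting for the periodic domain's Maslov contribution). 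The gradings of $A_i, B_i$ for $i < n$ then follow by propagating through the length-three operations: each application of an $m_3$ introduces an extra factor of $\lambda$ (from the $\lambda^{n-2} = \lambda$ prefactor in~\eqref{eq:GradingTypeA} when the number of inputs equals two) together with the algebra grading $\gr(\rho_2) \gr(\rho_1)$, producing the stated formulae~\eqref{eq:GradingAi} and~\eqref{eq:GradingBi}.

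The main obstacle is the bookkeeping for the non-commutative $G$-grading, together with confirming that no higher $A_\infty$-operations (of length $\geq 4$) are present. The former reduces to patient computation using~\eqref{eq:grading-torus-alg}; the latter follows either from a direct domain count in the chosen Heegaard diagram or by using the $A_\infty$-relations once the operations of length $\leq 3$ are fixed. Both points are treated carefully in~\cite{Petkova,HomCables}, to which the argument can ultimately appeal.
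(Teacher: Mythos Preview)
Your overall approach matches the paper's: work from a genus-one bordered diagram for the $(n,-1)$ cabling piece, read off the domains combinatorially, and then deduce the gradings from the compatibility relation~\eqref{eq:GradingTypeA}. Two points need correction, though.

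First, a minor indexing slip: the $m_3$ arrows in the diagram go from smaller to larger index, so the operations are $m_3(A_i,\rho_2,\rho_1)=A_{i+1}$ and $m_3(B_i,\rho_2,\rho_1)=U\cdot B_{i+1}$, not the reverse. You can check this against the grading formula: $\lambda\,\gr(A_i)\,\gr(\rho_2)\gr(\rho_1)=\gr(A_{i+1})$ holds with~\eqref{eq:GradingAi}, while the opposite direction does not. (Relatedly, the arrow $A_i\to U^iB_i$ is an $m_1$, not an $m_2$; this matters because the $\lambda^{-1}$ from $m_1$ is precisely what produces the leading $\lambda$ in the coset stabilizer.)

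Second, and more substantively, your claim that one must ``confirm that no higher $A_\infty$-operations (of length $\geq 4$) are present'' is not correct, and the $A_\infty$-relations will not confirm it --- they in fact force such operations. The paper is explicit that the diagram~\eqref{eq:CablingTypeA} is a \emph{shorthand}: the full list of operations is obtained by concatenating directed paths and multiplying adjacent algebra labels. For example, concatenating $A_{n-1}\to A_n$ (labelled $\rho_2\otimes\rho_1$) with $A_n\to X$ (labelled $\rho_2$) gives the operation sending $A_{n-1}$ to $X$ with input sequence $\rho_2\otimes\rho_{12}$; longer chains starting from $A_1$ produce genuine $m_k$ with $k$ up to $n+1$. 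So the task is not to rule out higher operations but to verify that the concatenated-path description accounts for all index-one domains --- which is what the references~\cite{Petkova,HomCables} do.
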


\begin{proof}
  The cabling module is the type $A$ module
  associated to a doubly-pointed Heegaard diagram.
  Since that diagram has genus one, the
  holomorphic curve counting can be done combinatorially; see Figure~\ref{fig:CableDiagram} for a picture with $n=2$.
 \begin{figure}
 \begin{center}
 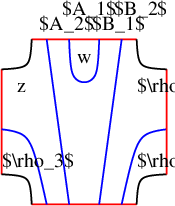
 \caption{\textbf{Heegaard diagram for the $n=2$ cabling piece.} This is 
taking place on  the punctured torus, with the usual opposite sides 
identifications. The Heegaard diagram
 represents a bordered diagram for an $(n,-1)$ cabling piece.
 \label{fig:CableDiagram}}
 \end{center}
 \end{figure}
 The computation was done in~\cite{Petkova} (see
 also~\cite{HomCables}); we recall here highlights for the reader's
 convenience.
  
 The diagram appearing in the statement of the lemma is a shorthand:
 dashed arrows represent $m_1$ actions (labelled by their outputs in
 $\Field[U]$), and all other
 operations are obtained by concatenating undashed paths
 (labelled by elements in $\Alg$ or $\Alg\otimes \Alg$).
   If there is a sequence of (undashed) arrows connecting some
 generator $P$ to some generator $Q$,
 there is a corresponding algebra operation from $P$ to $Q$
 whose sequence of input algebra elements is obtained from the sequences
 appearing on the edges by multiplying the last algebra element on some
 arrow with the first algebra  element on the next arrow.
 For instance, 
 concatenating the path from $A_{n-1}$ to $A_{n}$ (which is labelled $\rho_2\otimes \rho_1$) with the path from $A_{n}$  to $X$  (which is labelled by $\rho_2$)
we obtain an operation
  \[ m_3(A_{n-1}, \rho_2\otimes \rho_1\cdot \rho_2)=X.\]

  After verifying Equation~\eqref{eq:CablingTypeA},
  Equations~\eqref{eq:GradingX}, \eqref{eq:GradingAi},
  and~\eqref{eq:GradingBi} follow from Equation~\eqref{eq:GradingTypeA}.
  
  The verification of the grading set follows immediately from
  Equations~\eqref{eq:GradingAi} and~\eqref{eq:GradingBi} for $i=1$,
  and the fact that $\lambda^{-1} \gr(A_1)= \gr(U)\gr B_1$.
\end{proof}  

For simplicity, we have also reproduced the above answer in the
special case where $n=2$, see Equation~\eqref{eq:specCase}.  The
corresponding Heegaard diagram is pictured in
Figure~\ref{fig:CableDiagram}.  (Note that our numbering is slightly
different from the one from~\cite{Petkova}.)
\begin{equation} \label{eq:specCase}
  \mathcenter{\begin{tikzpicture}
      \node at (-9,1) (x) {$X$};
      \node at (-5,2) (a2) {$A_2$};
      \node at (-5,0) (b2) {$B_2$};
      \node at (-3,2) (a1) {$A_1$};
      \node at (-3,0) (b1) {$B_1$};
      \draw[->,dashed](a2) to node[right]{$U^{2}$} (b2);
      \draw[->,dashed](a1) to node[right]{$U^{1}$} (b1);
      \draw[->](a2) to node[above,sloped]{$\rho_2$} (x);
      \draw[->](x) to node[below,sloped]{$\rho_3$} (b2);
      \draw[->](b1) to node[below]{$U \rho_2\otimes \rho_1$} (b2);
      \draw[->](a1) to node[above]{$\rho_2\otimes \rho_1$} (a2);
    \end{tikzpicture}}
\end{equation}

The next lemma describes the chain homotopy type of the
bigraded chain complex 
$\CFKm(C_{n,2n-1}(T_{2,3}))$ over $\Field[U]$, whose homology is
the knot Floer homology $\HFKm(C_{n,2n-1}(T_{2,3}))$. This is the chain complex
obtained by taking the associated graded object for $\fCFKm(C_{n,2n-1}(T_{2,3}))$.

\begin{lemma}
  \label{lem:GradeComplex}
  The chain homotopy type of the complex $\CFKm(C_{n,2n-1}(T_{2,3}))$
  has a representative with generators
  $\{A_i\otimes P\}_{i=1}^n$,   $\{B_i\otimes P\}_{i=1}^n$
  $\{A_i\otimes Q\}_{i=1}^n$,   $\{B_i\otimes Q\}_{i=1}^n$
  and three more generators $\{X\otimes I, X\otimes J,X\otimes K\}$.
  The differential is specified by 
  \begin{align*}
    \partial (A_i\otimes Q) &= U^i B_i\otimes Q \\
    \partial (A_i\otimes P) &= \left\{\begin{array}{ll}
    U^i B_i \otimes P  & {\text{if $i<n-2$}} \\ 
    U^{n-2} B_{n-2} \otimes P + B_n\otimes Q & {\text{if $i=n-2$}}  \\
    U^{n-1} B_{n-1}\otimes P + X\otimes K & {\text{if $i=n-1$}} \\
    U^n B_n \otimes P + X\otimes I & {\text{if $i=n$}}
    \end{array}\right. \\
    \partial (X \otimes J) & = B_n \otimes P \\
    \partial (X\otimes I) &= 0 \\
    \partial (X\otimes K) &= 0 \\
    \partial (B_i\otimes Q) &= 0 \\
    \partial (B_i\otimes P) &= 0 .
  \end{align*}
  Relative bigradings are specified by 
  \begin{equation}
    \begin{aligned}
      \MasGr(B_1\otimes Q)-\MasGr(A_1\otimes Q)=1 \qquad 
      &\AlexGr(B_1\otimes Q)-\AlexGr(A_1\otimes Q)=1 \\
      \MasGr(A_1\otimes Q)-\MasGr(B_2\otimes Q)=1 \qquad
      &\AlexGr(A_1\otimes Q)-\AlexGr(B_2\otimes Q)=2n-2.
    \end{aligned}
  \end{equation}
  For $i=2,\dots,n-1$
  \begin{equation}
    \begin{aligned}
      \MasGr(B_i\otimes Q)-\MasGr(A_i\otimes Q) = 2i-1 
      \qquad &\AlexGr(B_i\otimes Q)-\AlexGr(A_i \otimes Q) = i \\
      \MasGr(A_i\otimes Q)-\MasGr(B_{i-1}\otimes P) = 1 \qquad &
      \AlexGr(A_i\otimes Q)-\AlexGr(B_{i-1}\otimes P) = n-i+1 \\
      \MasGr(B_{i-1}\otimes P)-\MasGr(A_{i-1}\otimes P)=2i-3 \qquad & 
      \AlexGr(B_{i-1}\otimes P)-\AlexGr(A_{i-1}\otimes P) = i-1 \\
      \MasGr(A_{i-1}\otimes P)-\MasGr(B_{i+1}\otimes Q) = 1  \qquad 
      &\AlexGr(A_{i-1}\otimes P)-\AlexGr(B_{i+1}\otimes Q) = n-i-1
    \end{aligned}
  \end{equation}
  and
  \begin{equation}
  \begin{aligned}
    \MasGr(B_n\otimes Q)-\MasGr(A_n\otimes Q) = 2n-1 \qquad 
    & \AlexGr(B_n\otimes Q)-\AlexGr(A_n\otimes Q)=n \\
    \MasGr(A_n\otimes Q)-\MasGr(B_{n-1}\otimes P) = 1 \qquad 
    & \AlexGr(A_n\otimes Q)-\AlexGr(B_{n-1}\otimes P) =1 \\
    \MasGr(B_{n-1}\otimes P)-\MasGr(A_{n-1}\otimes P)= 2n-3 \qquad
    & \AlexGr(B_{n-1}\otimes P)-\AlexGr(A_{n-1}\otimes P)= n-1 \\
    \MasGr(A_{n-1}\otimes P)-\MasGr(X\otimes K)=1 \qquad &
    \AlexGr(A_{n-1}\otimes P)-\AlexGr(X\otimes K)= 0 \\
    \MasGr(X\otimes J)-\MasGr(B_n\otimes P) = 1 \qquad &
    \AlexGr(B_{n}\otimes P)-\AlexGr(X\otimes J)=0 \\
    \MasGr(A_n\otimes P)-\MasGr(X\otimes I) = 1 \qquad &
    \AlexGr(A_n\otimes P)-\AlexGr(X\otimes I) = 0 \\
  \end{aligned}
  \end{equation}
  and finally
  \begin{equation}
    \begin{aligned}
      \MasGr(X\otimes J)-\MasGr(X\otimes K) = 2n-1 \qquad &
      \AlexGr(X\otimes J)-\AlexGr(X\otimes K) = n \\
      \MasGr(B_n\otimes P)-\MasGr(A_n\otimes P)= 2n-1 \qquad &
      \AlexGr(B_n\otimes P)-\AlexGr(A_n\otimes P) = n. \\
    \end{aligned}
  \end{equation}
  These are calibrated by 
  \begin{equation}
    \MasGr(B_1\otimes Q) = 0 
    \qquad
    \AlexGr(B_1\otimes Q) = n^2-n+1.
  \end{equation}
\end{lemma}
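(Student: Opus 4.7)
The plan is to apply the pairing theorem of bordered Floer homology, in its $U$-enhanced version from~\cite[Chapter~11]{InvPair}, to identify $\CFKm(C_{n,2n-1}(T_{2,3}))$ with the box tensor product of the cabling type $A$ module of Lemma~\ref{lem:CablingPiece} and the trefoil type $D$ module of Lemma~\ref{lem:TypeDTref}. The generators arise by matching idempotents: the trefoil module splits as $I,J,K\in\iota_0 N$ and $P,Q\in\iota_1 N$ (forced by the idempotent decorations of the $\rho$'s appearing in its $\delta^1$), while the cabling module splits as $X\in N\iota_0$ and $A_i,B_i\in N\iota_1$. Pairing compatible idempotents yields exactly the $4n+3$ generators listed in the statement.

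For the differentials, I apply the box tensor product formula $\partial(x\otimes y)=\sum m_{k+1}(x,a_1,\ldots,a_k)\otimes y'$ summed over sequences $a_1\otimes\cdots\otimes a_k\otimes y'$ appearing in the iterated coaction $\delta^k(y)$. Iterating the type $D$ differential on the trefoil produces a short list of outputs: $P$ emits $\rho_2\otimes I$, $\rho_2\otimes\rho_{12}\otimes K$, and $\rho_2\otimes\rho_{12}\otimes\rho_{123}\otimes Q$; $J$ emits $\rho_3\otimes P$, $\rho_1\otimes Q$, and the longer concatenations through $P$; $I$ emits $\rho_{12}\otimes K$ and $\rho_{12}\otimes\rho_{123}\otimes Q$; $K$ emits $\rho_{123}\otimes Q$; and $Q$ emits nothing. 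Pairing these against the type $A$ operations of Equation~\eqref{eq:CablingTypeA} together with the internal differential $\partial^- A_i=U^i B_i$ (coming from the $w$-basepoint on the cabling side) produces precisely the stated differentials. For instance, $X\otimes I$ in $\partial(A_n\otimes P)$ arises from $m_2(A_n,\rho_2)=X$ against $\rho_2\otimes I$, while $X\otimes K$ in $\partial(A_{n-1}\otimes P)$ requires the composite operation $m_3(A_{n-1},\rho_2,\rho_{12})=X$ against $\rho_2\otimes\rho_{12}\otimes K$.

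For the relative gradings, I use the $G$-valued grading formulas for each module. The type $D$ module is graded by $G/\langle\lambda\gr(\rho_{12})\gr(\rho_{23})^2\rangle$ with values as given in Lemma~\ref{lem:TypeDTref}; the type $A$ module is graded by $\gr(\lambda U^n\rho_3\rho_2)\backslash G$ with values from Equations~\eqref{eq:GradingX}--\eqref{eq:GradingBi}. The box tensor product inherits a grading in the double coset space, whose projection to the central $\langle\lambda\rangle\cong\Z$ gives the relative Maslov grading, while the supplementary $\Z$-factor (from the $w$-basepoint) gives the relative Alexander grading. Each relative grading assertion in the lemma then reduces to explicit arithmetic in $G$ using Equation~\eqref{eq:grading-torus-alg}. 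The absolute Maslov calibration is pinned down by the requirement that the unique non-torsion class in $H_*(\CFKm)$ sit in Maslov grading $0$ (Proposition~\ref{prop:HFmRankOne}), while the absolute Alexander calibration follows from symmetry of the Alexander polynomial.

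The main obstacle is the middle step: correctly identifying the higher $A_\infty$ operations in the cabling module (such as $m_3(A_i,\rho_2,\rho_{12})$ and $m_4(A_i,\rho_2,\rho_{12},\rho_{123})$) that feed the differentials but do not appear in the shorthand graph of Equation~\eqref{eq:CablingTypeA}. These must be extracted either by a direct holomorphic curve count on the genus-one Heegaard diagram of Figure~\ref{fig:CableDiagram} or deduced from the $A_\infty$ relations, and they are further constrained by the requirement that the resulting complex have total homology isomorphic to $\F[U]$ (Proposition~\ref{prop:HFmRankOne}).
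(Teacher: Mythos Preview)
Your proposal is correct and follows essentially the same approach as the paper: apply the pairing theorem to the modules of Lemmas~\ref{lem:CablingPiece} and~\ref{lem:TypeDTref}, read off the differentials from the box tensor product, compute relative bigradings in the double coset space $\langle\lambda U^n\gr(\rho_3)\gr(\rho_2)\rangle\backslash G/\langle\lambda\gr(\rho_{12})\gr(\rho_{23})^2\rangle$, and pin down the absolute normalization via Proposition~\ref{prop:HFmRankOne} and the Alexander polynomial. The paper does exactly this, carrying out one sample double-coset reduction (for $A_i\otimes P$) explicitly and listing the resulting closed-form bigradings.

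One small remark: the ``main obstacle'' you flag --- the higher $A_\infty$ operations $m_3(A_{n-1},\rho_2,\rho_{12})=X$ and $m_4(A_{n-2},\rho_2,\rho_{12},\rho_{123})=B_n$ --- is not really an obstacle. The paper's convention, stated immediately after Equation~\eqref{eq:CablingTypeA}, is that the shorthand graph encodes all operations obtained by concatenating directed paths and multiplying adjacent algebra labels; your two operations arise from the paths $A_{n-1}\to A_n\to X$ and $A_{n-2}\to A_{n-1}\to A_n\to X\to B_n$ with the appropriate regroupings $\rho_1\rho_2=\rho_{12}$ and $\rho_1\rho_2\rho_3=\rho_{123}$. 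So there is nothing further to compute or deduce from $A_\infty$ relations.
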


\begin{remark}
The equations are stated in the above order in order to draw attention to the
ordering of the generators by Alexander grading; e.g. the following sequence of generators
have decreasing Alexander grading:
\[
  B_1\otimes Q, \qquad A_1\otimes Q, \qquad B_2\otimes Q,
\]
then (for $i=2, \dots, n-1$)
\[ B_i\otimes Q, \qquad A_i\otimes Q, \qquad B_{i-1}\otimes P, \qquad A_{i-1}\otimes P, \qquad B_{i+1}\otimes Q, \]
and finally
\[ A_{n}\otimes Q, \qquad B_{n-1}\otimes P, \qquad A_{n-1}\otimes P.\]
(The gradings of other generators will be irrelevant, as they do not represent homology classes in $\HFKa$.)
\end{remark}

For future reference, notice that all other generators of
$\CFKm(C_{n,2n-1}(W^+_0(T_{2,3})))$ have Alexander grading $<g-2n+1$ and
Maslov grading $<-2$.  See Figure~\ref{fig:BigDiagramChase} for an
illustration of the portion of $\fCFKm$ (with Alexander
grading $\geq \AlexGr(B_2\otimes Q^r)$).

\begin{remark}
  See Figure~\ref{fig:CableTrefoilAns} for an illustration of the
  chain complex with $n=3$ (the general $n>2$ case looks similar); see
  Figure~\ref{fig:CableTrefoilAns2} for the degenerate case where
  $n=2$.
\end{remark}

 \begin{figure}
 \begin{center}
 \input{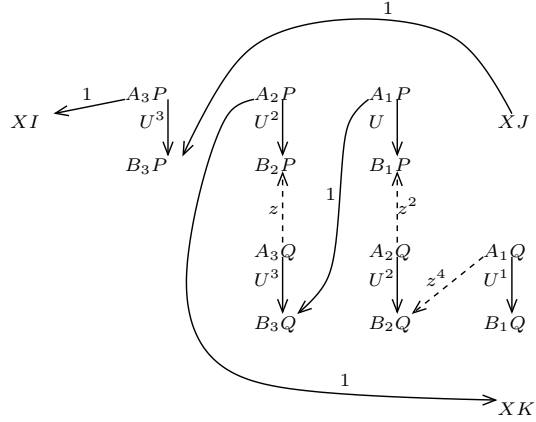}
 \caption{\textbf{Knot Floer complex $\CFKm (C_{3,5}(T_{2,3}))$
of $C_{3,5} (T_{2,3})$.}
   \label{fig:CableTrefoilAns}
  Solid arrows (which are all labelled with $U$-powers) indicate
  differentials (and the labels indicate the coefficients); dashed
  arrows are not differentials, but they connect pairs of generators
  of Maslov grading difference $1$ and Alexander grading difference
  recorded in the $z$ exponent of the labels. The complex for
  $C_{n,2n-1}(T_{2,3})$ with $n>3$ has very similar structure; the
  case $n=2$ is slightly degenerate; see
  Figure~\ref{fig:CableTrefoilAns2}.}
 \end{center}
 \end{figure}

 \begin{figure}
 \begin{center}
 \input{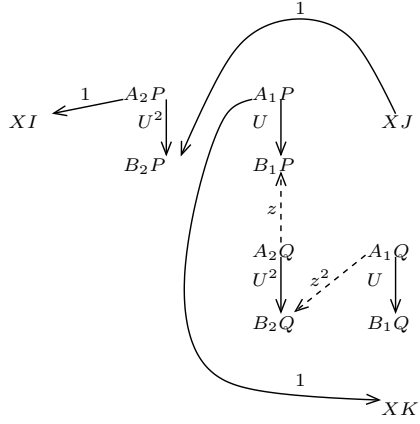}
 \caption{\textbf{Knot Floer complex of $C_{2,3} (T_{2,3})$.}  
   \label{fig:CableTrefoilAns2}
   This is the complex for $\CFKm(C_{2,3}(T_{2,3}))$, with the
   notational conventions from Figure~\ref{fig:CableTrefoilAns}.}
 \end{center}
 \end{figure}
\bigskip

\begin{prooff} {\bf  of Lemma~\ref{lem:GradeComplex}.}
  The lemma is a straightforward pairing of the module from
  Lemma~\ref{lem:CablingPiece} with the one from
  Lemma~\ref{lem:TypeDTref} (in view of the pairing
  theorem,~\cite[Theorem~11.19]{InvPair}).

  To illustrate this, we verify
    that
    \[ \partial(A_{n-2}\otimes P)= U^{n-2} B_{n-2}\otimes P + B_n\otimes Q.\]
    Lemma~\ref{lem:CablingPiece}
    states that $m_1(A_{n-2})= U^{n-2}B_{n-2}$.  This gives rise to
    the first term in the above boundary map. For the second term, we pair
    the sequence
    \[ \begin{array}{llll}\delta^1 P=\rho_2\otimes I, &\delta^1 I=
      \rho_{12}\otimes K, &\delta^1 K = \rho_{123} \otimes Q.
    \end{array}\]
    with the action
    \[ m_{4}(A_{n-2},\rho_2\otimes \rho_{12}\otimes \rho_{123})=B_n\]
    coming from the concatenation of four arrows in
    Equation~\eqref{eq:CablingTypeA}.  It is easy to see
  that there are no other terms in the differential.
  The other differentials are verified similarly.

  The pairing theorem can also be used to compute the stated bigradings.
  We illustrate this by computing
  the grading of
  $A_i\otimes P$, as follows:
  \begin{align*}
    \gr(A_i\otimes P) &= \gr(A_i)\cdot \gr(P) \\
    &= \lambda^{i-n}\gr(\rho_2)^{-1} (\gr(\rho_2)\gr(\rho_1))^{i-n}\cdot \lambda^{-2}\gr(\rho_3)^{-1} \\
    &\sim \left(\lambda \gr(U^n)\gr(\rho_3)\gr(\rho_2)\right)^{2i+1-2n} \\
    & \qquad 
    \left(\lambda^{i-n}\gr(\rho_2)^{-1} (\gr(\rho_2)\gr(\rho_1))^{i-n}\cdot \lambda^{-2}\gr(\rho_3)^{-1}\right)  \\
    & \qquad 
    \left(\lambda \gr(\rho_{12})\gr(\rho_{23})^2\right)^{n-i} \\
    & =\lambda^{-1-4i-2i^2+4n+4in-2n^2}u^{n(-1-2i+2n)}.
  \end{align*}
  In the above, $\sim$ denotes the equivalence relation of double cosets; the
  exponent $n-i$ of $\lambda\gr(\rho_{12})\gr(\rho_{23})$ is chosen to cancel
  all factors of $\gr(\rho_{12})$ (up to overall factors of $\lambda$); and
  the exponent $(2i+1-2n)$ of $\lambda \gr(U^n)\gr(\rho_3)\gr(\rho_2)$ is chosen to
  cancel the factors of $\gr(\rho_{23})$ (up to factors of $u$ and $\lambda$).
  The final step is a straightforward computation
    in the grading group, using the formulas recalled in Section~\ref{subsec:TorusAlg}.
  As in~\cite[Section~11.9]{InvPair},
  the pairing theorem interprets this double coset element as computing the Maslov/Alexander bigrading of
  generators. Specifically, the exponent of $\lambda$ computes $\MasGr-2\AlexGr$, while the exponent of $u$
  computes $-\AlexGr$. Thus the above computation shows that,  the
  Maslov/Alexander bigrading of $A_i\otimes P$ (up to overall shifts) is
  \[ 
    \MasGr(A_i\otimes P) = -1 - 4i - 2i^2 + 2n + 2n^2
    \qquad \AlexGr(A_i\otimes P)=-(n(-1 - 2i + 2n)). \]

    Proceeding in a similar manner, we find:
  \begin{align*}
    \MasGr(B_i\otimes P) = -2(1 + i + i^2 - n - n^2) \qquad 
    &\AlexGr(B_i\otimes P) = (i - n)(-1 + 2n)  \\
    \MasGr(A_i\otimes Q) = -2 i^2+2 i+2 n^2+2 n-1
    \qquad
    &\AlexGr(A_i\otimes Q)=
    -2 n (-i+n+1)\\
    \MasGr(B_i\otimes Q) = -2 \left(i^2+i-n^2-n+1\right)
    \qquad
    &\AlexGr(B_i\otimes Q)=(2 n-1) (i-n)\\
    \MasGr(X\otimes I) = -2n-2 \qquad & \AlexGr(X\otimes I)= n \\
    \MasGr(X\otimes J) = -1 \qquad & \AlexGr(X\otimes J)= 0 \\
    \MasGr(X\otimes K) = 2n \qquad & \AlexGr(X\otimes K)= -n.
  \end{align*}
  The relative bigrading statements in the statement of the lemma are a direct consequence of these computations.
  
  The non-trivial homology class in $\HFKa$ with minimal Alexander
  grading is represented by $B_{n-1}\otimes P$; in fact, that class
  descends to a non-torsion class in $\HFKm$.  By symmetry, it follows
  that the cycle $B_1\otimes Q$ with maximal Alexander grading 
  represents the $\tau$-invariant of $C_{n,2n-1}(T_{2,3})$  (in the sense that it descends to a generator for
  $\CFa(S^3)$); in particular $\MasGr(B_1\otimes Q)=0$. Its Alexander
  grading can be read off from the Alexander polynomial.
\end{prooff}

By Lemma~\ref{lem:GradeComplex}, the tensor product of these two modules has generating set
\[ \{ A_i\otimes P, B_i\otimes P, A_i\otimes Q, B_i\otimes Q, X\otimes I,
X\otimes J, X\otimes K\}_{i=1}^{n}.\] 
When $n>2$, 
there are four differentials (not
decorated by $U$): from $X\otimes J$ to $B_n\otimes P$; from
$A_n\otimes P$ to $X\otimes I$; and from $A_{n-2}\otimes P$ to $X\otimes
K$; and from $A_{n-1}\otimes P$ to $B_n \otimes Q$.

\bigskip

\begin{lemma}
  \label{lem:MonotoneGenerators}
  Let $K$ be a knot so that $\CFKa(K)$ has three generators $a$, $b$,
  and $c$ with the property that there are integers $1\leq k$ and
  $0\leq \ell$ with
  \begin{align*}
    \MasGr(a)-\MasGr(b)=2k-1 & \qquad \AlexGr(a)-\AlexGr(b)=k \\
    \MasGr(b)-\MasGr(c)=1 & \qquad \AlexGr(b)-\AlexGr(c)= \ell.
  \end{align*}
  Then, for any integer $n>\max(2,\ell)$, if $t<\frac{1}{n-1}$, then
  \[ gr_t(a)> \gr_t(b)> \gr_t(c).\]
\end{lemma}

\begin{proof}
  This is straightforward arithmetic.
\end{proof}

\begin{prooff} {\bf of Lemma~\ref{lem:PartUpsCableTorusKnot}.}
  Let $L_n=C_{n,2n-1}(T_{2,3})$.

  By the computation of the differentials in Lemma~\ref{lem:GradeComplex}, it follows that the set
  \[ \{ A_i\otimes P\}_{i=1}^{n-3}, \{ B_i\otimes P\}_{i=1}^{n-1},
  \{A_i\otimes Q\}_{i=1}^{n}, \{B_i\otimes Q\}_{i=1}^{n-1},
  \{A_n\otimes Q+ U^{n}A_{n-1}\otimes P\} \] of cycles in $\CFKm(L_n)$
  generate the homology $\HFKm(L_n)$.  Note the ranges of the indices:
  the computation of the differential allows us to remove the chain
  complex generators $X\otimes I$, $X\otimes J$, $X\otimes K$,
  $A_{n-1}\otimes P$, $B_{n-1}\otimes P$, and $A_{n-2}\otimes P$.  The
  final generator takes into account the differential which eliminates
  $B_n\otimes Q$.  Similarly, the corresponding subset of generators in $\CFKt(L_n)$ (where the last
  generator is replaced by $v^{c-\gr_t(A_n\otimes Q)} A_n\otimes Q + v^{c-\gr_t(A_{n-1}\otimes P)} A_{n-1}\otimes P$
  with $c=\max(\gr_t(A_{n-1}\otimes Q),\gr_t(A_n\otimes P))$) span a quasi-isomorphic subcomplex 
  of $\CFKt(L_n)$.

  In view of Lemma~\ref{lem:GradeComplex}, we can apply Lemma~\ref{lem:MonotoneGenerators} repeatedly
  to conclude that for  $t\leq \frac{1}{n-1}$,
  we have that for  all $i=2\dots n-1$, 
  \[ \gr_t(B_i\otimes Q)>\gr_t(A_i\otimes Q)>\gr(B_{i-1}\otimes P)>\gr_t(A_{i-1}\otimes P)>\gr_t(B_{i+1}\otimes Q);
  \]
  and also
  \[ \gr_t(B_{n}\otimes Q)>\gr_t(A_n\otimes Q)>gr_t(B_{n-1}\otimes P)>\gr_t(A_{n-1}\otimes P).\]
  Using the grading computations from Lemma~\ref{lem:GradeComplex}, we also see that for $t\leq \frac{1}{n-1}$,
  \[ \gr_t(B_1\otimes Q)>\gr_t(A_2\otimes Q) \qquad\text{and}\qquad \gr_t(A_1\otimes Q)>\gr_t(A_2\otimes Q).\]
  Thus, the generators $B_1\otimes Q$,
  $A_1\otimes Q$, and $B_2\otimes Q$ are the three homology generators with
  maximal $\gr_t$.
  
  We verify next that $B_1\otimes Q$ is a cycle, representing a
  non-torsion homology class in $\HFKt(L_n)$.  
  From
  the gradings computed in Lemma~\ref{lem:GradeComplex}, it follows
  that the Maslov grading of $B_1\otimes Q$ is greater by at least $2$ than the Maslov
  gradings of all other elements, except for $A_1\otimes Q$. 
  But $\partial(B_1\otimes Q)$ cannot equal $A_1\otimes Q$, because that would violate $\partial^2=0$.
  It follows that $B_1\otimes Q$
  represents a cycle in $\fCFKm(L_n)$. Moreover, since
  its Alexander grading is greater than the Alexander grading of all
  other generators, it follows that $B_1\otimes Q$ represents a
  non-trivial homology class in $H(\fCFKm(L_n))\cong
  \Field[U]$. We conclude that $B_1\otimes Q$, now thought of as an
  element of $\HFKt(L_n)$, has non-trivial image in
  $H(\CFKt(L_n\otimes \Ring^*))\cong \Ring^*$; i.e. it
  is a non-torsion homology class.

  Next, we claim that 
  \begin{equation} \partial(A_1\otimes Q)=U\cdot B_1\otimes Q + B_2\otimes Q 
    \label{eq:DA1}
  \end{equation}
  in
  $\fCFKm(C_{n,2n-1}(K))$.  Observe first that the Maslov gradings of
  $U\cdot B_1\otimes Q$ and $B_2\otimes Q$ are strictly greater than
  the Maslov gradings of all generators of $\fCFKm$, other than
  $A_1\otimes Q$. It follows that $\partial(A_1\otimes Q)$ can contain
  no other terms.  
  By the computation of $\CFKm(C_{n,2n-1}(K))$, it
  follows that $U\cdot B_1\otimes Q$ appears with non-zero coefficient
  in $\partial (A_1\otimes Q)$. We wish to verify that $\partial(A_1\otimes Q)$ also contains
  $B_2\otimes Q$, an element whose Alexander filtration level is $2n-2$ less than that of $A_1\otimes Q$.
  Since the Alexander filtration levels are different, the existence of this term in the differential is not 
  visible directly from the differential in the associated graded graded object
  $\CFKm(L_n)$; rather, its existence is verified by
  the following indirect argument.

  When $n>2$, we argue as follows. By Lemma~\ref{lem:GradeComplex},
  the three homology classes in
  $\HFKa(L_n)$ with minimal Alexander grading are
  represented by $B_{n-1}\otimes P$, $A_n \otimes Q + U^n
  A_{n-2}\otimes P$, and $B_{n-2}\otimes P$ (noting that $B_{n}\otimes
  Q$ is homologous to $U^{n-2} B_{n-2}\otimes P$); and that lemma
  gives a differential in
  $\CFKm(L_n)$ from $A_n \otimes Q + U^n
  A_{n-2}\otimes P$ to $U^{2n-2}\cdot B_{n-2}\otimes P$. 

  Consider now the complex ${\mathcal C}'=\fCFKm(L_n)'$
  appearing in Proposition~\ref{prop:HFKsymmetry}, which is obtained
  from $\fCFKm(L_n)$ by reversing the roles of the algebraic
  and Alexander filtrations.  Generators for ${\mathcal C}'$ are of the form
  $\x'=U^{A(\x)}\cdot \x$, where $\x$ is a generator for
  $\fCFKm(L_n)$. The fact that $U^{2n-2}\cdot B_{n-2}\otimes P$ appears
  in $\partial(A_n\otimes Q + U^{n}\cdot A_{n-2}\otimes P)$ ensures that
  the element $(B_{n-2}\otimes P)'$, whose Alexander grading is $2n-2$ smaller than that of 
  $(A_n\otimes Q + U^{n}\cdot A_{n-2}\otimes P)'$,
  appears in $\partial (A_n\otimes Q + U^{n}\cdot A_{n-2}\otimes P)'$.
  The filtered chain homotopy equivalence from ${\mathcal C}'$ to  $\fCFKm(L_n)$
  sends $(B_{n-1}\otimes P)'$, $(A_n\otimes Q+U^n \cdot A_{n-2}\otimes P)'$, and $(B_{n-2}\otimes P)'$ to
  $B_1\otimes Q$, $A_1\otimes Q$, and $B_2\otimes Q$ respectively, as those are the only generators
  in the corresponding bigradings. It follows that there is a non-zero term in the  differential from
  $A_1\otimes Q$ to $B_2\otimes Q$, which drops Alexander filtration level by $2n-2$, i.e.
  establishing Equation~\eqref{eq:DA1} when $n>2$.
  We say that this differential from $A_1\otimes Q$ to $B_2\otimes Q$ is symmetric to the differential
  from $A_n\otimes Q + U^n A_{n-2}\otimes P$ to $U^{2n-2}\cdot B_{n-2}\otimes P$.

  When $n=2$, there is no generator $A_{n-2}\otimes P$. Instead, the
  three generators in $\HFKa(L_n)$ with minimal
  Alexander grading are $B_1\otimes P$, $A_2\otimes Q$, and
  $B_2\otimes Q$. In this case, the differential from $A_2\otimes Q$
  to $U^2 \cdot B_2\otimes Q$ is symmetric to the differential from
  $A_1\otimes Q$ to $B_2\otimes Q$ which drops Alexander grading by
  $2n-2=2$, completing the verification of Equation~\eqref{eq:DA1}.

  Since $B_1\otimes Q$ represents a non-torsion class in $\HFKt(L_n)$
  and Equation~\eqref{eq:DA1} holds, we conclude that $B_2\otimes Q$ also represents
  a non-torsion class in $\HFKt(L_n)$. Since
  \begin{align*}
    \gr_t (B_1\otimes Q)&= -(n^2-n+1) t, \\
    \gr_t(B_2\otimes Q)&= -2-(n^2-3n+2) t,
  \end{align*}
  the computation of $\Upsilon_{L_n}(t)$ for $t\in [0,\frac{1}{n-1}]$ now follows.
\end{prooff}

\subsection{Cables of the Whitehead double}
Now we turn to the (partial) computation of the knot Floer complex of
the knot $C_{n,2n-1}(W_0^+(T_{2,3}))$. Our goal is to determine
$\Upsilon$ of this knot on the interval $[0,\frac{1}{n-1}]$.

\begin{lemma}
  \label{lem:PartUpsCableWDTorusKnot}
  The values of $\Upsilon_{C_{n,2n-1}(W_0^+(T_{2,3}))}(t)$ on the interval $[0,\frac{1}{n-1}]$
  are determined by
  \[ \Upsilon_{C_{n,2n-1}( W_0^+( T_{2,3}))}(t)=
  \left\{
    \begin{array}{ll}
      -(n^2-n+1) \cdot t & t\in [0,\frac{2}{2n-1}] \\ 2-(n^2-3n+2)
      \cdot t & t\in [\frac{2}{2n-1} , \frac{1}{n-1}].\\
   \end{array}
    \right.\]
\end{lemma}

Recall that for a knot $K$, its $0$-twisted Whitehead double (with a
positive clasp) is denoted by $W_0^+ (K)$.  The knot Floer homology
for this knot was computed (in terms of the knot Floer complex for
$K$) in~\cite{HeddenWhitehead}; see also~\cite{Eftekhary}. In the
special case where $K$ is the right-handed trefoil knot $T_{2,3}$, his
result specializes to the following:

\begin{theorem}(Hedden, \cite{HeddenWhitehead}) 
  \label{thm:WD}
  For the $0$-twisted Whitehead double of the right-handed trefoil
  (with its positive clasp), the knot Floer homology has 15
  generators, which we denote ${\mathbf i}^r$, ${\mathbf j}^r$,
  ${\mathbf k}^r$ for $r=0,1,2,3$ and ${\mathbf l}^s$ for $s=1,2,3$.
  The Alexander gradings of these elements are given (for $r=0,1,2,3$
  and $s=1,2,3$) by
  \[
  A({\mathbf i}^r)=A({\mathbf l}^s)=0 \qquad A({\mathbf j}^r)=1 \qquad
  A({\mathbf k}^r)=-1.
  \]
 The  Maslov gradings are given by
  \[ 
  \begin{aligned}
    M({\mathbf i}^0)=-1, \qquad &M({\mathbf j}^0)=0, 
\qquad &M({\mathbf k}^0)=-2, \\
    M({\mathbf i}^1)=-1=M({\mathbf l}^1), \qquad &M({\mathbf j}^1)=0, \qquad & M({\mathbf k}^1)=-2, \\
    M({\mathbf i}^r)=-2=M({\mathbf l}^s), \qquad &M({\mathbf j}^s)=-1, \qquad
    & M({\mathbf k}^s)=-2,
  \end{aligned}
  \]
  for $s=2,3$.
  Moreover, for $r=0,1,2,3$ and $s=1,2,3$
  \[
  \partial {\mathbf i}^r=U {\mathbf j}^r, \qquad \partial {\mathbf k}^s =
  {\mathbf l}^s;
  \]
  similarly, if we let $\partial^1_z$ denote the component of the differential
  which crosses the $z$ basepoint exactly once, but not the $w$ basepoint,
  then
\[
    \partial^1_z {\mathbf i}^r={\mathbf k}^r, \qquad
    \partial^1_z {\mathbf j}^s = {\mathbf l}^s.
\]
\qed
 \end{theorem}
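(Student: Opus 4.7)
The plan is to prove Theorem~\ref{thm:WD} using the bordered Floer homology framework recalled in Section~\ref{sec:Bordered}, exploiting the fact that $W_0^+(T_{2,3})$ is a satellite: its pattern is the Whitehead double pattern $P$ inside a solid torus $V$, and its companion is the right-handed trefoil. The pairing theorem for knots in satellites gives
\[ \widehat{CFK}(W_0^+(T_{2,3})) \simeq \widehat{CFA}(V,P) \boxtimes \widehat{CFD}(S^3\setminus T_{2,3}; 0), \]
where the $0$ indicates the Seifert framing. So the computation splits into two pieces: a \emph{pattern} calculation and a \emph{companion} calculation, which are then assembled.

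First I would compute the companion factor $\widehat{CFD}(S^3\setminus T_{2,3}; 0)$. Starting from the $+2$-framed type $D$ module of Lemma~\ref{lem:TypeDTref}, reframing by $-2$ (twisting the parametrization of the boundary torus) produces the $0$-framed module. This shifts the periodicity element $p$ controlling the grading set and relabels some of the algebra elements appearing in $\delta^1$, but preserves the generating set $\{I,J,K,P,Q\}$. Next I would compute the pattern factor $\widehat{CFA}(V,P)$ by drawing an explicit genus-one bordered Heegaard diagram for the solid torus with the Whitehead double pattern embedded inside (with its positive clasp), using the two basepoints $w,z$ inherited from the knot. Because this is a genus-one diagram, the $A_\infty$ operations $m_n$ are all determined by immersed polygon counts in a punctured torus. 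A careful enumeration of the idempotents $(\iota_0,\iota_1)$ and the resulting $m_n(-,a_1,\ldots,a_{n-1})$ operations produces the bimodule structure over $\Alg(\mathbb T)[U]$.

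Having both factors, I would then compute the box tensor product explicitly: the number of generators is $\#\iota_0\cdot(\#I\text{-generators of type }\iota_0)+\#\iota_1\cdot(\#\iota_1\text{-generators})$, and I would verify that this comes out to $15$ in exactly the pattern predicted (four triples $\{\mathbf{i}^r,\mathbf{j}^r,\mathbf{k}^r\}$ and three singletons $\mathbf{l}^s$). The operations $\partial \mathbf{i}^r = U\mathbf{j}^r$ and $\partial \mathbf{k}^s = \mathbf{l}^s$, together with the $z$-differentials $\partial^1_z\mathbf{i}^t = \mathbf{k}^t$ and $\partial^1_z\mathbf{j}^s = \mathbf{l}^s$, would be read off from the pairing theorem as sequences of $m_n\boxtimes \delta^n$ contributions. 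Finally, I would pin down absolute gradings by two external anchors: the Alexander polynomial $\Delta_{W_0^+(T_{2,3})}(t)\equiv 1$, which forces the signed Euler characteristic in each Alexander grading, and the identity $\tau(W_0^+(T_{2,3}))=1$ (which follows because $\tau(T_{2,3})=1>0$ by the general Hedden Whitehead-double relation); these two data determine the absolute Maslov grading once relative gradings are known.

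The main obstacle is the pattern calculation $\widehat{CFA}(V,P)$: Whitehead double diagrams have a clasp, which produces a nontrivial web of bigons and rectangles in the bordered torus diagram, and one must correctly identify all higher $A_\infty$ operations (not just $m_2$) that are forced by the pattern's isotopy class. In particular, one needs to track which operations are of the form $m_n(-,\rho_{i_1},\ldots,\rho_{i_{n-1}})$ with non-idempotent algebra inputs and verify the $A_\infty$ relations hold. After this, checking that the grading formula \eqref{eq:GradingTypeA} yields exactly the listed Maslov and Alexander gradings in the statement is a mechanical bookkeeping exercise, and the asymmetry between the three $\mathbf{l}^s$ generators and the four $\mathbf{i}^r,\mathbf{j}^r,\mathbf{k}^r$ generators emerges naturally from the idempotent decomposition.
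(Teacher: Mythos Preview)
The paper does not prove this theorem at all: it is stated as a result of Hedden (note the citation \cite{HeddenWhitehead} in the theorem header and the terminal \qed\ with no intervening argument). The surrounding text says explicitly that ``the knot Floer homology for this knot was computed \ldots\ in~\cite{HeddenWhitehead}; \ldots\ his result specializes to the following,'' and then the theorem is simply quoted. So there is no ``paper's own proof'' to compare against.

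Your bordered approach is a reasonable independent route and is in the spirit of the paper's later satellite computations (cabling via Lemmas~\ref{lem:TypeDTref} and~\ref{lem:CablingPiece}). It is, however, not what either the paper or Hedden does. Hedden's original argument predates bordered Floer homology; he works directly with a Heegaard diagram for the Whitehead double adapted to the companion, and expresses the resulting filtered complex in terms of the companion's $\CFKinf$. Your plan would require building the type~$A$ module of the Whitehead pattern from scratch, which is substantially more work than the cabling piece (the clasp forces more intersection points and higher $A_\infty$ operations), and you would also need the $0$-framed (not $+2$-framed) type~$D$ module of the trefoil complement --- Lemma~\ref{lem:TypeDTref} only supplies the $+2$-framed version, so an additional reframing step (tensoring with the appropriate Dehn-twist bimodule, or rerunning \cite[Theorem~11.27]{InvPair} with framing~$0$) is needed. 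None of this is wrong, but it is a genuine computation rather than a citation, and it is not what the paper intends here.
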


 Informally, Theorem~\ref{thm:WD} says that the knot Floer complex splits as a
 sum of a component which looks like the knot Floer complex for the
 right-handed trefoil, and three further ``boxes'': four generators connected
 with four arrows, two vertical and two horizontal. This direct sum
 description is a little misleading: there might in principle be further
 horizontal arrows which cross both $w$ and $z$ basepoints. However, these are
 not relevant in the algorithm for reconstructing the corresponding type $D$
 structure.

\begin{prop} 
  \label{prop:TypeDWD}
  The type $D$ structure of the  complement of the 
0-framed positive Whitehead double of
  the right-handed trefoil knot, with framing $+2$, splits as a direct
  sum of four summands; one of these is the type $D$ structure of the
  right-handed trefoil, spelled out in Lemma~\ref{lem:TypeDTref}
  (though we will now keep the superscript $0$ in the notation for the
  five generators, $I^0$, $P^0$, $J^0$, $Q^0$, and $K^0$).

  There are three further summands, with eight generators apiece
  $\{I^t,J^t, K^t, P^t, Q^t, R^t,S^t\}$ with $t=1,2,3$ and
  differential
  \begin{equation}                                                       
\label{eq:SquareD}
    \mathcenter{\begin{tikzpicture}[x=2.3cm,y=48pt] \node at (0,0) (j)
        {$I^t$} ; \node at (1,0) (p) {$P^t$}; \node at (2,0) (i)
        {$J^t$}; \node at (2,-1)(q){$Q^t$}; \node at (2,-2)(k){$K^t$};
        \node at (0,-1)(r){$R^t$}; \node at (1,-2)(s){$S^t$}; \node at
        (0,-2)(l){$L^t$}; \draw[->](i) to node[above] {$\rho_{3}$} (p)
        ; \draw[->](p) to node[above]{$\rho_{2}$}(j) ; \draw[->](i) to
        node[right]{$\rho_{1}$}(q) ; \draw[->](k) to
        node[right]{$\rho_{123}$}(q) ; \draw[->](j) to
        node[left]{$\rho_{1}$}(r); \draw[->](l) to
        node[left]{$\rho_{123}$}(r); \draw[->](s) to
        node[above]{$\rho_{2}$}(l); \draw[->](k) to
        node[above]{$\rho_{3}$}(s);
    \end{tikzpicture}}
  \end{equation}
  Gradings for these generators, thought of as elements of
  $G/\lambda\gr(\rho_{12})\gr(\rho_{23})^{2}$, are given by:
  \begin{equation}
    \begin{aligned}
      \gr(I^1)=\lambda^{-2}\gr(\rho_{23})^{-1}  \qquad & \gr(J^1)=\lambda^{-1}  \qquad  &\gr(K)= \gr(\rho_{23}) \qquad & \gr(L^1)=\lambda^{-1}\\
      \gr(P^1)= \lambda^{-2}\gr(\rho_3)^{-1} \qquad
      &\gr(Q^1)=\lambda^{-2}\gr(\rho_1)^{-1}& \gr(S^1)= \lambda^{-1}
      \gr(\rho_3) \gr(\rho_{23}) \qquad & \gr(R^1)=
      \lambda^{-2}\gr(\rho_1)^{-1}.
    \end{aligned}
  \end{equation}
  For $s=2,3$, corresponding eight generators have grading $\lambda^{-1}$ times
  their $s=1$ counterparts; e.g. $\gr(I^s)=\lambda^{-3}\gr(\rho_{23})^{-1}$.
  (For $s=0$, the gradings are as specified in Lemma~\ref{lem:TypeDTref}; note
  that for those five generators, the gradings are the same as the gradings of
  the corresponding $s=1$ generators.)
\end{prop}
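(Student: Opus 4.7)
The approach is to apply the HFK-to-type-$D$ algorithm of~\cite[Theorem~11.27]{InvPair} (the one used in the proof of Lemma~\ref{lem:TypeDTref}) to Hedden's description of the knot Floer complex of $W_0^+(T_{2,3})$ given in Theorem~\ref{thm:WD}. The crucial observation is that Hedden's answer presents the reduced Floer complex as a direct sum of four summands: a ``trefoil-like'' piece on $\{{\mathbf i}^0, {\mathbf j}^0, {\mathbf k}^0\}$ with the same structure as $\CFKm(T_{2,3})$, together with three ``square'' summands, one for each $s=1,2,3$, on $\{{\mathbf i}^s, {\mathbf j}^s, {\mathbf k}^s, {\mathbf l}^s\}$ with vertical differentials $\partial {\mathbf i}^s = U {\mathbf j}^s$, $\partial {\mathbf k}^s = {\mathbf l}^s$, and horizontal differentials $\partial_z^1 {\mathbf i}^s = {\mathbf k}^s$, $\partial_z^1 {\mathbf j}^s = {\mathbf l}^s$. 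The algorithm respects this decomposition, since any differential components crossing both $w$ and $z$ (which are not recorded by Theorem~\ref{thm:WD}) play no role in the construction of $\CFDa$ from a reduced complex.

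First I would run the algorithm on the $s=0$ summand, whose Maslov-Alexander data agree with those of the trefoil, to reproduce the five-generator structure of Lemma~\ref{lem:TypeDTref} verbatim. For each $s=1,2,3$ summand I would then run the algorithm on the square complex. The two parallel vertical arrows contribute two vertical ladders: one on $J^s, P^s, I^s, Q^s$ (coming from $U\cdot {\mathbf j}^s = \partial {\mathbf i}^s$, with $Q^s$ playing the role of the unstable generator forced by the framing $+2$ relative to Seifert genus $1$), and one on $L^s, S^s, K^s, R^s$ (coming from ${\mathbf l}^s = \partial {\mathbf k}^s$, with $R^s$ as its unstable generator). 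The two horizontal arrows ${\mathbf k}^s = \partial_z^1 {\mathbf i}^s$ and ${\mathbf l}^s = \partial_z^1 {\mathbf j}^s$ then glue the two ladders together, producing the cross-arrows labelled $\rho_1$ and $\rho_{123}$ of Equation~\eqref{eq:SquareD}.

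The grading set and the individual gradings follow from the standard recipe recalled in the proof of Lemma~\ref{lem:TypeDTref}: each type $D$ generator acquires a $G$-grading determined (up to the ambiguity of the grading set) by the bigrading of the associated HFK generator. Because the $r=0$ and $s=1$ summands have the same Maslov-Alexander bigradings on their shared generators, the type $D$ gradings on $\{I^0, J^0, K^0, P^0, Q^0\}$ coincide with those on $\{I^1, J^1, K^1, P^1, Q^1\}$, and the new generators $L^1, S^1, R^1$ (associated to ${\mathbf l}^1$) are placed symmetrically. For $s=2,3$, a uniform downward shift of the Maslov grading on each box propagates to a uniform $\lambda^{-1}$ shift of the $G$-grading on the whole summand. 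The main obstacle I anticipate is the bookkeeping for the square case: one must verify that the unstable chains really contribute exactly the generators $R^s, S^s$ (and no more), and that the cross-ladder arrows produced by the algorithm carry exactly the Reeb chords $\rho_1$ and $\rho_{123}$ rather than any other combinations, so that the assembled differential matches Equation~\eqref{eq:SquareD}.
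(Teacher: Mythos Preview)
Your proposal is essentially the same as the paper's proof, which consists of the single sentence ``This is a straightforward combination of Theorem~\ref{thm:WD} with the HFK-to-type $D$ module result~\cite[Theorem~11.27]{InvPair}.'' You have simply elaborated the details of how that combination goes, including the observation that the direct sum decomposition in Theorem~\ref{thm:WD} is respected by the algorithm and that the Maslov shifts for $s=2,3$ translate into the $\lambda^{-1}$ shift of $G$-gradings; your self-identified ``main obstacle'' (the square bookkeeping) is exactly the routine verification the paper leaves implicit.
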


\begin{proof}
  This is a straightforward combination of Theorem~\ref{thm:WD} with the HFK-to-type $D$ module
  result~\cite[Theorem~11.27]{InvPair}.
\end{proof}

Thus, to compute the knot Floer homology of $C_{n,2n-1}(W_0^+( T_{2,3}))$, it
remains to compute the pairing of the cabling type $A$ module with a
``square'' (on the eight generators $I^t$, $J^t$, $K^t$, $L^t$, $P^t$,
$Q^t$, $S^t$, $R^t$). This computation was done by
Petkova~\cite{Petkova}. Those results can be summarized as follows:

\begin{lemma}
  \label{lem:TensorWithSquare}
  (See~\cite{Petkova})
  Consider the square type $D$ module with eight generators and differentials according to the following
  diagram:
  \begin{equation}                                                                
    \label{eq:SquareDTest}
    \mathcenter{\begin{tikzpicture}[x=2.3cm,y=48pt]                                 
        \node at (0,0) (j) {$I$} ;
        \node at (1,0) (p) {$P$};
        \node at (2,0) (i) {$J$};
        \node at (2,-1)(q){$Q$};
        \node at (2,-2)(k){$K$};
        \node at (0,-1)(r){$R$};
        \node at (1,-2)(s){$S$};
        \node at (0,-2)(l){$L$};
        \draw[->](i) to node[above] {$\rho_{3}$} (p) ;
        \draw[->](p) to node[above]{$\rho_{2}$}(j) ;
        \draw[->](i) to node[right]{$\rho_{1}$}(q) ;
        \draw[->](k) to node[right]{$\rho_{123}$}(q) ;
        \draw[->](j) to node[left]{$\rho_{1}$}(r);
        \draw[->](l) to node[left]{$\rho_{123}$}(r);
        \draw[->](s) to node[above]{$\rho_{2}$}(l);
        \draw[->](k) to node[above]{$\rho_{3}$}(s);
    \end{tikzpicture}}
  \end{equation}
  Gradings for these generators, thought of as elements of $G/\lambda\gr(\rho_{12})\gr(\rho_{23})^{2}$, are given by:
  \begin{equation}
    \begin{aligned}
      \gr(I)=\lambda^{-2}\gr(\rho_{23})^{-1}  \qquad & \gr(J)=\lambda^{-1}  \qquad  &\gr(K)= \gr(\rho_{23}) \qquad & \gr(L)=\lambda^{-1}\\
      \gr(P)= \lambda^{-2}\gr(\rho_3)^{-1} \qquad
      &\gr(Q)=\lambda^{-2}\gr(\rho_1)^{-1}& \gr(S)= \lambda^{-1}
      \gr(\rho_3)^{-1} \gr(\rho_{23}) \qquad & \gr(R)=
      \lambda^{-2}\gr(\rho_1)^{-1}.
    \end{aligned}
  \end{equation}
  The pairing of this type $D$ module with the cabling type $A$ module
  from Lemma~\ref{lem:CablingPiece} gives a chain complex with
  generators
  \[ \{A_i\otimes P, A_i\otimes Q, A_i\otimes R, A_i\otimes S, 
  B_i\otimes P, B_i\otimes Q, B_i\otimes R, B_i\otimes S, 
  X\otimes I, X\otimes J, X\otimes K, X\otimes L\}_{i=1}^n.\]
  Let
  \begin{itemize}
    \item $i$ denote any integer between $1,\dots,n$,
    \item $j$ denote any integer between $1,\dots,n-1$,
    \item $k$ any integer between $1,\dots,n-2$;
  \end{itemize}
  then
  the differential is specified by:
  \begin{align*}
    \partial (A_j\otimes P) &= A_{j+1}\otimes R + U^j\cdot B_j\otimes P \\
    \partial (A_n\otimes P) &= U^n \cdot B_n \otimes P + J\otimes X \\
    \partial (A_i\otimes Q) &= U^i \cdot B_i\otimes Q \\
    \partial (A_i\otimes R) &= U^i \cdot B_i\otimes R \\
    \partial (A_k\otimes S) &= U^k \cdot B_k\otimes S \\
    \partial (A_{n-1}\otimes S) &= B_n\otimes R + U^{n-1} \cdot B_{n-1}\otimes S \\
    \partial (A_n\otimes S) &= U^n B_n \cdot \otimes S + L\otimes X \\
    \partial (B_j\otimes P) &= U \cdot B_{j+1}\otimes R \\
    \partial (X\otimes J) &= B_n \otimes P\\
    \partial (X\otimes I) &= 0\\
    \partial (X\otimes K) &= B_n \otimes S. \\
  \end{align*}
  Relative gradings are specified as follows. The relative bigradings of the generators
  $A_i\otimes P$, $B_i\otimes P$, $A_i\otimes Q$, and $B_i\otimes Q$,
  $X\otimes I$, $X\otimes J$, and $X\otimes K$ are as in
  Lemma~\ref{lem:GradeComplex}. For $j=1,\dots,n-1$
    \begin{align}
      \MasGr(B_{j+1}\otimes Q)-\MasGr(A_{j}\otimes Q) = 1
      \qquad &
      \AlexGr(B_{j+1}\otimes Q)-\MasGr(A_{j}\otimes Q) = 1 \nonumber \\
      \MasGr(B_{j+1}\otimes P)-\MasGr(B_j\otimes R) = 1 \qquad &
      \AlexGr(B_{j+1}\otimes P)-\AlexGr(B_j\otimes R) = 1 \label{eq:GradeNewGenerators} \\
      \MasGr(B_i\otimes S) - \MasGr(A_i\otimes S)=2i-1 
      \qquad &
      \AlexGr(B_i\otimes S) - \AlexGr(A_i\otimes S)=i. \nonumber
    \end{align}
\end{lemma}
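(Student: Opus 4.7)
The plan is to apply the pairing theorem~\cite[Theorem~11.19]{InvPair}, which identifies $\CFAa(Y_L) \DT \CFDa(Y_R)$ with the knot Floer complex of $Y_L \cup Y_R$. Here $Y_L$ is the torus-bordered solid torus for the $(n,-1)$-cabling (type $A$ module supplied by Lemma~\ref{lem:CablingPiece}), and $Y_R$ is the torus-bordered manifold whose type $D$ module is the square described in~\eqref{eq:SquareDTest}.

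First I would identify the generators. Reading the arrows in~\eqref{eq:CablingTypeA}, $X$ has right idempotent $\iota_0$ while each $A_i, B_i$ has right idempotent $\iota_1$. Inspection of~\eqref{eq:SquareDTest} shows that $I, J, K, L$ have left idempotent $\iota_0$ and $P, Q, R, S$ have left idempotent $\iota_1$. Matching idempotents produces exactly the $8n+4$ generators asserted in the lemma.

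Next I would compute the differentials using
\[ \partial(a \otimes x) = \sum_{k\geq 0} m_{k+1}(a, a_1, \ldots, a_k)\otimes y \]
where $\delta^k(x) = a_1\otimes\cdots\otimes a_k \otimes y$ is the $k$-fold iterate of the type $D$ differential on the square. Because the diagram~\eqref{eq:SquareDTest} is acyclic, $\delta^k$ vanishes for large $k$ on each generator, and the nonzero iterates are easily enumerated (for example $\delta^1(P) = \rho_2\otimes J$, $\delta^2(P) = \rho_2\otimes\rho_1\otimes R$, and $\delta^k(P)=0$ for $k\geq 3$). Pairing each such Reeb chord sequence with the matching $m_{k+1}$ operation on the cabling side — the elementary operations drawn in~\eqref{eq:CablingTypeA} together with the concatenated higher operations explained in the proof of Lemma~\ref{lem:CablingPiece} — yields the listed differentials. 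The case split among $i<n-2$, $i=n-2$, $i=n-1$, $i=n$ for $\partial(A_i\otimes P)$ comes from whether a long concatenation runs into the generator $X$ or terminates among the $A_j, B_j$, and the analogous split for $A_i\otimes S$ detects the interaction with $L$.

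Finally, for the relative gradings I would invoke $\gr(a\otimes x) = \gr(a)\cdot\gr(x)$, interpreted in the double coset space
\[ \langle q\rangle \backslash G /\langle p\rangle \]
with $q = \lambda\gr(U^n)\gr(\rho_3)\gr(\rho_2)$ and $p=\lambda\gr(\rho_{12})\gr(\rho_{23})^{2}$, enriched by the Alexander factor recording the knot. The gradings of $A_i, B_i, X$ come from Lemma~\ref{lem:CablingPiece} and those of $I, J, K, L, P, Q, R, S$ are listed in the statement of Lemma~\ref{lem:TensorWithSquare}. Multiplying these out and normalizing by $p$ and $q$ — exactly as in the model computation in the proof of Lemma~\ref{lem:GradeComplex} — yields the Maslov and Alexander components and, in particular, the three relative bigradings in~\eqref{eq:GradeNewGenerators} involving the new generators $R, S, L$.

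The main obstacle is the bookkeeping of the higher $A_\infty$ operations on the cabling side: each iterated $\delta^k$ on the square produces a specific Reeb chord sequence, and one must verify that the corresponding concatenated operation in~\eqref{eq:CablingTypeA} lands on the claimed target with the correct power of $U$. The acyclicity of the square and the essentially linear structure of the cabling module keep this finite, but the small-index cases and the boundary $i=n$ (where the $A_n\to X\to B_n$ segment is activated) are where the most careful case analysis is required.
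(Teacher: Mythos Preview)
Your proposal is correct and follows essentially the same approach as the paper: the paper's proof of this lemma is simply ``Again, this is a straightforward computation in the spirit of Lemma~\ref{lem:GradeComplex},'' and what you have written is precisely a careful outline of that computation (applying the pairing theorem, matching idempotents to enumerate generators, iterating $\delta$ on the square and pairing with the concatenated $A_\infty$ operations from Lemma~\ref{lem:CablingPiece}, and computing gradings in the double coset space). One small slip: the four-way case split $i<n-2,\ i=n-2,\ i=n-1,\ i=n$ you describe for $\partial(A_i\otimes P)$ is the one from Lemma~\ref{lem:GradeComplex}; in the present lemma the split for $A_i\otimes P$ is only between $j<n$ and $j=n$ (the finer split appears instead for $A_i\otimes S$).
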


\begin{remark}
  The results of the above lemma can be paraphrased as follows.  If
  $a$ and $b$ are two generators with $\MasGr(a)-\MasGr(b)=1$ and
  $\AlexGr (a) - \AlexGr (b)=i$, we write a dashed line from $a$ to
  $b$ labelled by $z^i$. (Note that this does not indicate a
  differential, hence the dashing on the line.)  With this notation,
  the tensor product of the box with the cabling bimodule produces a
  cyclic summand and further $n$ distinct summands; one of these has
  the form
  \begin{equation}
    \mathcenter
        {\begin{tikzpicture}
            \node at (3,0)(aq)  {$A_{1}\otimes Q$};
            \node at (0,0)(bq)  {$B_{1}\otimes Q$};
            \node at (3,-3)(ar)  {$A_{1}\otimes R$};
            \node at (0,-3)(br)  {$B_{1}\otimes R$};
            \draw[->](aq) to node[above]{$U$} (bq);
            \draw[->](ar) to node[above]{$U$}(br);
            \draw[->,dashed](aq) to node[right]{$z^{n}$} (ar);
            \draw[->,dashed](bq) to node[right]{$z^{n}$} (br);
        \end{tikzpicture}}
  \end{equation}
  For $j$ between $1,\dots,n-1$, the additional summands are
  \begin{equation}
    \mathcenter
        {\begin{tikzpicture}
            \node at (0,0) (bq) {$B_{j+1}\otimes Q$};
            \node at (6,0) (aq) {$A_{j+1}\otimes Q$};
            \node at (0,-2) (bs) {$B_{j}\otimes S$};
            \node at (3,-2) (as) {$A_{j}\otimes S$};
            \node at (3,-4) (br) {$B_{j+1}\otimes R$};
            \node at (6,-4) (bp) {$B_{j} \otimes P$};
            \node at (9,-4) (ap) {$A_{j} \otimes P$};
            \node at (9,-5) (ar) {$A_{j+1}\otimes R$};
            \draw[->](aq) to node[above]{$U^{j+1}$} (bq);
            \draw[->,dashed](bq) to node[left]{$z$} (bs);
            \draw[->](as) to node[above]{$U^{j}$} (bs);
            \draw[->,dashed](as) to node[left]{$z^{n-j-1}$} (br);
            \draw[->](bp) to node[above]{$U$} (br);
            \draw[->](ap) to node[above]{$U^{j}$}(bp);
            \draw[->](ap) to (ar);
            \draw[->] (ar) to node[below]{$U^{j+1}$}(br);
            \draw[->,dashed](aq) to node[right]{$z^{n-j}$} (bp);
        \end{tikzpicture}}
\end{equation}
\end{remark}

\bigskip

\begin{prooff} {\bf  of Lemma~\ref{lem:TensorWithSquare}.}
  Again, this is a straightforward computation in the spirit of
  Lemma~\ref{lem:GradeComplex}.
\end{prooff}

Now we are in the position to give a partial computation of the
$\Upsilon$-invariant of $C_{n,2n-1}(W_0^+(T_{2,3}))$.

\bigskip

\begin{prooff} {\bf of Lemma~\ref{lem:PartUpsCableWDTorusKnot}.}
  Let $K_n'=C_{n,2n-1}(W^+_0(T_{2,3}))$.
  Consider the eighteen generators of $\CFKm(K_n')$
  indexed by $r=0,1,2,3$, $s=1,2,3$:
  \begin{equation}
    \label{eq:Gens}
  \begin{array}{llllll}
    B_1\otimes Q^r, & A_1\otimes Q^r,&  B_2\otimes Q^r, &
    B_1\otimes R^s, & A_1\otimes R^s,
  \end{array}
  \end{equation}
  obtained by tensoring the generators $A_1$ and $B_2$ of the cabling piece
    with type $D$ generators $Q^r$ and $R^s$ of the type $D$ structure of
    the complement of $W^+_0(T_{2,3})$ from Proposition~\ref{prop:TypeDWD}
    (and the superscripts are as in the statement of that proposition).
  
  It will be useful to identify the bigradings of these generators.
  To this end, let
  \[
  g=n^2-n+1, \qquad {\text{and}}\qquad
  \epsilon_i = \left\{\begin{array}{ll}
      0 & {\text{$i=0,1$}} \\
      -1 &{\text{$i=2,3$.}}
      \end{array}
      \right.
      \]
  For $r=0,1$ and  $s=1,2,3$, we have that
  \begin{align*}
   \MasGr(B_1\otimes Q^r) = 0 +\epsilon_r \qquad & \AlexGr (B_1\otimes
   Q^r)=g\\ \MasGr(A_1\otimes Q^r) = -1 + \epsilon_r \qquad &
   \AlexGr(A_1\otimes Q^r)=g-1 \\ \MasGr(B_1\otimes R^s) = -1
   +\epsilon_s \qquad & \AlexGr(B_1\otimes R^s)=g-n
   \\ \MasGr(A_1\otimes R^s) = -2 +\epsilon_s \qquad &
   \AlexGr(A_1\otimes R^s)=g-n-1 \\ \MasGr(B_2\otimes Q^r) = -2
   +\epsilon_r\qquad & \AlexGr(B_2\otimes Q^r) = g-2n+1.\\
  \end{align*}

  The computation of $\Upsilon$ will follow from partial information about $\fCFKm$ that 
  can be extracted from the above computations. This partial information is broken into 
  a sequence of successively verified claims.

  {\bf Claim 1}.  
For $t<\frac{1}{n-1}$, the generators of $K_n'$
  enumerated in Equation~\eqref{eq:Gens} have $\gr_t$ strictly greater than all of its other generators.
  This follows from the argument of
  Lemma~\ref{lem:PartUpsCableTorusKnot} and
  Equation~\eqref{eq:GradeNewGenerators}.

 \begin{figure}
 \begin{center}
 \input{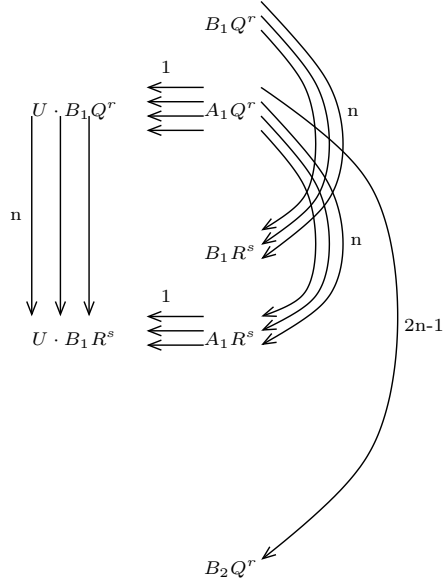}
 \caption{\textbf{Portion of the knot Floer complex of 
$C_{n,2n-1}(W_0^+(T_{2,3}))$.}
   \label{fig:BigDiagramChase}
   We have illustrated generators in Alexander gradings
   $\geq \AlexGr(B_2\otimes Q^r)$, and appearing with $U$ multiplies
   with exponent $\leq 1$. The convention here is that $r=0,1,2,3$ and
   $s=1,2,3$.  The horizontal coordinate represents the number of $U$
   powers, and the vertical coordinate indicates the Alexander
   grading. We have also illustrated all vertical and horizontal
   differentials connecting these elements; more precisely, a
   collection of parallel arrows indicates a linear map connecting
   spans of generators, and the number of parallel arrows indicates
   the dimension of its image.  The integers indicate the lengths of
   these arrows.}
 \end{center}
 \end{figure}

   {\bf Claim 2}. 
    There are  non-zero elements $x_1$, $y$, and $x_2$
  in $\fCFKm$, which are in the span (over $\Field$) of the eighteen generators 
  from Equation~\eqref{eq:Gens}, 
  satisfying the following further properties:
 \begin{enumerate}[label=(X-\arabic*),ref=(X-\arabic*)]
   \item\label{prop1}  $x_1$ is in the same bigrading as $B_1\otimes Q^0$
   \item $y$ is in the same bigrading is $A_1\otimes Q^0$
   \item $x_2$ is in the same bigrading as $B_2 \otimes Q^2$
   \item $\partial_v x_1=0$
   \item $\partial_h y=x_1$
   \item\label{propn} $\partial_v y = x_2$.
 \end{enumerate}
 We find these elements as follows. First we find some element $y$ in the same
 bigrading as $A_1\otimes Q$ with the property that $\partial_v y=x_2$, 
 where $x_2$ is a non-zero element in the same bigrading as $B_2\otimes Q$. The element $y$ corresponds,
 under the conjugation
 symmetry of knot Floer homology, 
 to  the element of $\fCFKm$ represented by 
 \[\xi = A_n\otimes Q^0+U^n A_{n-2}\otimes P^0 \]
 (in case $n>2$; when $n=2$, take $\xi=A_2$).
 
   Since $\partial \xi = U^{2n-2} B_{n-2}\otimes P^0$, 
   there is a symmetric differential $\partial _h y = U x_1$,
  for some non-zero element $x_1$ in the same bigrading as $B_1\otimes Q^0$.
In fact, the
 bordered computation shows that all non-zero elements in the bigrading of
 $y$ have non-zero $\partial_h$ in the bigrading of $Ux_1$. Thus, the fact
 that $\partial_v x_1=0$ is a consequence of $\partial^2=0$.
 This completes the construction of $x_1$, $x_2$, and $y$ satisfying Properties~\ref{prop1}-\ref{propn}.

{\bf Claim 3.} The element  $x_1$ constructed above is a cycle in $\fCFKm(K_n')$, which represents a  non-trivial
homology class.
The fact that 
 $x_1$ is a cycle
 follows from the fact that 
 $\partial_v(x_1)=0$, so $\partial x_1$ contains terms with non-zero $U$ power,
 and, according to the above grading computations,
 all such elements have Maslov grading $<-2$;
 but $\MasGr(x_1)=0$.
 Moreover, since $x_1$ has maximal Alexander grading among all generators, 
 it follows that $x_1$ represents a homologically non-trivial class in $\CFa(S^3)=\fCFKm/(U=0)$
 and hence also in $\fCFKm$, whose homology is $\Field[U]$.
 It follows that $x_1$ represents a non-torsion homology class in $\HFKt(K_n')$.

{\bf Claim 4.}
The following equation holds in $\fCFKm(K_n')$,
\begin{equation}
  \label{eq:ComputeBoundaryMinus}
  \partial y = U x_1 + x_2.
\end{equation}
To see this, observe that the definitions of $\partial_v$ and $\partial_h$ ensure that
\[ \partial y = U x_1 + x_2 + U z, \]
where $z$ is some element in Alexander filtration level is less than or equal to that of $y$.
But the above grading computations  show that such an element $z$
has Maslov grading $<0$, and so $\MasGr(U z)<-2$. Since $\MasGr(y)=-1$, we conclude that $z=0$,
verifying Equation~\eqref{eq:ComputeBoundaryMinus}.

{\bf Claim 5.} The elements $x_1$ and $x_2$ represent non-torsion homology classes in 
$\HFKt(K_n')$
The statement for $x_1$ follows immediately from Claim 3, and the statement for $x_2$ follows
from that, together with Claim 4.

{\bf Claim 6.} For $t<\frac{1}{n-1}$, the elements $x_1$ and $x_2$ are the two non-torsion elements
of $\HFKt(K_n')$ with maximal $\gr_t$.
Elements with the same bigrading as 
$A_1\otimes Q^r$ have $\gr_t(A_1\otimes Q^r)>\gr_t(B_1\otimes Q^r)$ for all $t$. However,
the differential $\partial_h$ is injective on the span of $A_1\otimes Q^r$, which implies also that
the cycles in $\CFKt(K_n')$ cannot contain components among the $A_1\otimes Q^r$.
Similarly, the differential $\partial_v$ is injective on the span of $B_1\otimes Q^r$ so 
cycles in $\CFKt(K_n')$ cannot contain components among the $B_1\otimes Q^r$. 
Finally, if a cycle in $\CFKt(K_n')$
contains a component among $B_1\otimes R^s$, then that cycle is homologous to another one,
obtained by adding $\partial(B_1\otimes Q^s)$.
It follows from Claim 1 now that $x_1$ and $x_2$ are two non-torsion elements with maximal $\gr_t$ for $t\in [0,\frac{1}{n-1}]$.

In view of Claim 6, the result follows from the fact that
\[ \gr_t(x_1)=-(n^2-n+1)t\qquad{\text{and}}\qquad
\gr_t(x_2)=-2-(n^2-3n+2).\]
\end{prooff}

In the proof of Theorem~\ref{thm:IndependenceTopSlice} we need to compare the
above result with $\Upsilon_{T_{n,2n-1}}$.

\begin{lemma}
  \label{lem:PartUpsTorusKnot}
  \[ \Upsilon_{T_{n,2n-1}}(t)=
  - (n-1)^2\cdot t
  \]
  for $t\leq \frac{2}{n}$.
\end{lemma}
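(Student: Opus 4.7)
The plan is to apply Theorem~\ref{thm:LSpaceKnotsUpsilon} directly. Torus knots admit positive lens-space surgeries (e.g.\ $pq\pm 1$ surgery on $T_{p,q}$ is a lens space, hence an $L$-space), so $T_{n,2n-1}$ is an $L$-space knot in the sense of~\cite{NoteLens}. The theorem therefore gives
\[
\Upsilon_{T_{n,2n-1}}(t) \;=\; \max_{\{i\,\mid\,0\le 2i\le N\}}\bigl\{\,m_{2i} - t\,\alpha_{2i}\,\bigr\},
\]
where $\{\alpha_k\}$ are the decreasing exponents appearing in the symmetrized Alexander polynomial $\Delta_{T_{n,2n-1}}(t)$ and $\{m_k\}$ are obtained from them by the recursion of Equation~\eqref{eq:DefMs}. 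Thus the problem reduces to a finite Alexander-polynomial computation, entirely analogous to the treatment of $T_{n,n+1}$ carried out in Proposition~\ref{prop:Tnnplus1}.

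Next I would identify the two largest even-indexed exponents $\alpha_0$ and $\alpha_2$. Starting from the standard expression
\[
\Delta_{T_{n,2n-1}}(t) \;=\; t^{-(n-1)^2}\,\frac{(t^{n(2n-1)}-1)(t-1)}{(t^n-1)(t^{2n-1}-1)},
\]
I would expand one factor at a time (mimicking the calculation $p(t)$ from the proof of Proposition~\ref{prop:Tnnplus1}) in order to read off the top exponents of $\Delta_{T_{n,2n-1}}$. This yields $\alpha_0$ directly, as well as the value of $\alpha_0-\alpha_1$ (which turns out to be $1$, so that $m_2=m_0-2=-2$) and the crucial difference $\alpha_0-\alpha_2$. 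The recursion~\eqref{eq:DefMs} then evaluates $m_{2i}$ for all the remaining $i$.

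With these ingredients in hand, the maximum in the formula above is achieved at $i=0$ throughout the interval on which $m_0-t\alpha_0 \ge m_2 - t\alpha_2$, i.e., $t\le 2/(\alpha_0-\alpha_2)$. A standard monotonicity argument (the slopes $-\alpha_{2i}$ increase with $i$ while the intercepts $m_{2i}$ decrease, so any transition in the maximum must happen first at $i=1$) shows that the larger-$i$ terms are dominated on this whole interval, confirming that $\Upsilon_{T_{n,2n-1}}(t)=-\alpha_0 t$ on $[0,2/(\alpha_0-\alpha_2)]$. Comparing with the stated interval $[0,2/n]$ pins down $\alpha_0-\alpha_2 = n$ and produces the closed-form slope claimed in the lemma.

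The main technical obstacle is the Alexander-polynomial bookkeeping needed to extract the values of $\alpha_0$ and $\alpha_2$ cleanly from the formula above; once these are obtained, the rest of the argument is the same dominance argument already used in Proposition~\ref{prop:Tnnplus1} and in the proof of Theorem~\ref{thm:LSpaceKnotsUpsilon}.
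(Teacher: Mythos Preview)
Your overall strategy is sound and is in fact the route the paper explicitly sets aside: just before the proof the authors write that one ``could obtain the theorem by playing around with coefficients of the Alexander polynomial'' but that they ``prefer instead to obtain these bounds via bordered Floer homology, in the spirit of the previous computations.'' So the paper pairs the $(n,-1)$ cabling type~$A$ module with the $+2$-framed unknot type~$D$ module, reads off a staircase for $\CFKm(T_{n,2n-1})$, and identifies the maximal-$\gr_t$ non-torsion generator for $t<2/n$ directly. Your approach via Theorem~\ref{thm:LSpaceKnotsUpsilon} is more elementary and avoids bordered machinery; the paper's approach is chosen for consistency with the neighboring satellite computations, where bordered is unavoidable.

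There is, however, a genuine gap in your sketch. The assertion that ``the slopes $-\alpha_{2i}$ increase with $i$ while the intercepts $m_{2i}$ decrease, so any transition in the maximum must happen first at $i=1$'' is false as a general principle about families of lines. For instance, with $m_0=0$, $m_2=-2$, $m_4=-3$ and $\alpha_0=10$, $\alpha_2=8$, $\alpha_4=0$, the line $L_2$ overtakes $L_0$ at $t=3/10$, well before $L_1$ does at $t=1$. Monotonicity of intercepts and slopes alone does not force the upper envelope to visit the lines in index order; some $L_i$ can lie entirely below the envelope. To close the argument you must actually verify that $-m_{2i}/(\alpha_0-\alpha_{2i})\ge 2/n$ for every $i\ge 1$, which requires knowing more of the gap structure of the semigroup $\langle n,2n-1\rangle$ than just $\alpha_0,\alpha_1,\alpha_2$. (For $T_{3,5}$, for example, one checks by hand that the crossover times are $2/3,\,4/5,\,1$, so the claim does hold---but it needs checking, not a monotonicity wave-of-the-hand.)

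One further remark: carrying out your plan gives slope $-\alpha_0=-(n-1)^2$, consistent with Proposition~\ref{prop:SlopeUpsilon} and $\tau(T_{n,2n-1})=(n-1)^2$. The coefficient $-n(n+1)/2$ printed in the lemma appears to be a typo; fortunately only the \emph{linearity} of $\Upsilon_{T_{n,2n-1}}$ on $[0,2/(2n-1)]\subset[0,2/n]$ is used downstream.
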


That latter function can be computed explicitly from the Alexander
polynomials, as in Theorem~\ref{thm:TorusKnots}. 
Thus, we could obtain the theorem by playing around with coefficients
of the Alexander polynomial; we prefer instead
to obtain these bounds via bordered Floer homology, in the
spirit of the previous computations.

\bigskip

\begin{prooff} {\bf of Lemma~\ref{lem:PartUpsTorusKnot}.}
  Recall~\cite[Theorem~A.11]{InvPair} that the $2$-framed unknot complement
  has type~D module with three generators which we write as $P$, $Q$, and $I$,
  and coefficient maps
\begin{equation}                                                                
  \label{eq:UnknotD}
  \mathcenter{\begin{tikzpicture}[x=2.3cm,y=48pt]
      \node at (1,1) (p) {$P$} ;
  \node at (1,-1) (q) {$Q$};
  \node at (0,0) (i) {$I$};
 \draw[->](p) to node[right] {$\rho_{23}$} (q) ;
  \draw[->](i) to node[left]{$\rho_{123}$}(p) ;
  \draw[->](q) to node[below]{$\rho_{2}$}(i) ;
\end{tikzpicture}}
\end{equation}

By the pairing theorem \cite[Theorem~11.19]{InvPair}, the
tensor product of this with the cabling type~$A$ module computes
$\CFKm(T_{n,2n-1})$.  In the tensor product, we obtain a sequence
starting with
\[ B_{i}\otimes P\overset{U^i}{\leftarrow} 
A_{i}\otimes P \overset{z^{n-i}}{\dashrightarrow} 
B_{i}\otimes Q\overset{U^{i}}{\leftarrow} 
A_{i}\otimes Q \overset{z^{n-i-1}}\dashrightarrow 
B_{i+1}\otimes P,\]
for $i=1,\dots,n-2$,
terminating at
\[ B_{n-1}\otimes P\overset{U^{n-1}}{\leftarrow} A_{n-1}\otimes
P\overset{z}{\dashrightarrow} B_{n-1}\otimes P.\] Note that the remaining
generators cancel in homology. See Figure~\ref{fig:T35Ans} for an example.
 \begin{figure}
 \begin{center}
 \input{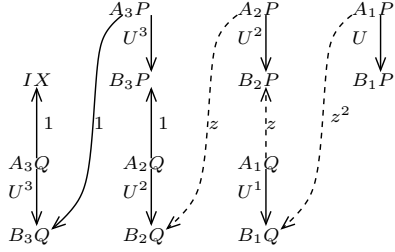}
 \caption{\textbf{Knot Floer complex of $T_{3,5}$.}
   \label{fig:T35Ans}
   The complex for $T_{3,5}$, as computed by the pairing theorem.}
 \end{center}
 \end{figure}

The generators $B_i\otimes P$ and $B_i\otimes Q$ are the ones that can 
represent torsion homology classes, since they are the ones with  even Maslov gradings.
Moreover, it follows from the above computations that for all $i=1,\dots,2n-2$, 
if $t<\frac{2}{n}$
\begin{align*}
  \gr_t(B_i\otimes P)-\gr_t(B_i\otimes Q)=2i-n t>0 \\
  \gr_t(B_i\otimes Q)-\gr_t(B_{i+1}\otimes Q)=2i-(n-1)t>0,
\end{align*}
so $B_1\otimes Q$ is a non-torsion class with maximal $\gr_t$ for $t<\frac{2}{n}$;
and $M(B_1\otimes Q)=0$ and $A(B_1\otimes Q)=(n-1)^2$.
It follows that 
\[\Upsilon_{T_{n,2n-1}}(t)=-(n-1)^2\cdot t \]
for $t\leq \frac{2}{n}$.
\end{prooff}

Putting the (partial) computations of $\Upsilon
_{C_{n,2n-1}(W_0^+(T_{2,3}))}$ and of $\Upsilon _{T_{n,2n-1}}$
together, we get

\bigskip

\begin{prooff} {\bf  of Theorem~\ref{thm:IndependenceTopSlice}.}
  Observe that $K\mapsto \Delta\Upsilon'_{K}(t)$ is a concordance
  homomorphism.  For $t\leq \frac{2}{2n-1}$, this homomorphism
  vanishes for $T_{n,2n-1}$ (by Lemma~\ref{lem:PartUpsTorusKnot});
  thus,
  \[ 
  \Delta\Upsilon'_{K_n}(t) = \left\{\begin{array}{ll}
      0 & {\text{for $t< \frac{2}{2n-1}$}} \\
      2n-1 & {\text{for $t=\frac{2}{2n-1}$.}}
    \end{array}\right.
  \]
  We can now apply Lemma~\ref{lem:Splitting} to the homomorphisms
  $\{ \frac{1}{2n-1} \Delta\Upsilon_K'(\frac{2}{2n-1})\}_{n=2}^{\infty}$
  and the knots $K_n$.
\end{prooff}

\begin{remark}
  When $n=1$, the knot $K_n$ is simply the Whitehead double of the
  trefoil.  Using Theorem~\ref{thm:WD} directly, we can see that the
  family of knots $K_n$ for all $n\geq 1$ is linearly independent. But
  for this linear independence result, we use the homomorphism
  $\frac{1}{2} \Delta\Upsilon'_{K}(1)$, as well as the
  $\frac{1}{2n-1}\Delta\Upsilon'_K(\frac{2}{2n-1})$ for $n\geq 2$.
\end{remark}

The above linear independence result can be stated in terms of the concordance genus.

\begin{corollary}
  \label{cor:ConcordanceGenusFamily}
  Let $\{a_n\}_{n=2}^{\infty}$ be a sequence of integers with finitely
  many non-zero terms. Consider the knot $K= \#_{n=2}^{\infty} a_n K_n$.
  Let 
  \[ c=\sum a_n \tau(K_n)
  = \sum a_n 
  \left(\frac{n^2-3n+2}{2}\right);\] and let $m=\max \{ n \mid a_n\neq 0\}$.
  Then,
  the concordance genus of $K$ is bounded below by
\[  
\max\{ |c|,|c+ a_m\cdot 2m+1|\}.
\]
\end{corollary}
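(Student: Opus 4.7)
The plan is to read the lower bound on $g_c(K)$ directly off the slopes of the piecewise linear function $\Upsilon_K(t)$, using Theorem~\ref{thm:BoundConcordanceGenus} together with additivity and mirror symmetry of $\Upsilon$.

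By Proposition~\ref{prop:AddUpsilon} we have $\Upsilon_K(t)=\sum_n a_n\Upsilon_{K_n}(t)$, and Proposition~\ref{prop:SlopeUpsilon} identifies the slope of $\Upsilon_K$ at $t=0$ as $-\tau(K)=-c$. Since $g_c(K)=g_c(m(K))$ while $\Upsilon_{m(K)}=-\Upsilon_K$ (Proposition~\ref{prop:mirror}), applying Theorem~\ref{thm:BoundConcordanceGenus} both to $K$ and to $m(K)$ gives the two-sided bound $g_c(K)\geq |s|$ for every slope $s$ appearing in the graph of $\Upsilon_K$. Taking $s=-c$ yields $g_c(K)\geq |c|$, which is the first term in the stated maximum.

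For the second term I exhibit another slope of $\Upsilon_K$ by examining the behaviour near $t_0 := 2/(2m-1)$. For each $n<m$ with $a_n\neq 0$, the inequality $t_0<2/(2n-1)$ combined with Lemmas~\ref{lem:PartUpsCableWDTorusKnot} and~\ref{lem:PartUpsTorusKnot} implies that $\Upsilon_{K_n}$ is linear on an open interval containing $t_0$, so these summands contribute no derivative jump there. For $n=m$ the same lemmas yield $\Delta\Upsilon'_{K_m}(t_0)=2m-1$, precisely as computed in the proof of Theorem~\ref{thm:IndependenceTopSlice}. Summing with weights, $\Delta\Upsilon'_K(t_0)=a_m(2m-1)$, so the slope of $\Upsilon_K$ just to the right of $t_0$ equals $-c+a_m(2m-1)$. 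Applying $g_c(K)\geq |s|$ to this new slope gives the remaining inequality.

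The only delicate point is the bookkeeping in the previous paragraph: one has to verify that the $n=m$ summand is the only one contributing to $\Delta\Upsilon'_K(t_0)$, but this is immediate from the intervals of validity in the cited lemmas once we use that $a_n\neq 0$ forces $n\leq m$, hence $2/(2n-1)\geq t_0$. No further computation is required; the corollary follows by assembling these two slope bounds.
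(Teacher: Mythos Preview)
Your argument is correct and follows the same route as the paper's proof, which is simply stated as a direct application of Theorem~\ref{thm:BoundConcordanceGenus} together with the computations from the proof of Theorem~\ref{thm:IndependenceTopSlice}. You have made the details explicit: additivity gives $\Upsilon_K=\sum a_n\Upsilon_{K_n}$; the slope at $0$ is $-c$; for $n<m$ the point $t_0=2/(2m-1)$ lies strictly inside the interval of linearity guaranteed by Lemmas~\ref{lem:PartUpsCableWDTorusKnot} and~\ref{lem:PartUpsTorusKnot}, so only the $n=m$ summand contributes to $\Delta\Upsilon'_K(t_0)$; and the mirror trick upgrades the one-sided bound of Theorem~\ref{thm:BoundConcordanceGenus} to $g_c(K)\geq|s|$ for every slope $s$.

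One small remark: your computation produces the second slope $-c+a_m(2m-1)$, hence the bound $|c-a_m(2m-1)|$, whereas the displayed formula in the corollary reads $|c+a_m\cdot 2m+1|$. This is a typographical issue in the statement rather than a flaw in your reasoning; your expression is the one that actually follows from the computations in Section~\ref{sec:LinIndep}.
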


\begin{proof}
  This is a direct application of
  Theorem~\ref{thm:BoundConcordanceGenus}, combined with computations
  in the proof of Theorem~\ref{thm:IndependenceTopSlice}.
\end{proof}

\section{Comparison with Hom's homomorphisms}
\label{sec:furtherformal}

It is interesting to compare the concordance homomorphisms constructed
here with those defined by Hom~\cite{JenHomInfinitelyGenerated}.  By a
recent result of Hom~\cite{HomEpsilonUpsilon} there are knots for which
our invariant $\Upsilon_K(t)\equiv 0$, but for which her invariant
$\epsilon$ (which she uses to construct concordance homomorphisms) is
non-zero.  We expect conversely that there are also knots $K$ with
$\Upsilon_K(t)\not\equiv 0$ but $\epsilon=0$.  In this section, we
give a formal construction which shows that there is no algebraic
obstruction to the existence of such knots.

Just like $\Upsilon_K$, Hom's homomorphisms are constructed from
invariants of (suitable) Maslov graded, Alexander filtered chain
complexes over $\Field[U]$. By construction, her homomorphisms vanish
on a particular subset of such complexes. Let us recall this set.

\begin{definition}
  Let $C$ be a Maslov graded, Alexander filtered chain complex which
  is free over $\Field[U]$. Suppose moreover that $H_*(C)\cong
  \Field[U]$, with generator in Maslov grading $0$. Let ${\mathcal
    A}(C)$ be the subcomplex of $C$ generated by all elements with
  Alexander filtration $A\leq 0$.  Let ${\mathcal A}'(C)$ be
  subcomplex of $C\otimes _{\Field [U]}\Field[U,U^{-1}]$ which is
  generated by $C\subset C\otimes _{\Field [U]}\Field [U, U^{-1}]$ and
  all elements of $C\otimes _{\Field [U]} \Field[U,U^{-1}]$ with
  $A\leq 0$.

  We say that $C$ is \defin{strongly trivial} if the map ${\mathcal
    A}(C)\to {\mathcal A'}(C)$ 
induces an isomorphism 
  \[H_*({\mathcal
    A}(C))/\Tors\to H_*({\mathcal A'}(C))/\Tors.\]  (Here $\Tors$ denotes the
  torsion part of an $\Field [U]$-module.) In the case where the
  rank of $H_*(C)$ is equal to one, this is equivalent to the condition that
  $\delta({\mathcal A}(C))=\delta(C)=\delta({\mathcal A}'(C))$, in the
  notation of Definition~\ref{def:Delta}.

  We say that $C$ is \defin{$\epsilon$-trivial} if 
  the map
  \[ {\mathcal A}(C)/U \rightarrow {\mathcal A}'(C)/U \]
on ${\mathcal A}(C)/U ={\mathcal A}(C)/(U \cdot {\mathcal A}(C))$
given by the embedding induces a non-zero map in homology.
\end{definition}

Note that the map ${\mathcal A}(C)\to {\mathcal A}'(C)$
naturally factors through $C$ itself; similarly 
${\mathcal A}(C)/U\to {\mathcal A}'(C)/U$ factors through $C/U$.
In~\cite[Definition~3.1]{HomHFKandtheSmoothConcordance} 
Hom defines 
a complex $C$ to have $\epsilon(C)=0$ if both maps
\[ H_*({\mathcal A}(C)/U)\to H_*(C/U)~\text{and}~ H_*(C/U)\to H_*({\mathcal
  A}'(C)/U) \] are non-trivial. Since, $H_*(C/U)=\Field$, this condition
is equivalent to the condition that $C$ is $\epsilon$-trivial, in the
above sense.
The relevance of strong triviality is the following:

\begin{prop}\label{prop:stronghenceUpsilonzero}
  If $C$ is strongly trivial, then  $\Upsilon_{C}(t)\equiv 0$.
\end{prop}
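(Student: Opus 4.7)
The strategy is to establish the two one-sided bounds $\Upsilon_C(t) \geq 0$ and $\Upsilon_C(t) \leq 0$ separately.

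For the lower bound, I work with the model $E^t \subset C_{\RRing^*}$ from Lemma~\ref{lem:SecondConstruction}, where $E^t = \{x : F^t(x) \leq 0\}$ with $F^t = \tfrac{t}{2} A + (1 - \tfrac{t}{2}) \cdot \mathrm{alg}$, graded by the Maslov grading. Strong triviality implies $\delta({\mathcal A}(C)) = \delta(C) = 0$, so there exists a Maslov-grading-$0$ cycle $c \in {\mathcal A}(C)$ representing the non-torsion generator of $H_*(C) \cong \Field[U]$. Since $c \in C \subset C_{\RRing^*}$ has algebraic filtration $0$ and Alexander filtration $\leq 0$, $F^t(c) \leq 0$, hence $c \in E^t$. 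Because $[c]$ generates the rank-one free $\RRing^*$-module $H_*(C_{\RRing^*})$, the class $[c]$ is non-torsion in $H_*(E^t)$, yielding $\Upsilon_C(t) \geq M(c) = 0$ for all $t \in [0,2]$. (Equivalently, this is Proposition~\ref{prop:BoundByNu} together with the observation that strong triviality forces $\nu^-(C) \leq 0$.)

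For the upper bound, I exploit duality. By Proposition~\ref{prop:MirrorChain} and the grading conventions on dual complexes, $\Upsilon_{C^*}(t) = -\Upsilon_C(t)$, so it suffices to prove $\Upsilon_{C^*}(t) \geq 0$. Applying the argument for the lower bound to $C^*$, this reduces to showing that $C^*$ is itself strongly trivial. The key input is a duality between the relevant subcomplexes: there are natural identifications (up to grading shifts)
\[
{\mathcal A}(C^*) \simeq \Mor_{\Field[U]}({\mathcal A}'(C), \Field[U]), \qquad {\mathcal A}'(C^*) \simeq \Mor_{\Field[U]}({\mathcal A}(C), \Field[U]),
\]
under which the inclusion ${\mathcal A}(C^*) \hookrightarrow {\mathcal A}'(C^*)$ becomes the $\Field[U]$-dual of the inclusion ${\mathcal A}(C) \hookrightarrow {\mathcal A}'(C)$. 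Since the universal coefficient theorem over the PID $\Field[U]$ ensures that $\Field[U]$-duality preserves the property of inducing isomorphisms on non-torsion homology, $C^*$ is strongly trivial whenever $C$ is. Combining both bounds yields $\Upsilon_C(t) \equiv 0$.

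The main technical obstacle is verifying the duality identifications ${\mathcal A}(C^*) \simeq \Mor_{\Field[U]}({\mathcal A}'(C), \Field[U])$ (and its analogue) precisely, including tracking the Maslov and Alexander gradings through the dualization. The geometric intuition is that the $\Z \oplus \Z$-filtered complex $C \otimes \Field[U, U^{-1}]$ contains ${\mathcal A}(C)$ as the ``corner'' $\{A \leq 0\} \cap \{\mathrm{alg} \geq 0\}$ and ${\mathcal A}'(C)$ as the complementary ``hook'' $\{A \leq 0\} \cup \{\mathrm{alg} \geq 0\}$; under the dualization $(A, \mathrm{alg}) \mapsto (-A, -\mathrm{alg})$ the roles of the corner and the hook swap, making the required duality statement manifest once the gradings are checked.
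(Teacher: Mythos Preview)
Your lower bound argument is essentially the paper's: the inclusion ${\mathcal A}(C)\hookrightarrow E^t$ (in the model of Lemma~\ref{lem:SecondConstruction}) together with Lemma~\ref{lem:InclusionInequality} gives $\delta({\mathcal A}(C))\leq \delta(E^t)=\Upsilon_C(t)$, and strong triviality pins $\delta({\mathcal A}(C))=0$.

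For the upper bound, however, you take a substantial detour. The paper simply observes the companion inclusion $E^t\hookrightarrow {\mathcal A}'(C)$ (tensored up to $\RRing$): an element $v^{\alpha}\x$ lies in $E^t$ exactly when $\alpha\geq tA(\x)$, and for $0\leq t\leq 2$ this forces either $\alpha\geq 0$ (so algebraic filtration $\leq 0$) or $\alpha\geq 2A(\x)$ (so Alexander filtration $\leq 0$), placing the element in ${\mathcal A}'(C)$. Another application of Lemma~\ref{lem:InclusionInequality} then yields $\Upsilon_C(t)\leq \delta({\mathcal A}'(C))=0$, and the proof is complete in three lines.

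Your duality route---showing $C^*$ is strongly trivial via the corner/hook swap and then invoking $\Upsilon_{C^*}=-\Upsilon_C$---is correct in outline, and the geometric picture you describe is the right one. But it imports two auxiliary facts (the identifications ${\mathcal A}(C^*)\simeq \Mor({\mathcal A}'(C),\Field[U])$ and the formal analogue of Proposition~\ref{prop:mirror} for arbitrary $C$) that, as you yourself flag, require careful bookkeeping with the bigradings. None of that is needed: the symmetric inclusion $E^t\subset {\mathcal A}'(C)$ is immediate and gives the upper bound directly. Your approach does have the virtue of making the duality between ${\mathcal A}$ and ${\mathcal A}'$ explicit, which is conceptually interesting, but for this proposition the sandwich ${\mathcal A}(C)\subset E^t\subset {\mathcal A}'(C)$ is both shorter and avoids the technical obstacle you identify.
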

\begin{proof}
There are inclusions ${\mathcal A}(C_{\RRing})\subset E^t
  \subset {\mathcal A}'(C_{\RRing})$ for all $t$, hence by
  Lemma~\ref{lem:InclusionInequality} we get that $\delta ( {\mathcal
    A}(C))\leq \delta ( E^t) \leq \delta( {\mathcal A}'(C))$. 
  (Note that $\delta({\mathcal A}(C_\RRing))=\delta({\mathcal A}(C))$ by 
  Proposition~\ref{prop:SameUpsilon}.)
  Since
  $C$ is strongly trivial, it follows that $\delta ( {\mathcal
    A}(C))=\delta ( {\mathcal A}'(C))$, implying that $\delta (E^t)$
  is constant. Since for $t=0$ we have that $\delta (E^t)=0$, the
  claim of the lemma follows.
\end{proof}

\begin{prop}
  If $C$ is strongly trivial, then it is also $\epsilon$-trivial.
\end{prop}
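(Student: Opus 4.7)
The plan is to extract from strong triviality a preferred Maslov-grading-$0$ cycle $\alpha \in \mathcal{A}(C)$, and then show that its reduction mod $U$ maps to a non-zero class in $H_*(\mathcal{A}'(C)/U)$; this directly witnesses non-triviality of the map in the definition of $\epsilon$-triviality. To produce $\alpha$, promote the composition $\mathcal{A}(C) \hookrightarrow C \hookrightarrow \mathcal{A}'(C)$---which, by strong triviality, induces an isomorphism on the free parts of homology---to isomorphisms at each individual stage. Since $H_*(C)/\Tors \cong \Field[U]$, the induced map $H_*(C)/\Tors \to H_*(\mathcal{A}'(C))/\Tors$ is a surjection of $\Field[U]$-modules onto a torsion-free module; any such surjection out of $\Field[U]$ is automatically an isomorphism, forcing the first map to be an isomorphism as well. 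Choose $\alpha$ to be any cycle in $\mathcal{A}(C)$ in Maslov grading $0$ representing the free generator of $H_*(\mathcal{A}(C))/\Tors$.

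For the main step, the reduction $\bar\alpha \in \mathcal{A}(C)/U$ is a cycle, and its image in $H_*(\mathcal{A}'(C)/U)$ is the class of $\alpha$ viewed in $\mathcal{A}'(C)/U$. Suppose for contradiction that this image vanishes, so that there exist $\beta, \gamma \in \mathcal{A}'(C)$ with
\[\alpha = \partial\gamma + U\beta.\]
Applying $\partial$ yields $U \partial\beta = 0$; since $C$ is free over $\Field[U]$, the module $C \otimes_{\Field[U]} \Field[U,U^{-1}]$ is $U$-torsion-free, and hence so is its submodule $\mathcal{A}'(C)$. Therefore $\partial\beta = 0$, and the displayed relation becomes the homology equation $[\alpha] = U[\beta]$ in $H_*(\mathcal{A}'(C))$. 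Reducing modulo torsion and using $H_*(\mathcal{A}'(C))/\Tors \cong \Field[U]$ with $[\alpha]$ identified (via the isomorphism chain above) with the generator, this reads $1 = U \cdot x$ in $\Field[U]$, which is impossible.

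The main technical input is the $U$-torsion-freeness of $\mathcal{A}'(C)$, which is the only place where the ambient localized complex enters nontrivially; the rest is a purely formal manipulation on $\Field[U]$-modules, hinging on the non-divisibility by $U$ of the generator of $\Field[U]$. I do not anticipate a substantive obstacle: strong triviality supplies exactly the algebraic identification needed to run the contradiction, and the argument does not touch the fine structure of the filtered complex beyond choosing the representative cycle $\alpha$ in the correct grading.
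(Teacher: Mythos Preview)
Your proof is correct and follows essentially the same approach as the paper's: pick a cycle $\alpha \in \mathcal{A}(C)$ representing the free generator of $H_*(\mathcal{A}(C))/\Tors$, observe that its image in $H_*(\mathcal{A}'(C))/\Tors$ is the generator by strong triviality, and conclude that its mod-$U$ reduction survives in $H_*(\mathcal{A}'(C)/U)$ since the generator of $\Field[U]$ is not divisible by $U$. Your contradiction argument spells out explicitly what the paper summarizes as ``injects into $H_*(\mathcal{A}'(C)/U)$''; the preliminary factorization of the isomorphism through $H_*(C)/\Tors$ is correct but unnecessary, as only the composite isomorphism is used in the contradiction.
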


\begin{proof}
  Consider a generator of $H_*({\mathcal A}(C))/\Tors$.  This can be
  lifted to an element $\xi$ of $H_*({\mathcal A}(C))$ which is in the
  cokernel of $U$, i.e. which injects into $H_*({\mathcal A}(C)/U)$. Call
  the image ${\widehat \xi}$. Moreover, since its image in
  $H_*({\mathcal A}'(C))$ induces a generator of $H_*({\mathcal A}'(C))/\Tors$,
  it follows that its image also injects in $H_*({\mathcal A}'(C)/U)$.
  By commutativity of the diagram
  \[ \begin{CD}
    {\mathcal A}(C)@>>> C @>>> {\mathcal A'}(C) \\
    @VVV @VVV @VVV\\
    {\mathcal A}(C)/U @>>> C/U @>>> {\mathcal A'}(C)/U,
  \end{CD}
  \]
  we conclude that ${\widehat \xi}$ is mapped non-trivially into
  $H_*({\mathcal A}'(C)/U)$, as desired.
\end{proof}

The converse of the above proposition is not true. An
$\epsilon$-trivial complex with $\Upsilon _C$ not identically zero
(hence $C$ not strongly trivial) can be given as follows.

Consider the $\Z\oplus \Z$-filtered complex $C^{\infty}$ over
$\Field[U,U^{-1}]$ 
with five generators
$a_{0,0}$, $b_{3,0}$, $c_{0,3}$, $d_{3,3}$, and $e_{1,1}$, satisfying
the grading conditions
\begin{align*}
  M(a_{0,0})&=1\\
  M(b_{3,0})&=-4\\
  M(c_{0,3})&=2\\
  M(d_{3,3})&=-3\\  
  M(e_{1,1})&=0
\end{align*}
and 
\begin{align*}
A(a_{0,0})=A(d_{3,3})= A(e_{1,1})&=0 \\
  A(c_{0,3})= -A(b_{3,0})&=3;
\end{align*}
and equipped with the 
differential
\begin{align*}
  \partial a_{0,0} & =0\\
  \partial b_{3,0} &= U^3\cdot a_{0,0} \\
  \partial c_{0,3} & =  a_{0,0} \\
  \partial d_{3,3} &= b_{3,0} +  U^3 \cdot c_{0,3}\\  
  \partial e_{1,1} &= U\cdot a_{0,0},
  \end{align*}
pictured in Figure~\ref{fig:FormalComplex}.

\begin{figure}
  \begin{center}
    \input{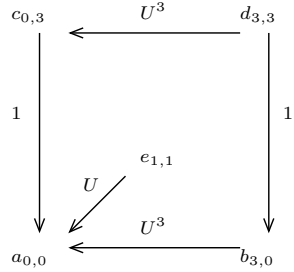}
 \caption{\textbf{An $\epsilon$-trivial chain complex with non-trivial
     $\Upsilon$}.
   Arrows represent terms in the differentials, and the labels
       represent the algebra element appearing in the corresponding term.}
 \label{fig:FormalComplex}
 \end{center}
 \end{figure}

\begin{prop}
  \label{prop:NotHom}
  The above complex is $\epsilon$-trivial, but
  \[ \Upsilon_C(t) =\left\{\begin{array}{ll}
0 &{\text{for $0\leq t\leq \frac{2}{3}$}} \\
2-3t &{\text{for $\frac{2}{3}\leq t\leq 1$}} \\
-4+3t &{\text{for $1\leq t \leq \frac{4}{3}$}} \\
0 &{\text{for $\frac{4}{3}\leq t\leq 2$.}}
\end{array}\right.\]
\end{prop}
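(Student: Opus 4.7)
The plan splits into verifying $\epsilon$-triviality and then computing $\Upsilon_C(t)$. First, I would check the standing hypotheses. Freeness over $\Field[U]$ and grading compatibility are immediate, and the only nontrivial case of $\partial^2 = 0$ is $\partial^2 d_{3,3} = U^3 a_{0,0} + U^3 a_{0,0} = 0$. Since $a_{0,0} = \partial c_{0,3}$, the only degree-$0$ cycle not already a boundary is $Uc_{0,3} + e_{1,1}$; a short check that no element of degree $-2n+1$ has boundary $U^{n+1}c_{0,3} + U^n e_{1,1}$ shows that $[Uc_{0,3}+e_{1,1}]$ generates a free summand, so $H_*(C) \cong \Field[U]$ with generator in Maslov grading $0$, as required.

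For $\epsilon$-triviality, the plan is to present ${\mathcal A}(C)$ as the free $\Field[U]$-module on $\{a_{0,0}, U^3c_{0,3}, b_{3,0}, d_{3,3}, e_{1,1}\}$ and ${\mathcal A}'(C)$ as the free $\Field[U]$-module on $\{a_{0,0}, c_{0,3}, U^{-3}b_{3,0}, d_{3,3}, e_{1,1}\}$, noting that $U^{-3}b_{3,0}$ has Alexander grading $0$ and so lies in ${\mathcal A}'(C)$. A direct calculation modulo $U$ then produces a nonzero class $[e_{1,1}] \in H_*({\mathcal A}(C)/U)$ that maps under inclusion to the nonzero class $[e_{1,1}] \in H_*({\mathcal A}'(C)/U)$, showing that $C$ is $\epsilon$-trivial.

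For the computation of $\Upsilon_C$, I would work directly with the $t$-modified complex $C^t$ from Section~\ref{sec:Formal}. The nontrivial $t$-modified differentials are
\[ \partial_t c_{0,3} = v^{3t} a_{0,0}, \quad \partial_t b_{3,0} = v^{6-3t} a_{0,0}, \quad \partial_t e_{1,1} = v^2 a_{0,0}, \quad \partial_t d_{3,3} = v^{3t} b_{3,0} + v^{6-3t}c_{0,3}, \]
so a general element $P a_{0,0} + Q c_{0,3} + R b_{3,0} + S d_{3,3} + T e_{1,1}$ is a cycle if and only if $S = 0$ and $Q v^{3t} + R v^{6-3t} + T v^2 = 0$ in $\mathcal{R}$. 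The structure of the solutions to this equation depends on which of the three exponents $3t, 6-3t, 2$ is smallest, giving three regimes $t \in [0, 2/3]$, $t \in [2/3, 4/3]$, and $t \in [4/3, 2]$; within the middle interval, the generator of the free homology summand switches at $t=1$, producing the four branches in the claim. In each regime I would exhibit explicit $\mathcal R$-generators for the cycle module, identify the relations coming from the boundary set $\{v^2 a_{0,0},\, v^{3t} b_{3,0} + v^{6-3t} c_{0,3}\}$, and isolate the unique non-torsion summand via the valuation-ring classification of Lemma~\ref{lem:HomAlgLem}. For example, on $[0, 2/3]$ the cycle generators are $a_{0,0}$, $v^{6-6t}c_{0,3} + b_{3,0}$, and $v^{2-3t}c_{0,3} + e_{1,1}$, of $\gr_t$-degrees $1$, $-4+3t$, and $0$; the first two become torsion in $H_*(C^t)$ while the third is free, yielding $\Upsilon_C(t) = 0$.

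The main obstacle is the careful bookkeeping at the regime boundaries $t \in \{2/3, 1, 4/3\}$: one must verify in each case that the chosen representative is genuinely a generator of a free $\mathcal R$-summand (rather than merely nonzero modulo a finitely-generated torsion submodule), which follows from the valuation-ring structure of $\mathcal R$. Continuity from Proposition~\ref{prop:Varying} then ensures that the four local computations patch together into the stated piecewise-linear function.
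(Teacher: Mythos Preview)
Your proposal is correct and follows essentially the same approach as the paper: verify $\epsilon$-triviality by exhibiting the class $[e_{1,1}]$ mapping nontrivially from $H_*(\mathcal A(C)/U)$ to $H_*(\mathcal A'(C)/U)$, then compute $\Upsilon_C(t)$ by writing down the $t$-modified differential explicitly, finding homogeneous cycle generators in each regime, and identifying which one generates the free $\mathcal R$-summand. Your treatment is in fact more careful than the paper's (you check $H_*(C)\cong\Field[U]$ and track the regime boundaries explicitly), and your formulas $\partial_t c_{0,3}=v^{3t}a_{0,0}$, $\partial_t b_{3,0}=v^{6-3t}a_{0,0}$ are the ones consistent with the stated Alexander gradings $A(c_{0,3})=3$, $A(b_{3,0})=-3$; the paper's displayed $\partial_t$ appears to have $b_{3,0}$ and $c_{0,3}$ interchanged, though by the symmetry of the complex this does not affect the final answer.
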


\begin{proof}
It is easy to see that $H_*({\mathcal {A}}(C)/U)\cong \Field ^3$,
 generated by the elements $a_{0,0}, e_{1,1}$ and $b_{3,0}$.
A similar
  computation shows that $H_*({\mathcal {A}}'(C)/U)\cong \Field ^3$,
  generated by
  $d_{3,3}$, $e_{1,1}$ and
  $c_{0,3}+U^{-3}b_{3,0}$. 
The map on homology induced by the map ${\mathcal A}(C)/U
  \rightarrow {\mathcal A}'(C)/U$ maps $e_{1,1}$ to $e_{1,1}$, hence
  it is non-zero, showing that $C$ is $\epsilon$-trivial.

Since 
\begin{align*}
  \partial _ta_{0,0} & =0\\
  \partial _tb_{3,0} & =  v^{3(2-t)}\cdot a_{0,0} \\
  \partial c_{0,3} &= v^{3t}\cdot a_{0,0} \\
  \partial d_{3,3} &= v^{3(2-t)} \cdot b_{3,0} + v^{3t}\cdot c_{0,3}\\  
  \partial e_{1,1} &= v^2\cdot a_{0,0},
  \end{align*}
 we can easily see that for $t\leq \frac{2}{3}$ the elements
\[
z_1=b_{3,0}+v^{4-3t}\cdot e_{1,1} \quad \text{and} \quad 
z_2=v^{2-3t}\cdot c_{0,3}+e_{1,1}
\]
generate the torsion-free quotient, while for $t\in [\frac{2}{3},1]$ the
elements
\[
z_1=b_{3,0}+v^{4-3t}\cdot e_{1,1} \quad \text{and} \quad 
z_2'=c_{0,3}+v^{3t-2}\cdot e_{1,1}
\]
play the same role.
Since $\gr _t (z_1)=-4+3t$,  $\gr _t (z_2)=0$, and 
$\gr _t (z_2')=2-3t$, we get the desired shape of 
$\Upsilon _C(t)$ on $[0,1]$. A similar computation 
(or the symmetry $\Upsilon _C(t)=\Upsilon _C (2-t)$) then computes
$\Upsilon _C$ on $[0,2]$, concluding the proof of the proposition.
\end{proof}

\newcommand\cCFLm{\mathrm{cCFL}^-}
\newcommand\CFLt{\mathrm{tCFL}}
\newcommand\HFLt{\mathrm{tHFL}}

\section{The case of links}
\label{sec:Links}

Knot Floer homology can be generalized to links in several ways; see
for instance~\cite{OSLinks}.  There are analogous generalizations of the
$t$-modified theory to links.  We describe here one such
generalization, which will be useful in~\cite{Unorient}.

An $\ell$-component oriented link
$L=(L_1,\ldots,L_{\ell})$ can be represented by a Heegaard diagram
${\mathcal {H}}=(\Sigma,\alphas,\betas,\w,\z )$, where:
\begin{itemize}
\item 
  $\Sigma$ is a surface of genus $g$,
\item $\alphas$ and $\betas$ are $g+\ell-1$-tuples of pairwise
  disjoint, simple closed curves,
\item and the pair $(\w,\z)=\{(w_i,z_i)\}_{i=1}^{\ell}$
is an $\ell$-tuple of pairs of basepoints;
\end{itemize}
see~\cite[Section~3]{OSLinks}.
The diagram equips each
component of $L$ with an orientation; we assume that this orientation
matches with the given orientation of $L$.

The generating set of the free $\RRing$-module 
$\CFLt({\mathcal {H}})$ is  given by the intersection
points $\Gen =\Ta \cap \Tb \in \Sym ^{g+\ell -1}(\Sigma )$. 
For $\phi\in\pi_2(\x,\y)$, let
\[ n_{\ws}(\phi)=\sum_{i=1}^{\ell} n_{w_i}(\phi )\qquad\text{and}\qquad
n_{\zs}(\phi)=\sum_{i=1}^{\ell} n_{z_i}(\phi ).\] The Maslov and
Alexander functions are once again characterized up to an overall
additive shift by the equations
\begin{align}
    M(\x)-M(\y)&=\Mas(\phi)-2n_{\ws}(\phi), \label{eq:MaslovFormulaLinks}\\
    A(\x)-A(\y)&=n_{\zs}(\phi)-n_{\ws}(\phi), \label{eq:AlexanderFormulaLinks} 
  \end{align}
for any $\phi\in\pi_2(\x,\y)$.
Before pinning down the additive indeterminacy on these functions, 
we  consider the differential on $\CFLt({\mathcal {H}})$:
\begin{equation}
  \label{eq:tModDiffLinks}
  \partial_t \x = \sum_{\y\in\Gen}\sum_{\{\phi\in\pi_2(\x,\y)\big|\Mas(\phi)=1\}} \# \left(\frac{\ModFlow(\phi)}{\mathbb R}\right)
  v^{t n_{\zs}(\phi)+ (2-t) n_{\ws}(\phi)} \y.
\end{equation}

\begin{lemma}
  \label{lem:TZeroLink}
  The homology of the $t=0$ 
  specialization of the above complex is a free $\Ring$-module
  of rank $2^{\ell-1}$. In fact, up to an overall shift in gradings,
  there is a graded isomorphism
  \begin{equation}
    \label{eq:TZero}
    H_*({\mathrm{tCFL}}\vert _{t=0}({\mathcal {H}}))\cong
    (\Ring_{-\OneHalf}\oplus\Ring_{\OneHalf})^{\ell-1}.
  \end{equation}
  The same holds when $t=2$:
  \begin{equation}
    \label{eq:TTwo}
    H_*({\mathrm{tCFL}}\vert _{t=2}({\mathcal {H}}))\cong
    (\Ring_{-\OneHalf}\oplus\Ring_{\OneHalf})^{\ell-1}.
  \end{equation}
\end{lemma}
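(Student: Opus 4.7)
The plan is to observe that at $t=0$ the basepoints $\zs$ drop out of the differential in Equation~\eqref{eq:tModDiffLinks}, reducing the computation to a multi-pointed Heegaard Floer complex for $S^3$, whose homology is standard.

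More precisely, at $t=0$ the differential becomes
\[
\partial_0 \x \;=\; \sum_{\y\in\Gen}\sum_{\{\phi\in\pi_2(\x,\y)\big|\Mas(\phi)=1\}} \# \left(\frac{\ModFlow(\phi)}{\mathbb R}\right) v^{2 n_{\ws}(\phi)} \y,
\]
which depends only on the data $(\Sigma,\alphas,\betas,\ws)$. Forgetting $\zs$, this is a valid $\ell$-pointed Heegaard diagram for $S^3$, and the above complex is $CF^{-}$ of that diagram, with the $\ell$ formal $U$-variables identified and renamed $v^2$, then extended from $\F[v^2]$ up to $\RRing$.

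Next I would invoke the standard computation for multi-pointed Heegaard Floer homology: for any $\ell$-pointed Heegaard diagram of $S^3$ (with the $\ell$ formal variables set equal to a single $U$), the resulting homology is isomorphic to $\F[U]\otimes V^{\otimes(\ell-1)}$ where $V=\F_{(0)}\oplus \F_{(-1)}$. This can be verified by destabilizing basepoints one at a time: each removal of a free basepoint (together with an auxiliary $\alpha$-$\beta$ pair bounding a disk containing that basepoint) tensors the complex by $V$. After $\ell-1$ destabilizations we arrive at a standard genus-$g$ diagram for $S^3$ with a single basepoint, whose $CF^{-}$ has homology $\F[U]$. Since $\RRing$ is torsion-free over $\F[v^2]$ (so flat), tensoring with $\RRing$ preserves the homology, giving $\RRing\otimes V^{\otimes(\ell-1)}$.

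Finally, the link Floer Maslov grading convention (which, compared to the single-pointed convention, shifts each factor of $V$ by $\tfrac{1}{2}$) places the two generators of each $V$-factor in gradings $\pm\tfrac{1}{2}$ rather than $0,-1$; this yields Equation~\eqref{eq:TZero} up to an overall shift. For Equation~\eqref{eq:TTwo}, I would invoke the symmetry of the link Floer diagram that swaps $\ws\leftrightarrow\zs$ (the link analogue of Proposition~\ref{prop:HFKsymmetry}, which underlies the proof of Proposition~\ref{prop:SymmUps}): this symmetry converts the $t=0$ specialization of $\CFLt(L)$ into its $t=2$ specialization, so both have isomorphic homologies up to regrading. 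The main obstacle I anticipate is the single-variable multi-pointed destabilization: while the case of independent variables $U_1,\dots,U_\ell$ is classical, one must check that identifying all of them to a single $v^2$ does not produce spurious homology. I expect this is handled by carrying out the local destabilization model carefully, where the two new generators introduced by a free stabilization cancel against each other over $\F[v^2]$ (not only over $\F[U_1,\dots,U_\ell]$), leaving behind the claimed factor of $V$.
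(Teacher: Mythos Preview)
Your proposal is correct and follows the same overall strategy as the paper: at $t=0$ the $\zs$-basepoints drop out, reducing to a multi-pointed $\CFm$ of $S^3$, and the $t=2$ case is the mirror image. The differences are purely tactical.

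For $t=0$, the paper sidesteps exactly the obstacle you flag. Rather than destabilizing the single-variable complex directly, it first considers the complex over $\Field[U_1,\dots,U_\ell]$ (with differential weighted by $U_1^{n_{w_1}(\phi)}\cdots U_\ell^{n_{w_\ell}(\phi)}$), which by \cite[Theorem~4.4]{OSLinks} has homology $\HFm(S^3)\cong\Field[U]$ with every $U_i$ acting as $U$. Setting all $U_i$ equal is then a purely algebraic step: since the $U_i$ already act identically on homology, the specialization produces the extra $V^{\otimes(\ell-1)}$ factor (this is a Koszul/Tor computation). This is cleaner than checking the single-variable local stabilization model by hand, though your approach would also work.

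For $t=2$, the paper makes a more elementary observation than the symmetry you invoke: the grading $\gr_2=M-2A$ satisfies $\gr_2(\x)-\gr_2(\y)=\mu(\phi)-2n_{\zs}(\phi)$, so both the $t=2$ differential and its grading are literally independent of $\ws$. Thus the $t=2$ complex is just the $t=0$ complex with the roles of $\ws$ and $\zs$ exchanged, and the same citation applies verbatim. Your appeal to the link analogue of Proposition~\ref{prop:HFKsymmetry} reaches the same conclusion but uses a stronger input than necessary.
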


\begin{proof}
  The $t=0$ specialization is independent of the placement of $\z$.
(This specialization is equipped with the Maslov grading.)
Consider the chain complex
  over $\Field[U_1,\dots,U_\ell]$ with the same generators as before,
  but with differential specified by
  \[
    \partial \x = \sum_{\y\in\Gen}\sum_{\{\phi\in\pi_2(\x,\y)\big|\Mas(\phi)=1\}} \# \left(\frac{\ModFlow(\phi)}{\mathbb R}\right)
    U_1^{n_{w_1}(\phi)}\cdots U_\ell^{n_{w_{\ell}}(\phi)} \y.
    \]
  According to~\cite[Theorem~4.4]{OSLinks}, this chain complex
  computes $\HFm (S^3)\cong \Field[U]$, where all the $U_i$ act as
  translations by $U$. Thus, if we set them equal to one another, the
  resulting complex is $\Field[U]\otimes V^{\ell-1}$.  The $t=0$
  complex is gotten by changing the base ring to $\RRing$ with
  variable $v$ (and with the understanding of $U=v^2$), equipped 
with the Maslov grading.

  The $t=2$ specialization is independent of the placement of  $\w$
  (even in the defintion of $\gr_{2}=M-2A$). 
\end{proof}

\begin{definition}
  Let $L$ be an oriented link.  Eliminate the additive indeterminacy in $M$ by
  the requirement that Equation~\eqref{eq:TZero} holds without shifting the
  grading.  Next, eliminate the additive indeterminacy in $A$ by the
  requirement that Equation~\eqref{eq:TTwo} holds without shifting the
  grading.  Using these normalizations, we define the grading $\gr _t$ on the
  generator $\x$ of $\CFLt({\mathcal{H}})$ by the usual formula
\[
\gr _t (\x )=M(\x )-tA(\x ),
\]
and extend it to the $\RRing$-module by $\gr _t (v^{\alpha }\x )=\gr
_t (\x )-\alpha$. The homology  
$\HFLt({\mathcal{H}})$ of the resulting graded chain complex
is a graded $\RRing$-moduli, called  the \defin{$t$-modified link
  homology} of $L$.
\end{definition}

We have the following analogue of Theorem~\ref{thm:Invariance}:

\begin{theorem}
  \label{thm:InvarianceForLinks}
  The $t$-modified link homology $\HFLt ({\mathcal {H}})$ of the Heegaard
  diagram ${\mathcal{H}}$ is an invariant of the underlying oriented link $L$,
and is denoted by $\HFLt (L)$.
\end{theorem}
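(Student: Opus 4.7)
The plan is to mirror the proof of Theorem~\ref{thm:InvarianceOfHFKt} in the link setting: I will realize $\CFLt(\mathcal{H})$ as the output of the formal $t$-modification procedure of Section~\ref{sec:Formal} applied to an Alexander filtered, Maslov graded chain complex $\cCFLm(\mathcal{H})$ over $\Field[U]$, and then invoke invariance of the underlying link Floer complex together with the functoriality of Proposition~\ref{prop:tModifyFunctor}.

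First I would define $\cCFLm(\mathcal{H})$ to be the free $\Field[U]$-module on $\Gen = \Ta \cap \Tb$ with differential
\[ \partial \x = \sum_{\y \in \Gen} \sum_{\{\phi \in \pi_2(\x,\y)\mid \Mas(\phi)=1\}} \#\!\left(\frac{\ModFlow(\phi)}{\mathbb R}\right) U^{n_{\ws}(\phi)}\, \y, \]
together with the Maslov function $M$ and the total Alexander function $A$ characterized by~\eqref{eq:MaslovFormulaLinks} and~\eqref{eq:AlexanderFormulaLinks}. The nonnegativity of $n_{\ws}(\phi)$ and $n_{\zs}(\phi)$ for counted disks, combined with these two equations, forces $\partial$ to drop $M$ by one, $U$ to drop $M$ by two and $A$ by one, and $\partial$ to respect the Alexander filtration. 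Thus $\cCFLm(\mathcal{H})$ fits into the framework of Definition~\ref{def:AlexMas}.

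Next I would note that $\cCFLm(\mathcal{H})$ is obtained from the standard multigraded link Floer complex over $\Field[U_1,\ldots,U_\ell]$, whose filtered chain homotopy invariance is established in~\cite{OSLinks}, by the base-ring extension $U_i \mapsto U$ and the collapse of the multi-filtration to its sum. Consequently $\cCFLm(\mathcal{H})$ is, up to filtered chain homotopy equivalence of Alexander filtered, Maslov graded complexes over $\Field[U]$, an invariant of $L$. Applying Definition~\ref{def:tModifyFormal} with $U = v^2$ then produces a complex $\cCFLm(\mathcal{H})^t$ whose $v$-exponents, via the identity
\[ t\, n_{\zs}(\phi) + (2-t)\, n_{\ws}(\phi) = \gr_t(\y) - \gr_t(\x) + 1 \]
already verified in Lemma~\ref{lem:tModIsCx} (the same computation goes through with $n_{\ws}$, $n_{\zs}$ summed over all basepoints), agree term-by-term with the differential~\eqref{eq:tModDiffLinks} defining $\CFLt(\mathcal{H})$. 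Proposition~\ref{prop:tModifyFunctor} then transfers the invariance of $\cCFLm$ to invariance of the graded $\Ring$-module $\HFLt(\mathcal{H})$, up to an overall additive shift in $\gr_t$. This shift is pinned down by Lemma~\ref{lem:TZeroLink}: the normalizations of $M$ and $A$ are the intrinsic conditions that the $t=0$ and $t=2$ specializations realize the isomorphism~\eqref{eq:TZero}, resp.~\eqref{eq:TTwo}, without any grading shift, and so the resulting $\gr_t$-graded module $\HFLt(L)$ is canonically associated to $L$.

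The one step that is not immediate from the knot case is the descent statement above: one must verify that the filtered quasi-isomorphisms implementing invariance of $\CFLm$ over $\Field[U_1,\ldots,U_\ell]$ remain filtered chain homotopy equivalences after the diagonal specialization $\Field[U_1,\ldots,U_\ell]\to \Field[U]$ and the collapse of the multi-filtration to its sum. This amounts to tracking through the standard Heegaard moves (isotopies, handleslides, (de)stabilizations) in~\cite{OSLinks} and observing that each induced chain homotopy is a $\Field[U_1,\ldots,U_\ell]$-module map, and hence becomes a $\Field[U]$-module map on specialization that respects the collapsed filtration. Once this routine verification is carried out, the functorial formalism of Section~\ref{sec:Formal} furnishes the invariance of $\HFLt(L)$ exactly as in the knot case.
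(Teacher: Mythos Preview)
Your proposal is correct and follows essentially the same approach as the paper: define the algebraically collapsed link complex $\cCFLm(\mathcal{H})$ by setting $U_1=\cdots=U_\ell$ and summing the Alexander multi-filtration, identify $\CFLt(\mathcal{H})$ as its $t$-modification, and then combine the invariance of $\CFLm$ from~\cite{OSLinks} with the functoriality of Proposition~\ref{prop:tModifyFunctor}. You include a bit more detail than the paper on the grading normalization and on the descent of filtered homotopy equivalences under specialization, but the argument is the same.
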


\begin{proof}
  In~\cite{OSLinks}, the link complex $\CFLm({\mathcal{H}})$ 
  is a $\Z^{\ell}$-filtered chain
  complex which is also equipped with a Maslov grading.  The Alexander
  multi-grading is specified by the vector
  \[{\mathbf A}=(A_1,\dots,A_\ell),\]
  and the underlying algebra is $\Field[U_1,\dots,U_\ell]$.
  
  We can specialize the link complex by setting $U_1=\dots=U_\ell$ to
  get a variant which is defined over $\Field[U]$, endowed with the
  $\Z$-filtration $A=\sum_{i=1}^{\ell} A_i$ (specified again 
 up to an overall shift). We call the resulting complex the
  {\em{algebraically collapsed link complex}} $\cCFLm({\mathcal{H}})$.
  It follows that
  $\CFLt({\mathcal{H}})$ is simply the $t$-modification (in the sense
  of Section~\ref{sec:Formal}) of the algebraically collapsed link complex.

  It is easy to see that $\Z^{\ell}$-filtered homotopy equivalences
  between $\CFLm({\mathcal{H}})$'s for different Heegaard
    diagrams representing $L$ induce $\Z$-filtered homotopy
  equivalences of the corresponding collapsed complex. Thus, since the
  filtered homotopy type of $\CFLm({\mathcal{H}})$ is a link
  invariant~\cite[Theorem~4.7]{OSLinks}, functoriality of the
  $t$-modification (Proposition~\ref{prop:tModifyFunctor}) implies the
  result.
\end{proof}
  
The definition of the knot invariant $\Upsilon _K (t)$ extends to links
as follows.

\begin{definition}
  For $t\in[0,2]$ choose a homogeneous basis
  $\{e_i(t)\}_{i=1}^n$ for the free $\Ring$-module
  $\HFLt(L)/\Tors$. The \defin{$\Upsilon$-set of the oriented link $L$
    at $t$} is the set $\{\gr_t(e_i(t))\}_{i=1}^n$ (a set with
    possible repetitions).
\end{definition}

\begin{theorem}
  The $\Upsilon$-set of $L$ at any $t\in[0,2]$ is a set with
  $2^{\ell-1}$ elements (counted with repetitions). It is an invariant of the
  oriented link $L$.
\end{theorem}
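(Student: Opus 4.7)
The plan has three steps: first, to show that the free part $\HFLt(L)/\Tors$ has $\Ring$-rank equal to $2^{\ell-1}$ for every $t\in[0,2]$; second, to deduce that the multiset of $\gr_t$-gradings of a homogeneous $\Ring$-basis of $\HFLt(L)/\Tors$ is intrinsic to the graded module, independent of the chosen basis; third, to invoke Theorem~\ref{thm:InvarianceForLinks} to conclude that this multiset is an oriented link invariant.

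For the rank, I would transplant the rescaling trick of Lemma~\ref{lem:SecondConstruction} to the link setting. Extending $\Phi_t(\x)=v^{tA(\x)}\x$ by $\Ring^*$-linearity produces a chain isomorphism
\[
\Phi_t\colon\CFLt(L)\otimes_\Ring\Ring^*\;\xrightarrow{\ \cong\ }\;\cCFLm(\mathcal H)\otimes_{\Field[U]}\Ring^*,
\]
where $U=v^2$ and the right-hand differential $\partial\x=\sum\#\bigl(\ModFlow(\phi)/\mathbb R\bigr)\,v^{2n_{\ws}(\phi)}\y$ is manifestly independent of $t$; the chain-map verification reduces to Equation~\eqref{eq:AlexanderFormulaLinks}, used to absorb the factor $v^{t(n_\zs-n_\ws)}$ appearing in $\partial_t$. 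Since $\Ring^*$ is a field and flat over $\Ring$, the $\Ring^*$-dimension of $H_\ast(\CFLt(L))\otimes_\Ring\Ring^*\cong H_\ast(\CFLt(L)\otimes_\Ring\Ring^*)$ coincides with the $\Ring$-rank of $\HFLt(L)/\Tors$, and is therefore $t$-independent. Setting $t=0$ and applying Lemma~\ref{lem:TZeroLink} computes this rank as $2^{\ell-1}$.

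For basis-independence, Lemma~\ref{lem:HomAlgLem} identifies $F:=\HFLt(L)/\Tors$ with a graded free $\Ring$-module of rank $2^{\ell-1}$. I would then introduce the ideal
\[
\mathfrak m\;=\;\{f\in\Ring\mid\text{the coefficient of $v^0$ in $f$ vanishes}\}\;\subset\;\Ring,
\]
which is the unique maximal ideal of the valuation ring $\Ring$ and satisfies $\Ring/\mathfrak m\cong\Field$ via extraction of constant terms. For any graded $\Ring$-basis $\{e_i\}_{i=1}^{2^{\ell-1}}$ of $F$, the reductions $\{[e_i]\}$ form a graded $\Field$-basis of $F/\mathfrak m F$, and hence the multiset $\{\gr_t(e_i)\}$ is recovered from the graded $\Field$-dimensions of $F/\mathfrak m F$---an invariant of $F$ alone, and therefore of $\HFLt(L)$.

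The main technical point is the rescaling identification in the second paragraph; once that is in hand, the remaining steps are either direct computations or applications of Theorem~\ref{thm:InvarianceForLinks} and Lemma~\ref{lem:HomAlgLem}. The one verification meriting explicit mention is that $\mathfrak m$ is genuinely an ideal of the exotic ring $\Ring$, which is immediate once one notes that every exponent appearing in a product of two elements of $\mathfrak m$ is a sum of two strictly positive reals and hence strictly positive.
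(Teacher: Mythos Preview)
Your proposal is correct and follows essentially the same strategy as the paper. For the rank, the paper states the equivalent fact that $\HFKinf(L)\cong\Field[U,U^{-1}]\otimes V^{\ell-1}$ (obtained from \cite[Theorem~4.7]{OSLinks} as in Lemma~\ref{lem:TZeroLink}); your rescaling $\Phi_t$ is precisely the mechanism (Lemma~\ref{lem:SecondConstruction} transplanted to links) that identifies $\HFLt(L)\otimes_\Ring\Ring^*$ with this $t$-independent module. For invariance the paper simply invokes Theorem~\ref{thm:InvarianceForLinks} and leaves basis-independence of the $\Upsilon$-set implicit, whereas you supply an explicit argument via $F/\mathfrak m F$. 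One small remark: to show $\mathfrak m$ is an ideal you need $\Ring\cdot\mathfrak m\subset\mathfrak m$, not just $\mathfrak m\cdot\mathfrak m\subset\mathfrak m$, so the relevant observation is that a sum of a nonnegative and a strictly positive exponent is strictly positive---but this is the same trivial computation.
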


\begin{proof}
  The proof consists of two parts. First, we must show that
  $\HFLt(L)/\Tors$ is a free module of rank $2^{\ell-1}$. Second, we
  must show that the set is a link invariant.
  
  To see that $\HFLt(L)/\Tors$ is a free module of rank $2^{\ell-1}$,
  we use the fact that $\HFKinf(L)\cong \Field[U,U^{-1}]\otimes (\Field \oplus \Field )^{\otimes (\ell -1)}$.
  This follows from an application of~\cite[Theorem~4.7]{OSLinks}
  (exactly as in the proof of Lemma~\ref{lem:TZeroLink}).

  Invariance follows immediately from
  Theorem~\ref{thm:InvarianceForLinks}.
\end{proof}

\bibliographystyle{plain}
\bibliography{biblio}

\end{document}